   \def\sH{{\mathfrak H}}   
\def\sM{{\mathfrak M}}   \def\sN{{\mathfrak N}}   
\def\sS{{\mathfrak S}}
      \def\dC{{\mathbb C}}
      \def\dR{{\mathbb R}}
      \def\cC{{\mathcal C}}
   \def\cE{{\mathcal E}}   
   \def\cH{{\mathcal H}}
\newcommand{\fH}{\mathfrak H}
 \DeclareMathOperator{\ran}{ran}
\DeclareMathOperator{\dom}{dom} 
\DeclareMathOperator{\Ext}{Ext} 
\DeclareMathOperator{\supp}{supp} 
\DeclareMathOperator{\sign}{sign} 
\DeclareMathOperator{\shh}{sh}
\newtheorem {theorem}     {Theorem} [section]    
\newtheorem {definition} [theorem] {Definition} 
\newtheorem {lemma}    [theorem]   {Lemma}       
\newtheorem {corollary}[theorem]   {Corollary}   
\newtheorem {proposition}[theorem]{Proposition} 
\theoremstyle{remark}
\newtheorem {example}   [theorem] {Example} 
\newtheorem {remark}    [theorem] {Remark} 
\newtheorem*{remark*}{Remark}
\renewenvironment {proof} {\begin{trivlist} \item[\hspace{\labelsep}%
\sc Proof.]}{$\Box$ \end{trivlist}}
\newcommand {\wt}{\widetilde}
\newcommand {\wh}{\widehat}
\renewcommand {\Im}{\mathop{\rm Im}\nolimits}
\def\IM{{\rm Im\,}}
\def\ran{{\rm ran\,}}
\def\dom{{\rm dom\,}}
\def\dim{{\rm dim\,}}
\newcommand {\R}{\mathbb{R}}
\newcommand{\eq}[2]{\begin{equation}{{#2}\label{#1}} \end{equation}}
\makeatletter \@addtoreset{equation}{section}\makeatother
\begin{document}
\title[Weyl function of a Hermitian operator]
{Weyl function of a Hermitian operator and its connection with characteristic function}

\author[V. Derkach, M. Malamud]{Vladimir Derkach, Mark Malamud}
\email{derkach.v@gmail.com}
\email{malamud3m@yahoo.com}
\subjclass{Primary  47B25; Secondary   47A48; 47A56, 47B44}
\keywords{symmetric operator, selfadjoint extension,  boundary {triplet}, Weyl function, characteristic operator function, almost solvable extension}

\maketitle

 \tableofcontents

\renewcommand{\contentsname}{Contents}

\section{Introduction}
Let $A$ be a closed symmetric operator with a dense domain $\dom(A)$  in a separable
Hilbert space $H$, and having equal deficiency indices  $(n,n)$ with $n\leq\infty$.
Characteristic operator functions of the operator  $A$, as well as characteristic
operator functions of its self-adjoint extensions, introduced originally as a subject in
seminal papers by M.S. Livsi\v{c} \cite{Liv46}-\cite{Liv50} and were studied afterwards
in numerous papers  (see for instance \cite{Arl80}-\cite{ArlTs74}, \cite{Brod69},
\cite{Koc80}, \cite{Kuz59}-\cite{Kuz80}, \cite{SNF}, \cite{ShmTs77}-\cite{KL73},
\cite{ShmTs77} and references therein).

In this paper we present a new approach to the concept  of the characteristic operator
function, which differs from that used in  the abovementioned papers and which seems to
be more simple and natural. It is based on the abstract version of the second Green
formula, formalized in the notion of the "abstract boundary value" \cite{Koc75},
\cite{Koc79}, \cite{Mikh80}, and is closed to the approach elaborated by A.N. Kochubej
in~\cite{Koc80}.

Let us briefly describe  the content of the paper. First we  remind  the following
definition (see~\cite{Koc75}).
\begin{definition}\label{D:DM_1}
Let $\mathcal{H}$ be a Hilbert space, and let $\Gamma_0$ and $\Gamma_1$ be linear mappings from
 $\dom(A^*)$ to $\mathcal{H}$. A {triplet} $\{\mathcal{H}, \Gamma_0, \Gamma_1\}$ is called a boundary {triplet} for the operator
 $A^*$, if:
 \begin{itemize}
    \item [(i)] the following abstract Green identity holds
    \begin{equation}\label{DM_1}
      (A^*f,g)_\fH-(f,A^*g)_\fH=(\Gamma_1f,\Gamma_0g)_{\cH}-(\Gamma_0f,\Gamma_1g)_{\cH},\qquad f,g\in\dom{A^*};
    \end{equation}
    \item [(ii)] and the mapping $\Gamma=\begin{pmatrix}
    \Gamma_1\\ \Gamma_0\end{pmatrix}:\;
    \dom A^*\rightarrow\cH\oplus\cH$ is surjective.
  \end{itemize}
\end{definition}
In the following definition we introduce a class of extensions of the operator  $A$,
which is quite useful in many questions, and, in particular, for our purposes. An
extension $\widetilde{A}$ of the operator $A$ is called {\it a proper extension}, if
$A\subset\widetilde{A}\subset A^*$.
\begin{definition}\label{D:DM_2}
A proper extension $\widetilde{A}$ of the operator $A$ will be called almost solvable (a.e.),
if there exists a boundary {triplet}  $\{\mathcal{H}, \Gamma_0, \Gamma_1\}$ and an operator  $B\in[\cH]$, such that
\begin{equation}\label{DM_2}
\dom(\widetilde{A} )=\{ f\in\dom(A^*):\, \Gamma_1f=B\Gamma_0f\}.
\end{equation}
A proper extension $\widetilde{A}$ of the operator $A$, determined by the
equality~\eqref{DM_2} will be denoted by ${A}_B$.
\end{definition}

The class of almost solvable extensions is big enough, as follows from the results of
Section~\ref{1}, were two criteria for an extension $\wt A$ to be  almost solvable are
presented. Let us notice that the class of almost solvable extensions of $A$ contains
proper extensions $\widetilde{A}$ of the operator $A$ with two regular points $z_1, z_2
\in \rho(\widetilde{A})$, such that $\IM z_1\cdot\IM z_2<0$  (see
Proposition~\ref{P:DM_5}), and  it contains all proper extensions $\widetilde{A}$ of $A$
whenever the defect index of $A$ is finite.

Let us also note that the scope of the method is not restricted to the class of almost
solvable extensions of symmetric operators only.  Certain results are obtained for
extensions determined by~\eqref{D:DM_2} with unbounded
 $B\in\mathcal{C}(\mathcal{H})$ (see Propositions~\ref{P:DM_7}-\ref{P:DM_9}, Theorems~\ref{T:DM_2}, \ref{T:DM_3}).

In Section~\ref{2}, influenced by the analogy with the Sturm-Liouville operator, we
associate to each boundary {triplet}  $\{\mathcal{H}, \Gamma_0, \Gamma_1\}$ an operator
valued function $M(z)$ via the equality
\[
M(z)\Gamma_0f_z=\Gamma_1f_z\qquad (f_z\in \sN_z,\ z\in \rho(A_0)).   
\]
It is shown that  $M(z)$ is holomorphic on $\rho(A_0)$  operator-valued function with
values in $[\mathcal{H}]$ where ${A}_0={A}_0^* = A^*\lceil \ker\Gamma_0$. In what
follows $M(z)$  will be called  the abstract Weyl function. We show
(Theorem~\ref{T:DM_1}), that it is a $Q$ -- function (in the sense of M.Krein and
H.Langer~\cite{KL71}) of the symmetric operator $A$, corresponding to the extension
${A}_0$.  In particular, it belongs to the class $(R_\cH)$
 \footnote[1]{Class $(R_\cH)$ consists of functions $F(z)$ holomorphic in the upper
 half-plane $\dC_+$, taking values in $[\cH]$
  and such that $\Im F(z)\ge 0$  for $z\in\dC_+$.
 All required  information on  the class $(R)$ can be found in~\cite{KrNud}.}
and determines the pair $\{A,{A}_0\}$ uniquely up to the unitary equivalence provided
that $A$ is simple. Therefore in the later case the spectrum of $A_0$ is implicitly
described by means of the Weyl function.

For every symmetric operator  $A$ the Weyl function $M(z)$ plays a role similar to that
of the classical Weyl function~\footnote[2]{Weyl function in the papers~\cite{Marc52},
\cite{Pavlov66} differs by the sign from the function $m_h(z)$ in~\cite{Naj69}} $m_h(z)$
(\cite{Marc52}, \cite{Pavlov66}) for the Sturm-Liouville operator on the half-line, and
coincides with the latter for an appropriate choice of the boundary {triplet}. Further,
if $A$ is a minimal differential operator of order $2n$ on the half-line with the
deficiency index $(n,n)$, then $M(z)$ coincides with the characteristic matrix
\cite{Naj69} of certain its extension ${A}_B={A}_B^*$. In the case of  a differential
operator  $A$  with operator-valued  coefficient considered either on a finite
interval~\cite{RB69}, or on a semi-axes~\cite{Khol'kin}, the operator-valued function
$M(z)$ coincides with the characteristic function introduced in~\cite{RB69}  and
\cite{Khol'kin}, respectively.

In the case of a nonnegative operator $A$, a criterion   for the Weyl function $M(z)$ to
belong to the Stieltjes class $(S)$ \cite{KrNud} is given (see Theorem~\ref{T:DM_3}).
Moreover, a  connection is found between the operator-valued function $M(z)$ on the one
hand, and the operator-valued functions $Q_{F}(z)$ и $Q_{M}(z)$ introduced
in~\cite{KrOv78} on the other hand. Note in this connection that $Q_{F}(z)$ и $Q_{M}(z)$
 are not properly $Q$-functions in the sense of~\cite{KL71}.

Let us notice that in the upper half plane the abstract Weyl function $M(z)$ is a
linear-fractional transform of the characteristic function of the symmetric operator $A$
introduced in \cite{Koc80}, which, up to non-essential details, turns out to be the
characteristic function~\cite{Str68} in the sense of A.V. Shtraus.

The main result of Section~\ref{3} states that for any boundary triplet  $\{\mathcal{H},
\Gamma_0, \Gamma_1\}$  for $A^*$,  any pair of proper extensions ${A}_{B_k}$ of $A$ with
$B_k\in\mathcal{C}(\mathcal{H})\  (k=0,1)$,   the following equivalences hold:
$$
({A}_{B_1}-z)^{-1}-({A}_{B_0}-z)^{-1}\in\mathfrak{S}(H)\;\Longleftrightarrow\;
(B_1-M(z))^{-1}-(B_0-M(z))^{-1}\in\mathfrak{S}(\mathcal{H})
$$
$$
\;\Longleftrightarrow\; (B_1-\xi)^{-1}-(B_0-\xi)^{-1}\in\mathfrak{S}(\mathcal{H}).
$$
%
%
 Here   $M(z)$ is the corresponding Weyl function,  $\mathfrak{S}(H)$ denotes a
two-sided ideal in $[H]$,  $z\in \rho({A}_{B_0}\cap \rho({A}_{B_1}),$  and
$\xi\in\rho(B_0)\cap \rho(B_1)$.
 As immediate consequences of this result we derive  certain statements
 (see Corollaries \ref{C:DM_7}-\ref{C:DM_11})  regarding  the asymptotic
behavior of the spectrum of the operator ${A}_B$.

In Section \ref{4} the negative spectrum of a self-adjoint extension ${A}_B={A}_B^*$
($B\in [\mathcal{H}]$) of a nonnegative operator $A\ge 0$ is investigated. It is shown
here that the dimensions  of the  "negative" spectral subspaces $E_{A_B}(-\infty, 0)$
and $E_{B-M(0)}(-\infty, 0)$  of the operators  ${A}_B$ and  $B-M(0)$, respectively,
coincide. This statement extends and generalizes the results from~\cite{Bir56},
\cite{Kr47}, \cite{Mikh80}, and  coincides  with them in the case of a uniformly
positive operator $A$ and a special choice of a boundary triplet (such a choice ensures
that $M(0)=0$  whenever $m_A>0$).

In Section \ref{5} characteristic functions of almost solvable  extensions of the
operator $A$ are studied. As is known \cite{GK82}, if a minimal Sturm-Liouville operator
$A$ on the half-line is in the limit point case at $\infty$, then the characteristic
function $\Theta(z)$ of its extension ${A}_h\quad({A}_hy = -y''+q(x)y, \;
y'(0)=hy(0),\;h\neq\overline{h})$ is connected with the classical Weyl function
$m_{\infty}(z)$ by a linear-fractional transformation
\[
\Theta(z)=(m_{\infty}(z)+h)(m_{\infty}(z)+\overline{h})^{-1}.
\]
In Theorem~\ref{T:DM_3} it is shown that for the characteristic function of almost
solvable  extension ${A}_B$ (see~\eqref{DM_2}) of   $A$ an analogous formula holds, with
$m_{\infty}(z)$ and $h$ replaced by $M(z)$ and the operator $B$ from~\eqref{DM_2},
respectively. Also, the inverse problem for characteristic function of a.s. extensions
is solved, i.e. a criterion for an analytic operator-valued function $\Theta(z)$ to be a
characteristic function of a.s. extension  ${A}_B$ of a symmetric operator $A$. Notice
also, that the proof of this result is essentially relied on the Kre\u{\i}n-Langer
construction from~\cite{KL73}.

In Sections \ref{6}-\ref{9} different differential operators are considered, for which
Weyl functions and characteristic functions are found and spectra of extensions are investigated.
Namely, ordinary differential operators are studied in Section~\ref{6},
 differential operators with unbounded operator coefficients are studied in Section~\ref{7},
 Shr\"{o}dinger operator in $\mathbb{R}^3\setminus
\{0\}$ and Laplacian operator in domains $\Omega\subset\mathbb{R}^2$ with non-smooth boundaries are
studied in Section~\ref{8} and Section~\ref{9}, respectively.
Let us note that while the boundary {triplet}s for operator $A^*$ from Sections~\ref{6}-\ref{8} were
found in~\cite{VG75}, \cite{Gor71}, \cite{Kut76}, \cite{RB69}, the boundary {triplet} for operator $A^*$
from Section~\ref{9} is constructed here for the first time.

In conclusion, let us emphasize  that
a boundary triplet plays a role of a "coordinate
system"  in analytic geometry.
It leads to a natural parametrization   of the proper  extensions of $A$ by means of
linear relations (multi-valued operators) in $\cH$.
An adequate  treatment  of certain spectral problem for extensions can be achieved by
using an appropriate  boundary triplet. 


In particular, the  A.V. Shtraus' approach \cite{Str60}, based on the J.von Neumann theory, is
equivalent, in essence, to a  choice of a "canonical" boundary {triplet}. However, as it
is clear  from numerous examples (see for instance sections \ref{6}-\ref{8}) to each
differential expression one associates  a natural boundary {triplet}, which in general,
is very far to be "canonical".  However, all the computations, connected with the
characteristic function, and description of spectral properties of extensions, become
more explicit and simpler, if the corresponding boundary {triplet}s are naturally
related to  the problem. Therefore, we see the main advantage of the proposed approach
in the flexibility  of a  choice of a boundary {triplet}.

In what follows  we use the following notations:
\begin{itemize}
\item $\mathbb{C}_+(\mathbb{C}_-)$ -- open upper (lower) half-plane;
    \item $H, \mathcal{H}$ --  Hilbert spaces;
    \item  $[\mathcal{H}_1, \mathcal{H}_2]$ -- the set of bounded linear operators from $\mathcal{H}_1$ to $\mathcal{H}_2$; if
$\mathcal{H}_1=\mathcal{H}_2=\mathcal{H}$, then $[\mathcal{H}_1,
\mathcal{H}_2]=[\mathcal{H}]$;
    \item $\mathcal{C}(\mathcal{H})$ -- the set of closed densely defined operators in  $\mathcal{H}$;
    \item $\mathfrak{S}(\cH)$ -- two-sided ideal in the ring $[\cH]$;
    \item $\sS_\infty(\cH)$ -- the set of compact operators in $\cH$;
    \item $s_j(B)=\lambda_j((B^*B)^{1/2})$ -- s-numbers of the operator $B\in\sS_\infty(\cH)$;
\item $B_I=\IM B=(B-B^*)/2i$ -- imaginary component of the operator $B\in[\cH]$.
    \item $A$ -- symmetric operator in $H$, $\dom(A)$ -- its domain,
    \item $\mathfrak{M}_z=(A-z)\dom(A)$,
$\mathfrak{N}_z=\mathfrak{M}_{\bar{z}}^{\bot}=\ker(A^*-z)$,
    \item $n_{\pm}(A)=\dim\mathfrak{N}_{\pm i}$ -- deficiency index of the operator $A$;
    \item $\widetilde{A}$ -- proper extension of the operator  $A$, i.e.
$A\subset\widetilde{A}\subset A^*$,
    \item $\sigma_p(\widetilde{A}), \sigma_c(\widetilde{A}),
\sigma_r(\widetilde{A})$ -- point spectrum, continuous spectrum and residual spectrum of the operator  $\widetilde{A}$;
\item $\rho(\widetilde{A})$,
$\widehat{\rho}(\widetilde{A})$ -- its resolvent set and the field of regularity of $A$, respectively;
\item $\sigma(\widetilde{A}):=\dC\setminus\rho(\wt A)$,
$\widehat{\sigma}(\widetilde{A}):=\dC\setminus\wh{\rho}(\wt A)$ --
spectrum and the core of the spectrum of the operator $\widetilde{A}$;
\item $R_{\widetilde{A}}(\lambda)=(\widetilde{A}-\lambda)^{-1}$ --
the resolvent of the operator  $\widetilde{A}$;
\item  $A_{F}, A_{K}$ ---  Friedrichs extension and Kre\u{\i}n extension \cite{Kr47}  of the nonnegative operator  $A$.
\end{itemize}


\section{Almost solvable  extensions}\label{1}
In this section we consider proper extensions $\widetilde{A}$ of a symmetric  operator
$A$ and find certain sufficient conditions and criteria for $\widetilde{A}$ to be almost
solvable.    Some useful properties of boundary {triplet}s for $A^*$ will also be
established.

With each boundary {triplet} $\{\cH, \Gamma_0, \Gamma_1\}$ for $A^*$ one  associates two
self-adjoint extensions ${A}_0$ and ${A}_1$ of the operator $A$ by setting:
\begin{equation}\label{DM_3}
\dom({A}_0)=\ker \Gamma_0,\quad \dom({A}_1)=\ker \Gamma_1.
\end{equation}
Clearly, the extensions ${A}_0$ and ${A}_1$  are transversal in the sense of the
following definition.
\begin{definition}\label{D:DM_3}
Two proper extensions $\widetilde{A}_0$ and $\widetilde{A}_1$ of an operator $A$ are
called  disjoint if
\begin{equation}\label{DM_4}
\dom(\widetilde{A}_0)\cap\dom(\widetilde{A}_1)=\dom(A),
\end{equation}
They are called transversal, if in addition
\begin{equation}\label{DM_5}
\dom(\widetilde{A}_0)+\dom(\widetilde{A}_1)=\dom(A^*).
\end{equation}
\end{definition}
The converse statement  is also true  (see Corollary~\ref{C:DM_1} of
Proposition~\ref{P:DM_2}).   Now we only notice that for each extension $\widetilde{A}_1
=\widetilde{A}_1^*$ of the operator $A$ there is an  extension
$\widetilde{A}_0=\widetilde{A}_0^*$ of $A$ transversal to $\widetilde{A}_1$ and a
"canonical" boundary {triplet} $\{\cH, \Gamma_0, \Gamma_1\}$, corresponding  to the pair
$(\widetilde{A}_0, \widetilde{A}_1)$ in the sense of  equalities~\eqref{DM_3} for
$A_0=\widetilde{A}_0$ and $A_1=\widetilde{A}_1$. Indeed, in accordance with the second
J. von Neumann formula
\[
\dom(\widetilde{A}_1)=\dom(A)\dotplus(I-V)\mathfrak{N}_i,\quad \mbox{where}\quad
V\in[\mathfrak{N}_i,\mathfrak{N}_{-i}] 
\]
is an isometry from $\mathfrak{N}_i$ onto  $\mathfrak{N}_{-i}$. The operator
$\widetilde{A}_0=\widetilde{A}_0^*$  defined  by
\[
\dom(\widetilde{A}_0)=\dom(A)\dotplus(I+V)\mathfrak{N}_i
\]
will be called  canonically transversal to the operator $\widetilde{A}_1$. Setting
  \begin{equation}\label{DM_6}
\Gamma_0^0=-P_{-i}+VP_i, \quad \Gamma_1^0=iP_{-i}+iVP_i, \quad \cH^0=\mathfrak{N}_{-i},
  \end{equation}
one obtains a   "canonical" boundary {triplet} $\{\cH, \Gamma_0^0, \Gamma_1^0\}$
constructed  in   \cite{Koc75} where  $P_{\pm i}$ are projections from $\dom(A^*)$ onto
$\mathfrak{N}_{\pm i}$ parallel to $\dom(A)+\mathfrak{N}_{\mp i}$. Clearly,
$\widetilde{A}_0$ and $\widetilde{A}_1$  are given by formulas~\eqref{DM_3}.

In what follows, by abuse of language,  a triplet $\{\cH, \Gamma_0^0, \Gamma_1^0\}$ will
be called  a "canonical" boundary {triplet} corresponding to the operator
$\widetilde{A}_0=\widetilde{A}_0^*$ (or $\widetilde{A}_1$).
   \begin{proposition}\label{P:DM_1}
Let $\{\cH, \Gamma_0, \Gamma_1\}$ and $\{\cH, \Gamma_0, \Gamma_1'\}$ be two boundary
{triplet}s with a common boundary space $\cH$ and the operator $\Gamma_0$. Then there
exists an  operator  $K=K^*\in[{\mathcal H}]$, such that
  \begin{equation}\label{E:10.18}
   \Gamma_1=\Gamma_1'+K\Gamma_0.
  \end{equation}
\end{proposition}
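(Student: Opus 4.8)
The plan is to use the Green identity~\eqref{DM_1} for both triplets and subtract. Since the two triplets share the same boundary space $\cH$ and the same map $\Gamma_0$, writing the identity once with $(\Gamma_0,\Gamma_1)$ and once with $(\Gamma_0,\Gamma_1')$ and subtracting kills the left-hand side $(A^*f,g)-(f,A^*g)$, leaving
\[
(\Gamma_1 f,\Gamma_0 g)_\cH-(\Gamma_0 f,\Gamma_1 g)_\cH
=(\Gamma_1' f,\Gamma_0 g)_\cH-(\Gamma_0 f,\Gamma_1' g)_\cH,\qquad f,g\in\dom A^*.
\]
Setting $D:=\Gamma_1-\Gamma_1'$, this reads $(Df,\Gamma_0 g)_\cH=(\Gamma_0 f,Dg)_\cH$ for all $f,g\in\dom A^*$. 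So the first step is purely formal: subtract the two Green identities to obtain this symmetry relation between $D$ and $\Gamma_0$.

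The second step is to exploit surjectivity of the joint map $\Gamma=(\Gamma_1,\Gamma_0)^\top$ (part (ii) of Definition~\ref{D:DM_1}) to define $K$. Given $h\in\cH$, pick $f\in\dom A^*$ with $\Gamma_0 f=h$; I want to set $Kh:=Df$. For this to be well-defined I must check that $\Gamma_0 f=0$ forces $Df=0$: but if $\Gamma_0 f=0$ then the relation above gives $(Df,\Gamma_0 g)_\cH=(0,Dg)_\cH=0$ for all $g\in\dom A^*$, and since $\Gamma_0$ is onto $\cH$ (by surjectivity of $\Gamma$, a fortiori of $\Gamma_0$), $\Gamma_0 g$ ranges over all of $\cH$, hence $Df=0$. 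This shows $D$ factors through $\Gamma_0$, i.e. there is a linear map $K:\cH\to\cH$ with $Df=K\Gamma_0 f$, which is exactly~\eqref{E:10.18}. Symmetry of $K$ is immediate from $(Df,\Gamma_0 g)=(\Gamma_0 f,Dg)$: write $f,g$ with $\Gamma_0 f=h_1,\Gamma_0 g=h_2$ to get $(Kh_1,h_2)_\cH=(h_1,Kh_2)_\cH$ for all $h_1,h_2$, so $K=K^*$ as a (possibly a priori unbounded, but everywhere-defined) symmetric operator.

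The main obstacle is proving $K\in[\cH]$, i.e. boundedness. An everywhere-defined symmetric operator on a Hilbert space is bounded by the Hellinger–Toeplitz theorem, so once $K$ is shown to be defined on all of $\cH$ (which follows from $\Gamma_0$ being onto) and symmetric, boundedness is automatic. I would state this explicitly: $K$ is symmetric and $\dom K=\cH$, hence $K=K^*\in[\cH]$ by the closed graph / Hellinger–Toeplitz theorem. This is the only non-algebraic input and it is a standard citation; the rest is a direct unwinding of the two defining properties of a boundary triplet. A small point to keep clean: surjectivity of the pair $\Gamma$ gives in particular that $\Gamma_0$ maps $\dom A^*$ onto $\cH$ and that $\ker\Gamma_0\cap\ker\Gamma_1$ need not be empty of interesting vectors, but only the surjectivity of $\Gamma_0$ and the fact that $\Gamma_0 g$ exhausts $\cH$ are used above.
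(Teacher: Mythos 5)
Your proof is correct and follows essentially the same route as the paper: subtract the two Green identities to get the symmetry relation for $D=\Gamma_1-\Gamma_1'$, use surjectivity of $\Gamma_0$ to show $D$ factors through $\Gamma_0$ (the paper instead picks $f$ with $\Gamma_0 f=Dg$ to get $\|Dg\|^2=0$, a trivially different arrangement of the same ingredients), and then invoke Hellinger--Toeplitz for an everywhere-defined symmetric operator to conclude $K=K^*\in[\cH]$. Nothing is missing.
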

     \begin{proof}
Taking a difference of two  equalities \eqref{DM_1} written down for each of  boundary
{triplet}s  yields
\begin{equation}\label{DM_7}
(\Gamma_0f, (\Gamma_1-\Gamma_1')g)=((\Gamma_1-\Gamma_1')f, \Gamma_0g), \qquad
f,g\in\dom(A^*).
 \end{equation}
Let us establish the implication:
\begin{equation}\label{eq:Impl}
    \Gamma_0g=0 \quad \Rightarrow \quad \Gamma_1g=\Gamma_1'g
\end{equation}
Let us choose a vector $f\in\dom(A^*)$, such that $\Gamma_0f=(\Gamma_1-\Gamma_1')g$.
Inserting such vector  $f$ into~\eqref{DM_7}  implies
$$
\|(\Gamma_1-\Gamma_1')g\|^2=((\Gamma_1-\Gamma_1')g,\Gamma_0g)=0 \quad \Longrightarrow
\quad \Gamma_1g=\Gamma_1'g.
$$
Let us define the operator $K:\cH\rightarrow\cH$ by  setting
\[
K\Gamma_0f=\Gamma_1f-\Gamma_1'f\quad (f\in\dom(A^*)).
\]
In view of the implication~\eqref{eq:Impl} the operator $K$ is well defined. It follows from~\eqref{DM_7} that the operator $K$ is symmetric
\[
(\Gamma_0f, K\Gamma_0g)_\cH=(K\Gamma_0f, \Gamma_0g)_\cH,
\]
and since $\dom(K)=\cH$, it is bounded and selfadjoint,  $K=K^*\in[\cH]$.
\end{proof}
\begin{corollary}\label{C:DM_1}
Let under the assumptions of Proposition~\ref{P:DM_1} the operators ${A}_i={A}_i^*
\;(i=0,1)$ are defined by the equality~\eqref{DM_3} and $\dom(A_1')=\ker\Gamma_1'$. Then
the operators $\tilde{A}_1$ and $\tilde{A}_1'$ are transversal if and only if the
operator $K$ is boundedly invertible, i.e. $K^{-1}\in[\cH]$. In this case the triplet
$\{\cH,\Gamma_0,\Gamma_1'\}$, with $\Gamma_1'=K^{-1}\Gamma_1$ is also a boundary
{triplet}.
\end{corollary}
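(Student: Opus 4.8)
The plan is to unwind the definition of transversality in terms of the operators $\Gamma_0$, $\Gamma_1$, $\Gamma_1'$ and then read off that the relevant condition is exactly bounded invertibility of $K$. First I would fix notation: write $A_1 = A^* \lceil \ker\Gamma_1$ and $A_1' = A^* \lceil \ker\Gamma_1'$, and recall from Proposition~\ref{P:DM_1} that $\Gamma_1 = \Gamma_1' + K\Gamma_0$ with $K = K^* \in [\cH]$. Since the boundary triplet $\{\cH,\Gamma_0,\Gamma_1\}$ has $\Gamma = (\Gamma_1,\Gamma_0)^\top$ surjective, the pair $(\Gamma_0,\Gamma_1)$ identifies $\dom(A^*)/\dom(A)$ with $\cH \oplus \cH$; I would use this to translate $\dom(A_1) + \dom(A_1')$ and $\dom(A_1)\cap\dom(A_1')$ into subspaces of $\cH\oplus\cH$. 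Concretely, $f \in \dom(A_1)$ iff $\Gamma_1 f = 0$, i.e. $\Gamma_1' f = -K\Gamma_0 f$; and $f \in \dom(A_1')$ iff $\Gamma_1' f = 0$.

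The key computation is then the following. For $f \in \dom(A_1')$ we have $\Gamma_1' f = 0$, hence $\Gamma_1 f = K\Gamma_0 f$; such an $f$ lies in $\dom(A_1)$ iff $\Gamma_1 f = 0$ iff $K\Gamma_0 f = 0$. By surjectivity of $\Gamma$, $\Gamma_0$ maps $\dom(A_1') = \ker\Gamma_1'$ onto all of $\cH$ (given $h \in \cH$ pick $f$ with $\Gamma_0 f = h$, $\Gamma_1' f = 0$). Therefore $\dom(A_1)\cap\dom(A_1') = \dom(A)$ — which is disjointness — holds iff $\ker K = \{0\}$. For transversality I must also show $\dom(A_1) + \dom(A_1') = \dom(A^*)$; using surjectivity of $\Gamma$ again, an arbitrary $f \in \dom(A^*)$ is determined mod $\dom(A)$ by the pair $(\Gamma_0 f, \Gamma_1 f)$, and one checks that $\dom(A_1)$ corresponds to $\{(h, 0): h \in \cH\}$ and $\dom(A_1')$ corresponds to $\{(h, Kh) : h \in \cH\}$; the sum of these two subspaces is all of $\cH\oplus\cH$ iff $\ran K = \cH$. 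Combining, transversality of $A_1$ and $A_1'$ is equivalent to $K$ being injective with dense range and, since $K$ is bounded and selfadjoint, to $K$ being boundedly invertible (closed graph theorem gives $K^{-1} \in [\cH]$ once $K$ is a bijection).

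For the last sentence of the statement, assuming $K^{-1} \in [\cH]$ I would verify that $\{\cH, \Gamma_0, \Gamma_1'\}$ with $\Gamma_1' := K^{-1}\Gamma_1$ — note this is a \emph{different} $\Gamma_1'$ from the one in Proposition~\ref{P:DM_1}, so I would flag that the corollary is asserting a fresh construction — is again a boundary triplet. Green's identity~\eqref{DM_1} for the new triplet reads $(A^*f,g) - (f,A^*g) = (K^{-1}\Gamma_1 f, \Gamma_0 g) - (\Gamma_0 f, K^{-1}\Gamma_1 g)$; this is not literally the original identity, so in fact the natural statement is that $\{\cH, K^{-1}\Gamma_1, -\Gamma_0\}$ or $\{\cH, \Gamma_1, \Gamma_0\}$ composed appropriately works — I would instead observe that swapping the roles of $\Gamma_0$ and $\Gamma_1$ (allowed since~\eqref{DM_1} is antisymmetric) and then rescaling by the positive-invertible part is the clean route, or simply check directly that $\{\cH, \Gamma_0, \Gamma_1'\}$ with $\Gamma_1' = K^{-1}\Gamma_1$ satisfies a Green identity with the roles arranged so that $A_0$ is unchanged and $\ker\Gamma_1' = \ker\Gamma_1 = \dom(A_1)$. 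Surjectivity of $(\Gamma_1', \Gamma_0)^\top = (K^{-1}\Gamma_1, \Gamma_0)^\top$ is immediate from surjectivity of $(\Gamma_1,\Gamma_0)^\top$ since $K^{-1}$ is a bijection.

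The main obstacle I anticipate is bookkeeping the last claim precisely: the identity~\eqref{DM_1} is genuinely altered when one replaces $\Gamma_1$ by $K^{-1}\Gamma_1$ unless $K$ and $K^{-1}$ interact correctly with the sesquilinear form, so I would need to be careful about whether the intended statement is that $\{\cH, \Gamma_0, \Gamma_1'\}$ is a boundary triplet \emph{for the pair} $(A_0, A_1)$ in some weaker sense, or whether an extra selfadjoint factor must be absorbed; writing $K = |K|^{1/2}\,\mathrm{sgn}(K)\,|K|^{1/2}$ and conjugating the triplet by $|K|^{1/2}$ (which is a well-defined invertible operator once $K^{-1} \in [\cH]$) is the device I would use to make this honest. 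The first part — the equivalence of transversality with bounded invertibility of $K$ — is by contrast routine once the passage to $\cH\oplus\cH$ via surjectivity of $\Gamma$ is set up, and I expect no real difficulty there.
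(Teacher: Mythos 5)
Your treatment of the main equivalence is correct and is essentially the argument the paper intends but does not write out: the paper's proof consists of the single assertion that invertibility of $K$ in the wide sense is equivalent to $\dom(A_1)\cap\dom(A_1')=\dom(A)$, followed by ``the rest is evident''. Your explicit passage to $\cH\oplus\cH$ via the isomorphism \eqref{DM_8}, identifying $\dom(A_1)$ and $\dom(A_1')$ with the subspaces $\{(h,0)\}$ and $\{(h,Kh)\}$, is the right way to fill this in. One wording slip: in your summary you say transversality is equivalent to $K$ being ``injective with dense range'', but your own computation shows the sum condition \eqref{DM_5} is equivalent to $\ran K=\cH$ (actual surjectivity, since no closure is taken in \eqref{DM_5}); for $K=K^*\in[\cH]$ surjectivity already forces $\ker K=(\ran K)^\perp=\{0\}$ and hence $K^{-1}\in[\cH]$ by the open mapping theorem, so the correct phrase is ``bijective''. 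A merely dense range would not suffice.

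On the final sentence your suspicion is well founded: the corollary silently redefines $\Gamma_1'$ (the $\Gamma_1'$ of Proposition~\ref{P:DM_1} is $\Gamma_1-K\Gamma_0$, not $K^{-1}\Gamma_1$), and with $\Gamma_0$ left unchanged the triplet $\{\cH,\Gamma_0,K^{-1}\Gamma_1\}$ does \emph{not} satisfy \eqref{DM_1} unless $K=I$, since the right-hand side becomes $(\Gamma_1f,K^{-1}\Gamma_0g)-(K^{-1}\Gamma_0f,\Gamma_1g)$ while the left-hand side is unchanged. The clean repair is the one implicit in Proposition~\ref{P:DM_6}: the operator $X=\diag(K^{-1},K)$ is $J$-unitary, so $\{\cH,\,K\Gamma_0,\,K^{-1}\Gamma_1\}$ \emph{is} a boundary triplet, and it determines the same extensions $A_0$ and $A_1$ because $\ker(K\Gamma_0)=\ker\Gamma_0$ and $\ker(K^{-1}\Gamma_1)=\ker\Gamma_1$. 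This is simpler and more direct than your proposed polar-decomposition device, but your identification of the defect in the literal statement is a genuine correction to the text rather than a gap in your argument; the paper's proof does not address this point at all.
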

\begin{proof}
The invertibility of the operator $K$ in a wide sense is equivalent to the relation:
$\dom({A}_1)\cap\dom({A}_1')=\dom({A})$. The rest is evident.
\end{proof}

Let $\{\cH,\Gamma_0,\Gamma_1\}$ be a boundary {triplet} for the operator $A^*$. Endowing
$\dom(A^*)$ with the graph norm one obtains a Hilbert space, in which every subspace
\footnote[1]{Subspace means a closed linear subspace} $\mathfrak{M}$, containing
$\dom(A)$, determines  a closed proper extension of the operator $A$. The mapping
$\Gamma:y\rightarrow
\begin{pmatrix}
    \Gamma_1y\\ \Gamma_0y\end{pmatrix}$ determines a topological isomorphism between
$\dom(A^*)/\dom(A)$ and $\cH^2:=\begin{pmatrix}
    \cH\\ \cH\end{pmatrix}$
    which establishes a bijective correspondence between closed proper extensions $\wt A$ of the operator
$A$ and subspaces in $\cH^2$.
\begin{equation}\label{DM_8}
A\subset\wt{A} \quad \longleftrightarrow \quad
\mathfrak{M}_{\wt{A}}=\{\Gamma y: y\in\dom(\wt{A})\}
\end{equation}
\begin{proposition}\label{P:DM_2}
A proper extension  $\wt{A}$ of the operator $A$ is almost solvable  if and only if it
is transversal to some self-adjoint extension $\wt{A}_0=\wt{A}_0^*$  of the operator
$A$. In this case for every boundary {triplet} $\{\cH,\Gamma_0,\Gamma_1\}$, for which
$\ker\Gamma_0=\dom(\wt{A}_0)$, there exists an operator $B\in[\cH]$, such that
$\wt{A}={A}_B$, i.e.~\eqref{DM_2} holds.
\end{proposition}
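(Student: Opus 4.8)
The plan is to prove the two implications of the equivalence separately and then supply the quantitative statement about the triplet.

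First I would prove that almost solvability implies transversality to a self-adjoint extension. Suppose $\wt A = A_B$ for some boundary triplet $\{\cH,\Gamma_0,\Gamma_1\}$ and some $B\in[\cH]$, so that $\dom(\wt A)=\{f\in\dom(A^*):\Gamma_1 f=B\Gamma_0 f\}$. The natural candidate for a self-adjoint extension transversal to $\wt A$ is $\wt A_0 := A_0 = A^*\lceil\ker\Gamma_0$. Disjointness is immediate: if $f\in\dom(\wt A)\cap\dom(\wt A_0)$ then $\Gamma_0 f=0$ and $\Gamma_1 f=B\Gamma_0 f=0$, so $\Gamma f=0$, hence $f\in\dom(A)$ by the isomorphism~\eqref{DM_8}. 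For transversality I must show $\dom(\wt A)+\dom(\wt A_0)=\dom(A^*)$. Given $f\in\dom(A^*)$, by surjectivity of $\Gamma$ (part (ii) of Definition~\ref{D:DM_1}) I can pick $g\in\dom(A^*)$ with $\Gamma_0 g=\Gamma_0 f$ and $\Gamma_1 g=B\Gamma_0 f$; then $g\in\dom(\wt A)$, and $f-g\in\ker\Gamma_0=\dom(\wt A_0)$, which gives the decomposition.

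Next I would prove the converse together with the refined claim about the triplet. Suppose $\wt A$ is transversal to some $\wt A_0=\wt A_0^*$, and let $\{\cH,\Gamma_0,\Gamma_1\}$ be any boundary triplet with $\ker\Gamma_0=\dom(\wt A_0)$ (such a triplet exists by the "canonical" construction~\eqref{DM_6} recalled above, or one is simply given). The goal is to produce $B\in[\cH]$ with $\dom(\wt A)=\{f:\Gamma_1 f=B\Gamma_0 f\}$. The key observation is that $\Gamma_0$ restricted to $\dom(\wt A)$ is a bijection onto $\cH$: it is injective because $\Gamma_0 f=0$ with $f\in\dom(\wt A)$ forces $f\in\dom(\wt A)\cap\dom(\wt A_0)=\dom(A)$ by disjointness, so $f\in\dom(A)$ and hence $\Gamma_1 f=0$ too; and it is surjective because, given $h\in\cH$, transversality lets me write any preimage $y\in\dom(A^*)$ of the pair $(\Gamma_1 y, h)$ as $y=y_1+y_0$ with $y_1\in\dom(\wt A)$, $y_0\in\dom(\wt A_0)$, whence $\Gamma_0 y_1=\Gamma_0 y=h$. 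Therefore I may define $B:\cH\to\cH$ by $B\Gamma_0 f:=\Gamma_1 f$ for $f\in\dom(\wt A)$; this is well defined by injectivity and everywhere defined by surjectivity. Finally $B$ is closed (its graph is the image under the topological isomorphism $\Gamma$ of the subspace $\mathfrak M_{\wt A}$, with the two coordinates swapped), and a closed everywhere-defined operator on a Hilbert space is bounded, so $B\in[\cH]$. It remains to check $\dom(A_B)=\dom(\wt A)$: the inclusion $\dom(\wt A)\subseteq\dom(A_B)$ is by construction, and conversely if $\Gamma_1 f=B\Gamma_0 f$, decompose $f=f_1+f_0$ with $f_1\in\dom(\wt A)$ and $f_0\in\dom(\wt A_0)$; then $\Gamma_0 f_0=0$ gives $\Gamma_0 f=\Gamma_0 f_1$, and $\Gamma_1 f_1=B\Gamma_0 f_1=B\Gamma_0 f=\Gamma_1 f$, so $\Gamma_1 f_0=0$, i.e.\ $f_0\in\ker\Gamma_0\cap\ker\Gamma_1=\dom(A)$, hence $f\in\dom(\wt A)$.

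I expect the main obstacle to be the surjectivity of $\Gamma_0\lceil\dom(\wt A)$ onto $\cH$ and, relatedly, the clean passage between the domain decomposition~\eqref{DM_5} and statements about the boundary maps; everything else (well-definedness of $B$, its boundedness via the closed graph theorem, and the final domain identity) is routine once the isomorphism~\eqref{DM_8} and the surjectivity of $\Gamma$ are invoked. One should also be slightly careful that the triplet in the statement is an \emph{arbitrary} one compatible with $\wt A_0$, not just the canonical one; but since transversality of $\wt A$ and $\wt A_0$ is a property of the extensions alone and does not depend on the triplet, the argument above applies verbatim to any such triplet.
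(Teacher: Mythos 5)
Your proof is correct and follows essentially the same route as the paper: the forward direction is identical (choose $g$ with $\Gamma_0g=\Gamma_0f$, $\Gamma_1g=B\Gamma_0f$ and split $f=g+(f-g)$), and your converse — disjointness makes $\Gamma_0\lceil\dom(\wt A)$ injective, transversality makes it surjective, and the closed graph theorem gives $B\in[\cH]$ — is just the boundary-map phrasing of the paper's argument that the subspace $\mathfrak M_{\wt A}\subset\cH^2$ is an algebraic complement of $\cH\oplus\{0\}$ and hence the graph of a closed, everywhere defined, therefore bounded operator. Your closing remark that transversality is a property of the extensions alone, so the argument works for an arbitrary triplet with $\ker\Gamma_0=\dom(\wt A_0)$, correctly covers the second assertion of the proposition, which the paper only treats via the canonical triplet.
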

\begin{proof}
Let $\wt{A}$ be an a.s. extension of the operator $A$. Then there exists a boundary
{triplet} $\{\cH,\Gamma_0,\Gamma_1\}$, such that
$\dom(\wt{A})=\ker(\Gamma_1-B\Gamma_0)$, i.e. $\wt{A}={A}_B$. Define
$\wt{A}_0=\wt{A}_0^*$, by setting
\[
f\in\dom(\wt{A}_0)\;\Leftrightarrow\;
f\in\dom(A^*)\quad\mbox{и}\quad\Gamma_0f=0,
\]
and check the transversality of the extensions
${A}_B$ and $\wt{A}_0$. Clearly, the condition
 \[
 \dom(\wt{A}_0)\cap\dom({A}_B)=\dom(A),
 \]
is fulfilled. Let us check the condition
\begin{equation}\label{DM_5A}
     \dom(\wt{A}_0)+\dom({A}_B)=\dom(A^*),
\end{equation}
assuming that $f\in\dom(A^*)$ and $\Gamma_0f=\varphi\neq0$
 (since $\Gamma_0f=0$ implies $f\in\dom(\wt{A}_0)$).
Let us choose a vector $g\in\dom(A^*)$, such that $\Gamma_0f=\varphi$ and
$\Gamma_1f=B\varphi$. Then $g\in\dom(\wt A_B)$, since
$\Gamma_1f-B\Gamma_0 g=B\varphi-B\varphi=0$, and  $f-g\in\dom(\wt
A_0)$, since $\Gamma_0(f-g)=\varphi-\varphi=0$. Therefore,
\[
f=g+(f-g)\in\dom(\wt A_B)+\dom(\wt A_0)
\]
and hence~\eqref{DM_5A} is proven.

Let us prove the converse statement. Let $\wt{A}_1=\wt{A}_1^*$ be an operator canonically transversal to the operator $\wt{A}_0$, let
$\{\cH,\Gamma_0^0,\Gamma_1^0\}$ be the corresponding boundary {triplet} of the form~\eqref{DM_6}, and let $\mathfrak{M}=\wt{\mathfrak{M}}_{\wt{A}}$
be a subspace of $\cH^0_0\oplus\cH^0_1$, corresponding to $\wt{A}$ in the sense of~\eqref{DM_8}. It is easy to see that the transversality of the operators
$\widetilde{A}_0$ and $\widetilde{A}_1$ is equivalent to the equality
\begin{equation}\label{DM_9}
\cH_1^0\dotplus\mathfrak{M}=\cH^0_0\oplus\cH^0_1.
\end{equation}
In turn,  \eqref{DM_9}  holds if and only if $\mathfrak{M}$ is a graph of a bounded
operator $B:\cH^0_0\rightarrow\cH^0_1$. Indeed, if  $\mathfrak{M}$ is a graph of a
bounded operator, then clearly \eqref{DM_9} is in force. Conversely, if the sum
in~\eqref{DM_9} is direct, then $\mathfrak{M}$ has no elements of the form
 $(x,0)$, i.e. $\mathfrak{M}$ is a graph of a closed operator  $B:\cH^0_0\rightarrow\cH^0_1$. Further, if $\dom(B)\neq\cH^0_0$, then there exists
$x_2\in\cH^0_0, \; x_2\in\dom(B)$, and hence $(0,x_2)\in\cH^0_1\dotplus\mathfrak{M}$, what contradicts to~\eqref{DM_9}. This proves the statement.
\end{proof}
\begin{corollary}\label{C:DM_2}
Let $\wt{A}_1=\wt{A}_1^*$, and let $\wt{A}_0=\wt{A}_0^*$ be transversal extensions of the operator $A$. Then there exists a boundary {triplet}
$\{\cH,\Gamma_0,\Gamma_1\}$, such that
\eqref{DM_3} are in force.
\end{corollary}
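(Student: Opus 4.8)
The goal is Corollary~\ref{C:DM_2}: given transversal self-adjoint extensions $\wt A_0=\wt A_0^*$ and $\wt A_1=\wt A_1^*$ of $A$, produce a boundary triplet $\{\cH,\Gamma_0,\Gamma_1\}$ with $\ker\Gamma_0=\dom(\wt A_0)$ and $\ker\Gamma_1=\dom(\wt A_1)$.

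The plan is to build the triplet in two stages. First I would invoke the construction recalled in the text preceding Proposition~\ref{P:DM_1}: let $\wt A_0'=(\wt A_0')^*$ be the operator \emph{canonically} transversal to $\wt A_1$ (defined via the von Neumann isometry $V$ by $\dom(\wt A_0')=\dom(A)\dotplus(I+V)\fN_i$), and let $\{\cH,\Gamma_0^0,\Gamma_1^0\}$ be the associated ``canonical'' boundary triplet of the form~\eqref{DM_6}. By its very definition $\ker\Gamma_1^0=\dom(\wt A_1)$, but in general $\ker\Gamma_0^0=\dom(\wt A_0')\neq\dom(\wt A_0)$, so this triplet is not yet the one we want; it has the correct $\Gamma_1$-kernel but the wrong $\Gamma_0$-kernel.

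The second stage is to correct the $\Gamma_0$-component while preserving $\ker\Gamma_1$. Since $\wt A_0$ and $\wt A_1$ are transversal, Proposition~\ref{P:DM_2} (its first half, applied after we know $\wt A_0$ is a.s. — or more directly the equivalence~\eqref{DM_9}) shows that the subspace $\mathfrak{M}_{\wt A_0}=\Gamma^0(\dom(\wt A_0))$ is the graph of a boundedly invertible operator relative to the decomposition dual to the one used for $\wt A_1$. Concretely, I would look for the new triplet in the form $\Gamma_0=\Gamma_0^0-N\Gamma_1^0$ (or, after possibly first passing through the dual triplet $\{\cH,\Gamma_1^0,-\Gamma_0^0\}$, in a form $\Gamma_1=K^{-1}\Gamma_1^0$, $\Gamma_0=\Gamma_0^0-N\Gamma_1^0$), choosing $N=N^*\in[\cH]$ so that the modified map has kernel exactly $\dom(\wt A_0)$. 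One checks the abstract Green identity~\eqref{DM_1} is invariant under such a change: with $\Gamma_1'=\Gamma_1^0$ and $\Gamma_0'=\Gamma_0^0-N\Gamma_1^0$, the extra terms cancel precisely because $N=N^*$, and surjectivity of $\Gamma'=(\Gamma_1',\Gamma_0')^\top$ follows from surjectivity of $\Gamma^0$ since the transformation $(u,v)\mapsto(u,v-Nu)$ is a linear isomorphism of $\cH\oplus\cH$. Transversality of $\wt A_0,\wt A_1$ is exactly what guarantees such an $N$ exists: it is the statement that $\dom(\wt A_0)$ maps under $\Gamma^0$ onto a graph $\{(u,Nu):u\in\cH\}$ of an everywhere-defined bounded operator $N$, and self-adjointness of $\wt A_0$ forces $N=N^*$ by the Green identity. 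Then $\ker\Gamma_0' = \{y:\ \Gamma_1^0 y - N\,$ ... wait, $\ker\Gamma_0'=\{y:\Gamma_0^0y=N\Gamma_1^0y\}=\dom(\wt A_0)$ by construction, while $\ker\Gamma_1'=\ker\Gamma_1^0=\dom(\wt A_1)$ is untouched. So $\{\cH,\Gamma_0',\Gamma_1'\}$ is the desired triplet.

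The main obstacle is the bookkeeping of \emph{which} decomposition of $\cH\oplus\cH$ one uses and in which direction one perturbs: one must modify $\Gamma_0$ by a multiple of $\Gamma_1$ (not the reverse, which is what Proposition~\ref{P:DM_1} handles), so it is cleanest to apply Proposition~\ref{P:DM_1} to the ``flipped'' triplet $\{\cH,\Gamma_1^0,-\Gamma_0^0\}$ — which is again a boundary triplet — obtaining $-\Gamma_0^0 = -\Gamma_0' + K\Gamma_1^0$ for some $K=K^*\in[\cH]$, and then verifying directly that the kernel conditions come out right. Beyond this the verification is the routine invariance check for~\eqref{DM_1} and the isomorphism argument for surjectivity, both of which are short. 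Since the statement as printed in the paper says only ``The rest is evident'' in the analogous Corollary~\ref{C:DM_1}, I expect the intended proof here to be a one-line reduction: apply the converse half of Proposition~\ref{P:DM_2} to $\wt A:=\wt A_0$ (which is a.s.\ because it is self-adjoint, hence transversal to the self-adjoint $\wt A_1$), producing a triplet with $\ker\Gamma_0=\dom(\wt A_0)$ and $\wt A_1=A_B$ for some $B\in[\cH]$; transversality then forces $B$ to be boundedly invertible (Corollary~\ref{C:DM_1}), so replacing $\Gamma_1$ by $B^{-1}\Gamma_1$ gives a boundary triplet in which additionally $\ker\Gamma_1=\dom(\wt A_1)$, as required.
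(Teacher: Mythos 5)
Your main, two-stage construction is correct and is in substance the argument the paper intends (the paper offers no written proof; the corollary is meant to follow from Propositions~\ref{P:DM_1} and~\ref{P:DM_2}). You work in the mirror image: you start from a triplet adapted to $\wt A_1$ and shift $\Gamma_0$ by $N\Gamma_1^0$; the more direct route starts from any triplet with $\ker\Gamma_0=\dom(\wt A_0)$ (e.g.\ the canonical one for $\wt A_0$), uses Proposition~\ref{P:DM_2} to write $\wt A_1=A_B$ with $B\in[\cH]$, observes that $B=B^*$ because $\wt A_1$ is self-adjoint, and replaces $\Gamma_1$ by $\Gamma_1-B\Gamma_0$. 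Your verification that transversality plus self-adjointness yields a bounded, everywhere-defined, self-adjoint angle operator, and that the Green identity and surjectivity survive the shift, is exactly the content needed, so the heart of the proof is fine.

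However, the closing ``one-line reduction'' you offer as the likely intended proof is wrong on two counts, and you should not leave it as stated. First, once $\wt A_1=A_B$ with $B\in[\cH]$ in a triplet with $\ker\Gamma_0=\dom(\wt A_0)$, the extension $A_B$ is \emph{automatically} transversal to $A_0$ for every bounded $B$ (this is precisely the content of Proposition~\ref{P:DM_2}); transversality therefore imposes no invertibility on $B$, and Corollary~\ref{C:DM_1} is not applicable here --- it concerns transversality of $A_B$ to $A_{1}=A^*\lceil\ker\Gamma_1$, i.e.\ to the extension with parameter $0$, not to $A_0$. Second, replacing $\Gamma_1$ by $B^{-1}\Gamma_1$ does not change $\ker\Gamma_1$ at all, so it cannot produce $\ker\Gamma_1=\dom(\wt A_1)=\ker(\Gamma_1-B\Gamma_0)$. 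The correct final step is the additive shift $\Gamma_1\mapsto\Gamma_1-B\Gamma_0$, which preserves the boundary-triplet axioms exactly because $B=B^*$ --- i.e.\ the same move you already carried out correctly in your second stage.
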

\begin{proposition}\label{P:DM_3}
Let $\wt{A}_0$ and $\wt{A}_1$ be proper extensions of the operator
$A$, having common regular points $z_1\in\dC_+$ and $z_2\in\dC_-$ and let the operators $X_{z}:\mathfrak{N}_{\bar{z}}\rightarrow\mathfrak{N}_{z}$ be defined by
\begin{equation}\label{DM_10}
X_z=\left[(\wt{A}_1-z)^{-1}-(\wt{A}_0-z)^{-1}\right]_{\mathfrak{N}_{\bar{z}}}
\quad (z\in\dC_+\cup\dC_-).
\end{equation}
Then the  operators $\wt{A}_0$ and $\wt{A}_1$ are transversal if and and only if
the operators $X_{z_1}$ and $X_{z_2}$
are boundedly invertible, i.e.
\begin{equation}\label{eq:Trans}
    \ker X_{z_k}=0\quad\mbox{ and }\quad
    X_{z_k}^{-1}\in[\mathfrak{N}_{z_k},\mathfrak{N}_{\bar{z}_k}]
    \quad
(k=1,2).
\end{equation}
\end{proposition}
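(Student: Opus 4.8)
The plan is to reduce the transversality criterion to a statement about boundary triplets by using the canonically transversal construction and Proposition~\ref{P:DM_2}, and then to translate the abstract condition~\eqref{DM_5} into the invertibility of the resolvent-difference operators $X_z$ via explicit Krein-type resolvent formulas. First I would fix a boundary triplet $\{\cH,\Gamma_0,\Gamma_1\}$ (for instance the canonical one of the form~\eqref{DM_6}) adapted to $\wt A_0$, so that $\dom(\wt A_0)=\ker\Gamma_0$, and describe $\wt A_1$ by a linear relation $\mathfrak{M}_{\wt A_1}\subset\cH^2$ in the sense of~\eqref{DM_8}. The key computation is the identification of $\ran X_z$ with $\mathfrak{N}_z\cap\dom(\wt A_1)$ modulo the obvious pieces: indeed, for $h\in\mathfrak{N}_{\bar z}$ the vector $f:=(\wt A_1-z)^{-1}h-(\wt A_0-z)^{-1}h$ lies in $\dom(A^*)$, satisfies $(A^*-z)f=0$, hence $f\in\mathfrak{N}_z$, and $X_z h=f$. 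Conversely every $f_z\in\mathfrak{N}_z\cap\dom(\wt A_1)$ is of this form with $h=(\wt A_1-z)(f_z)$ suitably interpreted. Thus $X_z$ is, up to the von Neumann decomposition, the natural map encoding how $\dom(\wt A_1)$ sits relative to $\dom(\wt A_0)$ at the point $z$.

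Next I would separately analyze the kernel and the range condition in~\eqref{eq:Trans}. For the kernel: $\ker X_z=0$ for $z=z_1$ and $z=z_2$ is equivalent to disjointness, $\dom(\wt A_0)\cap\dom(\wt A_1)=\dom(A)$. The point is that if $X_{z}h=0$ then $(\wt A_1-z)^{-1}h=(\wt A_0-z)^{-1}h=:u\in\dom(\wt A_0)\cap\dom(\wt A_1)$; if this intersection is only $\dom(A)$ then $(A-z)u=h$ with $u\in\dom(A)$, forcing $u\perp\mathfrak{N}_{\bar z}$, but $u$ was obtained by applying $(\wt A_0-z)^{-1}$ to $h\in\mathfrak{N}_{\bar z}$ — one checks $u\in\mathfrak{N}_{\bar z}$ as well (since $(\wt A_0-z)^{-1}$ maps $\mathfrak{N}_{\bar z}$ into $\mathfrak{N}_{\bar z}$ is \emph{not} true in general, so instead one argues directly that $u\in\dom(A)$ and $(A-z)u=h\in\mathfrak{N}_{\bar z}$, while $(A-z)\dom(A)=\mathfrak{M}_z=\mathfrak{N}_{\bar z}^\perp$, giving $h\in\mathfrak{N}_{\bar z}\cap\mathfrak{N}_{\bar z}^\perp=0$, hence $h=0$). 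Conversely, a nontrivial element of $\dom(\wt A_0)\cap\dom(\wt A_1)$ lying outside $\dom(A)$ produces, via $(A^*-z)$, a nonzero $h$ with $X_z h=0$ — one needs $z\in\rho(\wt A_0)\cap\rho(\wt A_1)$, which is why two regular points (one in each half-plane) rather than one are relevant, together with the fact that disjointness can be tested at a single point but must be compatible with selfadjointness issues; the two-point hypothesis is what makes the argument symmetric and avoids sign obstructions.

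For the range condition: surjectivity of $X_{z_1}$ and $X_{z_2}$ should be shown equivalent to the sum condition $\dom(\wt A_0)+\dom(\wt A_1)=\dom(A^*)$. Using the von Neumann-type decomposition $\dom(A^*)=\dom(\wt A_0)\dotplus\mathfrak{N}_{z_1}\dotplus\mathfrak{N}_{z_2}$ (valid since $z_1,z_2$ are regular for the selfadjoint — or at least proper with regular points — extension $\wt A_0$, so that $\dom(A^*)=\dom(\wt A_0)+\mathfrak N_z$ for each non-real $z$, and one combines the two), any $f\in\dom(A^*)$ decomposes and the transversality $\dom(\wt A_0)+\dom(\wt A_1)=\dom(A^*)$ is equivalent to every $\mathfrak{N}_{z_k}$-component being reachable from $\dom(\wt A_1)$ modulo $\dom(\wt A_0)$, i.e.\ exactly to $\ran X_{z_k}=\mathfrak{N}_{z_k}$. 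Boundedness of $X_{z_k}^{-1}$ is then automatic by the closed graph theorem once $X_{z_k}$ is a bijective bounded operator between the Hilbert spaces $\mathfrak{N}_{\bar z_k}$ and $\mathfrak{N}_{z_k}$.

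\textbf{Main obstacle.} I expect the delicate point to be the careful bookkeeping of \emph{which} regularity hypotheses are actually used and at which point the hypothesis ``$z_1\in\dC_+$, $z_2\in\dC_-$'' (two points in opposite half-planes) is essential rather than cosmetic: disjointness and the range condition each a priori need only one regular point, but the clean equivalence with~\eqref{eq:Trans} as stated — with both $X_{z_1}$ and $X_{z_2}$ appearing — requires being precise about the direct-sum decomposition $\dom(A^*)=\dom(\wt A_0)\dotplus\mathfrak N_{z_1}\dotplus\mathfrak N_{z_2}$ and about the fact that $\wt A_0,\wt A_1$ are merely \emph{proper} (not assumed selfadjoint here), so one cannot invoke $\mathfrak N_i,\mathfrak N_{-i}$ and must work with the actual regular points. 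The resolvent identity $X_z^* = $ (a $\mathfrak{N}$-to-$\mathfrak{N}$ map built from $\wt A_0^{-*},\wt A_1^{-*}$ at $\bar z$) should be used to pass between the two half-planes and to confirm that invertibility at $z_1$ together with invertibility at $z_2$ is the correct, symmetric formulation; everything else is a routine application of Proposition~\ref{P:DM_2} and Corollary~\ref{C:DM_2}.
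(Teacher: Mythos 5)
Your overall strategy is the paper's: reduce \eqref{eq:Trans} to the two halves ``$\ker X_z=0\Leftrightarrow$ disjointness \eqref{DM_4}'' and ``$\ran X_{z_k}=\sN_{z_k}\Leftrightarrow$ the sum condition \eqref{DM_5}'', working directly with the resolvents rather than with boundary triplets (the reduction to Proposition~\ref{P:DM_2} that you announce is never used and is not needed). Your kernel argument, after the mid-sentence self-correction, lands on the right computation ($h\in\sM_{z}\cap\sN_{\bar z}=\{0\}$). But your central ``key computation'' is wrong as stated: $\ran X_z$ is not $\sN_z\cap\dom(\wt A_1)$ ``modulo obvious pieces'' --- for $z\in\rho(\wt A_1)$ that intersection is $\{0\}$, and your converse with $h=(\wt A_1-z)f_z$ produces $h=0$. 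The correct identification is $\ran X_z=\sN_z\cap\bigl(\dom(\wt A_0)+\dom(\wt A_1)\bigr)$, and the nontrivial inclusion needs the normalization step you omit: given $f=u_1-u_0\in\sN_z$ with $u_j\in\dom(\wt A_j)$, one subtracts a common element of $\dom(A)$ so that $(A^*-z)u_1$ lands in $\sN_{\bar z}$, using $H=\sM_z\oplus\sN_{\bar z}$. This is exactly the paper's step of replacing $\varphi_1$ by its component in $\sN_{\bar z}$, and it is the crux of the necessity direction.

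The second concrete error is the decomposition $\dom(A^*)=\dom(\wt A_0)\dotplus\sN_{z_1}\dotplus\sN_{z_2}$: since already $\dom(A^*)=\dom(\wt A_0)\dotplus\sN_{z_1}$ whenever $z_1\in\rho(\wt A_0)$, the triple sum cannot be direct, and ``combining'' the two single-point decompositions does not yield anything of this form. What the paper actually uses is the generalized von Neumann formula \eqref{DM_11}, $\dom(A^*)=\dom(A)\dotplus\sN_{z_1}\dotplus\sN_{z_2}$, whose validity rests on $H=\sM_{z_1}\dotplus\sN_{\bar z_2}$ and hence precisely on $\IM z_1\cdot\IM z_2<0$; this, and not the vague ``sign obstructions'' for disjointness you gesture at, is where the opposite-half-plane hypothesis enters the written proof. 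With that formula, surjectivity of $X_{z_1}$ and $X_{z_2}$ gives $\sN_{z_k}\subset\dom(\wt A_0)+\dom(\wt A_1)$ and hence \eqref{DM_5}. (As an aside, the identity $\dom(A^*)=\dom(\wt A_0)\dotplus\sN_{z_1}$ shows that invertibility of $X_{z_1}$ alone already implies transversality; the two-point formulation is what makes the statement a clean equivalence, since transversality forces invertibility of $X_z$ at every common regular point.) Your plan is salvageable, but as written both load-bearing identities are incorrect.
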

\begin{proof}
Notice first, that the condition $\ker X_z=0$ $(z\in\rho(A_0)\cap\rho(A_1))$
is equivalent to the condition \eqref{DM_4}.

If the extensions $\wt{A}_0$ and $\wt{A}_1$ are transversal, then in view of the condition
\eqref{DM_5} for every vector $f\in H$ there are vectors
$\varphi_0$ and $\varphi_1\in H$, such that
$(\wt{A}_0-z)^{-1}\varphi_0+(\wt{A}_1-z)^{-1}\varphi_1=f$. If
$f\in\mathfrak{N}_z$, then applying the operator
$A^*-z$ to the latter equality, one obtains $\varphi_0+\varphi_1=0 \; \Rightarrow \;
\varphi_1=-\varphi_0$, i.e.
\[
(\wt{A}_1-z)^{-1}\varphi_1-(\wt{A}_0-z)^{-1}\varphi_1=f.
\]
In this relation one can choose
$\varphi_1\in\mathfrak{N}_{\bar{z}}$, as follows from the equality
$H=\mathfrak{M}_z\oplus\mathfrak{N}_{\bar{z}}$.
Hence for $z\in\rho(A_0)\cap\rho(A_1))$
one has $X_z\mathfrak{N}_{\bar{z}}=\mathfrak{N}_z$
and, therefore, the operator  $X_z$ is invertible.

The converse statement follows from the generalized J.von Neumann formula
\begin{equation}\label{DM_11}
\dom(A^*)=\dom(A)\dotplus\mathfrak{N}_{z_1}\dotplus\mathfrak{N}_{z_2}
\quad(\IM z_1\cdot\IM z_2<0),
\end{equation}
which is derived from the relation $H=\sM_{z_1}\dotplus\sN_{\bar z_2}$
in the same manner as J.von Neumann formula is reduced from the equality
$H=\sM_{z}\dotplus\sN_{\bar
z}$. If now the conditions~\eqref{eq:Trans} hold, then
\[
\mathfrak{N}_{{z}_k}=X_{z_k}\mathfrak{N}_{\bar{z}_k}\subset\dom(\wt
A_0)+\dom(\wt A_1) \quad(k=1,2)
\]
and by the formula~\eqref{DM_11} the extensions $\wt A_0$ and $\wt A_1$
are transversal.
\end{proof}
\begin{corollary}\label{cor:DM_3}
Let $\wt{A}_0$ and $\wt{A}_1$ be proper extensions of the operator
$A$, having common regular real  point  $a\in\dR$. Then the transversality of the operators $\wt{A}_0$ and $\wt{A}_1$ is equivalent to the following conditions
\begin{equation}\label{eq:Trans_2}
    \ker X_{a}=0\quad\mbox{ and }\quad
    X_{a}^{-1}\in[\mathfrak{N}_{a}].
\end{equation}
\end{corollary}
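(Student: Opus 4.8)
The plan is to derive Corollary~\ref{cor:DM_3} directly from Proposition~\ref{P:DM_3} by observing that a single real regular point does the work of the two complex regular points in the proposition. First I would set $z_1=z_2=a$ in the argument of Proposition~\ref{P:DM_3}; the point is that when $a\in\dR$ one has $\bar a=a$, so the two spaces $\sN_{\bar z}$ and $\sN_z$ coincide, the operator $X_a$ maps $\sN_a$ into itself, and the generalized von Neumann decomposition~\eqref{DM_11} degenerates into the ordinary one $\dom(A^*)=\dom(A)\dotplus\sN_a\dotplus\sN_a$ — which must be read correctly as $\dom(A^*)=\dom(A_0)\dotplus\sN_a$ (or equivalently, the sum of the two deficiency-type subspaces coming from $\wt A_0$ and $\wt A_1$ at the single point $a$).

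The key steps, in order: (1) Recall, exactly as in the proof of Proposition~\ref{P:DM_3}, that $\ker X_a=0$ is equivalent to the disjointness condition~\eqref{DM_4}, $\dom(\wt A_0)\cap\dom(\wt A_1)=\dom(A)$; this half needs no change since the argument there used only one generic regular point. (2) For the forward implication, assume $\wt A_0$ and $\wt A_1$ transversal; repeat verbatim the computation from Proposition~\ref{P:DM_3} with $z=a$: given $f\in\sN_a$, write $f=(\wt A_0-a)^{-1}\varphi_0+(\wt A_1-a)^{-1}\varphi_1$ using~\eqref{DM_5}, apply $A^*-a$ to get $\varphi_0+\varphi_1=0$, and conclude $X_a\sN_a=\sN_a$, hence $X_a$ is boundedly invertible on $\sN_a$, giving~\eqref{eq:Trans_2}. (3) For the converse, assume~\eqref{eq:Trans_2}; then $\sN_a=X_a\sN_a\subset\dom(\wt A_0)+\dom(\wt A_1)$, and combining this with $\dom(A)\subset\dom(\wt A_0)+\dom(\wt A_1)$ and the ordinary von Neumann-type formula $\dom(A^*)=\dom(A_0)\dotplus\sN_a$ yields~\eqref{DM_5}, so $\wt A_0$ and $\wt A_1$ are transversal.

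The one genuine subtlety — and the step I expect to need the most care — is the replacement of~\eqref{DM_11}. For complex $z_1,z_2$ with $\IM z_1\cdot\IM z_2<0$ the three-term direct sum $\dom(A)\dotplus\sN_{z_1}\dotplus\sN_{z_2}$ is automatically all of $\dom(A^*)$, but for a single real point $a$ one only has $\dom(A^*)=\dom(\wt A_0)\dotplus\sN_a$ when $a\in\rho(\wt A_0)$, and the relevant fact is that $\dom(\wt A_0)+\dom(\wt A_1)\supseteq\dom(\wt A_0)$ already, so adding $\sN_a=X_a\sN_a$ forces the sum to be $\dom(A^*)$. Concretely, every $h\in\dom(A^*)$ splits as $h=h_0+h_a$ with $h_0\in\dom(\wt A_0)$ and $h_a\in\sN_a$ (since $a$ is regular for $\wt A_0$, so $A^*-a$ restricted appropriately is surjective with the stated kernel), and $h_a\in X_a\sN_a\subset\dom(\wt A_0)+\dom(\wt A_1)$; hence $h\in\dom(\wt A_0)+\dom(\wt A_1)$. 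This is the only place where the real-point hypothesis must be invoked in a form slightly different from the proposition, and once it is in place the rest is a transcription of the proof of Proposition~\ref{P:DM_3}.
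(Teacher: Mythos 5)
Your proof is correct, but it takes a genuinely different route from the paper's. The paper disposes of the corollary in one line by a perturbation argument: since $a$ is a common regular point, the operators $X_z$ are well defined for all $z$ in a complex neighbourhood of $a$, the invertibility of $X_a$ propagates to nearby points $z_1\in\dC_+$ and $z_2\in\dC_-$, and then Proposition~\ref{P:DM_3} applies as stated (conversely, the first half of that proposition's proof already gives $X_z\sN_{\bar z}=\sN_z$ at \emph{every} common regular point, real or not, so the forward direction needs nothing new). You instead rerun the proof of Proposition~\ref{P:DM_3} directly at the real point, and you correctly isolate the only step that must change: the generalized von Neumann formula~\eqref{DM_11} is unavailable for a single real point, and you replace it by the decomposition $\dom(A^*)=\dom(\wt A_0)\dotplus\sN_a$, valid precisely because $a\in\rho(\wt A_0)$ (solve $(\wt A_0-a)h_0=(A^*-a)h$ and note $h-h_0\in\ker(A^*-a)$, directness coming from $\ker(\wt A_0-a)=\{0\}$). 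Your route is self-contained and sidesteps the continuity claim the paper leaves implicit --- which is not entirely free of content, since $X_z$ acts between the varying subspaces $\sN_{\bar z}$ and $\sN_z$, so ``invertible for $z$ close to $a$'' tacitly uses continuity of the deficiency subspaces; the paper's route buys brevity by leaning on the already-proved proposition. One cosmetic point: the expression $\dom(A)\dotplus\sN_a\dotplus\sN_a$ in your preamble is not a meaningful direct sum, and $\dom(A)\dotplus\sN_a$ is in general a proper subspace of $\dom(A^*)$ of codimension $n_\pm(A)$; but you immediately replace it by the correct decomposition, so the argument is unaffected.
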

The proof is implied by the fact, that the operators  $X_z$
are well defined and invertible for all  $z$ which are close enough to $a$.


\begin{proposition}\label{P:DM_5}
Let a proper extension $\widetilde{A}$ of the operator $ A$ have two regular points  $z_1$, $z_2\in\rho(\widetilde{A})$, such that
$\Im{z_1}\cdot\Im{z_2}<0$. Then $\widetilde{A}$ is an almost solvable extension of the operator $ A$.
\end{proposition}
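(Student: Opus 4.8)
The plan is to reduce Proposition~\ref{P:DM_5} to the characterization of almost solvable extensions obtained in Proposition~\ref{P:DM_2}: a proper extension is almost solvable if and only if it is transversal to some self-adjoint extension of $A$. So it suffices to produce a self-adjoint extension $\wt A_0=\wt A_0^*$ of $A$ which is transversal to the given $\wt A$. The natural tool for verifying transversality is Proposition~\ref{P:DM_3}, which says that $\wt A_0$ and $\wt A$, sharing regular points $z_1\in\dC_+$ and $z_2\in\dC_-$, are transversal precisely when the operators $X_{z_k}=\bigl[(\wt A-z_k)^{-1}-(\wt A_0-z_k)^{-1}\bigr]_{\sN_{\bar z_k}}$ are boundedly invertible from $\sN_{\bar z_k}$ onto $\sN_{z_k}$ for $k=1,2$.

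First I would observe that we are free to assume $z_1$ and $z_2$ are complex, by replacing them if necessary: since $\wt A$ has a regular point in each open half-plane, and $\rho(\wt A)$ is open, we may take $z_1\in\dC_+\cap\rho(\wt A)$ and $z_2\in\dC_-\cap\rho(\wt A)$ with $z_2=\bar z_1$. (If the original $z_1,z_2$ are not conjugate, I would argue that $\rho(\wt A)\cap\dC_+$ being nonempty and open forces $\rho(\wt A)\cap\dC_-$ to contain the conjugate point as well, because the field of regularity is symmetric — or, more simply, pass to the self-adjoint part of the analysis below where $z$ and $\bar z$ are automatically paired.) Then I would pick any self-adjoint extension $\wt A_0=\wt A_0^*$ of $A$ — these exist since $A$ has equal deficiency indices — noting that $z_1,\bar z_1\in\rho(\wt A_0)$ automatically because $\wt A_0$ is self-adjoint. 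Now both $\wt A$ and $\wt A_0$ have $z_1$ and $\bar z_1$ as common regular points, so Proposition~\ref{P:DM_3} applies in principle; the issue is only the bounded invertibility of $X_{z_1}$ and $X_{\bar z_1}$.

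The main obstacle is precisely that a fixed $\wt A_0$ need not be transversal to $\wt A$ — $X_{z_1}$ could have nontrivial kernel or non-dense range. The remedy is to exploit the freedom in choosing $\wt A_0$. Here is the key step I would carry out: parametrize self-adjoint extensions $\wt A_0$ via a boundary triplet for which the given extension has a convenient description, or more directly, work with the Cayley-type / von Neumann parametrization. Concretely, fix a boundary triplet $\{\cH,\Gamma_0,\Gamma_1\}$ for $A^*$; the self-adjoint extensions $\wt A_0$ disjoint from $\wt A$, together with the invertibility conditions \eqref{eq:Trans}, translate into an algebraic condition on the parameter, and one shows that a generic choice of self-adjoint parameter satisfies it. Equivalently — and this is the cleanest route — I would use that $\wt A$ itself can be brought into the form $\ker(\Gamma_1-B\Gamma_0)$ with $B$ a \emph{closed densely defined} (possibly non-bounded, non-self-adjoint) operator or linear relation in $\cH$ once \emph{any} boundary triplet is fixed, via \eqref{DM_8}: $\wt A\leftrightarrow\sM_{\wt A}$, a subspace of $\cH^2$. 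The hypothesis $\Im z_1\cdot\Im z_2<0$ and $z_1,z_2\in\rho(\wt A)$ is exactly what guarantees, through the generalized von Neumann formula \eqref{DM_11}, that $\dom(\wt A)\cap\sN_{z_1}=\dom(\wt A)\cap\sN_{z_2}=\{0\}$, hence $\sM_{\wt A}$ is the graph of a closed operator $B$ from $\cH$ to $\cH$ with respect to the decomposition induced by $\Gamma_0,\Gamma_1$; one then chooses $\wt A_0$ to correspond to $\Gamma_0$-kernel of a \emph{rotated} triplet so that $B$ becomes bounded. I would spell this out as: pick $\wt A_0$ so that, relative to the canonical triplet attached to $\wt A_0$ as in \eqref{DM_6}, the subspace $\sM_{\wt A}$ has trivial intersection with $\cH_1^0$; the two regular-point conditions supply exactly two independent such non-degeneracy facts (one in $\dC_+$, one in $\dC_-$), and \eqref{DM_9} then forces $\sM_{\wt A}$ to be the graph of a bounded $B$, i.e. $\wt A=A_B$. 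The delicate point to get right is confirming that \emph{one} self-adjoint $\wt A_0$ simultaneously works for both $z_1$ and $z_2$; this is where the sign condition $\Im z_1\cdot\Im z_2<0$ is indispensable, since it lets \eqref{DM_11} replace the single-point von Neumann formula and couples the two half-planes into one direct-sum decomposition of $\dom(A^*)$.

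Finally, having exhibited $\wt A_0=\wt A_0^*$ transversal to $\wt A$, I would invoke Proposition~\ref{P:DM_2} directly to conclude that $\wt A$ is almost solvable, and moreover that for every boundary triplet with $\ker\Gamma_0=\dom(\wt A_0)$ there is $B\in[\cH]$ with $\wt A=A_B$. I expect the write-up to be short: most of the work is the transversality verification, and that is essentially the content of Proposition~\ref{P:DM_3} combined with \eqref{DM_11}; the only genuinely new observation needed is that the two regular points, one in each half-plane, give enough room to select a compatible self-adjoint $\wt A_0$.
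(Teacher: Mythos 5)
Your reduction to Proposition~\ref{P:DM_2} is the right frame, but the proposal stops exactly where the real work begins: you never actually construct the self-adjoint extension $\wt A_0$ transversal to $\wt A$. You say that ``a generic choice of self-adjoint parameter'' works, or that one should ``pick $\wt A_0$ so that $\sM_{\wt A}$ has trivial intersection with $\cH_1^0$'', but the existence of such a choice is precisely the content of the proposition --- Remark~\ref{R:DM_2} exhibits proper extensions (with a regular point in one half-plane only) that are transversal to \emph{no} self-adjoint extension, so no genericity or soft argument can do the job; one must use both regular points to produce a concrete $V$. The paper does this by writing $\dom(\wt A)=\dom(A)+(I+M)\sN_i$ with $M=(\wt A-i)(\wt A+i)^{-1}\upharpoonright\sN_i$ (this uses $z_1=-i\in\rho(\wt A)$), invoking the Shtraus characteristic function $C(z)$ to translate $z_2\in\rho(\wt A)\cap\dC_+$ into $0\in\rho(C(z_2)-M)$, and then taking the polar decomposition $C(z_2)-M=VR$, $R>0$: the isometry $V$ defines $\wt A_0$ via $\dom(\wt A_0)=\dom(A)+(I+V)\sN_i$, and the bound $\|C(z_2)\|<1$ gives $\RE(V^*C(z_2))<I$, hence $I-V^*M=I-V^*C(z_2)+R$ is boundedly invertible, which is exactly the invertibility of $V-M$ needed to get $B=i(M+V)(V-M)^{-1}\in[\cH]$. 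None of this appears in your sketch, and without it the argument is circular.

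Two further inaccuracies. First, your normalization to conjugate points rests on the claim that ``the field of regularity is symmetric'': this is true for $\wh\rho(A)$ of the symmetric operator $A$, but false for $\rho(\wt A)$ of a non-self-adjoint proper extension ($z\in\rho(\wt A)$ gives $\bar z\in\rho(\wt A^*)$, not $\bar z\in\rho(\wt A)$); fortunately the normalization is unnecessary, since Proposition~\ref{P:DM_3} does not require $z_2=\bar z_1$. Second, the statement that $\dom(\wt A)\cap\sN_{z_k}=\{0\}$ forces $\sM_{\wt A}$ to be the graph of a closed operator with respect to $(\Gamma_0,\Gamma_1)$ is a non sequitur: being a graph over $\Gamma_0$ requires disjointness from $\ker\Gamma_0=\dom(A_0)$, i.e.\ from the chosen self-adjoint extension, which is again the unproved point.
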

\begin{proof}
Assume for definitness that $z_1=-i$, $z_2\in\dC_+$. Then (see
\cite{ShmTs77})
\begin{equation}\label{eq:Mext}
    \dom(\wt{A})=\dom{A}+(I+M)\sN_i, \quad\mbox{ where }
M=(\wt{A}-i)(\wt{A}+i)^{-1}\upharpoonright_{\sN_i}\in[\sN_i,\sN_{-i}].
\end{equation}
According to A. V. Strauss  \cite{Str68}, the characteristic function $C(z)$ of the operator $A$ $(C(z)\in[\sN_i,\sN_{-i}])$ is defined by the relation:
\begin{equation}\label{DM_14}
\dom(A)+\sN_z=\dom(A)+(I+C(z))\sN_i.
\end{equation}
The conditions $z_2\in\rho(\wt{A})$ and $0\in\rho(C(z_2)-M)$ are equivalent
(see \cite{Koc80} and Section \ref{4}). Therefore, the operator $C(z_2)-M$ has the polar representation $C(z_2)-M=VR$, where $R>0$, and $V$ is an isometry from $\sN_i$ onto $\sN_{-i}$.

Since $\|C(z)\|< 1$ for all $z\in{\dC}_+$ (see \cite{Str68}), then
$\mbox{Re }(V^*C(z_2))< I$ and hence the operator
$I-V^*M=I-V^*C(z_2)+R$ is boundedly invertible, i.e. $1\in\rho(-U^*M)$.
Define a boundary {triplet} by~\eqref{DM_6}. Then for a vector $f\in\dom(\wt A)$ of the form
\[
f=f_0+(I+M)f_i\quad(f_0\in\dom(A),\,f_i\in\sN_i)
\]
one obtains
\[
\Gamma_0^0f=(V-M)f_i,\quad\Gamma_1^0f=i(V+M)f_i.
\]
Hence, one arrives at the equality
$\dom(\wt{A})=\ker(\Gamma_1^0-B\Gamma_0^0)$, in which
$B=i(M+V)(V-M)^{-1}\in[\cH]$. Therefore, the extension
$\widetilde{A}$ of the operator $A$ is almost solvable.
\end{proof}

\begin{remark}\label{R:DM_1}
It follows from the above proof and Proposition~\ref{P:DM_7} that the extension
 $\wt{A}$ is almost solvable, if there exist
$z_1,z_2\in\rho(\wt{A})\cup\sigma_c(\wt{A})$, such that $\Im
z_1\cdot\Im z_2<0$.
\end{remark}

\begin{remark}\label{R:DM_2}
Finally, let us present examples of proper extensions
$\wt{A}$ of the operator $A$, which are not almost solvable.
To show this on account of Proposition~\ref{P:DM_2}, it is enough to point out a proper extension
$\wt{A}$ of the operator $A$, which is not transversal to any self-adjoint extension  $\wt A_0=\wt{A}_0^*$ . Let the domains
of the extensions $\wt{A}$  and $\wt A_0$ are given by
\begin{equation}\label{eq:wtA}
    \dom(\wt{A})=\dom(A)+(I+M)\sN_i,\quad M\in[\sN_i,\sN_{-i}],
\end{equation}
\begin{equation}\label{eq:wtA_0}
    \dom(\wt{A}_0)=\dom(A)+(I+V)\sN_i,\quad V\in[\sN_i,\sN_{-i}].
\end{equation}
Clearly, the disjointness of the extensions $\wt{A}$
and $\wt A_0$ is equivalent to the condition:
$1\not\in\sigma_p(V^*M)$, and the transversality of the extensions  $\wt{A}$
and $\wt A_0$ is equivalent to the condition:
$1\in\rho(V^*M)$. Notice also, that  $-i\in\rho(\wt A)$.
\end{remark}

one can construct the needed examples by setting
 $M=\alpha U^*$, where $|\alpha|>1$ and  $U$  is an isometry from
$\dom(U)=\sN_i$ onto $U\sN_{-i}\nsubseteqq\sN_{i}$. Indeed, for any isometry
$V$ from $\sN_i$ onto $\sN_{-i}$ the operator $UV$ is a nonunitary isometry
in $\sN_i$, and by the Wald decomposition
$UV = U_0\oplus U_1$, where $U_0$ is a unitary operator, and
  $U_1 (\not =
0)$ is a unilateral shift. Therefore, the point spectrum of the operator
$V^*U^* = (UV)^*$
coincides with the open unit disc. Indeed,
$1\in\sigma_p(V^*M)=\sigma_p(\alpha V^*U^*)$ for $|\alpha|>1$, and, therefore, the extension
$\wt A$, determined by the relation~\eqref{eq:wtA}, is not disjoint to to any self-adjoint extension
of the operator $A$. Thus, the extension $\wt A$ cannot be represented in the form~\eqref{DM_2}, where $B\in\cC(\cH)$ and, in particular, is not almost solvable.
At the same time the extension $\wt A$ is transversal to the extension
 ${\wt A}_{-i}$, for which  $\dom({\wt A}_{-i}) = \dom(A) + \sN_{-i}$ and, hence, $-i\in\rho(\wt A)$.

This example shows that the assumptions of Proposition~\ref{P:DM_5} are essential. It is also worth mentioning
that the operators $\wt{A}$ and $\wt{A}^*$ are transversal for $|\alpha|>1$. Indeed, the conditions
\[
\dom(\wt{A})\cap\dom(\wt{A}^*)=\dom(A),\quad
\dom(\wt{A})+\dom(\wt{A}^*)=\dom(A^*)
\]
are equivalent to the following ones:
\[
1\in\rho(M^*M),\quad 1\in\rho(MM^*).
\]
The latter conditions certainly are fulfilled, since $M^*M=|\alpha|^2I$ and
$MM^*=|\alpha|^2P$ ($P$ is an orthoprojection). Therefore, the transversality of the operators $\wt{A}$ and $\wt{A}^*$ is not sufficient for the extension  $\wt{A}$ to be almost solvable.

If $|\alpha|=1$ then the operator  $\wt A$, presented above, is a maximal symmetric (but not self-adjoint $\wt{A}\neq\wt{A}^*$) extension of the operator $A$, which is not almost solvable, since
$1\in{\sigma}_c{(V^*M)}=\sigma_c(\alpha V^*U^*)$ при $|\alpha|=1$.

\section{Weyl Functions and $Q$--Functions of Hermitian Operators}\label{2}

\textbf{1.} Let $\Pi=\{\cH, \Gamma_0,\Gamma_1\}$ be some boundary triplet
for the operator $A^*$, and let ${A}_0={A}_0^*$, ${A}_1={A}_1^*$
be extensions corresponding to the operators $\Gamma_0$ and
$\Gamma_1$ in the sense of relations~\eqref{DM_3}.
\begin{definition}\label{D:DM_4}
An operator function $M(z)$ defined by
\begin{equation}\label{DM_15}
M(z)\Gamma_0f_z=\Gamma_1f_z\qquad (f_z\in\sN_z,\ z\in{\rho}(A_0))
\end{equation}
is said to be a  Weyl function of the operator $A$ corresponding to the boundary triplet $\Pi$.
\end{definition}

Let us show that the operator function $M(z)$ is well defined, confining ourselves to the case when $\Im z>0$. To this end, we notice that for $\Im z>0$ the proper extension
$\wt{A}_z\subset A^*$ of the operator $A$ defined by
\[
\dom(\wt{A}_z)=\dom(A)+\sN_z, \qquad z\in \dC_+,
\]
is a closed, maximal dissipative\footnote[1]{This fact is proved in~\cite{Str68}.
Here we suggest its elementary proof. Since $\wt{A}_z(f_A+f_z)=Af_A + zf_z$,
where $f_A \in\dom(A),  f_z\in\sN_z$, it follows that
$(\wt{A}_zf,g)_H-(f,\wt{A}_zg)_H=(z-\bar{z})(f_z,g_z)_H$ and, therefore, $\wt{A}_z$ is dissipative. Clearly, $\wt{A}_z$ is closed. Moreover,
the fact that $\wt{A}_z$ is maximal dissipative follows from the relation
$(\wt{A} -z_0)\dom(\wt{A})=H,\  \Im z_0<0$. The latter can be set, for example, as follows.
Assuming the contrary, one obtains the equality $((\wt{A} -z_0)f, g)_H =0$ for every $f \in\dom(\wt{A})$ with some $g\in H$. On the one hand, for $f = f_A \in\dom(A)$
this implies  $g\in \sN_{\bar{z_0}}$ and, on the other hand, for $f \in\sN_z$ the same equality yields $g\perp\sN_{z}$.
However, for $\bar{z}_0$ close to $z$ the relation $g\perp\sN_z$
contradicts the fact that the aperture of the subspaces $\sN_z$ and $\sN_{\bar{z_0}}$ is less than one~\cite[Section~34]{AG81}.}
extension of $A$, which
and, in view of the Neumann formulas, is transversal to any self-adjoint extension $\wt{A}$,
in particular, to the operator ${A}_0$. By Proposition~\ref{P:DM_2}, we have
$\wt{A}_z={A}_{B(z)}$, where $B(z)$ belongs to $[\cH]$ and is dissipative. Hence,
\[
f\in\dom(\wt{A}_z)\Longleftrightarrow f\in\dom(A^*)\quad\mbox{ and }\quad
B(z)\Gamma_0f=\Gamma_1f.
\]
But $\Gamma_0(\dom(A^*))=\cH$ by the definition of a boundary triplet. Taking into account
the transversality of the operators ${A}_0$ and  $\wt{A}_z$ this implies the relation:
\[
\Gamma_0\sN_z=\Gamma_0(\dom(\wt{A}_z))=\Gamma_0(\dom(A^*))=\cH,
\]
which leads to the equality $M(z)=B(z)$. Thus, $M(z)$ is the operator function
with values in $[\cH]$ and $\Im z\cdot\Im M(z)>0$ whenever $z\in\wh{\rho}(A)$.
Together with the analyticity of $M(z)$ in $\dC_+$  proved below this ensures
that $M(z)$ belongs to the class $(R)$. Note also that $M(z)^*=M(\bar{z})$ since
$(\wt A_z)^*=\wt{A}_{\bar{z}}$.

\begin{remark}\label{R:DM_3}
When justifying that the Weyl function is well defined it has been shown
that the operators $\Gamma_0$ and $\Gamma_1$ map $\sN_z$ onto $\cH$ isomorphically.
This fact is also extracted from the results of~\cite{Br76} and is useful for specific operators in Section~\ref{2}. Let us set
\begin{equation}\label{eq:gamma}
    \gamma(z):=(\Gamma_0|_{\sN_z})^{-1}\quad (z\in\rho(A_0)).
\end{equation}
The operator function $\gamma(z)$ takes values in $[\cH,\sN_z]$
for any $z\in\rho(A_0)$.
\end{remark}

Clearly, the function $M(z)$ depends on the choice of a boundary triplet.
In view of Proposition~\ref{P:DM_1}, we obtain the following connection between
functions $M(z)$ and $\wt M(z)$ corresponding to two boundary
triplets $\{\cH,\Gamma_0,\Gamma_1\}$ and $\{\cH, \wt\Gamma_0,\Gamma_1\}$
of the operator $A^*$ with a common operator $\Gamma_1$:
\begin{equation}\label{DM_16}
\wt M^{-1}(z)=M^{-1}(z)+K,\qquad K=K^*\in[\cH].
\end{equation}

To clarify this connection in the general case, denote by $M(z)$ and
$\wt M(z)$ the Weyl functions of the operator $A$ corresponding to the boundary triplets
$\{\cH, \Gamma_0,\Gamma_1\}$ and $\{\wt\cH,\wt\Gamma_0,\wt\Gamma_1\}$
respectively. Let also $U$ be an isometric operator from $\cH$ onto $\wt\cH$
$(\dim(\cH)=\dim(\wt\cH))$, and let
\[
J=\left(
 \begin{array}{cc}
 0 & iI_\cH \\
-iI_\cH & 0 \\
\end{array}
\right)
\quad \mbox{ be the signature operator in }\quad\cH\oplus\cH.
\]
\begin{proposition}\label{P:DM_6}
Suppose that $\Pi=\{{\mathcal H},\Gamma_0,\Gamma_1\}$ and
$\wt\Pi=\{\wt{{\mathcal H}},\wt{\Gamma}_0,\wt{\Gamma}_1\}$ are some boundary triplets
of the operator $A^*$, and $U$ is an isometric operator from
${\mathcal H}$ onto $\wt{{\mathcal H}}$. Then the operators $\Gamma$ and
$\wt\Gamma$ are related by
\begin{equation}\label{DM_18}
\begin{pmatrix}\wt\Gamma_1\\
\wt\Gamma_0\end{pmatrix}= \left(
               \begin{array}{cc}
                 U & 0 \\
                 0 & U \\
               \end{array}
             \right)
\left(
               \begin{array}{cc}
                 X_{11} & X_{12} \\
                 X_{21} & X_{22} \\
               \end{array}
             \right)\begin{pmatrix}\Gamma_1\\
\Gamma_0\end{pmatrix},
\end{equation}
where $X=(X_{ij})_{i,j=1}^2$,\   is a $J$-unitary operator in $\cH\oplus\cH$, and
the Weyl functions $M(z)$ and $\wt{M}(z)$ corresponding to the boundary triplets
$\Pi$ and $\wt\Pi$ satisfy the relation
\begin{equation}\label{DM_17}
\wt M(z)=U(X_{11}M(z)+X_{12})(X_{21}M(z)+X_{22})^{-1}U^{*}.
\end{equation}
\end{proposition}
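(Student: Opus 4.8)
The plan is to first establish \eqref{DM_18}, i.e.\ that the two boundary triplets are related by a $J$-unitary transformation (after unitary identification of the boundary spaces), and then to \emph{compute} the Weyl function of $\wt\Pi$ in terms of that of $\Pi$ by plugging a defect element $f_z\in\sN_z$ into the relations $\wt M(z)\wt\Gamma_0 f_z=\wt\Gamma_1 f_z$ and $M(z)\Gamma_0 f_z=\Gamma_1 f_z$. The only genuinely new ingredient beyond bookkeeping is the $J$-unitarity, and this is where Green's identity enters.

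\textbf{Step 1: the transformation matrix $X$ exists and is $J$-unitary.} Since $\Gamma=(\Gamma_1,\Gamma_0)^{\!\top}$ and $\wt\Gamma=(\wt\Gamma_1,\wt\Gamma_0)^{\!\top}$ are both surjective from $\dom(A^*)$ onto $\cH\oplus\cH$ (resp.\ $\wt\cH\oplus\wt\cH$) with kernel exactly $\dom(A)$, the composite map $\wt\Gamma\,\Gamma^{-1}$ is a well-defined bounded bijection of $\cH\oplus\cH$ onto $\wt\cH\oplus\wt\cH$; pulling back by $\mathrm{diag}(U,U)$ gives a bounded bijection $X$ of $\cH\oplus\cH$, which is \eqref{DM_18}. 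To see $X$ is $J$-unitary, write both Green identities \eqref{DM_1}: for all $f,g\in\dom(A^*)$,
\[
(A^*f,g)-(f,A^*g)=(J\Gamma f,\Gamma g)_{\cH^2}=(J\wt\Gamma f,\wt\Gamma g)_{\wt\cH^2},
\]
where I abbreviate $(J\xi,\eta)_{\cH^2}=(\xi_1,\eta_2)-(\xi_2,\eta_1)$. Since $\wt\Gamma f=\mathrm{diag}(U,U)X\Gamma f$ and $U$ is isometric, $(J\wt\Gamma f,\wt\Gamma g)=(J X\Gamma f,X\Gamma g)=(X^*JX\,\Gamma f,\Gamma g)$. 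As $\Gamma$ is surjective, $\Gamma f$ and $\Gamma g$ range over all of $\cH^2$, so $X^*JX=J$; bijectivity of $X$ then gives $XJX^*=J$ as well, i.e.\ $X$ is $J$-unitary. This is the step I expect to be the main (though still modest) obstacle, since it is the one place the hypotheses are used substantively rather than formally.

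\textbf{Step 2: transfer to the Weyl functions.} Fix $z\in\rho(A_0)$ and $f_z\in\sN_z$. By Remark~\ref{R:DM_3} the map $\Gamma_0\!\restriction_{\sN_z}$ is onto $\cH$, so as $f_z$ runs over $\sN_z$ the vector $\Gamma_0 f_z$ runs over all of $\cH$; write $h=\Gamma_0 f_z$, so $\Gamma_1 f_z=M(z)h$ and thus $\Gamma f_z=(M(z)h,\ h)^{\!\top}$. Applying \eqref{DM_18},
\[
\begin{pmatrix}\wt\Gamma_1 f_z\\ \wt\Gamma_0 f_z\end{pmatrix}
=\begin{pmatrix}U & 0\\ 0 & U\end{pmatrix}
\begin{pmatrix}X_{11}M(z)h+X_{12}h\\ X_{21}M(z)h+X_{22}h\end{pmatrix}
=\begin{pmatrix}U(X_{11}M(z)+X_{12})h\\ U(X_{21}M(z)+X_{22})h\end{pmatrix}.
\]
Now $z\in\rho(A_0)$ for the first triplet does not a priori force $z\in\rho(\wt A_0)$ for the second, but for $z$ in the (nonempty, open) intersection $\rho(A_0)\cap\rho(\wt A_0)$ the operator $\wt\Gamma_0\!\restriction_{\sN_z}=U(X_{21}M(z)+X_{22})$ is a bijection of $\cH$ onto $\wt\cH$, hence $(X_{21}M(z)+X_{22})^{-1}\in[\cH]$ there. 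Then $\wt M(z)\,\wt\Gamma_0 f_z=\wt\Gamma_1 f_z$ reads
\[
\wt M(z)\,U(X_{21}M(z)+X_{22})h=U(X_{11}M(z)+X_{12})h\qquad(h\in\cH),
\]
and, canceling the invertible factor $U(X_{21}M(z)+X_{22})$ on the right and multiplying by $U^*$ on the left, this is exactly \eqref{DM_17}. Finally, since both sides of \eqref{DM_17} are holomorphic on $\rho(A_0)\cap\rho(\wt A_0)$ and this set has the same accumulation points as either resolvent set on which the respective Weyl function is defined, the identity extends by analytic continuation to the natural common domain; in particular it holds throughout $\dC_+$, which suffices for the applications. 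This completes the proof.
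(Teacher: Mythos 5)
Your proof is correct and follows the same skeleton as the paper's: define $X$ by \eqref{DM_18}, extract its $J$-unitarity from the two Green identities together with the surjectivity of $\Gamma$, and then push a defect element $f_z\in\sN_z$ through the transformation. The one place where you genuinely diverge is the invertibility of $X_{21}M(z)+X_{22}$. The paper obtains it from the dissipativity of $M(z)$ for $\Im z>0$ combined with the $J$-unitarity of $X$, via the Kre\u{\i}n--Shmuljan theorem on linear-fractional transformations with operator coefficients; you instead observe that $U(X_{21}M(z)+X_{22})=(\wt\Gamma_0|_{\sN_z})\,\gamma(z)$ is a composition of the two isomorphisms furnished by Remark~\ref{R:DM_3}, hence a bounded bijection of $\cH$ onto $\wt\cH$ and so boundedly invertible whenever $z\in\rho(A_0)\cap\rho(\wt A_0)$. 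Your route is more self-contained (no external citation, and it uses only facts already established in Section~\ref{2}) and applies verbatim at any point of the common resolvent set, including real regular points; the Kre\u{\i}n--Shmuljan route, by contrast, shows that the invertibility is forced by dissipativity and $J$-unitarity alone, with no reference to the spectral data of $\wt A_0$. Two minor remarks: since $A_0$ and $\wt A_0$ are both self-adjoint, $\rho(A_0)\cap\rho(\wt A_0)\supseteq\dC\setminus\dR$, so your closing analytic-continuation step is not actually needed for the claim as used in the paper (which treats $\Im z>0$ and $\Im z<0$ separately); and your boundedness of $X$ comes from the topological-isomorphism property of $\Gamma$ and $\wt\Gamma$ stated before Proposition~\ref{P:DM_2}, whereas the paper derives it from the $J$-isometry property via Iokhvidov's theorem --- both are legitimate.
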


\begin{proof}
The proof is similar to the proof of Theorem~\ref{T:DM_1} from~\cite{Koc80}.
Define the operator
$X:\cH\oplus\cH\rightarrow\cH\oplus\cH$ by relation~\eqref{DM_18}.
It is clear that the operator $X$ is well defined and surjective. The definition
of a boundary triplet implies that the operator $X$ is $J$--unitary.
It follows that $X$ is bounded (see~\cite{Iokhv}), and
relation~\eqref{DM_18} takes the form
\begin{equation}\label{DM_18A}
\wt\Gamma_1f=U(X_{11}\Gamma_1f+X_{12}\Gamma_0f),\quad
\wt\Gamma_0f=U(X_{21}\Gamma_1f+X_{22}\Gamma_0f)
\end{equation}
with operators $X_{ij}\in[\cH]$. By definition of the Weyl function,
\begin{equation}\label{DM_19}
\wt M(z)\wt\Gamma_0f=U\left(X_{11}M(z)+X_{12}\right)\Gamma_0f,\qquad
\wt\Gamma_0f=U\left(X_{21}M(z)+X_{22}\right)\Gamma_0f.
\end{equation}
Since the operators $M(z)$ are dissipative for $\Im z>0$,
the $J$--unitarity of the operator $X$ and the Krein--Shmuljan theorem~\cite{KrSh74}
together imply that the operator $X_{22}M(z)+X_{22}$ has a bounded inverse. Hence,
in view of the definition~\eqref{DM_18}, $\wt M(z)$ is of the form~\eqref{DM_17}.
The proof for $\Im z<0$ is similar.
\end{proof}

\begin{corollary}\label{C:DM_4}
The $J_{\wt{\mathcal H}}$--unitarity of the operator $X$ implies the relations
\begin{equation}\label{eq:1.34}
   \begin{array}{ccc}
     X_{11}^*X_{21}=X_{21}^*X_{11},\qquad & X_{12}^*X_{22}=X_{22}^*X_{12},\qquad &
     X_{11}^*X_{22}-X_{21}^*X_{12}=I_{\mathcal H},
   \end{array}
\end{equation}
\begin{equation}\label{eq:1.35}
   \begin{array}{ccc}
     X_{11}X_{12}^*=X_{12}X_{11}^*,\qquad & X_{21}X_{22}^*=X_{22}X_{21}^*,\qquad &
     X_{11}X_{22}^*-X_{12}X_{21}^*=I_{\mathcal H}.
   \end{array}
\end{equation}

\end{corollary}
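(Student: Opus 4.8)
The plan is to unwind the definition of $J$-unitarity of the block operator $X=(X_{ij})_{i,j=1}^2$ and read off the identities \eqref{eq:1.34} and \eqref{eq:1.35} entrywise. First recall that the signature operator $J$ satisfies $J^*=J$ and $J^2=I_{\cH\oplus\cH}$, so it is a self-adjoint involution. Since $X$ was shown, in the proof of Proposition~\ref{P:DM_6}, to be bounded, surjective and $J$-unitary, it is boundedly invertible with $X^{-1}=JX^*J$; equivalently, both the ``isometric'' identity $X^*JX=J$ and the ``coisometric'' identity $XJX^*=J$ hold.

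The identity $X^*JX=J$ yields \eqref{eq:1.34}. Writing $X^*=\begin{pmatrix}X_{11}^* & X_{21}^*\\ X_{12}^* & X_{22}^*\end{pmatrix}$ and computing $JX=\begin{pmatrix} iX_{21} & iX_{22}\\ -iX_{11} & -iX_{12}\end{pmatrix}$, one multiplies out the $2\times 2$ block product $X^*(JX)$ and equates it with $J=\begin{pmatrix}0 & iI_\cH\\ -iI_\cH & 0\end{pmatrix}$. The $(1,1)$ and $(2,2)$ entries give $X_{11}^*X_{21}=X_{21}^*X_{11}$ and $X_{12}^*X_{22}=X_{22}^*X_{12}$, the $(1,2)$ entry gives $X_{11}^*X_{22}-X_{21}^*X_{12}=I_\cH$, and the $(2,1)$ entry reproduces the adjoint of the latter relation, which is consistent and carries no new content. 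This is precisely \eqref{eq:1.34}.

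For \eqref{eq:1.35} one repeats the same computation starting from $XJX^*=J$: now $JX^*=\begin{pmatrix} iX_{12}^* & iX_{22}^*\\ -iX_{11}^* & -iX_{21}^*\end{pmatrix}$, and forming the block product $X(JX^*)$ and equating with $J$ yields $X_{11}X_{12}^*=X_{12}X_{11}^*$, $X_{21}X_{22}^*=X_{22}X_{21}^*$ and $X_{11}X_{22}^*-X_{12}X_{21}^*=I_\cH$ from the $(1,1)$, $(2,2)$ and $(1,2)$ entries, respectively. There is no genuine obstacle: the argument is a routine block-matrix computation. The only point worth flagging is that one must use $J$-unitarity in both forms $X^*JX=J$ and $XJX^*=J$, which is legitimate precisely because $X$ is boundedly invertible; had one only the ``isometric'' relation, the second group of identities in \eqref{eq:1.35} would not follow.
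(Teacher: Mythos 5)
Your computation is correct and is exactly the argument the paper leaves implicit: the corollary is stated without proof, the intended justification being precisely the entrywise expansion of $X^*JX=J$ (giving \eqref{eq:1.34}) and of $XJX^*=J$ (giving \eqref{eq:1.35}). Your remark that the second identity requires invertibility of $X$, which is guaranteed since $X$ is bounded, surjective and $J$-isometric, is a worthwhile point that the paper glosses over.
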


\begin{corollary}\label{C:DM_4A}
The function $M(z)$ belongs to the class $(R)$.
\end{corollary}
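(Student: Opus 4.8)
The goal is Corollary~\ref{C:DM_4A}: the Weyl function $M(z)$ of an arbitrary boundary triplet belongs to the class $(R)$, i.e.\ it is holomorphic on $\dC_+$, takes values in $[\cH]$, and satisfies $\Im M(z)\ge 0$ there.

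\medskip

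\textsc{Plan.} The plan is to reduce the general case to the "canonical" boundary triplet of the form~\eqref{DM_6}, for which the three defining properties of class $(R)$ can be read off directly, and then transport these properties through the linear-fractional transformation supplied by Proposition~\ref{P:DM_6}. First I would recall from the discussion following Definition~\ref{D:DM_4} (the paragraph justifying that $M(z)$ is well defined) that for any boundary triplet one already has: $M(z)\in[\cH]$ for $z\in\rho(A_0)$, and $\Im z\cdot\Im M(z)>0$ on $\wh\rho(A)$, together with the symmetry $M(z)^*=M(\bar z)$; what is not yet in hand is the \emph{holomorphy} of $M(z)$ on $\dC_+$. So the real content of the corollary is analyticity, after which the sign condition $\Im M(z)\ge 0$ on all of $\dC_+$ follows because $\dC_+\subset\wh\rho(A)$ (every non-real point lies in the field of regularity of a symmetric operator) and there $\Im M(z)>0$; values in $[\cH]$ are likewise already established.

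\medskip

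For analyticity I would argue as follows. Fix $z_0\in\dC_+$. Pick the canonical triplet $\{\cH^0,\Gamma_0^0,\Gamma_1^0\}$ of~\eqref{DM_6} associated with the isometry $V$; for this triplet, using $\dom(\wt A_{z}) = \dom(A) \dotplus \sN_z$ and the formula $\dom(\wt A_z)=\dom(A)\dotplus(I+C(z))\sN_i$ with A.~V.~Shtraus' characteristic function $C(z)$ (as in the proof of Proposition~\ref{P:DM_5}), one computes for $f=f_0+(I+C(z))f_i$ that $\Gamma_0^0 f=(V-C(z))f_i$ and $\Gamma_1^0 f=i(V+C(z))f_i$, so that the Weyl function of the canonical triplet is $M_0(z)=i(V+C(z))(V-C(z))^{-1}$. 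Since $C(z)$ is holomorphic on $\dC_+$ with $\|C(z)\|<1$ there (A.~V.~Shtraus \cite{Str68}), the operator $V-C(z)$ is boundedly invertible and depends holomorphically on $z$, hence $M_0(z)$ is holomorphic on $\dC_+$. Now for an arbitrary triplet $\Pi$, Proposition~\ref{P:DM_6} gives $M(z)=U(X_{11}M_0(z)+X_{12})(X_{21}M_0(z)+X_{22})^{-1}U^*$ with a $J$-unitary constant matrix $X=(X_{ij})$; the proof of that proposition already shows $X_{21}M_0(z)+X_{22}$ is boundedly invertible for $\Im z>0$, and it depends holomorphically on $z$, so the composition is holomorphic on $\dC_+$. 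Thus $M(z)$ is holomorphic on $\dC_+$, which completes the verification of all three defining properties of $(R)$.

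\medskip

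\textsc{Main obstacle.} The only genuinely delicate point is establishing holomorphy of the canonical Weyl function $M_0(z)$, which rests on the regularity properties of Shtraus' characteristic function $C(z)$ on $\dC_+$ (holomorphy and the strict contractivity $\|C(z)\|<1$). Once one grants those, the linear-fractional transport via Proposition~\ref{P:DM_6} is routine, and the remaining two properties (operator-valued in $[\cH]$, and $\Im M(z)\ge 0$) are already contained in the text preceding the corollary. An alternative, entirely self-contained route that avoids invoking $C(z)$ would be to prove directly that $\gamma(z)=(\Gamma_0|_{\sN_z})^{-1}$ and hence $M(z)=\Gamma_1\gamma(z)$ is holomorphic by differentiating the resolvent identity $\sN_z=\sN_{z_0}+(z-z_0)(A_0-z)^{-1}\sN_{z_0}$ — but since the characteristic-function approach is already developed in Section~\ref{1} and used in Proposition~\ref{P:DM_5}, the reduction above is the most economical way to close the argument here, with the precise analytic continuation argument for $M(z)$ on all of $\rho(A_0)$ postponed to the main body of Section~\ref{2}.
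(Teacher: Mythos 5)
Your proof is correct and shares the paper's overall architecture -- establish the result for the ``canonical'' boundary triplet \eqref{DM_6} and then transport it to an arbitrary triplet through the linear-fractional transformation of Proposition~\ref{P:DM_6}, noting that the positivity $\Im z\cdot\Im M(z)>0$ and the inclusion $M(z)\in[\cH]$ were already secured in the discussion following Definition~\ref{D:DM_4}. Where you diverge is in the central step, the analyticity of the canonical Weyl function $M_0(z)$: you represent $M_0(z)=i(V+C(z))(V-C(z))^{-1}$ via A.~V.~Shtraus' characteristic function, writing $V-C(z)=V(I-V^*C(z))$ and invoking the holomorphy and strict contractivity $\|C(z)\|<1$ on $\dC_+$ (facts the paper itself imports from \cite{Str68} in the proof of Proposition~\ref{P:DM_5}), whereas the paper derives the explicit resolvent formula $M_0(z)=-iI-(z+i)\Gamma_1^0(A_0-z)^{-1}$ by writing $f_z=(A_0+i)(A_0-z)^{-1}f_{-i}$, so that analyticity is immediate from analyticity of the resolvent. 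The trade-off: your route leans on an external regularity result for $C(z)$ and yields holomorphy only on $\dC_+\cup\dC_-$, which suffices for membership in $(R)$; the paper's formula \eqref{DM_20} is self-contained and gives the stronger conclusion that $M_0$ (hence $M$) is holomorphic on all of $\rho(A_0)$, including real regular points -- a fact used later in the paper. The ``alternative, entirely self-contained route'' you sketch at the end (differentiating the resolvent identity for $\gamma(z)$) is in substance exactly what the paper does.
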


\begin{proof}
First, we will prove this statement  for the function $M_0(z)$ coresponding to
a "canonical"\ boundary triplet of the form~\eqref{DM_6}. Representing $f_z\in\sN_z$
as $f_z=({A}_0+i)({A}_0-z)^{-1}f_{-i}$ we obtain from~\eqref{DM_6} that
\[
\Gamma_0^0f_z=-f_{-i},\qquad
\Gamma_1^0f_z=if_{-i}+(z+i)\Gamma_1^0({A}_0-z)^{-1}f_{-i}.
\]
Therefore, the corresponding Weyl function is of the form
\begin{equation}\label{DM_20}
M_0(z)=-iI-(z+i)\Gamma_1^0({A}_0-z)^{-1}.
\end{equation}
Relation~\eqref{DM_20} implies the analyticity of $M_0(z)$
in the domain $\rho({A}_0)$, in particular, for $\bar{z}\neq z$, and, by
 The analyticity of $M(z)$ for an arbitrary
boundary triplet follows from Proposition~\ref{P:DM_6}. The property $\Im M(z)\cdot\Im z>0$ for $\bar{z}\neq z$
has been established before.
\end{proof}
\begin{example}\label{ex:3.6}
Let $A$ be a minimal symmetric Sturm--Liouville operator
determined in $L^2(0,\infty)$ by the differential expression $Ay=-y''+q(x)y$
with a bounded potential $q(x)(=\overline{q(x)})$. Then the operator $A$ is in the limit point case at $\infty$, and $0$ is a regular endpoint for the operator $A$ (see~\cite{Naj69}). In this case,
 \[\dom(A^*)=W^{2,2}[0,\infty),\quad
\dom(A)=\{y\in W^{2,2}[0,\infty):\,y(0)=y'(0)=0\}.
 \]
 Introduce the one-parametric family of boundary triplets
$\{\Gamma_{0h},\Gamma_{1h},\dC\}$ $(h\in\dR\cup{\infty})$ by setting
\[
\Gamma_{0h}y =(y'(0)-hy(0))(1+h^2)^{-1/2},\qquad
\Gamma_{1h}y=-(hy'(0)+y(0))(1+h^2)^{-1/2}
\]
for $h\in\dR$ and
\[
\Gamma_{0,\infty}y =y(0),\qquad \Gamma_{1,\infty}y=y'(0)
\]
for $h=\infty$.

Straightforward calculations show that $M_h(z)$ coincides with the classical Weyl
function\footnote[1]{$M_h(z)$ differs in sign from $m_h(z)$ used in \cite{Marc52,Pavlov66}.} $m_h(z)$ (see~\cite{Naj69}), and formula~\eqref{DM_17} acquiring the form
\[
m_h(z)=(1-hm_\infty(z))(m_\infty(z)-h)^{-1},
\]
expresses the well-known relationship between two Weyl functions  (see~\cite{Marc52},\cite{Pavlov66}).
\end{example}

\textbf{2.} Now recall the definition of the $Q$--function of
a Hermitian operator~\cite{KL71}. Assume that $\wt{A}=\wt{A}^*$ is a self-adjoint extension of
the operator $A$ and $\cH$ is an auxiliary Hilbert space
$(\dim\cH=n\leqslant\infty)$. Assume also that $\gamma(z_0)$ is an operator from  $[\cH,\sN_{z_0}]$ such that $\gamma(z_0)^{-1}\in[\sN_{z_0},\cH]$. The relation
\begin{equation}\label{eq:gamma_f}
    \gamma(z)=(\wt{A}-z_0)(\wt{A}-z)^{-1}\gamma(z_0),\qquad
z,z_0\in\rho(\wt{A}),
\end{equation}
determines the analytic vector function with values in $[\cH,\sN_{z}]$
which is called  a $\gamma$-field of the operator $A$ (see~\cite{KL71}).
It is easy to see that the operator function $\gamma(z)$ defined by the relation~\eqref{eq:gamma}
satisfies the identity~\eqref{eq:gamma_f} and hence it is the $\gamma$-field of
the operator $A$.
\begin{definition}[{[30]}]\label{D:DM_5}
An operator function $Q(z)$ with values in $[\cH]$ is called the $Q$--function of
the operator $A$ belonging to the proper extension $\wt{A}$ if for any $z,\zeta\in\rho(\wt{A})$
the following equality holds
\begin{equation}\label{DM_21}
Q(z)-Q(\zeta)^*=(z-\bar\zeta)\gamma(\zeta)^*\gamma(z).
\end{equation}
\end{definition}
\begin{theorem}\label{T:DM_1}
The Weyl function $M(z)$ corresponding to the boundary triplet
$\{\cH,\Gamma_0,\Gamma_1\}$ is the $Q$--function of the operator $A$
belonging to the self-adjoint extension\footnote[2]{Recall that
$f\in\dom({A}_0)\Leftrightarrow f\in\dom(A^*)$ and $\Gamma_0f=0$.}
$A_0$.
\end{theorem}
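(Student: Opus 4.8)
The plan is to verify the defining identity~\eqref{DM_21} for $Q(z) = M(z)$, with $\gamma(z)$ the $\gamma$-field defined in~\eqref{eq:gamma}, i.e. $\gamma(z) = (\Gamma_0|_{\sN_z})^{-1}$. First I would fix $z, \zeta \in \rho(A_0)$ and pick arbitrary $f_z \in \sN_z$, $g_\zeta \in \sN_\zeta$. Writing $f_z = \gamma(z)\varphi$ and $g_\zeta = \gamma(\zeta)\psi$ with $\varphi = \Gamma_0 f_z$, $\psi = \Gamma_0 g_\zeta \in \cH$, the definition~\eqref{DM_15} of the Weyl function gives $\Gamma_1 f_z = M(z)\varphi$ and $\Gamma_1 g_\zeta = M(\zeta)\psi$. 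Since $f_z \in \ker(A^* - z)$ and $g_\zeta \in \ker(A^* - \zeta)$, we have $A^* f_z = z f_z$ and $A^* g_\zeta = \zeta g_\zeta$, so the left-hand side of the abstract Green identity~\eqref{DM_1} is
\[
(A^* f_z, g_\zeta) - (f_z, A^* g_\zeta) = (z f_z, g_\zeta) - (f_z, \zeta g_\zeta) = (z - \bar\zeta)(f_z, g_\zeta).
\]

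Next I would evaluate the right-hand side of~\eqref{DM_1} using the boundary values computed above:
\[
(\Gamma_1 f_z, \Gamma_0 g_\zeta) - (\Gamma_0 f_z, \Gamma_1 g_\zeta) = (M(z)\varphi, \psi) - (\varphi, M(\zeta)\psi) = (M(z)\varphi, \psi) - (M(\zeta)^*\varphi, \psi)\big/\!\!\ldots
\]
more precisely $(\varphi, M(\zeta)\psi) = (M(\zeta)^*\varphi, \psi)$, so the right-hand side equals $\big((M(z) - M(\zeta)^*)\varphi, \psi\big)_\cH$. Equating the two sides of~\eqref{DM_1} yields
\[
\big((M(z) - M(\zeta)^*)\varphi, \psi\big)_\cH = (z - \bar\zeta)(f_z, g_\zeta)_H = (z - \bar\zeta)(\gamma(z)\varphi, \gamma(\zeta)\psi)_H = (z - \bar\zeta)\big(\gamma(\zeta)^*\gamma(z)\varphi, \psi\big)_\cH.
\]
Since $\varphi$ and $\psi$ range over all of $\cH$ (because $\Gamma_0$ maps $\sN_z$ onto $\cH$, a fact established while justifying that $M(z)$ is well defined), this gives the operator identity $M(z) - M(\zeta)^* = (z - \bar\zeta)\gamma(\zeta)^*\gamma(z)$, which is exactly~\eqref{DM_21}.

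It remains to check the hypotheses of Definition~\ref{D:DM_5}: that $\gamma(z)$ is indeed a $\gamma$-field of $A$ and that $M(z)$ takes values in $[\cH]$. Both are available from the preceding material — the remark after~\eqref{eq:gamma_f} records that the $\gamma(z)$ of~\eqref{eq:gamma} satisfies~\eqref{eq:gamma_f}, and the discussion following Definition~\ref{D:DM_4} shows $M(z) \in [\cH]$ and that $\Gamma_0|_{\sN_z}$ is a bounded bijection onto $\cH$ with bounded inverse. I expect the only genuine subtlety to be making sure the surjectivity of $\Gamma_0|_{\sN_z}$ is invoked correctly so that testing against all $\varphi, \psi \in \cH$ legitimately upgrades the scalar identity to the operator identity; everything else is a direct substitution into the Green identity~\eqref{DM_1}. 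One should also note in passing that taking $\zeta = z$ in~\eqref{DM_21} recovers $\Im M(z) = \Im z \,\cdot\, \gamma(z)^*\gamma(z) \geq 0$ for $z \in \dC_+$, consistent with $M \in (R)$ as already proved in Corollary~\ref{C:DM_4A}.
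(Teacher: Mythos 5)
Your proof is correct, but it takes a genuinely different route from the paper. You verify the defining identity \eqref{DM_21} directly: plugging $f_z=\gamma(z)\varphi\in\sN_z$ and $g_\zeta=\gamma(\zeta)\psi\in\sN_\zeta$ into the abstract Green identity \eqref{DM_1} gives $(z-\bar\zeta)(\gamma(\zeta)^*\gamma(z)\varphi,\psi)_\cH$ on the left and $((M(z)-M(\zeta)^*)\varphi,\psi)_\cH$ on the right, and since $\Gamma_0$ maps $\sN_z$ onto $\cH$ (established when $M(z)$ was shown to be well defined, cf.\ Remark~\ref{R:DM_3}) this upgrades to the operator identity. Together with the observation that $\gamma(z)=(\Gamma_0|_{\sN_z})^{-1}$ satisfies \eqref{eq:gamma_f}, this is a complete and self-contained argument. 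The paper instead proceeds in two stages: it first computes $M_0(z)$ explicitly for the "canonical" boundary triplet \eqref{DM_6} via formula \eqref{DM_20} and checks \eqref{DM_21} there by hand, and then transfers the result to an arbitrary triplet through the $J$-unitary transformation of Proposition~\ref{P:DM_6}, arriving at the relation $M(z)=CM_0(z)C^*+D$ of \eqref{DM_23}. Your argument is shorter and more transparent (it is essentially the proof that became standard in the later literature on boundary triplets), and it avoids the machinery of Proposition~\ref{P:DM_6} entirely; the paper's longer route has the side benefit of producing the explicit relation \eqref{DM_23} between two Weyl functions sharing the same $A_0$, which is reused later (Remark~\ref{R:DM_4}, Theorem~\ref{T:DM_3}). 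The only point worth being pedantic about in your version — and you flag it yourself — is that the passage from the scalar identity to the operator identity genuinely requires the surjectivity of $\Gamma_0|_{\sN_z}$ and $\Gamma_0|_{\sN_\zeta}$, not merely density of their ranges; since both are isomorphisms onto $\cH$ for $z,\zeta\in\rho(A_0)$, this is fine.
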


\begin{proof}
First we give the proof for the "canonical"\ boundary triplet of form~\eqref{DM_6}.
In this case, the Weyl function has form~\eqref{DM_20}. It is clear that
\begin{equation}\label{DM_22}
\Gamma_1^0=-P({A}_0-i)P',
\end{equation}
where $P$ is the orthoprojector onto $\sN_{-i}$, and $P'$ is the projector from
$\dom(A^*)$ onto $\dom({A}_0)=\dom(A)\dot{+}(I+V)\sN_i$ in the decomposition
\[
\dom(A^*)=\dom({A}_0)\dot{+}(I-V)\sN_{i}.
\]
Hence, both formulas~\eqref{DM_19} and~\eqref{DM_20} imply that
\begin{eqnarray*}
M_0(z)&=&-iI+P(z+i)({A}_0-i)({A}_0-z)^{-1}\\
&=& P[zI+(z^2+1)({A}_0-z)^{-1}]=\\
&=& P[iI+(z-i)({A}_0+i)({A}_0-z)^{-1}].
\end{eqnarray*}
By setting $\gamma(-i)=I|_{\sN_{-i}}$ and by
taking into account that $\gamma(-i)^*=P$ one obtains the relation
\begin{eqnarray*}
M_0(z)&=&i\gamma(-i)^*\gamma(-i)+(z-i)\gamma(-i)^*\gamma(z)\\
&=&M_0(-i)^*+(z-i)\gamma(-i)^*\gamma(z),
\end{eqnarray*}
which yields the relation~\eqref{DM_21}.

Now let $\{\cH, \Gamma_0,\Gamma_1\}$ be an arbitrary boundary triplet.
Together with it, consider the "canonical"\ boundary triplet
$\{\cH^0, \Gamma_0^0,\Gamma_1^0\}$ of the form~\eqref{DM_6} and the corresponding
proper extension ${A}_0$. Let $U$ be a unitary operator from $\cH$ to $\sN_{-i}$.
The formulas that connect $\{\cH, \Gamma_0,\Gamma_1\}$ and $\{\cH^0, \Gamma_0^0,\Gamma_1^0\}$
take the form
\[
\Gamma_1=U(X_{11}\Gamma_1^0+X_{12}\Gamma_0^0), \quad
\Gamma_0=UX_{22}\Gamma_0^0,
\]
since $X_{21}=0$ in view of the condition $\ker(\Gamma_0)=\ker(\Gamma_0^{(0)})$.
Since the operator $X$ is $J$--unitary then  $X_{22}^{-1}=X_{11}^*$,
$X_{12}X_{11}^*=(X_{12}X_{11}^*)^*$.  By virtue of these relations and the formula~\eqref{DM_17}
one gets the equality
\begin{eqnarray*}
M(z)&=&U[X_{11}M_0(z)+X_{12}]X_{22}^{-1}U^*\\
&=& UX_{11}M_0(z)X_{11}^*U+U^{-1}X_{12}X_{11}^*U^*.
\end{eqnarray*}
Hence, we have
\begin{equation}\label{DM_23}
M(z)=CM_0(z)C^*+D,
\end{equation}
where $C=U^{-1}X_{11}$, $D=U^{-1}X_{12}X_{11}^*U$, and the function $M(z)$
is also the $Q$--function of the operator $A$ belonging to the extension ${A}_0$.
\end{proof}
\begin{remark}\label{R:DM_4}
In what follows the function $M(z)$ will be also called the Weyl
function of the operator ${A}_0$. Thus, two Weyl functions of the operator ${A}_0$
are related by~\eqref{DM_23}.
\end{remark}

Properties of the $Q$--function obtained in~\cite{KL73} make it possible to formulate the
following corollaries.
\begin{corollary}\label{C:DM_5}
Simple Hermitian operators $A{'}$ and $A{''}$ are unitary equivalent if and only if,
for some choice of boundary triplets $\{\cH^1,\Gamma_0',\Gamma_1'\}$ and
$\{\cH^2, \Gamma_0'',\Gamma_1''\}$ for $(A')^*$ and $(A'')^*$, respectively,
their Weyl functions coincide. In this case, the extensions\footnote[1]{$\dom(A_0')=\ker\Gamma_0',\
\dom(A_0'')=\ker\Gamma_0''$.} ${A}_0'$ and ${A}_0''$ are also unitary equivalent.
\end{corollary}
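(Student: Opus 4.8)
The plan is to reduce Corollary~\ref{C:DM_5} to the corresponding statement about $Q$-functions proved in~\cite{KL73}, using Theorem~\ref{T:DM_1}, which identifies each Weyl function $M(z)$ with the $Q$-function of the pair $\{A,A_0\}$. First I would recall the relevant fact from Kre\u{\i}n--Langer theory: a simple Hermitian operator together with a fixed self-adjoint extension is determined up to unitary equivalence by its $Q$-function; more precisely, if $Q'(z)$ and $Q''(z)$ are the $Q$-functions of the pairs $\{A',A_0'\}$ and $\{A'',A_0''\}$ (with respect to $\gamma$-fields $\gamma'$, $\gamma''$ normalized at the same point $z_0$), and $Q'(z)\equiv Q''(z)$, then there is a unitary $W:H'\to H''$ with $WA'=A''W$ and $WA_0'=A_0''W$. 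This is exactly the content needed, and by the hypothesis of the present excerpt it may be invoked.

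The forward direction is immediate: if $A'$ and $A''$ are unitarily equivalent via $W$, transport a boundary triplet $\{\cH',\Gamma_0',\Gamma_1'\}$ for $(A')^*$ to a triplet $\{\cH',\Gamma_0'W^{-1},\Gamma_1'W^{-1}\}$ for $(A'')^*$ (one checks the Green identity and surjectivity survive since $W$ intertwines the adjoints, using $(A'')^*=W(A')^*W^{-1}$). The defining relation~\eqref{DM_15} then shows the two Weyl functions coincide, and the associated extensions $A_0'$, $A_0''$ correspond under $W$. For the converse, suppose the Weyl functions coincide for some choice of triplets. By Theorem~\ref{T:DM_1}, $M'(z)$ is the $Q$-function of $A'$ belonging to $A_0'$ and $M''(z)$ is the $Q$-function of $A''$ belonging to $A_0''$; since $M'\equiv M''$, the Kre\u{\i}n--Langer uniqueness theorem yields a unitary $W$ intertwining $A'$ with $A''$ and $A_0'$ with $A_0''$. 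That $A_0'$ and $A_0''$ are themselves unitarily equivalent is then part of the same conclusion.

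The main obstacle is a bookkeeping point rather than a deep one: the $Q$-function in Definition~\ref{D:DM_5} depends on the choice of $\gamma$-field, and two Weyl functions of the same $A_0$ are related by~\eqref{DM_23}, i.e.\ $M(z)=CM_0(z)C^*+D$ with $C$ boundedly invertible. So ``the Weyl functions coincide'' must be reconciled with ``the $Q$-functions coincide up to the equivalence~\eqref{DM_23}.'' The clean way is to note that the Kre\u{\i}n--Langer theorem is insensitive to this equivalence: replacing $\gamma(z_0)$ by $\gamma(z_0)C^{-*}$ and adding a constant self-adjoint $D$ corresponds to composing the auxiliary isomorphism $\cH\to\sN_{z_0}$ with a fixed bounded invertible operator and does not change the unitary equivalence class of $\{A,A_0\}$ it encodes. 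Concretely, if $M'\equiv M''$ as operator functions on $\cH^1=\cH^2=\cH$, then in particular the $\gamma$-fields satisfy $\gamma'(z)^*\gamma'(z)=\gamma''(z)^*\gamma''(z)$ and~\eqref{DM_21} for both with the same left-hand side, which is precisely the hypothesis under which the construction in~\cite{KL73} produces the intertwining unitary; I would simply cite that construction rather than reproduce it.
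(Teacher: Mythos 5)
Your proposal is correct and follows essentially the same route as the paper: the paper states Corollary~\ref{C:DM_5} as an immediate consequence of Theorem~\ref{T:DM_1} together with the Kre\u{\i}n--Langer uniqueness property of $Q$-functions of simple operators from~\cite{KL73}, which is exactly the reduction you carry out (your explicit transport of the boundary triplet in the forward direction and the remark that coincidence of the Weyl functions already gives identical $\gamma$-field inner products are just the details the paper leaves implicit).
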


\begin{corollary}\label{C:DM_6}
For an operator function $M(z)$ with values in $[\cH]$ analytic on the upper half-plane 
 to be the Weyl function of a simple densely defined
Hermitian operator $A$, it is necessary and sufficient that the following
three conditions hold:
\begin{itemize}
\item[(i)] $M\in(R)$;
\item[(ii)] $w-\lim\limits_{y\uparrow\infty}\frac{M(iy)}{y}=0$;
\item[(iii)] $\lim\limits_{y\uparrow\infty}y\Im(M(iy)h,h)=\infty$  for any  $h\in\cH\setminus\{0\}$.
\end{itemize}
\end{corollary}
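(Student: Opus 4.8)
The plan is to characterize Weyl functions of simple densely defined Hermitian operators by combining Theorem~\ref{T:DM_1}, which identifies $M(z)$ with the $Q$-function of $A$ belonging to $A_0$, with the known characterization of $Q$-functions due to Kre\u{\i}n and Langer~\cite{KL71},~\cite{KL73}. By Theorem~\ref{T:DM_1} and Corollary~\ref{C:DM_4A}, if $M(z)$ is the Weyl function of a simple densely defined $A$, then it is automatically in $(R)$, so the necessity of (i) is immediate. The task is then to translate the remaining spectral-theoretic content --- that $A$ is densely defined and that $A_0$ has the correct behaviour --- into the asymptotic conditions (ii) and (iii), and conversely to reconstruct $A$ and a boundary triplet from any $M$ satisfying (i)--(iii).

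For the necessity direction, I would use the integral (Nevanlinna) representation of $M\in(R)$,
\[
M(z)=C_0+C_1 z+\int_{\R}\left(\frac{1}{t-z}-\frac{t}{1+t^2}\right)\,d\Sigma(t),
\]
with $C_1=C_1^*\ge 0$ and $\Sigma$ a nondecreasing $[\cH]$-valued function with $\int(1+t^2)^{-1}\,d\Sigma(t)\in[\cH]$. Condition (ii) $w\text{-}\lim_{y\to\infty}M(iy)/y=0$ is then equivalent to $C_1=0$, and I would argue that $C_1\neq 0$ corresponds exactly to the presence of a nontrivial "multi-valued part" or, in the language of the canonical formulas~\eqref{DM_6} and~\eqref{DM_20}, to $A$ failing to be densely defined --- indeed from $M_0(z)=-iI-(z+i)\Gamma_1^0(A_0-z)^{-1}$ one reads off directly that $M_0(iy)/y\to 0$ in the weak sense because $(A_0-z)^{-1}\to 0$ strongly and $\|(A_0-z)^{-1}\|=O(1/|z|)$ along the imaginary axis; a linear term could only enter through~\eqref{DM_23}, i.e. through $D$, which is bounded, so (ii) always holds for $M$ as constructed. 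For (iii), using $\Im M(iy)=C_1 y+\int \frac{y}{t^2+y^2}\,d\Sigma(t)$ and monotone convergence, $y\,\Im(M(iy)h,h)=C_1 y^2\|h\|^2+\int\frac{y^2}{t^2+y^2}\,d(\Sigma(t)h,h)\to\int d(\Sigma(t)h,h)=(\Sigma(+\infty)-\Sigma(-\infty))h,h)$ when $C_1=0$; so (iii) says this total mass is $+\infty$ for every $h\neq 0$, which is precisely the statement that $\gamma(z)^{-1}$ stays bounded and $\ran\gamma(z)$ is not eventually trivial --- equivalently that $A$ has deficiency spaces of full dimension and no "bounded piece", i.e. that $A=A^{**}$ and the $\gamma$-field is genuinely injective with bounded inverse. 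Here I would lean on the identity~\eqref{DM_21}, $\Im M(z)=(\Im z)\gamma(z)^*\gamma(z)$, which gives $y\,\Im(M(iy)h,h)=y^2\|\gamma(iy)h\|^2$, and then relate $\lim_y y^2\|\gamma(iy)h\|^2$ to $\|\gamma(i)h\|^2$-type quantities via~\eqref{eq:gamma_f} to see that its being infinite is equivalent to $\ker\gamma(z_0)^{-1}$-type degeneracy being absent, i.e. to $A$ being densely defined.

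For the sufficiency direction, given $M$ satisfying (i)--(iii), I would invoke the Kre\u{\i}n--Langer realization theorem~\cite{KL71}: every $(R_\cH)$-function satisfying the minimality-type conditions (ii), (iii) is the $Q$-function of some simple Hermitian operator $A$ (necessarily densely defined, by the converse to the computation above) belonging to a self-adjoint extension $A_0$, with an associated $\gamma$-field $\gamma(z)$. Then I would produce a boundary triplet: set $\cH$ as the auxiliary space of the realization, $\Gamma_0:=\gamma(z_0)^{-1}$ composed with the appropriate projection (i.e. mimic the canonical construction so that $\ker\Gamma_0=\dom(A_0)$), and define $\Gamma_1$ on $\sN_z$ by $\Gamma_1 f_z:=M(z)\Gamma_0 f_z$, extending to all of $\dom(A^*)$ using the von Neumann-type decomposition $\dom(A^*)=\dom(A_0)\dotplus\sN_z$; the Green identity~\eqref{DM_1} is then verified from~\eqref{DM_21}, and surjectivity of $\Gamma=(\Gamma_1,\Gamma_0)^\top$ follows from the transversality of $A_0$ and $\wt A_z$ (exactly as in the "well-definedness of $M$" argument earlier in Section~\ref{2}). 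This shows $M$ is the Weyl function of that triplet for $A^*$, completing the proof.

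The main obstacle I anticipate is the precise bookkeeping in the sufficiency direction: Kre\u{\i}n--Langer give a $Q$-function realization, but assembling from it a genuine boundary triplet $\{\cH,\Gamma_0,\Gamma_1\}$ in the sense of Definition~\ref{D:DM_1} --- in particular proving surjectivity of $\Gamma$ rather than just the Green identity, and checking that the constructed $\Gamma_1$ is consistent (single-valued, closed-graph) on the overlap of the decompositions for different $z$ --- requires care. A secondary technical point is making conditions (ii) and (iii) rigorously equivalent to "densely defined" and "full, bounded $\gamma$-field": the cleanest route is the identity $y\,\Im(M(iy)h,h)=y^2\|\gamma(iy)h\|^2$ from~\eqref{DM_21} together with the resolvent formula~\eqref{eq:gamma_f}, reducing everything to spectral-measure estimates for the self-adjoint operator $A_0$, but one must handle the case $\dim\cH=\infty$ where weak limits in (ii) are genuinely weaker than norm limits.
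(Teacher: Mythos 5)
Your proposal is correct and follows essentially the same route as the paper: the paper derives this corollary by combining Theorem~\ref{T:DM_1} (which identifies the Weyl function with the $Q$-function of $A$ belonging to $A_0$) with the Kre\u{\i}n--Langer characterization of $Q$-functions of simple densely defined Hermitian operators from~\cite{KL71},~\cite{KL73}, which is exactly your plan. The paper offers no further detail, so your elaboration (the identity $y\,\Im(M(iy)h,h)=y^2\|\gamma(iy)h\|^2$ from~\eqref{DM_21}, the spectral-measure computation showing this tends to $\infty$ precisely when $\gamma(z_0)h\notin\dom(A_0)$, i.e.\ when $\sN_{z_0}\cap\dom(A_0)=\{0\}$, and the assembly of a boundary triplet from the Kre\u{\i}n--Langer realization) is exactly the bookkeeping the paper delegates to the cited references.
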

\section{Spectra of Extensions and Weyl Function}\label{3}

In this section we describe the spectrum of the operator ${A}_B$ in
terms of the Weyl function and establish a criterion of a resolvent comparability
of two extensions ${A}_{B_1}$ and ${A}_{B_2}$.
\begin{proposition}\label{P:DM_7}
Let $\{\cH, \Gamma_0,\Gamma_1\}$ be a boundary triplet, and let
$B\in\cC(\cH)$, ${A}_B\supset A$, $z\in\hat{\rho}(A)$. Then:
\begin{itemize}
\item[(i)] $z\in\sigma_p({A}_B)\Leftrightarrow0\in\sigma_p(M(z)-B)$,
and in this case
\[
\dim\ker({A}_B-z)=\dim\ker(M(z)-B);
\]
\item[(ii)] $z\in\sigma_r({A}_B)\Leftrightarrow0\in\sigma_r(M(z)-B)$;
\item[(iii)] $z\in\sigma_c({A}_B)\Leftrightarrow0\in\sigma_c(M(z)-B)$.
\end{itemize}
\end{proposition}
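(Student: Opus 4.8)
The plan is to exploit the fundamental relationship between $\dom(A_B)$, the defect subspaces $\sN_z$, and the boundary operators, reducing each spectral‐type statement for $A_B-z$ to the corresponding statement for $M(z)-B$. The central observation is that for $z\in\wh\rho(A)$, every $f\in\dom(A^*)$ decomposes (using $\dom(A^*)=\dom(A_0)\dotplus\sN_z$, valid since $z\in\rho(A_0)$ when $z\in\wh\rho(A)$—and if $z\notin\rho(A_0)$ one works on the field of regularity directly) so that the restriction of $\Gamma_0$ to $\sN_z$ is a topological isomorphism onto $\cH$ with inverse $\gamma(z)$ (Remark~\ref{R:DM_3}), and on $\sN_z$ one has $\Gamma_1=M(z)\Gamma_0$ by Definition~\ref{D:DM_4}. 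Consequently, $f_z\in\sN_z$ lies in $\dom(A_B)$ iff $\Gamma_1f_z=B\Gamma_0f_z$, i.e. iff $(M(z)-B)\Gamma_0f_z=0$; since $\Gamma_0$ is a bijection $\sN_z\to\cH$, this gives immediately the bijection $\ker(A_B-z)=\gamma(z)\ker(M(z)-B)$, which proves (i) including the dimension equality.

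For (ii), the residual spectrum statement, I would use the standard duality $\overline{\ran(A_B-z)}\ne H \iff \ker((A_B-z)^*)\ne\{0\}$. The key point is that $(A_B)^*$ is again a proper extension governed by the adjoint boundary data: one checks that $(A_B)^* = A_{B^*}$ (for $B\in\cC(\cH)$ this follows from the Green identity~\eqref{DM_1} and the surjectivity of $\Gamma$, exactly as in the characterization of adjoints of proper extensions), and $M(z)^*=M(\bar z)$ as already noted. Then $z\in\sigma_r(A_B)$ means $z\notin\sigma_p(A_B)$ but $\bar z\in\sigma_p(A_{B^*})$; by part (i) applied to $A_{B^*}$ at the point $\bar z$ this is equivalent to $0\notin\sigma_p(M(z)-B)$ and $0\in\sigma_p(M(\bar z)-B^*)=\sigma_p((M(z)-B)^*)$, i.e. precisely $0\in\sigma_r(M(z)-B)$.

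For (iii), the continuous spectrum, I would argue by elimination together with a closed‑range/closed‑graph comparison. Since for $z\in\wh\rho(A)$ the operator $A-z$ has closed range with finite (or at worst the defect) codimension behavior controlled by $\sN_z$, the range $\ran(A_B-z)$ is closed in $H$ if and only if $\ran(M(z)-B)$ is closed in $\cH$; this is the crux and the main obstacle—establishing the equivalence of closedness of ranges. I would prove it by writing $\ran(A_B-z)$ explicitly: a vector $h\in H$ lies in $\ran(A_B-z)$ iff the equation $(A^*-z)f=h$, $f\in\dom(A^*)$, admits a solution with $\Gamma_1f=B\Gamma_0f$; choosing a fixed bounded right inverse of $(A^*-z)$ mapping $H$ into a complement of $\sN_z$ (available since $z\in\rho(A_0)$ gives $(A_0-z)^{-1}\in[H,\dom(A_0)]$), this solvability condition becomes an equation of the form $(M(z)-B)\Gamma_0 f_z = (\text{bounded affine image of }h)$, so $\ran(A_B-z)$ is the preimage under a bounded surjection of $\ran(M(z)-B)$, hence closed iff the latter is. Combining the closed‑range equivalence with parts (i) and (ii) and the definitions $\sigma_c=\{z:\ker(\cdot-z)=0,\ \overline{\ran}=H,\ \ran\ne H\}$ versus its analogue for $M(z)-B$ yields (iii). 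Throughout one must be careful that $B$ is only closed, not bounded, but since $M(z)\in[\cH]$ the operator $M(z)-B$ has the same domain as $B$ and is closed, so all the Fredholm‑type bookkeeping goes through verbatim.
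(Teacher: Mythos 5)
Your proof is correct and follows essentially the same route as the paper's: parts (i) and (ii) are identical in substance, and in (iii) your reduction of $(A^*-z)f=h$ via the decomposition $f=(A_0-z)^{-1}h+f_z$, $f_z\in\sN_z$, to the equation $(M(z)-B)\Gamma_0 f_z=-\Gamma_1(A_0-z)^{-1}h$ is exactly the computation the paper uses to establish $z\in\rho(A_B)\Leftrightarrow 0\in\rho(M(z)-B)$ and then conclude (iii) by elimination. Your extra packaging of this step as a closed-range statement (range of $A_B-z$ equals the preimage of $\ran(M(z)-B)$ under the bounded surjection $h\mapsto-\Gamma_1(A_0-z)^{-1}h$) is a harmless variant of the paper's direct construction of the resolvent.
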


\begin{proof}
(see \cite{GG72A}, \cite{Koc80}). (i) Let $z\in\sigma_p({A}_B)$, and let
${A}_Bf=zf$ for some $f\ne 0$. Then $f\in\sN_z$ and $M(z)\Gamma_0f=\Gamma_1f$.
Taking into account that $B\Gamma_0f=\Gamma_1f$ we have
\[
(M(z)-B)\Gamma_0f=0 \quad \mbox{ and } \quad \Gamma_0{f}\ne 0,
\]
and hence $0\in\sigma_p(M(z)-B)$.

Conversely, if $(M(z)-B)h=0$ for some $h\in\cH\setminus\{0\}$,
then, choosing an $\wt{f}$ such that $\Gamma_0\wt{f}=h$, $\Gamma_1\wt{f}=Bh$,
we obtain
\[
M(z)\Gamma_0\wt{f}=\Gamma_1\wt{f}\quad\mbox{and}\quad
\Gamma_0\wt{f}=h\ne 0.
\]
Thus, $\wt{f}\in\dom(A)\dotplus\sN_z$. Let $f$ be a component of the vector
$\wt{f}$ lying in $\sN_z$. Then ${A}_Bf=zf$ and $f\ne 0$.
It follows that $z\in\sigma_p({A}_B)$.

Finally, multiplicities of the eigenvalues $z\in\sigma_p({A}_B)$ and
$0\in\sigma_p(M(z)-B)$ coincide since the mapping $f\mapsto\Gamma_0f$
sets up a one-to-one correspondence between eigenspaces $\ker({A}_B-z)$ and
$\ker(M(z)-B)$.

(ii) If $z\in\sigma_r({A}_B)$, then $\bar{z}\in\sigma_p(\wt{A}_{B^*})$,
 and from (i) it follows that $0\in\sigma_p(M(\bar{z})-B^*)$.
Since $M(\bar{z})=M^*(z)$, we conclude that $0\in\sigma_r(M(z)-B)$.

(iii) To prove (iii), it suffices to establish the equivalence
\[
z\in\rho(A_B)\Longleftrightarrow 0\in\rho(M(z)-B).
\]
Let us prove the solvability of the equation
\begin{equation}\label{DM_24}
({A}_B-z)\varphi=h
\end{equation}
for any $h\in H$.
We will seek its solution in the form $\varphi=f+g$ assuming that $f\in\sN_z$ and
$g$ is a unique solution to the equation $(A^*-z)g=h$ such that $\Gamma_0g=0$ (i.e.,  $g=(\wt{A}_0-z)^{-1}h$). Since $0\in\rho(M(z)-B)$, there exists a $u_0\in \cH$ such that
$(M(z)-B)u_0=-\Gamma_1g$. Define an $\wt{f}\in\dom(A^*)$ from the following conditions:
\[
\Gamma_0\wt{f}=u_0, \quad\Gamma_1\wt{f}=M(z)u_0.
\]
Then $\wt{f}\in\dom(A)+\sN_z$ and $\wt{f}=f_A+f$ $(f_A\in\dom(A),\
f\in\sN_z)$. It is clear that
\begin{equation}\label{DM_25}
(M(z)-B)\Gamma_0\wt{f}=-\Gamma_1g,\qquad
\Gamma_1(f+g)=B\Gamma_0f=B\Gamma_0(f+g).
\end{equation}
Therefore, $f+g\in\dom({A}_B)$, and the solvability of the equation~\eqref{DM_24}
is proved.

To prove the converse, we will show the solvability of the equation $(M(z)-B)u_0=u_1$ for any  $u_1\in H$ assuming that $z\in\rho({A}_B)$. By definition of a boundary triplet, there exists a $g\in\dom(A^*)$ such that
\[
\Gamma_0g=0,\quad \Gamma_1g=-u_1.
\]
Let
$\varphi=({A}_B-z)^{-1}(A^*-z)g\in\dom({A}_B)$. Since
$f=\varphi-g\in\sN_z$, we obtain that $M(z)\Gamma_0f=\Gamma_1f$ and hence
\[
(M(z)-B)\Gamma_0f=(\Gamma_1-B\Gamma_0)f=(\Gamma_1-B\Gamma_0)(\varphi-g)=-\Gamma_1g=u_1.
\]
Putting $u_0=\Gamma_0f$ we arrive at the desired result.
\end{proof}
   \begin{remark}\label{R:DM_5}
For dissipative extensions ${A}_B$, Proposition~\ref{P:DM_7} was proved before. Namely,
in the case of a minimal Sturm--Liouville operator with an operator potential it was proved
in~\cite{GG72A}, and in the case of a Hermitian operator it was proved in~\cite{Koc80}.
     \end{remark}
\begin{corollary}\label{C:DM_7}
Suppose that $\{\cH,\Gamma_0,\Gamma_1\}$ is a boundary triplet for $A^*$,
$B\in[\cH]$, ${A}_B\supset A$. Then the following conditions are equivalent:
\begin{itemize}
\item[(i)] $z\in\rho({A}_B)$;
\item[(ii)] $\Gamma_1-B\Gamma_0$ isomorphically maps $\sN_z$ onto $\cH$;
\item[(iii)] the extensions $\wt{A}_z$ and ${A}_B$ are transversal.
\end{itemize}
\end{corollary}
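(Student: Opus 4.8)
The plan is to prove the three-way equivalence (i) $\Leftrightarrow$ (ii) $\Leftrightarrow$ (iii) for a bounded operator $B\in[\cH]$, leaning heavily on Proposition~\ref{P:DM_7}(iii), which already gives $z\in\rho(A_B)\Leftrightarrow 0\in\rho(M(z)-B)$. So the real content is to translate $0\in\rho(M(z)-B)$ into the two geometric conditions, and this is essentially a matter of unwinding the definition of the Weyl function together with the fact (established when justifying that $M(z)$ is well defined, and recorded in Remark~\ref{R:DM_3}) that $\Gamma_0$ and $\Gamma_1$ map $\sN_z$ isomorphically onto $\cH$ for $z\in\rho(A_0)$.

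First I would prove (i) $\Leftrightarrow$ (ii). For $f_z\in\sN_z$ one has $(\Gamma_1-B\Gamma_0)f_z=(M(z)-B)\Gamma_0 f_z$ by the defining relation~\eqref{DM_15} of $M(z)$. Since $\gamma(z)=(\Gamma_0|_{\sN_z})^{-1}$ is a topological isomorphism of $\cH$ onto $\sN_z$ (Remark~\ref{R:DM_3}), the operator $\Gamma_1-B\Gamma_0$ restricted to $\sN_z$ is the composition $(M(z)-B)\circ\Gamma_0|_{\sN_z}$, hence it is an isomorphism of $\sN_z$ onto $\cH$ if and only if $M(z)-B$ is boundedly invertible in $[\cH]$, i.e. $0\in\rho(M(z)-B)$. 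Combined with Proposition~\ref{P:DM_7}(iii) this yields (i) $\Leftrightarrow$ (ii). One should note here that $z\in\rho(A_B)$ forces $z\in\rho(A_0)$ as well — or more precisely, that the relevant statements are for $z\in\hat\rho(A)$, which holds automatically when $z\in\rho(A_B)$ and which one assumes in the reverse direction — so that $\gamma(z)$ is defined; this is the kind of bookkeeping point I would want to state carefully.

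Next, (i) $\Leftrightarrow$ (iii). The extension $\wt A_z$ has $\dom(\wt A_z)=\dom(A)\dotplus\sN_z$, i.e. $\dom(\wt A_z)=\ker(\Gamma_0|_{?})$ is the preimage under $\Gamma$ of $\{(\Gamma_1 f_z,\Gamma_0 f_z):f_z\in\sN_z\}$, which by the graph-isomorphism~\eqref{DM_8} corresponds to the subspace $\{(M(z)\varphi,\varphi):\varphi\in\cH\}$, the graph of $M(z)$ in $\cH\oplus\cH$. Meanwhile $\dom(A_B)$ corresponds to the graph of $B$. By the argument already used in the proof of Proposition~\ref{P:DM_2} (the transversality of two proper extensions is equivalent to their associated subspaces being complementary in $\cH^2$ in the manner of~\eqref{DM_9}), $\wt A_z$ and $A_B$ are transversal if and only if the graphs of $M(z)$ and of $B$ are transversal, which — since both are graphs of bounded operators — happens precisely when $M(z)-B$ is boundedly invertible. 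Again invoking Proposition~\ref{P:DM_7}(iii) gives (i) $\Leftrightarrow$ (iii).

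The main obstacle, such as it is, is not a deep one: it is making sure the domain hypotheses line up so that $M(z)$, $\gamma(z)$, and $\wt A_z$ are all legitimately in play. Specifically, $\wt A_z$ is only the "right" transversal extension for $\Im z\neq 0$, where maximal dissipativity was used; for real $z\in\hat\rho(A)$ one needs the analogous (and easier) fact that $\dom(A)\dotplus\sN_z$ is a proper extension with $z$ in its resolvent set, and the surjectivity $\Gamma_0\sN_z=\cH$ must be re-derived there. I would handle this by simply remarking that for $z\in\hat\rho(A)$ the subspace $\sN_z$ has the same dimension as $\cH$, $\Gamma_0|_{\sN_z}$ and $\Gamma_1|_{\sN_z}$ are bounded and injective with closed range equal to $\cH$ (this is exactly Remark~\ref{R:DM_3}), and then the graph-subspace dictionary of~\eqref{DM_8} applies verbatim. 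Everything else is routine rephrasing.
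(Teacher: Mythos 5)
Your proposal is correct, and its overall strategy is the same as the paper's: everything is funnelled through Proposition~\ref{P:DM_7}(iii), i.e.\ $z\in\rho(A_B)\Leftrightarrow 0\in\rho(M(z)-B)$, together with the identity $(\Gamma_1-B\Gamma_0)|_{\sN_z}=(M(z)-B)\Gamma_0|_{\sN_z}$ and the fact from Remark~\ref{R:DM_3} that $\Gamma_0|_{\sN_z}$ is an isomorphism onto $\cH$. Your treatment of (i)$\Leftrightarrow$(ii) is word for word the paper's argument. For (i)$\Leftrightarrow$(iii) you diverge slightly in mechanism: the paper works downstairs in $\dom(A^*)$, first showing $(\Gamma_1-B\Gamma_0)\dom(A^*)=\cH$ via the decomposition $\dom(A^*)=\dom(A_B)+\dom(A_0)$ and then reading off surjectivity and injectivity of $M(z)-B$ from the transversality conditions for the pair $(\wt A_z, A_B)$; you instead push everything through the graph correspondence~\eqref{DM_8} and observe that transversality of $\wt A_z$ and $A_B$ amounts to the complementarity in $\cH\oplus\cH$ of the graphs of the bounded operators $M(z)$ and $B$, hence to bijectivity (and so bounded invertibility) of $M(z)-B$. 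The two routes are equivalent reformulations of the same decomposition; yours reuses the subspace dictionary of Proposition~\ref{P:DM_2} more explicitly and is arguably a touch cleaner, at the cost of having to justify that $\dom(\wt A_z)$ corresponds to the graph $\{(M(z)\varphi,\varphi):\varphi\in\cH\}$, which is exactly the defining relation~\eqref{DM_15} plus $\Gamma_0\sN_z=\cH$. Your caveats about where $z$ must lie so that $M(z)$, $\gamma(z)$ and $\wt A_z$ are all defined are appropriate and are in fact glossed over in the paper as well; no substantive gap results.
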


\begin{proof}
Since $z\in\rho({A}_B)\Leftrightarrow0\in\rho(M(z)-B)$, the equivalence
(i)$\Leftrightarrow$(ii) is implied by both the obvious relation $(\Gamma_1-B\Gamma_0)|_{\sN_z}=(M(z)-B)\Gamma_0|_{\sN_z}$ and Remark~\ref{R:DM_3}.

To prove the equivalence (i)$\Leftrightarrow$(iii), we notice that,
in view of Proposition~\ref{P:DM_2},
\[
\dom(A^*)=\ker(\Gamma_1-B\Gamma_0)+\ker\Gamma_0.
\]
Therefore,
\begin{eqnarray*}
(\Gamma_1-B\Gamma_0)\dom(A^*)&=&(\Gamma_1-B\Gamma_0)(\dom({A}_B)+\dom({A}_0))=\\
&=&(\Gamma_1-B\Gamma_0)\dom({A}_0)=\Gamma_1(\dom({A}_0))=\cH.
\end{eqnarray*}

Further, if ${A}_0$ and ${A}_B$ are transversal, then
\[
\cH=(\Gamma_1-B\Gamma_0)\dom(A^*)=(\Gamma_1-B\Gamma_0)\sN_z=(M(z)-B)\Gamma_0\sN_z.
\]
It follows that $(M(z)-B)^{-1}\in[\cH]$ since
\[
\ker(M(z)-B)=\{0\}\Leftrightarrow\dom({A}_B)\cap\dom(\wt{A}_z)=\dom(A).
\]
Conversely, if $z\in\rho({A}_B)$, then
\[
(\Gamma_1-B\Gamma_0)\sN_z=\cH=(\Gamma_1-B\Gamma_0)\dom(A^*)\quad\mbox{
and }\quad \dom(A^*)=\dom({A}_B)\dot{+}\sN_z.
\]
\end{proof}

The following Lemma can easily be extracted from the proof of Theorem~\ref{T:DM_1}.

\begin{lemma}\label{L:DM_1}
Suppose that a proper extension $\wt{A}$ of an operator $A$ at some boundary triplet $\{\cH, \Gamma_0,\Gamma_1\}$ has the form $\wt{A}={A}_B$,
where $B\in\cC(\cH^1)$ $(B\in[\cH^1])$. Then at a boundary triplet $\{\cH^1, \Gamma_0^1,\Gamma_1^1\}$ such that $\ker\Gamma_0=\ker\Gamma_0^1$,
the extension $\wt{A}$ is also of the form $\wt{A}={A}_{B_1}$, where $B_1\in\cC(\cH)$ $(B_1\in[\cH])$.
\end{lemma}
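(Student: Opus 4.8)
\textbf{Plan for the proof of Lemma~\ref{L:DM_1}.}
The idea is to compare the two boundary triplets $\{\cH,\Gamma_0,\Gamma_1\}$ and $\{\cH^1,\Gamma_0^1,\Gamma_1^1\}$ using the transformation formula of Proposition~\ref{P:DM_6}, specialised to the situation where the two triplets share the same kernel of the zeroth component, $\ker\Gamma_0=\ker\Gamma_0^1=\dom(\wt A_0)$. First I would fix a unitary $U$ from $\cH$ onto $\cH^1$ (possible since $\dim\cH=\dim\cH^1=n_\pm(A)$) and invoke Proposition~\ref{P:DM_6} to write $\Gamma^1=\big(\begin{smallmatrix}U&0\\0&U\end{smallmatrix}\big)X\,\Gamma$ with $X=(X_{ij})_{i,j=1}^2$ a $J$-unitary operator in $\cH\oplus\cH$. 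The hypothesis $\ker\Gamma_0^1=\ker\Gamma_0$ forces the lower-left entry to vanish, $X_{21}=0$, exactly as in the last part of the proof of Theorem~\ref{T:DM_1}; then the $J$-unitarity relations of Corollary~\ref{C:DM_4} give $X_{22}=(X_{11}^*)^{-1}$ (so $X_{11}$ is boundedly invertible) and $X_{11}X_{12}^*=X_{12}X_{11}^*$. Consequently
\[
\Gamma_1^1 = U(X_{11}\Gamma_1+X_{12}\Gamma_0),\qquad \Gamma_0^1=UX_{22}\Gamma_0 .
\]

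Next I would translate the defining condition $\Gamma_1 f=B\Gamma_0 f$ of $\wt A=A_B$ into a condition on $\Gamma_0^1 f,\Gamma_1^1 f$. For $f\in\dom(A^*)$, set $\psi=\Gamma_0 f$; then $f\in\dom(\wt A)$ iff $\Gamma_1 f\in\dom(B)$ and $\Gamma_1 f=B\psi$. Substituting into the formulas above, $\Gamma_0^1 f=UX_{22}\psi$, so $\psi=X_{22}^{-1}U^*\Gamma_0^1 f=X_{11}^*U^*\Gamma_0^1 f$, and $\Gamma_1^1 f=U(X_{11}B+X_{12})\psi=U(X_{11}B+X_{12})X_{11}^*U^*\,\Gamma_0^1 f$. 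Hence the natural candidate is
\[
B_1:=U(X_{11}B+X_{12})X_{11}^*U^* = UX_{11}B X_{11}^* U^* + UX_{12}X_{11}^*U^*,
\]
with domain $\dom(B_1)=UX_{22}\dom\Gamma_0=\cH^1$ in the bounded case, and $\dom(B_1)=\{UX_{22}\psi:\psi\in\cH,\ X_{11}B\psi+X_{12}\psi\ \text{defined}\}$, i.e. $\dom(B_1)=U X_{22}\dom(B)$, in the unbounded case. One then checks $\wt A=A_{B_1}$ at the new triplet by running the equivalence in both directions: $f\in\dom(\wt A)\Leftrightarrow \Gamma_1 f=B\Gamma_0 f\Leftrightarrow \Gamma_1^1 f=B_1\Gamma_0^1 f$, using that $\Gamma=\big(\begin{smallmatrix}\Gamma_1\\\Gamma_0\end{smallmatrix}\big)$ and $\Gamma^1$ are both surjective onto $\cH^2$, resp. $(\cH^1)^2$.

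Finally I would verify the regularity class of $B_1$. Boundedness is immediate when $B\in[\cH]$, since $B_1$ is then a composition and sum of bounded operators and $\dom(B_1)=\cH^1$. When $B\in\cC(\cH)$, closedness of $B_1$ follows because $UX_{22}$ and $UX_{11}^*$ ($=UX_{22}^{-1}$... more precisely $U$ times the bounded boundedly invertible $X_{11}^*$) are bounded with bounded inverses, so $B_1=U(X_{11}B+X_{12})X_{11}^*U^*$ is obtained from the closed operator $B$ by left/right multiplication by boundedly invertible bounded operators plus a bounded additive term; each such operation preserves closedness. Density of $\dom(B_1)=UX_{22}\dom(B)$ in $\cH^1$ follows because $UX_{22}$ is a bounded bijection (being $U$ composed with the boundedly invertible $X_{22}$) and hence maps the dense set $\dom(B)$ onto a dense set. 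The main thing to be careful about is the bookkeeping for the unbounded case: one must confirm that $\dom(B_1)$ as produced by the transformation of the boundary maps is exactly $UX_{22}\dom(B)$ and that no spurious enlargement or shrinking occurs; this is where I expect the only real (though still routine) work to lie, and it is handled by using the surjectivity of the full boundary map $\Gamma^1$ together with the bounded invertibility of $X_{11}$ and $X_{22}$ that we extracted from $X_{21}=0$ and $J$-unitarity.
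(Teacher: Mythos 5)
Your proposal is correct and follows essentially the same route as the paper: apply Proposition~\ref{P:DM_6}, note that $\ker\Gamma_0=\ker\Gamma_0^1$ forces $X_{21}=0$, use $J$-unitarity to get $X_{22}=(X_{11}^*)^{-1}$, and set $B_1=U(X_{11}B+X_{12})X_{11}^*U^*$, which coincides with the paper's formula~\eqref{DM_26} since $X_{12}=X_{11}K$. (Only a slip of the pen: the membership condition should read $\Gamma_0f\in\dom(B)$, not $\Gamma_1f\in\dom(B)$; your added bookkeeping for closedness and density of $\dom(B_1)$ in the unbounded case is detail the paper leaves implicit.)
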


\begin{proof}
Formula \eqref{DM_18} gives the following relations between the
boundary triplets $\{\cH,\Gamma_0,\Gamma_1\}$ and $\{\cH^1, \Gamma_0^1,\Gamma_1^1\}$:
\[
\Gamma_0^1=UX_{22}\Gamma_0,\qquad
\Gamma_1^1=UX_{11}(\Gamma_1+K\Gamma_0),
\]
where $U$ is a unitary operator from $\cH$ to $\cH^1$,
$K=X_{11}^{-1}X_{12}=K^*\in[\cH]$. The relation $\Gamma_1=B\Gamma_0$ yields
$\Gamma_1=UX_{11}(B+K)\Gamma_0$. By putting
\begin{equation}\label{DM_26}
B_1=UX_{11}(B+K)X_{11}^*U^*
\end{equation}
one obtains
$\dom(\wt{A})=\dom({A}_{B_1})=\ker(\Gamma_1^1-B_1\Gamma_0^2)$.
This completes the proof.
\end{proof}

In what follows $\sS(H)$ stands for a two-sided ideal in the algebra
$[\cH]$.

\begin{proposition}\label{P:DM_8}
Let $\{\cH, \Gamma_0,\Gamma_1\}$ be a boundary triplet of
$B_1\in\cC(\cH), B_2\in\cC(\cH)$, and let ${A}_{B_1}$ and $ A_{B_2}$ be almost
solvable extensions of the operator $A$ with a common regular point
$z\in\rho({A}_{B_1})\cap\rho({A}_{B_2})$. Then
\begin{equation}\label{eq:ResComp}
    ({A}_{B_1}-z)^{-1}-({A}_{B_2}-z)^{-1}\in\sS(H)\Longleftrightarrow
(B_1-M((z))^{-1}-(B_2-M(z))^{-1}\in\sS(\cH).
\end{equation}
\end{proposition}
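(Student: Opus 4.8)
The plan is to derive a Krein-type resolvent formula expressing the difference of resolvents of $A_{B_1}$ and $A_{B_2}$ through the Weyl function and the $\gamma$-field, and then read off the equivalence from the structure of that formula. First I would recall from Theorem~\ref{T:DM_1} and Remark~\ref{R:DM_3} that the $\gamma$-field $\gamma(z) = (\Gamma_0|_{\sN_z})^{-1}$ is a bounded operator from $\cH$ onto $\sN_z$ with bounded inverse, and that $M(z)$ is the corresponding $Q$-function, so $M(z) - M(\bar\zeta)^* = (z - \bar\zeta)\gamma(\zeta)^*\gamma(z)$. Using the computation already carried out in the proof of Proposition~\ref{P:DM_7}(iii) — where the solution $\varphi$ of $(A_B - z)\varphi = h$ was constructed as $\varphi = f + g$ with $g = (\wt A_0 - z)^{-1}h$ and $f \in \sN_z$ determined by $(M(z) - B)\Gamma_0 f = -\Gamma_1 g$ — one extracts the identity
\begin{equation}\label{eq:Krein}
(A_B - z)^{-1} = (\wt A_0 - z)^{-1} - \gamma(z)\bigl(M(z) - B\bigr)^{-1}\gamma(\bar z)^*,
\end{equation}
valid for $z \in \rho(A_B) \cap \rho(A_0)$ and $B \in \cC(\cH)$ (here one needs to identify $\Gamma_1 g$ with $\gamma(\bar z)^* h$, which follows from the second Green identity \eqref{DM_1} applied to $g$ and an element of $\sN_{\bar z}$).

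Granting \eqref{eq:Krein}, subtracting the two formulas for $B_1$ and $B_2$ cancels the common term $(\wt A_0 - z)^{-1}$ and yields
\begin{equation}\label{eq:ResDiff}
(A_{B_1} - z)^{-1} - (A_{B_2} - z)^{-1} = \gamma(z)\Bigl[\bigl(B_2 - M(z)\bigr)^{-1} - \bigl(B_1 - M(z)\bigr)^{-1}\Bigr]\gamma(\bar z)^*.
\end{equation}
Now the equivalence \eqref{eq:ResComp} follows from the two-sided ideal property of $\sS$: the right-hand side of \eqref{eq:ResDiff} lies in $\sS(H)$ whenever the bracketed difference lies in $\sS(\cH)$, since $\gamma(z)$ and $\gamma(\bar z)^*$ are bounded; conversely, since $\gamma(z)$ and $\gamma(\bar z)^*$ are boundedly invertible (as maps $\cH \to \sN_z$ and $\sN_{\bar z} \to \cH$ respectively, or rather their relevant restrictions have bounded left/right inverses), multiplying \eqref{eq:ResDiff} on the left by $\gamma(z)^{-1}$ — extended by $0$ on $\sM_z = \sN_{\bar z}^\perp$ — and on the right by the corresponding inverse of $\gamma(\bar z)^*$ recovers $(B_2 - M(z))^{-1} - (B_1 - M(z))^{-1}$ inside $\sS(\cH)$. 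A small point to be careful about: $\gamma(z)$ maps into the (generally non-closed-complemented, but here closed) subspace $\sN_z$, so the inversion argument should be phrased via a bounded operator $\gamma(z)^{[-1]} \in [\sN_z, \cH]$ with $\gamma(z)^{[-1]}\gamma(z) = I_\cH$, which exists precisely because $\gamma(z)^{-1} \in [\sN_z,\cH]$ by Remark~\ref{R:DM_3}.

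I expect the main obstacle to be the rigorous justification of the resolvent formula \eqref{eq:Krein} for possibly \emph{unbounded} $B \in \cC(\cH)$ — in particular verifying that $(M(z) - B)^{-1} \in [\cH]$ under the hypothesis $z \in \rho(A_B)$. This is where Proposition~\ref{P:DM_7}(iii) (and its proof) does the real work: it is exactly the statement $z \in \rho(A_B) \Leftrightarrow 0 \in \rho(M(z) - B)$, so this can be invoked directly, but one must check the domain bookkeeping — that $g = (\wt A_0 - z)^{-1}h$ indeed satisfies $\Gamma_1 g = \gamma(\bar z)^* h$ and that the resulting $\varphi = f + g$ genuinely lies in $\dom(A_B)$ — which requires $\Gamma_0 g = 0$ together with the dissipativity/closedness facts about $\wt A_z = \dom(A) + \sN_z$ established earlier. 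Once \eqref{eq:Krein} is secured, everything else is a two-line consequence of the ideal property, so the substance of the proposition is really contained in the already-proved Krein formula plus Proposition~\ref{P:DM_7}.
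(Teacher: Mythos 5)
Your argument is correct, but it is not the route the paper takes. You derive the Krein-type resolvent formula
\begin{equation*}
(A_{B}-z)^{-1}-(A_{0}-z)^{-1}=\gamma(z)\bigl(B-M(z)\bigr)^{-1}\gamma(\bar z)^{*}
\end{equation*}
for an \emph{arbitrary} boundary triplet directly from the construction in the proof of Proposition~\ref{P:DM_7}(iii) (the only extra ingredient being the adjoint identity $\Gamma_1(A_0-z)^{-1}h=\gamma(\bar z)^*h$, which you correctly note follows from the Green identity \eqref{DM_1} tested against $\sN_{\bar z}$), subtract, and invoke the two-sided ideal property together with the bounded invertibility of $\gamma(z)$ and of $\gamma(\bar z)^*\gamma(\bar z)$. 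The paper instead first passes to the ``canonical'' boundary triplet attached to the point $z$ itself, for which $M_0(z)=z$, and there proves the special identity \eqref{DM_29}, $(B-z)^{-1}V=[(A_0-z)^{-1}-(A_B-z)^{-1}]|_{\sN_{\bar z}}$, by an explicit von Neumann-type decomposition $g=g_z+g_{\bar z}$; the general case is then obtained by transporting $B$ and $M$ through Lemma~\ref{L:DM_1} and the congruences \eqref{DM_27}--\eqref{DM_30}. Your version is a one-step argument uniform in the triplet and is essentially the modern standard proof; the paper's version buys the auxiliary congruence \eqref{DM_30}, which is reused later (e.g.\ in Proposition~\ref{P:DM_9}), at the cost of the two-stage reduction. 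Two small points you should make explicit: (i) the formula, and indeed the very appearance of $M(z)$ in the statement, tacitly requires $z\in\rho(A_0)$ in addition to $z\in\rho(A_{B_1})\cap\rho(A_{B_2})$ (the paper is equally silent on this, arranging it implicitly through the choice of canonical triplet); and (ii) the passage between $\sS(H)$ and $\sS(\cH)$ under conjugation by the bounded, boundedly invertible maps $\gamma(z)$, $\gamma(\bar z)^*$ presupposes the usual identification of the ideal across the two spaces (via $s$-numbers), which again matches the paper's own level of rigor.
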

\begin{proof} {\it Step 1.}
First we prove Proposition~\ref{P:DM_8} assuming that the boundary triplet
$\{\cH, \Gamma_0,\Gamma_1\}$ is the canonical one\footnote[1]{For $z\neq i$
the "canonical"\ boundary triplet is constructed as follows:
$$\cH^0=\sN_{z},\quad \Gamma_0^0=P_z-VP_{\bar{z}},\quad
\Gamma_1^0=zP_z-\bar{z}VP_{\bar{z}},$$ where $V$ is an isometry from
$\sN_{\bar z}$ to $\sN_{ z}$ such that
$\dom(\widetilde{A}_0)=\dom(A)\dotplus(I+V)\mathfrak{N}_z $, and
$P_{z}, P_{\bar z}$ are projectors in $\dom(A^*)$ onto
$\mathfrak{N}_{z}$ and $\mathfrak{N}_{\bar z}$ respectively in the decomposition
$\dom(A^*)=\dom(A)\dotplus\mathfrak{N}_{z}\dotplus
\mathfrak{N}_{\bar z}$.} $\{\cH^0, \Gamma_0^0,\Gamma_1^0\}$. Since
\[
[({A}_{B_1}-z)^{-1}-({A}_{B_2}-z)^{-1}]|_{\sN_z}=0,
\]
it follows that
\begin{equation}\label{eq:Res2}
({A}_{B_1}-z)^{-1}-({A}_{B_2}-z)^{-1}\in\sS(H)\Longleftrightarrow
[({A}_{B_1}-z)^{-1}-({A}_{B_2}-z)^{-1}]|_{\sN_z}\in\sS(\sN_z).
\end{equation}

Hence it suffices to consider the difference of resolvents
on the subspace $\sN_{\bar{z}}$. To this purpose, we represent the vector
$g=g_{\bar{z}}+g_z\in\dom({A}_B)\cap(\sN_z\dot{+}\sN_{\bar{z}})$ as
\[
g=f_0+f_z\quad\mbox{ by putting  }\quad
f_0=(I+V)g_{\bar{z}}\in\dom({A}_0),\quad
f_z=g_z-Vg_{\bar{z}}\in\sN_{\bar{z}}
\]
(here $V$ is an isometry from $\sN_{\bar{z}}$ onto $\sN_z$). Then
\[\begin{array}{ll}
  \Gamma_0^0g& =\Gamma_0^0(f_0+f_z)=\Gamma_0^0f_z=(P_z-VP_{\bar{z}})f_z=f_z,  \\
  \Gamma_1^0g &
  =(zP_z-\bar{z}VP_{\bar{z}})(g_z+g_{\bar{z}})=zf_z+(z-\bar{z})Vg_{\bar{z}}.
\end{array}\]
Therefore, the relation $\Gamma_1^0g=B\Gamma_0^0g$ valid for any $g\in\dom({A}_B)$
is equivalent to
\[
(B-z)f_z=(z-\bar{z})Vg_{\bar{z}}.
\]
But in this case $\ker(B-z)=\{0\}$, $(B-z)^{-1}\in\cC(\cH)$ and
\[
f_z=(z-\bar{z})(B-z)^{-1}Vg_{\bar{z}}.
\]
On the other hand,
\[
({A}_B-z)g=({A}_B-z)(g_z+g_{\bar{z}})=(\bar{z}-z)g_{\bar{z}}\quad
\mbox{and}\quad ({A}_0-z)f_0=({A}_0-z)(I+V)g_{\bar{z}}=(\bar{z}-z)g_{\bar{z}}.
\]
Hence we have
$f_z=g-f_0=(z-\bar{z})[(\wt{A}-z)^{-1}-({A}_B-z)^{-1}]g_{\bar{z}}$.
It follows that
\begin{equation}\label{DM_29}
(B-z)^{-1}V=[({A}_0-z)^{-1}-({A}_B-z)^{-1}]|_{\sN_{\bar{z}}}.
\end{equation}
In view of~\eqref{eq:Res2}, this proves the equivalence~\eqref{eq:ResComp}.

 {\it Step 2.}
In the case when a boundary triplet $\{\cH, \Gamma_0,\Gamma_1\}$ is arbitrary
we consider the canonical boundary triplet of form~\eqref{DM_6} such that  $\ker\Gamma_0^0=\ker\Gamma_0=\dom({A}_0)$. By Lemma~\ref{L:DM_1}, the extensions
${A}_{B_j}$ in the boundary triplet $\{\cH^0, \Gamma_0^0,\Gamma_1^0\}$
are of the form ${A}_{B_j^0}$, where ${B}_j^0\in\cC(\cH^0)$ $(j=1,2)$.
It follows from~\eqref{DM_26}  that
\begin{equation}\label{DM_27}
B_j=UX_{11}(B_j^0+K)U^*X_{11}^*U^*\qquad (j=1,2),
\end{equation}
where $X_{11}$ is an automorphism in $\cH^0$ and $U$ is an isometry from $\cH^0$
to $\cH$. The Weyl functions $M(z)$ and $M_0(z)$ corresponding to the boundary
triplets $\{\cH, \Gamma_0,\Gamma_1\}$ and $\{\cH^0,\Gamma_0^0,\Gamma_1^0\}$
respectively, are related by
\begin{equation}\label{DM_28}
M(z)=UX_{11}(M_0(z)+K)X_{11}^*U^*.
\end{equation}
From both \eqref{DM_27} and \eqref{DM_28} we obtain that
\begin{equation}\label{DM_30}
(B_j^0-M_0(z))^{-1}=U^*X_{11}^*(B_j-M(z))^{-1}X_{11}U.
\end{equation}

Since $M_0(z)=z$, then, as was proved in Step 1,  the following  equivalence holds:
\begin{equation}\label{eq:ResCompA}
    ({A}_{B_1}-z)^{-1}-({A}_{B_2}-z)^{-1}\in\sS(H)\Longleftrightarrow
(B_1^0-M_0((z))^{-1}-(B_2^0-M_0(z))^{-1}\in\sS(\cH).
\end{equation}
Thus, the equivalence \eqref{eq:ResComp} follows from \eqref{eq:ResCompA}
and~\eqref{DM_30}.
\end{proof}

\begin{theorem}\label{T:DM_2}
Suppose that, at some boundary triplet $\{\cH, \Gamma_0,\Gamma_1\}$, the operators
${A}_{B_1}$ and ${A}_{B_2}$ $(B_1,B_2\in\cC(\cH))$
have a common regular point $z\in\rho({A}_{B_1})\cap\rho({A}_{B_2})$. If
$\rho(B_1)\cap\rho(B_2)\neq\emptyset$, then for any $\zeta\in\rho(B_1)\cap\rho(B_2)$
the following relation holds:
\[
({A}_{B_1}-z)^{-1}-({A}_{B_2}-z)^{-1}\in\sS(H) \Longleftrightarrow
 (B_1-\zeta)^{-1}-(B_2-\zeta)^{-1}\in\sS(\cH).
\]
\end{theorem}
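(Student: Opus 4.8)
The plan is to combine Proposition~\ref{P:DM_8}, which already yields the equivalence between the resolvent difference of $A_{B_1},A_{B_2}$ and the difference $(B_1-M(z))^{-1}-(B_2-M(z))^{-1}$ at the point $z$, with an independent analytic argument showing that, for an $(R_\cH)$-valued function $M$ and closed operators $B_1,B_2$, the $\sS(\cH)$-membership of $(B_1-M(z))^{-1}-(B_2-M(z))^{-1}$ does not depend on the choice of the point where $B_1-M(z)$, $B_2-M(z)$ are both invertible, and coincides with $\sS(\cH)$-membership of $(B_1-\zeta)^{-1}-(B_2-\zeta)^{-1}$ for a scalar point $\zeta\in\rho(B_1)\cap\rho(B_2)$. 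So the theorem reduces to a statement purely about two closed operators and an $(R)$-function.

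First I would record the resolvent identity that connects the ``spectral parameter'' $M(z)$ with a genuine scalar point. For a fixed closed operator $B$ and points $w_1=M(z_1)$, $w_2=M(z_2)$ (or $w_2=\zeta\in\rho(B)$), one has the standard identity
\[
(B-w_1)^{-1}-(B-w_2)^{-1}=(B-w_1)^{-1}(w_1-w_2)(B-w_2)^{-1},
\]
valid whenever both inverses are bounded; if $w_1-w_2$ is bounded then the left side lies in $\sS(\cH)$ iff either outer factor composed with the bounded middle does, and since $(B-w_j)^{-1}$ are bounded this ties $\sS(\cH)$-membership of resolvent differences at different points of the same $B$ together. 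Applying this to $B_1$ and $B_2$ separately and subtracting, I would express $(B_1-M(z))^{-1}-(B_2-M(z))^{-1}$ in terms of $(B_1-\zeta)^{-1}-(B_2-\zeta)^{-1}$ plus terms each of which already contains a factor of the resolvent difference, modulo bounded multipliers. The connecting point between $M(z)$ and $\zeta$ is handled by first moving within the Weyl function (using that $M$ is operator-valued analytic and that by Corollary~\ref{C:DM_7} the point $z$ with $0\in\rho(M(z)-B_j)$ is exactly $z\in\rho(A_{B_j})$) and then to the constant $\zeta$; since $\rho(B_1)\cap\rho(B_2)\neq\emptyset$, such a $\zeta$ exists and $(B_j-\zeta)^{-1}\in[\cH]$.

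Concretely, the steps I would carry out, in order: (1) invoke Proposition~\ref{P:DM_8} to get $(A_{B_1}-z)^{-1}-(A_{B_2}-z)^{-1}\in\sS(H)\iff (B_1-M(z))^{-1}-(B_2-M(z))^{-1}\in\sS(\cH)$; (2) write, for $j=1,2$,
\[
(B_j-M(z))^{-1}-(B_j-\zeta)^{-1}=(B_j-M(z))^{-1}(M(z)-\zeta)(B_j-\zeta)^{-1},
\]
which is a bounded operator, and expand the difference over $j$; (3) collect terms so that
\[
(B_1-M(z))^{-1}-(B_2-M(z))^{-1}=\big[(B_1-\zeta)^{-1}-(B_2-\zeta)^{-1}\big]+R,
\]
where $R$ is a sum of products, each having a factor of the form $(B_1-\zeta)^{-1}-(B_2-\zeta)^{-1}$ or $(B_1-M(z))^{-1}-(B_2-M(z))^{-1}$, with all remaining factors bounded (using $M(z)-\zeta\in[\cH]$ and that $B_j-M(z)$, $B_j-\zeta$ are boundedly invertible at the points in question); (4) since $\sS(\cH)$ is a two-sided ideal in $[\cH]$, conclude that $(B_1-M(z))^{-1}-(B_2-M(z))^{-1}\in\sS(\cH)\iff(B_1-\zeta)^{-1}-(B_2-\zeta)^{-1}\in\sS(\cH)$; (5) chain with step (1) to finish.

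The main obstacle I anticipate is bookkeeping in step (3): one must be careful that every ``remainder'' product genuinely carries one of the two resolvent differences as a factor, and that no uncontrolled unbounded operator survives — this requires using that $M(z)$ is $[\cH]$-valued (so $M(z)-\zeta$ is bounded) and that at $z$ the operators $(B_j-M(z))^{-1}$ are bounded, while at $\zeta$ the operators $(B_j-\zeta)^{-1}$ are bounded; the algebra is essentially a double application of the second resolvent identity, but writing it so that the ideal property applies cleanly (left/right multiplication by bounded operators) is where the care goes. A minor additional point is justifying that one may pass from the ``operator-valued spectral point'' $M(z)$ to the scalar $\zeta$ at all: this is immediate once $(B_1-M(z))^{-1}-(B_2-M(z))^{-1}$ is known to be well-defined and bounded, which is exactly the content of $z\in\rho(A_{B_1})\cap\rho(A_{B_2})$ via Corollary~\ref{C:DM_7} and Proposition~\ref{P:DM_7}(iii).
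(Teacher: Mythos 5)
Your proposal is correct and follows essentially the same route as the paper: reduce via Proposition~\ref{P:DM_8} to comparing $(B_j-M(z))^{-1}$ with $(B_j-\zeta)^{-1}$, and then use the second resolvent identity to factor $(B_2-M(z))^{-1}-(B_1-M(z))^{-1}$ as a product of boundedly invertible operators with $R_{B_2}(\zeta)-R_{B_1}(\zeta)$ in the middle, which is exactly the paper's computation based on the identities \eqref{DM_31}--\eqref{DM_32}. The one point to make explicit in your steps (3)--(4) is that your remainder $R$ contains the unknown difference $(B_1-M(z))^{-1}-(B_2-M(z))^{-1}$ itself, so to solve for it you need the \emph{bounded invertibility} (not merely boundedness) of the factors $I+(\zeta-M(z))(B_j-\zeta)^{-1}$ and $I+(B_j-\zeta)^{-1}(\zeta-M(z))$, which is precisely what \eqref{DM_31}--\eqref{DM_32} supply.
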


\begin{proof}
It suffices to establish the equivalence
\[
R_{B_1}(\zeta)-R_{B_2}(\zeta)\in\sS(\cH) \Longleftrightarrow (B_1-M(z))^{-1}
-(B_2-M(z))^{-1}\in\sS(\cH).
\]
To this aim, we use the identities
\begin{equation}\label{DM_31}
[I+(\zeta-M(z))(B-\zeta)^{-1}]^{-1}=I+(M(z)-\zeta)(B-M_z)^{-1},
\end{equation}
\begin{equation}\label{DM_32}
[I+(B-\zeta)^{-1}(\zeta-M(z))]^{-1}=I+(B-M(z))^{-1}(M(z)-\zeta),
\end{equation}
that hold for any $z\in\rho({A}_B)$ and $\zeta\in\rho(B)$. Recall that,
by Proposition~\ref{P:DM_7},
\[
z\in\rho({A}_B)\Leftrightarrow(B-M(z))^{-1}\in[\cH].
\]
In particular, the identities~\eqref{DM_31}, \eqref{DM_32} ensure
the bounded invertibility of the operators $I+(\zeta-M(z))(B-\zeta)^{-1}$ and
$I+(B-\zeta)^{-1}(\zeta-M(z))$ and easily follow from the Hilbert
identity for resolvents. Further on,
\begin{eqnarray*}
(B_2-M(z))^{-1}&-&(B_1-M(z))^{-1}=\\&=&R_{B_2}(\zeta)[I+(\zeta-M(z))R_{B_2}(\zeta)]^{-1}
-[I+R_{B_1}(\zeta)(\zeta-M(z))]^{-1}R_{B_1}(\zeta)\\
&=&[I+R_{B_1}(\zeta)(\zeta-M(z))]^{-1}[R_{B_2}(\zeta)-R_{B_1}(\zeta)]
[I+(\zeta-M(z))R_{B_2}(\zeta)]^{-1}.
\end{eqnarray*}
Due to the identities \eqref{DM_31}, \eqref{DM_32} this completes the proof.
\end{proof}

\begin{corollary}\label{C:DM_8}
Suppose that under the assumptions of Theorem~\ref{T:DM_2} $B_1,B_2\in[\cH]$.
Then
\[
({A}_{B_1}-z)^{-1}-({A}_{B_2}-z)^{-1}\in\sS(H)  \Longleftrightarrow
B_1-B_2\in\sS(\cH).
\]
\end{corollary}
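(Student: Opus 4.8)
The plan is to deduce Corollary~\ref{C:DM_8} directly from Theorem~\ref{T:DM_2} by exhibiting a suitable point $\zeta\in\rho(B_1)\cap\rho(B_2)$ for which the difference $(B_1-\zeta)^{-1}-(B_2-\zeta)^{-1}$ lies in $\sS(\cH)$ \emph{if and only if} $B_1-B_2\in\sS(\cH)$. Since $B_1,B_2\in[\cH]$ are now bounded, their spectra are compact, so $\rho(B_1)\cap\rho(B_2)$ is nonempty; in particular any real $\zeta$ with $|\zeta|>\max(\|B_1\|,\|B_2\|)$ works, and the hypothesis $\rho(B_1)\cap\rho(B_2)\neq\emptyset$ of Theorem~\ref{T:DM_2} is automatically satisfied. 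Thus Theorem~\ref{T:DM_2} already gives
\[
({A}_{B_1}-z)^{-1}-({A}_{B_2}-z)^{-1}\in\sS(H)\Longleftrightarrow
(B_1-\zeta)^{-1}-(B_2-\zeta)^{-1}\in\sS(\cH),
\]
and it remains only to identify the right-hand condition with $B_1-B_2\in\sS(\cH)$.

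First I would write down the elementary resolvent identity
\[
(B_1-\zeta)^{-1}-(B_2-\zeta)^{-1}=(B_1-\zeta)^{-1}(B_2-B_1)(B_2-\zeta)^{-1},
\]
valid at any common regular point $\zeta$ of $B_1$ and $B_2$. Since $\sS(\cH)$ is a two-sided ideal in $[\cH]$ and the factors $(B_1-\zeta)^{-1},(B_2-\zeta)^{-1}$ are bounded, the right-hand side lies in $\sS(\cH)$ whenever $B_2-B_1\in\sS(\cH)$. For the converse, multiply the same identity on the left by $(B_1-\zeta)$ and on the right by $(B_2-\zeta)$, both of which are in $[\cH]$:
\[
B_2-B_1=(B_1-\zeta)\bigl[(B_1-\zeta)^{-1}-(B_2-\zeta)^{-1}\bigr](B_2-\zeta),
\]
so the ideal property again forces $B_2-B_1\in\sS(\cH)$ as soon as the bracketed difference is. Hence $(B_1-\zeta)^{-1}-(B_2-\zeta)^{-1}\in\sS(\cH)\Longleftrightarrow B_1-B_2\in\sS(\cH)$, and combining this with the equivalence from Theorem~\ref{T:DM_2} finishes the proof.

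There is essentially no obstacle here: the only point requiring a word of care is the nonemptiness of $\rho(B_1)\cap\rho(B_2)$, which is where boundedness of $B_1,B_2$ is used (in contrast to the general closed-operator setting of Theorem~\ref{T:DM_2}, where this had to be assumed). The argument is a three-line application of the ideal property of $\sS(\cH)$ together with the boundedness of the resolvents and of $B_j-\zeta$, so I expect the whole corollary to be dispatched in a single short paragraph referring back to Theorem~\ref{T:DM_2}.
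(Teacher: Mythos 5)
Your proof is correct and follows essentially the same route as the paper, which simply cites the resolvent identity $R_{B_1}(\zeta)-R_{B_2}(\zeta)=R_{B_1}(\zeta)[B_2-B_1]R_{B_2}(\zeta)$ and the two-sided ideal property. You merely spell out both directions of the equivalence and the (correct) observation that boundedness of $B_1,B_2$ guarantees $\rho(B_1)\cap\rho(B_2)\neq\emptyset$.
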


The proof follows from the identity
\[
R_{B_1}(z)-R_{B_2}(z)=R_{B_1}(z)[B_2-B_1]R_{B_2}(z).
\]

In the following corollary a class of extensions with
a discrete spectrum, which have the same principal terms in the asymptotic behavior
of the $s$--numbers, will be selected.

\begin{corollary}\label{C:DM_9}
Suppose that under the assumptions of Theorem~\ref{T:DM_2}
$({A}_{B_1}-z_0)^{-1}\in\sS_\infty(H)$
and
\[
\lim\limits_{n\to\infty}n^\alpha
s_n(({A}_{B_1}-z_0)^{-1})=a\quad\mbox{ for some }\quad
\alpha>0, a>0.
\]
Then for the validity of the relation
\[
\lim\limits_{n\to\infty}n^\alpha s_n(({A}_{B_2}-z_0)^{-1})=a
\]
it suffices that
\[
\lim\limits_{n\to\infty}n^\alpha s_n(R_{B_1}(z_0)-R_{B_2}(z_0))=0.
\]
\end{corollary}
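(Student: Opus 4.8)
The plan is to deduce Corollary~\ref{C:DM_9} from Theorem~\ref{T:DM_2} via a standard perturbation argument for $s$-numbers of compact operators. First I would record the setup: by Theorem~\ref{T:DM_2} (with $\sS(H)=\sS_\infty(H)$, the compact ideal) the hypothesis $\lim_n n^\alpha s_n(R_{B_1}(z_0)-R_{B_2}(z_0))=0$ together with $\rho(B_1)\cap\rho(B_2)\neq\emptyset$ implies $(A_{B_1}-z_0)^{-1}-(A_{B_2}-z_0)^{-1}\in\sS_\infty(H)$, and moreover I would want the sharper quantitative conclusion that $\lim_n n^\alpha s_n\bigl((A_{B_1}-z_0)^{-1}-(A_{B_2}-z_0)^{-1}\bigr)=0$. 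This refinement does not literally appear in Theorem~\ref{T:DM_2}, so I would revisit its proof: the key formula there expresses $(B_2-M(z))^{-1}-(B_1-M(z))^{-1}$ as $W_1\,[R_{B_2}(\zeta)-R_{B_1}(\zeta)]\,W_2$ with $W_1=[I+R_{B_1}(\zeta)(\zeta-M(z))]^{-1}$ and $W_2=[I+(\zeta-M(z))R_{B_2}(\zeta)]^{-1}$ bounded; sandwiching a compact operator between bounded operators multiplies $s$-numbers by at most $\|W_1\|\,\|W_2\|$ (by the two-sided $s$-number inequality $s_n(XKY)\le\|X\|\,s_n(K)\,\|Y\|$), so $n^\alpha s_n$ still tends to $0$. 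Then going back through Step~1/Step~2 of Proposition~\ref{P:DM_8} (the equivalence was realized through the bounded, boundedly invertible transformations in~\eqref{DM_29}, \eqref{DM_30}) the same sandwiching yields $\lim_n n^\alpha s_n\bigl((A_{B_1}-z_0)^{-1}-(A_{B_2}-z_0)^{-1}\bigr)=0$.

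The second step is the purely operator-theoretic comparison of $s$-numbers. Write $T_1=(A_{B_1}-z_0)^{-1}$, $T_2=(A_{B_2}-z_0)^{-1}$, $D=T_1-T_2$. We are given $T_1\in\sS_\infty(H)$ and $n^\alpha s_n(T_1)\to a$, hence also $T_2=T_1-D\in\sS_\infty(H)$, and we have just arranged $n^\alpha s_n(D)\to 0$. I would invoke the Ky Fan / Weyl-type inequalities for singular numbers: $s_{m+n-1}(T_1)\le s_m(T_2)+s_n(D)$ and symmetrically $s_{m+n-1}(T_2)\le s_m(T_1)+s_n(D)$. Taking $m=n=\lceil k/2\rceil$ gives, for all large $k$,
\[
s_k(T_1)-s_{\lceil k/2\rceil}(D)\le s_{\lceil k/2\rceil}(T_2)\quad\text{and}\quad s_k(T_2)\le s_{\lceil k/2\rceil}(T_1)+s_{\lceil k/2\rceil}(D).
\]
Multiplying by $k^\alpha$, using $k^\alpha s_k(D)\le 2^\alpha(\lceil k/2\rceil)^\alpha s_{\lceil k/2\rceil}(D)\to 0$ and $(\lceil k/2\rceil)^\alpha s_{\lceil k/2\rceil}(T_1)\to a$ so that $k^\alpha s_{\lceil k/2\rceil}(T_1)\to 2^\alpha a$, one gets upper and lower bounds; but a cleaner route is to argue directly that $n^\alpha s_n(T_2)$ and $n^\alpha s_n(T_1)$ have the same limit. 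Precisely: from $|s_n(T_2)-s_n(T_1)|\le \|D\|$ this only gives boundedness; to get the limit one must use that $D$ itself decays faster, so I would instead use the inequality $s_{n}(T_2)\le s_{n-m+1}(T_1)+s_m(D)$ with $m=m(n)\to\infty$ chosen slowly, e.g. $m=\lfloor \log n\rfloor$, so that $n^\alpha s_m(D)=o(1)$ while $n^\alpha s_{n-m+1}(T_1)=(1+o(1))\,(n-m+1)^\alpha s_{n-m+1}(T_1)\to a$; the reverse inequality is symmetric. This squeezes $n^\alpha s_n(T_2)\to a$, which is the conclusion.

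The main obstacle I anticipate is the first step, namely upgrading Theorem~\ref{T:DM_2} from a mere membership statement ($\in\sS_\infty$) to the quantitative $n^\alpha s_n\to 0$ transfer; this requires checking that every transformation used in the proofs of Proposition~\ref{P:DM_8} and Theorem~\ref{T:DM_2} is a two-sided multiplication by \emph{bounded} operators (no unbounded factors appear since the relevant resolvents and the operators $X_{11}$, $U$, $W_i$ are all bounded, and the ideal calculus $s_n(XKY)\le\|X\|s_n(K)\|Y\|$ applies). A minor point to get right is the choice of the auxiliary index $m(n)$ in the squeezing argument: it must grow fast enough that $s_{m(n)}(D)=o(n^{-\alpha})$ — which holds for any $m(n)\to\infty$ since $s_{m}(D)=o(m^{-\alpha})$ — yet slowly enough that $(n-m(n))^\alpha\sim n^\alpha$; $m(n)=\lfloor\log n\rfloor$ comfortably satisfies both. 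Everything else is routine bookkeeping with singular-value inequalities.
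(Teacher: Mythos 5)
Your first step is sound and coincides in substance with what the paper actually uses: the quantitative transfer from $n^\alpha s_n\bigl(R_{B_1}(z_0)-R_{B_2}(z_0)\bigr)\to 0$ to $n^\alpha s_n\bigl((A_{B_1}-z_0)^{-1}-(A_{B_2}-z_0)^{-1}\bigr)\to 0$ is exactly the content of relation~\eqref{DM_33}, which writes the resolvent difference (restricted to $\sN_{\bar z_0}$; it vanishes on the complementary subspace $\sM_{z_0}$, so no singular values are lost) as a two-sided product $T_1[R_{B_1}-R_{B_2}]T_2^*$ with bounded invertible factors, and the inequality $s_n(XKY)\le\|X\|\,s_n(K)\,\|Y\|$ does the rest. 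Tracking the bounded factors through the proofs of Proposition~\ref{P:DM_8} and Theorem~\ref{T:DM_2}, as you propose, amounts to the same thing.

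The gap is in your second step. The paper simply invokes Ky Fan's lemma (Gohberg--Krein, Theorem~2.2.3): if $n^\alpha s_n(T_1)\to a$ and $n^\alpha s_n(D)\to 0$, then $n^\alpha s_n(T_1+D)\to a$. You attempt to reprove it, and your ``cleaner route'' with $m(n)=\lfloor\log n\rfloor$ breaks down: the assertion that $s_{m(n)}(D)=o(n^{-\alpha})$ ``holds for any $m(n)\to\infty$'' is false. From $m^\alpha s_m(D)\to 0$ you only get $s_{\lfloor\log n\rfloor}(D)=o\bigl((\log n)^{-\alpha}\bigr)$, hence $n^\alpha s_{\lfloor\log n\rfloor}(D)=o\bigl((n/\log n)^\alpha\bigr)$, which is perfectly compatible with divergence (take, e.g., $s_m(D)=m^{-\alpha}/\log(m+2)$). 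The repair is to choose $m=\lfloor\delta n\rfloor$ with $\delta>0$ fixed: then $n^\alpha s_{\lfloor\delta n\rfloor}(D)=\delta^{-\alpha}(1+o(1))\,(\delta n)^\alpha s_{\lfloor\delta n\rfloor}(D)\to 0$, and the inequalities $s_{m+n-1}(T_1+D)\le s_n(T_1)+s_m(D)$ yield $(1+\delta)^{-\alpha}a\le\liminf n^\alpha s_n(T_2)\le\limsup n^\alpha s_n(T_2)\le(1-\delta)^{-\alpha}a$; letting $\delta\downarrow 0$ gives the claim. Your half-index variant, as you yourself note, only traps the limit between $2^{-\alpha}a$ and $2^\alpha a$. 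So the overall strategy is right and repairable, but the decisive limiting step is carried out incorrectly as written; citing the Ky Fan lemma, as the paper does, avoids the issue entirely.
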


The proof is implied by both the Ky Fan lemma (\cite[Theorem 2.2.3]{GohKr65}) and the relation
\begin{equation}\label{DM_33}
[({A}_{B_1}-z_0)^{-1}-({A}_{B_2}-z_0)^{-1}]|_{\sN_{\bar{z}_0}}=T_1[R_{B_1}(z)-R_{B_2}(z)]T_2^*
\end{equation}
in which $T_1$ and $T_2$ are isomorphisms from $\sN_z$ to $\sN_{\bar{z}}$.
$\Box$

In the following corollary, for a given extension of ${A}_{B_1}$
with a discrete spectrum, extensions ${A}_{B_2}$ with a more thick spectrum (i.e., $\lim\limits_{n\to\infty}s_n({A}_{B_2})/s_n({A}_{B_1})=0$) are constructed.
\begin{corollary}\label{C:DM_11}
Suppose that, in the assumptions of Theorem~\ref{T:DM_2},
$({A}_{B_1}-z_0)^{-1}\in\sS_\infty(H)$ and
$s_n(({A}_{B_1}-z_0)^{-1})\sim a/n^\alpha$, $a>0$ and
$0<\beta<\alpha$. Then the limits
\[
\lim\limits_{n\to\infty}n^{\beta}s_n(({A}_{B_2}-z_0)^{-1})\quad\mbox{and}\
\quad\lim\limits_{n\to\infty}n^{\beta}s_n(R_{B_1}(z)-R_{B_2}(z))
\]
exist, and are finite and different from zero only simultaneously.
\end{corollary}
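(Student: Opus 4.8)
The plan is to reduce the claimed two-way equivalence to the already established relation~\eqref{DM_33}, which identifies the compression to $\sN_{\bar z_0}$ of the resolvent difference with $T_1[R_{B_1}(z_0)-R_{B_2}(z_0)]T_2^*$ up to the isomorphisms $T_1,T_2$ between $\sN_{z_0}$ and $\sN_{\bar z_0}$. As in the proof of Corollary~\ref{C:DM_9}, since the resolvent difference vanishes on $\sM_{z_0}=\ran(A-z_0)$, the $s$-numbers of $(A_{B_1}-z_0)^{-1}-(A_{B_2}-z_0)^{-1}$ coincide with those of its restriction to $\sN_{\bar z_0}$, hence with the $s$-numbers of $T_1[R_{B_1}(z_0)-R_{B_2}(z_0)]T_2^*$.

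Next I would invoke the Ky Fan lemma in the sharper two-sided form: if $K=T_1 L T_2^*$ with $T_1,T_2$ bounded and boundedly invertible, then $s_n(K)$ and $s_n(L)$ are simultaneously $O(n^{-\beta})$, simultaneously $o(n^{-\beta})$, and more precisely satisfy $c\,s_n(L)\le s_n(K)\le C\,s_n(L)$ for constants $0<c\le C$ depending only on $\|T_1^{\pm1}\|,\|T_2^{\pm1}\|$. This gives that $\lim_n n^\beta s_n\big((A_{B_1}-z_0)^{-1}-(A_{B_2}-z_0)^{-1}\big)$ exists (finite, nonzero) if and only if $\lim_n n^\beta s_n\big(R_{B_1}(z_0)-R_{B_2}(z_0)\big)$ does. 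So it remains to connect the asymptotics of $s_n\big((A_{B_2}-z_0)^{-1}\big)$ itself with the asymptotics of the resolvent difference.

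Here the key input is the hypothesis $s_n\big((A_{B_1}-z_0)^{-1}\big)\sim a/n^\alpha$ together with $0<\beta<\alpha$. Write $(A_{B_2}-z_0)^{-1}=(A_{B_1}-z_0)^{-1}-D$, where $D:=(A_{B_1}-z_0)^{-1}-(A_{B_2}-z_0)^{-1}$. Since $\alpha>\beta$, the operator $(A_{B_1}-z_0)^{-1}$ has $s$-numbers that are $o(n^{-\beta})$, so by the standard $s$-number inequality $s_{m+n-1}(X+Y)\le s_m(X)+s_n(Y)$ (Fan) applied in both directions, one gets $|\,n^\beta s_n((A_{B_2}-z_0)^{-1})-n^\beta s_n(D)\,|\to 0$: splitting the index, say $n\mapsto \lceil n/2\rceil$, the contribution of $(A_{B_1}-z_0)^{-1}$ is $n^\beta\cdot o((n/2)^{-\beta})\to 0$ because $\alpha>\beta$ forces even $n^\beta s_n((A_{B_1}-z_0)^{-1})\to 0$. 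Hence $\lim_n n^\beta s_n((A_{B_2}-z_0)^{-1})$ exists and is finite and nonzero precisely when $\lim_n n^\beta s_n(D)$ is, and by the previous paragraph this is equivalent to the existence (finiteness, nonvanishing) of $\lim_n n^\beta s_n(R_{B_1}(z_0)-R_{B_2}(z_0))$. This chain of equivalences is exactly the assertion.

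The main obstacle is bookkeeping with the $s$-number triangle-type inequalities: one must be careful that the index shift (replacing $n$ by $\lceil n/2\rceil$ or similar) does not distort the factor $n^\beta$ in the limit — this works only because the "error" term $(A_{B_1}-z_0)^{-1}$ decays strictly faster than $n^{-\beta}$, which is where the hypothesis $\beta<\alpha$ is used essentially. A secondary, purely technical point is to check that $T_1,T_2$ in~\eqref{DM_33} are genuinely boundedly invertible at the chosen $z_0$ (this follows since $z_0\in\rho(A_{B_1})\cap\rho(A_{B_2})$ and the deficiency subspaces at conjugate points are isomorphic via the resolvents), so that the two-sided Ky Fan estimate applies. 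Everything else is a routine assembly of Corollary~\ref{C:DM_9}-style arguments.
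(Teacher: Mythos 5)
Your route is the one the paper intends: Corollary~\ref{C:DM_11} is stated without proof and is clearly meant to follow, like Corollary~\ref{C:DM_9}, from the Ky Fan lemma of \cite{GohKr65} together with relation~\eqref{DM_33}. The genuinely new part of your argument, and the place where the hypothesis $0<\beta<\alpha$ enters, is correct: since $n^{\beta}s_n\bigl((A_{B_1}-z_0)^{-1}\bigr)\to 0$, the Ky Fan lemma applied to $(A_{B_2}-z_0)^{-1}=(A_{B_1}-z_0)^{-1}-D$, where $D$ denotes the resolvent difference, shows that $\lim_n n^{\beta}s_n\bigl((A_{B_2}-z_0)^{-1}\bigr)$ and $\lim_n n^{\beta}s_n(D)$ exist, are finite and are nonzero only simultaneously (and are then equal). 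Your explicit split $n\mapsto\lceil n/2\rceil$ would by itself only yield a spurious factor $2^{\beta}$; the clean statement is exactly the cited lemma, so invoke it rather than re-derive it.

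The gap is in your second step. From $D|_{\sN_{\bar z_0}}=T_1[R_{B_1}(z_0)-R_{B_2}(z_0)]T_2^*$ with $T_1,T_2$ merely bounded and boundedly invertible you correctly get $c\,s_n(L)\le s_n(T_1LT_2^*)\le C\,s_n(L)$, and this does preserve the properties $s_n=O(n^{-\beta})$, $s_n=o(n^{-\beta})$ and $\liminf_n n^{\beta}s_n>0$. It does \emph{not} preserve existence of $\lim_n n^{\beta}s_n$: for instance $s_n(L)=n^{-\beta}$ and $s_n(K)=(2+(-1)^n)n^{-\beta}$ are two-sidedly comparable while only one of the limits exists. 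So the sentence ``this gives that the limit exists (finite, nonzero) if and only if\dots'' is not a consequence of the estimate you state. Literal simultaneity of existence would require the outer factors to be unitary; in the canonical triplet of Proposition~\ref{P:DM_8} the relation~\eqref{DM_29} involves an isometry $V$ and gives exact equality of $s$-numbers, but passing to a general boundary triplet reintroduces the non-unitary automorphism $X_{11}$ of~\eqref{DM_30} and the invertible factors $[I+R_{B_1}(\zeta)(\zeta-M(z))]^{-1}$, $[I+(\zeta-M(z))R_{B_2}(\zeta)]^{-1}$ from the proof of Theorem~\ref{T:DM_2}. What your argument actually establishes is that $n^{\beta}s_n\bigl((A_{B_2}-z_0)^{-1}\bigr)$ is bounded above and bounded away from zero if and only if the same holds for $n^{\beta}s_n\bigl(R_{B_1}(z_0)-R_{B_2}(z_0)\bigr)$; the stronger existence claim needs either this weaker reading of the corollary or an additional argument --- a point the paper itself leaves unaddressed by giving no proof.
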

\begin{remark}\label{R:DM_6}
The results close to Theorem~\ref{T:DM_2} were obtained
by other technique in the dissipative case in~\cite{Brod69}, \cite{GK78}.
Moreover, Corollaries~\ref{C:DM_8}---\ref{C:DM_11} in particular cases
were obtained just before in~\cite{Geh69}, \cite{Geh82}, \cite{GG72}.
\end{remark}
\section{Extensions of positive operators and Weyl function}\label{4}

\subsection{Positive boundary {triplet}s} In the study of various classes of extensions of a symmetric operator with a real point of regular type
$-a \in \wh{\rho}(A)$ (for instance semi-bounded operator) M.I.~Vishik \cite{Vi52} and M.Sh.~Birman \cite{Bir56} were using  the following decomposition of $\dom(A^*)$:
\[
\dom(A^*) = \dom({A}_0) \dotplus \mathfrak {N}_{-a},\quad -a \in
\wh{\rho}({A}_0)
\]
instead of the J. von Neumann formula. Further development of this approach has led to the concept of
positive boundary {triplet} (see~\cite{Koc79}), the use of which is
very convenient in the study of  proper extensions of the operator ${A}$.

\begin{definition} \label{D:DM_6} {\rm(\cite{Koc79, Mikh80})} Let $0 \in
\wh{\rho}(A)$. A boundary {triplet} $\{\mathcal{H}, \Gamma_0,
\Gamma_1\}$ of the operator $A^*$ is called a positive boundary {triplet},
corresponding  to the extension ${A_0}={A}^*_0$, if
$$(A^*f, g)=({A}_0f_0, g_0)+(\Gamma_1f, \Gamma_0g)_{\mathcal{H}},$$
where $f,g\in\dom(A^*)$ and $f_0, g_0 \in \dom({A}_0)$ are components of the vectors $f, g$ in
the decomposition
$$\dom(A^*) = \dom({A}_0) \dotplus \ker (A^*).$$
\end{definition}

If $-a \in {\rho}({A}_0)$, then the positive boundary {triplet}
$\{\mathcal{H}, \Gamma_0^a, \Gamma_1^a \}$ for the operator  $A^* +
a$ can be constructed (see~\cite{Koc79, Mikh80}) by
\begin{equation}\label{DM_34}
\mathcal{H}_a = \mathfrak{N}_{-a},\quad \Gamma_1^a =
P(-a)({A}_0+a)P_1,\quad \Gamma_0^a = P_0,
\end{equation}
where $P(-a)$ is the orthogonal projection from $H$ to $\mathfrak{N}_{-a}$;
and $P_1$, $P_0$ are skew projections from
\begin{equation}\label{eq:domA*}
    \dom(A^*) = \dom({A}_0) \dotplus \mathfrak {N}_{-a}
\end{equation}
onto $\dom({A}_0)$  and  ${\sN}_{-a}$, respectively.

\begin{proposition}\label{P:DM_9} Let $\{\mathcal{H}, \Gamma_0, \Gamma_1\}$ be a  boundary {triplet} for $A^*$,
 $B\in \mathcal{C}(\mathcal{H})$, $-a \in
\rho({A}_B)\cap\rho({A}_0)$. Then the following relation
holds
\eq{DM_35}{[({A}_B + a) - ({A}_0 + a)] \mid_{\mathfrak{N}_{-a}}
= T(-a)^*(B - M(-a))^{-1}T(-a),} where $T(-a) \in
[\mathfrak{N}_{-a}, \mathcal{H}]$, and $T(-a)^{-1} \in [\mathcal{H},
\mathfrak{N}_{-a}]$.
\end{proposition}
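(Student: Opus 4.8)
The statement is the analogue, for positive (Vishik–Birman) boundary triplets and the shifted operator $A^*+a$, of the Krein-type resolvent formula established in Step~1 of Proposition~\ref{P:DM_8} (see~\eqref{DM_29}). So the plan is to imitate that computation, replacing the J.~von Neumann decomposition $\dom(A^*)=\dom(A_0)\dotplus\sN_z\dotplus\sN_{\bar z}$ by the positive-triplet decomposition~\eqref{eq:domA*}, namely $\dom(A^*)=\dom(A_0)\dotplus\sN_{-a}$, and using the explicit forms $\Gamma_0^a=P_0$, $\Gamma_1^a=P(-a)(A_0+a)P_1$ from~\eqref{DM_34}. First I would observe that, exactly as in~\eqref{eq:Res2}, the difference of resolvents $[(A_B+a)^{-1}-(A_0+a)^{-1}]$ annihilates $\sM_{-a}=(A+a)\dom(A)$ (both resolvents agree there since $A\subset A_B,\ A\subset A_0$), hence it is completely determined by its restriction to $\sN_{-a}$, and it maps $\sN_{-a}$ into $\sN_{-a}$; this is why the left-hand side of~\eqref{DM_35} makes sense as an operator on $\sN_{-a}$.

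The core computation: take $h\in\sN_{-a}$ and set $f:=[(A_B+a)^{-1}-(A_0+a)^{-1}]h$, which lies in $\sN_{-a}$ by the previous remark. Write $u:=(A_B+a)^{-1}h\in\dom(A_B)$ and $u_0:=(A_0+a)^{-1}h\in\dom(A_0)$, so $f=u-u_0$ and $(A^*+a)f=0$. In the decomposition~\eqref{eq:domA*} one has $u=u_0+f$ with $u_0$ its $\dom(A_0)$-component and $f$ its $\sN_{-a}$-component, so $P_1 u=u_0$ and $P_0 u=f$. Now apply the boundary operators: $\Gamma_0^a u=P_0 u=f$, while $\Gamma_1^a u=P(-a)(A_0+a)P_1 u=P(-a)(A_0+a)u_0=P(-a)h=h$ (using $h\in\sN_{-a}$, so $P(-a)h=h$). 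Since $u\in\dom(A_B)$ means $\Gamma_1^a u=B\Gamma_0^a u$, we get $h=Bf$, i.e.\ $(B-M(-a))$ enters once I also record the value of $M(-a)$ for this triplet. Here $M(-a)$ is computed from~\eqref{DM_15}: for $f_{-a}\in\sN_{-a}$ one has $\Gamma_0^a f_{-a}=P_0 f_{-a}=f_{-a}$ (its $\dom(A_0)$-component is $0$) and $\Gamma_1^a f_{-a}=P(-a)(A_0+a)P_1 f_{-a}=P(-a)(A_0+a)\cdot 0=0$, so $M(-a)=0$ on $\sN_{-a}$ for the \emph{canonical} positive triplet~\eqref{DM_34}; in general, by the transformation rule~\eqref{DM_23}/\eqref{DM_28} for Weyl functions under change of triplet, $M(-a)$ becomes a bounded self-adjoint operator conjugated by the transition isomorphism, which is where the operator $T(-a)$ will come from.

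Assembling: for the canonical positive triplet the relation $h=Bf$ with $f=[(A_B+a)^{-1}-(A_0+a)^{-1}]h$ and $M(-a)=0$ gives directly
\[
[(A_B+a)^{-1}-(A_0+a)^{-1}]\mid_{\sN_{-a}}=(B-M(-a))^{-1},
\]
which is~\eqref{DM_35} with $T(-a)=I_{\sN_{-a}}$; the bounded invertibility of $B-M(-a)$ here is equivalent, by Proposition~\ref{P:DM_7}(iii) / Corollary~\ref{C:DM_7}, to $-a\in\rho(A_B)$, which is a hypothesis. For a general boundary triplet $\{\cH,\Gamma_0,\Gamma_1\}$ one passes to the canonical positive triplet $\{\sN_{-a},\Gamma_0^a,\Gamma_1^a\}$ with the same kernel of $\Gamma_0$ (i.e.\ the same $A_0$), applies Lemma~\ref{L:DM_1} to rewrite $A_B=A_{B^0}$ with $B^0\in\cC(\sN_{-a})$, uses~\eqref{DM_26}/\eqref{DM_30} to relate $(B-M(-a))^{-1}$ and $(B^0-M_0(-a))^{-1}=(B^0)^{-1}$ via conjugation by $UX_{11}$, and absorbs that conjugation into the operator $T(-a):=X_{11}^*U^*\in[\sN_{-a},\cH]$ (boundedly invertible since $X_{11}$ is a $J$-unitary block, hence an automorphism, and $U$ is unitary). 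This yields~\eqref{DM_35} in full generality. I expect the only real subtlety to be bookkeeping: matching the transition operator $X_{11}$, the unitary $U:\sN_{-a}\to\cH$, and the scalar normalizations so that the conjugating operator ends up being precisely $T(-a)$ with $T(-a)^{-1}\in[\cH,\sN_{-a}]$ — the analytic content is entirely contained in the short computation with $\Gamma_0^a,\Gamma_1^a$ above and in the already-proven equivalence $-a\in\rho(A_B)\Leftrightarrow(B-M(-a))^{-1}\in[\cH]$.
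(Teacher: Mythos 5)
Your proposal is correct and follows essentially the same route as the paper: reduce to the canonical positive boundary triplet \eqref{DM_34} with the same $A_0$, compute $\Gamma_0^a u$ and $\Gamma_1^a u$ for $u=(A_B+a)^{-1}h$ using the decomposition \eqref{eq:domA*} to obtain $B_a^{-1}=[(A_B+a)^{-1}-(A_0+a)^{-1}]\mid_{\sN_{-a}}$ together with $M_a(-a)=0$, and then transport the identity to an arbitrary triplet via Lemma~\ref{L:DM_1} and the conjugation relations, which produce $T(-a)$. Your version of the core computation (evaluating $\Gamma_1^a u=P(-a)(A_0+a)u_0=h$ directly) is a slightly cleaner rendering of the paper's \eqref{DM_37}--\eqref{DM_38}, and you correctly read the left-hand side of \eqref{DM_35} as a difference of resolvents.
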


\begin{proof}
Alongside with  the boundary {triplet}  $\{\mathcal{H},
\Gamma_0, \Gamma_1 \}$ consider the boundary {triplet} of the form
$\eqref{DM_34}$ and the corresponding Weyl functions $M(z)$ и $M_a(z)$.
By Lemma~\ref{L:DM_1}, the domain of the operator
 $\wt A:={A}_B$ has the following form
\[
\dom(\wt A)=\ker{(\Gamma_1^a-B_a\Gamma_0^a)},
\]
 where $B_a \in \mathcal C(\mathfrak{N}_{-a})$. Since $M_a(-a)=0$,
 then by  Proposition~\ref{P:DM_7} and the condition $-a \in
\rho({A}_B)$, one obtains $0 \in \rho(B_a) = \rho(B_a - M_a(-a))$.
 Let us show that
\eq{DM_36}{B^{-1}_a = [({A}_B + a)^{-1} - ({A}_0 +
a)^{-1}]\mid_{\mathfrak{N}_{-a}}.}

If $f \in \dom({A}_B) \subset\dom(A^*)$, then in accordance with
the decomposition~\eqref{eq:domA*},
\[
 f = f_0 +
f_{-a},\quad\mbox{ where  }\quad f_0 \in \dom({A}_0),\quad f_{-a}
\in \mathfrak{N}_{-a}.
\]
Hence $ \Gamma_0^{a}f = P_0f = f_{-a}$.  Further,
\[
P_1f = f_{0}, \quad({A}_0 + a)f_0= g = g_1 + g_{-a},\quad\mbox{
where }\quad g_1 \in \ran(A + a),\quad g_{-a} \in \mathfrak{N}_{-a}
\] and, therefore,
\[
\Gamma_1^af = P({A}_0 + a)P_1f = P(g_1 + g_{-a}) = g_{-a}.
\]
Consequently, the equality $\Gamma_1^af = B_a\Gamma_0^af$ takes the
form $B_af_{-a} = g_{-a} $, which is equivalent to the relation
\eq{DM_37}{f_{-a} = B_{a}^{-1}g_{-a} = (B_a - M_a(-a))^{-1}f_{-a}.}
On the other hand
\[
({A}_B + a)f = (A^* + a)(f_0 + f_{-a}) = ({A}_0
+ a)f_0 = g,
\]
whence \eq{DM_38}{f_{-a} = f - f_0 = [({A}_B + a)^{-1} - ({A}_0 +
a)^{-1}]g = [({A}_B + a)^{-1} - ({A}_0 + a)^{-1}]g_{-a}.}

Comparing the formulas $\eqref{DM_37}$ and $\eqref{DM_38}$, we
arrive at the relation $\eqref{DM_36}$. The equality $\eqref{DM_35}$
is implied by $\eqref{DM_36}$ and the following relation between
the Weyl functions $M(z)$, $M_a(z)$ and operators $B$, $B_a$:
$$M(z) = T(M_a(z)+K)T^*, \quad
B = T(B_a + K)T^*,$$ where $K = K^* \in [\mathfrak{N}_{-a}]$, $T =
UX_1$. This completes the proof.
\end{proof}

\begin{remark}\label{R:DM_7}
Notice, that when  $a = 0$ the positive boundary {triplet}
 $\{\mathfrak{N}_0, \Gamma_0^0, \Gamma_1^0 \}$ of the form $\eqref{DM_34}$ is obtained from the
 "canonical" boundary {triplet}s $\{\mathcal{H}(z), \Gamma_0(z), \Gamma_1(z) \}$,
 where
 \[
 \mathcal{H}(z) = \mathfrak{N}_z, \quad
 \Gamma_1(z) = zP_z - \overline{z}VP_{\overline{z}},\quad
  \Gamma_0(z) = P_z - VP_{\overline{z}},
 \]
 by "limiting process" as $z \to 0$.
 We omit the cumbersome calculations leading to this and to a bit more general relation
 $\Gamma(z) \to \Gamma_1^a - a\Gamma_0^a, \Gamma_0(z) \to \Gamma_0^a$, as $z \to -a$,
  but we  merely point out, that the formula $\eqref{DM_36}$ follows easily from \eqref{DM_27},
  because
\[
V(z) = -({A}_0 - \bar{z})({A}_0 - z)^{-1}
\mid_{\mathfrak{N}_{\bar{z}}} =
  -I + (\bar{z} - {z})({A}_0 - z)^{-1} \mid_{\mathfrak{N}_{\bar{z}}} \quad \to -I
  \quad\mbox{where
}\quad z \to -a
\]
  and $B(z) \to B_a - aI$, where $z \to -a$.
\end{remark}

\begin{remark}\label{R:DM_8}
Applying the above "limiting process" to the formula~\eqref{DM_20}, one
obtains  the following representation of the Weyl function $M(z)$ corresponding to the boundary {triplet}
$\eqref{DM_34}$

\eq{DM_39}{M(z) = (z+a)P[I + (z + a)({A}_0 - z)^{-1}]|_{\sN_{-a}}.}
However, the direct proof is even shorter. Indeed, writing  $f_z \in
\mathfrak{N}_z$ as
\[
f_z = ({A}_0 + a)({A}_0 - z)^{-1}f_{-a}, \quad
f_{-a} \in
\mathfrak{N}_{-a},
\]
one obtains the equalities  $\Gamma_0f_z = f_{-a}$, $\Gamma_1f_z = (z + a)P({A}_0 +
a)({A}_0)^{-1}f_{-a}$, which lead to~$\eqref{DM_39}$.

Notice also that   the equality
\eqref{DM_21} for $M(z)$ easily follows from $\eqref{DM_39}$, if the $\gamma$-field  is defined as follows
\[
\gamma(-a) = I_{\mathfrak{N}_{-a}}, \quad
\gamma(z) = ({A}_0 +
a)({A}_0 - z)^{-1}\gamma(-a).
\]
\end{remark}

\subsection{Stieltjes class}
Let $A\ge 0$ be a nonnegative symmetric operator in $\frak H$.
Recall that in the set of non-negative self-adjoint extensions of $A$
there exists (see \cite{Kr47}) the maximal and the minimal extensions $\wt{A}_{F}$
and $\wt{A}_{K}$,  called the Friedrichs and the Krein
extensions of $A$, respectively. These extensions are characterized by the inequalities
  \begin{equation}\label{DM_Fridr_Krein}
(\wt{A}_{F} + x)^{-1} \le(\wt{A} + x)^{-1} \le (\wt{A}_{K} + x)^{-1}, \quad x >0,
    \end{equation}
in which $\wt{A}\ge 0$ is an arbitrary non-negative self-adjoint
extension of the operator $A\ge 0$.

\begin{definition}\label{D:DM_7}{\rm(\cite{GG72})}.
An operator function $F(z)$ holomorphic on the complex plane with a cut along the half-line
$[0,\infty)$  with values in
$[\mathcal{H}]$ is called the Stieltjes function, if it belongs to the class $(R)$ and $F(x) \ge 0$ for all $x <0$.
\end{definition}

The class of Stieltjes operator functions is denoted by $(S)$. Recall that $F \in (S)$,
if $F \in (R)$ and $zF(z) \in (R)$ (see~\cite{KrNud}). We also write: $F_1 \in
(\widehat{S})$, if $F_1(z) = F(z) + K,$ where  $F \in (S)$ and $K = K^* \in [\mathcal{H}]$.

\begin{theorem}\label{T:DM_3}
Let $M(z)$ be a Weyl function corresponding to a boundary {triplet}
$\{\mathcal{H}, \Gamma_0, \Gamma_1\}.$ Then $M \in
(\widehat{S})$, if and only if the operator  ${A}_0$ is positive and transversal to the Friedrichs extension $\wt{A}_{F}$.
\end{theorem}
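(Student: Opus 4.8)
The plan is to reduce the general statement to the special positive boundary triplet of the form~\eqref{DM_34} with $a=0$, where one has the explicit representation~\eqref{DM_39}, namely $M(z)=zP[I+z(A_0-z)^{-1}]|_{\sN_0}$, and then use the comparison inequalities~\eqref{DM_Fridr_Krein} together with Proposition~\ref{P:DM_9}. Recall that by the definition of $(\widehat S)$ we must show $M(z)=F(z)+K$ with $F\in(S)$, i.e. $F\in(R)$ and $zF(z)\in(R)$; since adding $K=K^*\in[\cH]$ and conjugating by the isomorphisms $T$ arising from a change of boundary triplet (see Proposition~\ref{P:DM_6}, formula~\eqref{DM_23}, and Lemma~\ref{L:DM_1}) preserves membership in $(\widehat S)$ and preserves both positivity of $A_0$ and transversality to $\wt A_F$, it suffices to prove the equivalence for one convenient triplet. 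I would therefore assume throughout that $\{\cH,\Gamma_0,\Gamma_1\}$ is the positive triplet~\eqref{DM_34} with $a=0$, so that $M(0)=0$ and $M$ is given by~\eqref{DM_39}.

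First I would establish the implication ``$A_0\ge 0$ and $A_0$ transversal to $\wt A_F$'' $\Rightarrow$ $M\in(\widehat S)$. That $M\in(R)$ is already known (Corollary~\ref{C:DM_4A}); the point is $zM(z)\in(R)$. For $x<0$ one computes from~\eqref{DM_39} that $M(x)=xP[I+x(A_0-x)^{-1}]|_{\sN_0}=xP\cdot(A_0)(A_0-x)^{-1}|_{\sN_0}$, and since $A_0\ge 0$ and $x<0$ the operator $A_0(A_0-x)^{-1}=I+x(A_0-x)^{-1}$ is nonnegative; hence $M(x)\le 0$ for $x<0$, i.e. $-M\in(R)$ is wrong but rather $zM(z)$ has the right sign on the negative axis. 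More precisely, $xM(x)\ge 0$ for $x<0$, and combined with $M\in(R)$ and the holomorphy of $M$ on $\dC\setminus[0,\infty)$ (which follows from $\rho(A_0)\supset\dC\setminus[0,\infty)$, itself a consequence of $A_0\ge 0$), Definition~\ref{D:DM_7} applied to $-M$ or directly the characterization $F\in(S)\Leftrightarrow F,\ zF\in(R)$ gives $M\in(S)\subset(\widehat S)$. The role of transversality to $\wt A_F$ here is to guarantee, via Proposition~\ref{P:DM_9} (with $B$ corresponding to $A_0$... more precisely via~\eqref{DM_35} comparing $A_0$ with $\wt A_F$), that $A_0$ is indeed nonnegative rather than merely having $0$ as a point of regular type; one uses that $\wt A_F$ is the \emph{largest} nonnegative extension in the sense of~\eqref{DM_Fridr_Krein}, so $A_0\ge 0$ together with transversality to $\wt A_F$ pins down the relevant spectral behaviour of $M$ near $0$.

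For the converse, suppose $M\in(\widehat S)$, so after subtracting $K=M(0)^*$-type constant we may assume $M\in(S)$ and $M(0)=0$. Then $M(x)\le 0$ for $x<0$, and reading~\eqref{DM_39}/\eqref{DM_35} backwards: $M(x)\le 0$ forces $(B-M(x))^{-1}$ to have the sign making $[(A_B+|x|)^{-1}-(A_0+|x|)^{-1}]|_{\sN_{x}}\ge 0$ for the relevant comparison extension, which via~\eqref{DM_Fridr_Krein} and Proposition~\ref{P:DM_9} identifies $A_0$ as a nonnegative extension of $A$ transversal to $\wt A_F$; concretely, $M\in(S)$ gives holomorphy of $M$ on $\dC\setminus[0,\infty)$, hence $\rho(A_0)\supset\dC\setminus[0,\infty)$ by Proposition~\ref{P:DM_7}(iii) (spectrum of $A_0$ is the set of $z$ with $0\notin\rho(M(z)-0)$, and $M$ being holomorphic and boundedly invertible off the cut on a neighbourhood...), so $A_0\ge 0$; and the transversality to $\wt A_F$ is then extracted from the behaviour of $M(x)/x$ as $x\uparrow 0$, since transversality to $\wt A_F$ is equivalent (by Proposition~\ref{P:DM_3} or its real-point analogue Corollary~\ref{cor:DM_3}) to bounded invertibility of $[(A_0-x)^{-1}-(\wt A_F-x)^{-1}]|_{\sN_{x}}$, which in turn corresponds through Proposition~\ref{P:DM_9} to boundedness of $(B_F-M(x))^{-1}$ staying controlled as $x\to 0$.

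\textbf{Main obstacle.} I expect the delicate point to be the converse direction, specifically teasing apart ``$A_0\ge 0$'' from ``$A_0$ transversal to $\wt A_F$'': the Stieltjes property $M\in(\widehat S)$ must be shown to encode \emph{both} pieces of information, and disentangling them requires relating the boundary behaviour of $M$ at the cut endpoint $0$ to the Friedrichs extension via the comparison inequalities~\eqref{DM_Fridr_Krein} and Proposition~\ref{P:DM_9}. The forward direction is comparatively routine once one reduces to the triplet~\eqref{DM_34} and uses $A_0(A_0-x)^{-1}\ge 0$. A secondary technical nuisance is checking carefully that the reduction to a single convenient boundary triplet is legitimate --- i.e. that the classes $(\widehat S)$, the positivity of $A_0$, and the transversality to $\wt A_F$ are all invariant under the admissible transformations~\eqref{DM_23} and~\eqref{DM_27}--\eqref{DM_28}; this is where one must invoke Lemma~\ref{L:DM_1} and Proposition~\ref{P:DM_6} with some care about which quantities ($A_0$, $\wt A_F$, $\cH$) change and which do not.
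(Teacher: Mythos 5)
Your forward direction breaks down exactly where the transversality hypothesis has to do its work. From \eqref{DM_39} with $a=0$ you get $M(x)=xPA_0(A_0-x)^{-1}|_{\sN_0}\le 0$ for $x<0$, hence $xM(x)\ge 0$; but neither sign condition is what $(\widehat{S})$ requires. By Definition~\ref{D:DM_7}, $M\in(\widehat{S})$ means $M(x)\ge K$ for all $x<0$ with a fixed $K=K^*\in[\cH]$, i.e.\ $M$ must be \emph{uniformly bounded below} on $(-\infty,0)$ (equivalently, since $M$ increases there, $\slim_{x\downarrow -\infty}M(x)$ must exist in $[\cH]$). Your two sign conditions hold for \emph{every} nonnegative $A_0$, transversal to $\wt{A}_F$ or not: for $-d^2/dt^2$ on the half-line with $A_0=\wt{A}_F$ (Dirichlet) one has $M(x)=-\sqrt{-x}$, which satisfies $M(x)\le 0$ and $xM(x)\ge 0$ yet is unbounded below, so $M\notin(\widehat{S})$ --- consistent with the theorem, since $\wt{A}_F$ is not transversal to itself. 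So the step ``$M(x)\le0$ and $xM(x)\ge0$ imply $M\in(S)$'' proves a false statement, and the role you assign to transversality (``to guarantee that $A_0$ is nonnegative'') is not its role: positivity of $A_0$ is a separate hypothesis, while transversality is what controls boundedness of $M(-x)$ as $x\to+\infty$.

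The paper makes this quantitative in a way your sketch does not reach: choosing triplets with a common $\Gamma_1$-component it obtains $M_F^{-1}(z)=\wt M^{-1}(z)+B$ with $B\ge 0$ (from the first inequality in \eqref{DM_Fridr_Krein}), transversality of $A_0$ and $\wt A_F$ being equivalent to $B^{-1}\in[\cH]$ by Corollary~\ref{C:DM_1}; then $\slim_{x\uparrow\infty}\wt M(-x)^{-1}=-B$, so $\wt M(-x)$ converges in the generalized sense to $-B^{-1}$, which is a bounded lower bound precisely when $B^{-1}$ is bounded, and when $B^{-1}$ is unbounded the spectrum of $\wt M(-x)$ escapes to $-\infty$ and $\wt M\notin(\widehat{S})$. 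Your converse has the matching defect: you propose to read transversality to $\wt A_F$ off ``the behaviour of $M(x)/x$ as $x\uparrow 0$'', but the boundary behaviour of $M$ at $0^-$ governs the relation of $A_0$ to the \emph{Krein} extension $\wt A_K$ (see \eqref{eq:M0} and the discussion preceding Proposition~\ref{P:DM_11}); the Friedrichs extension is detected at $x\to-\infty$. A smaller issue: the reduction to the triplet \eqref{DM_34} with $a=0$ presupposes $0\in\rho(A_0)$, which is not given; one must work at some $-a<0$, or avoid the reduction as the paper does.
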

\begin{proof}
Notice first that the operator ${A}_0$ is positive.
Hence the operators ${A}_0$ and $\wt{A}_{F}$ are transversal to the operator $\wt{A}_{-a}$ and by Corollary~\ref{C:DM_2}, there exist boundary {triplet}s
$\{\mathcal{H}, \Gamma_0, \Gamma_1^{a}\}$,
$\{\mathcal{H}, \Gamma_0^{F}, \Gamma_1^a\}$, such that
\[
\dom({A}_0) = \ker\Gamma_0,\quad \dom(\wt{A}_{F}) =
\ker\Gamma_0^{F},\quad\dom(\wt{A}_{-a}) = \ker\Gamma_1^{a}.
\]
In view of Proposition~\ref{P:DM_1} the corresponding  Weyl function are connected by the equality
\eq{DM_40}{M^{-1}_{F}(z) = \wt{M}^{-1}(z) + B.}
It follows from the formula $\eqref{DM_39}$  and the first of the inequalities
\eqref{DM_Fridr_Krein}
  \[
(\wt{A}_{F} + x)^{-1} \le({A}_0 + x)^{-1} , \quad x >0,
    \]
that $B \ge 0$. We note that transversality of the operators
${A}_0$ and $\wt{A}_{F}$ is equivalent to the existence of a bounded inverse of $B$ (see Corollary~\ref{C:DM_1}).

It follows from $\eqref{DM_40}$  that $M_{F}(z)[\wt{M}(z)^{-1}
+ B] = I$. The operator-function $\wt{M}(-x)^{-1}$ increases monotonically with $x > 0$, what follows from the formula $\eqref{DM_39}$. Furthermore,
\[
 \lim\limits_{x \to
\infty}(M_{F}(-x)h, h) = - \infty, \quad \mbox{for all}\quad h \in
\mathcal{H}\setminus \{0\}
\] (see~\cite{KrOv78}). Hence (see \cite{KrOv77}) there exists
$s-\lim\limits_{x  \uparrow \infty} [\wt{M}(-x)^{-1} + B] = 0$, i.e.
\eq{DM_41}{s-\lim\limits_{x \uparrow +\infty} \wt{M}(-x)^{-1} = -B.}

Since $\ker B = 0$, then the condition $\eqref{DM_41}$ means that
$\wt{M}(-x)$ converges strongly to  $-B^{-1}$ in the generalized sense (see~\cite{Kato}).

Let${A}_0$ and $\wt{A}_{F}$ be transversal.  Then $B^{-1} \in [\mathcal{H}]$  and
$-B^{-1} = s-\lim\limits_{x \uparrow \infty} \wt{M}(-x)$. By
\eqref{DM_40}, $\wt{M}(-x) +B \ge 0$ and $\wt{M}(z) \in (\widehat{S})$.

If the extensions ${A}_0$ and $\wt{A}_{F}$ are not transversal, then  the operator $B^{-1}$
is unbounded.  Applying the theorem on the semi-continuity of the spectrum below (see~\cite{Kato}),
we obtain for all $\lambda \in \sigma(B^{-1})$ that any interval $(-\lambda, -\lambda +
\varepsilon)$ $(\varepsilon>0)$ contains points of the spectrum of $\wt{M}(-x)$, for all $x$  large enough. Thus, the operator function $\wt{M} \notin (\widehat{S})$.

To complete the proof it remains to use the relation
$M(z) = C\wt{M}(z)C^* + D$. Theorem is proved.
\end{proof}

Note that Theorem~\ref{T:DM_3} can be also deduced from the following proposition, which is of  independent interest.

\begin{proposition}\label{P:DM_10}
Let $Q(x)$ $(x > 0)$ be an operator function with values in the set of positive operators in  $[\mathcal{H}]$, such that
\begin{itemize}
\item[\;\;\rm (i)] $Q(x)$ decreases monotonically on the half-line $(0, \infty)$;
\item[\;\;\rm (ii)] $s-\lim\limits_{x \uparrow \infty} Q(x) = B$;
\item[\;\;\rm (iii)] The operator$B$   is invertible in a wide sense.
\end{itemize}
Then there exists the limit
\[
s-\lim\limits_{x \uparrow \infty}
B^{1/2}Q(x)^{-1}B^{1/2} = I_\cH.
\]
\end{proposition}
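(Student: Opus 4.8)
The plan is to reduce everything to the scalar/nonnegative operator situation by a suitable functional‑calculus argument, working with the positive square root $B^{1/2}$ and its (possibly unbounded) inverse. First I would record the basic monotonicity consequence: since $Q(x)$ decreases and is positive, the inverses $Q(x)^{-1}$ increase monotonically in $x$ on $(0,\infty)$, so for each $h\in\cH$ the quadratic forms $(Q(x)^{-1}h,h)$ are nondecreasing; by hypothesis (ii) and (iii), $Q(x)\ge c\,I$ for $x$ large (here $c>0$ may be taken as half the bottom of the spectrum of $B$, which is positive because $B\ge0$ is invertible in the wide sense and is a strong limit of a monotone family, hence has closed range), so the $Q(x)^{-1}$ are eventually uniformly bounded, $0\le Q(x)^{-1}\le c^{-1}I$. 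A uniformly bounded monotone family of self‑adjoint operators has a strong limit, so $s\text{-}\lim_{x\uparrow\infty}Q(x)^{-1}=:R$ exists with $0\le R\le c^{-1}I$.

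\textbf{Identifying the limit.} The next step is to show $R=B^{-1}$. From $s\text{-}\lim Q(x)=B$ and $s\text{-}\lim Q(x)^{-1}=R$, together with the uniform bounds just established, one passes to the limit in the identities $Q(x)^{-1}Q(x)=Q(x)Q(x)^{-1}=I$: strong convergence of one factor against norm‑boundedness of the other (and vice versa) gives $RB=BR=I$ on all of $\cH$, so $R=B^{-1}\in[\cH]$, and in particular $B$ is boundedly invertible. (This is where hypothesis (iii) is genuinely used — if $B$ were not invertible in the wide sense the limit $R$ would still exist but would only be a one‑sided inverse on a dense range; the hypothesis forces $\ker B=0$ and $\ran B$ closed, which is exactly what closes the argument.) Consequently $B^{1/2}$ and $B^{-1/2}$ are both bounded, and one may write
\[
B^{1/2}Q(x)^{-1}B^{1/2} = B^{1/2}\bigl(Q(x)^{-1}-B^{-1}\bigr)B^{1/2} + I_\cH .
\]
Since $B^{1/2}\in[\cH]$ and $Q(x)^{-1}-B^{-1}\to 0$ strongly with the difference uniformly bounded, the first term tends to $0$ strongly, giving $s\text{-}\lim_{x\uparrow\infty}B^{1/2}Q(x)^{-1}B^{1/2}=I_\cH$.

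\textbf{Main obstacle.} The delicate point is not the final algebra but establishing the eventual uniform bound $Q(x)\ge cI$ from mere strong convergence to an operator that is only "invertible in the wide sense." Strong convergence does not by itself transfer a lower spectral bound; one must exploit monotonicity: because $Q(x)$ is decreasing, $\inf_{x}Q(x)$ exists as a self‑adjoint operator and equals the strong limit $B$ (decreasing bounded‑below families of positive operators converge strongly to their infimum), so in fact $Q(x)\ge B$ for \emph{all} $x>0$, hence $Q(x)^{-1}\le B^{-1}$ once $B^{-1}$ is known bounded — but to know $B^{-1}$ bounded one first needs that $B$, being the infimum of the family and invertible in the wide sense, has spectrum bounded away from $0$. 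For a self‑adjoint $B\ge 0$, "invertible in the wide sense" ($\ker B=0$, $\ran B=\ran\overline B$) together with $B\ge0$ forces $0\notin\sigma(B)$, i.e. $B^{-1}\in[\cH]$; this is a standard fact I would invoke (closed‑range plus injective self‑adjoint implies boundedly invertible). Once that is in hand the chain $Q(x)\ge B\Rightarrow 0\le Q(x)^{-1}\le B^{-1}\in[\cH]$ gives the uniform bound directly, the strong limit $R=s\text{-}\lim Q(x)^{-1}$ exists by monotone convergence, and the identification $R=B^{-1}$ and the conclusion follow as above.
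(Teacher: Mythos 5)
Your argument proves the proposition only in the case where $B$ is \emph{boundedly} invertible, and that is strictly stronger than hypothesis (iii). In this paper ``invertible in a wide sense'' means only $\ker B=\{0\}$ — compare Corollary~\ref{C:DM_1}, where invertibility in the wide sense corresponds to disjointness and bounded invertibility to transversality, and the proof of Theorem~\ref{T:DM_3}, where the non-transversal case is explicitly the case ``$B^{-1}$ is unbounded.'' The proposition is needed precisely in that situation. Your attempted justification — that $B$, ``being a strong limit of a monotone family, has closed range'' — is a non sequitur, and closed range is not part of the hypothesis. Once $0\in\sigma(B)$ with $\ker B=\{0\}$ is allowed, the chain collapses: there is no $c>0$ with $Q(x)\ge cI$ for large $x$; the family $Q(x)^{-1}$ is not uniformly bounded (from $Q(x)\ge B$ one only gets $0\le B^{1/2}Q(x)^{-1}B^{1/2}\le I$, not $Q(x)^{-1}\le c^{-1}I$); the strong limit $R=s\text{-}\lim Q(x)^{-1}$ need not exist in $[\cH]$; and the identity $B^{1/2}Q(x)^{-1}B^{1/2}=B^{1/2}\bigl(Q(x)^{-1}-B^{-1}\bigr)B^{1/2}+I$ is meaningless because $B^{-1}$ is unbounded. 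So everything after your opening observation rests on an assumption you are not entitled to.

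What does survive from your write-up is the first step: the infimum of the decreasing family equals its strong limit, so $Q(x)\ge B$ for all $x$, hence $B^{1/2}Q(x)^{-1}B^{1/2}\le I$, and by monotonicity the limit $K:=s\text{-}\lim_{x\uparrow\infty}B^{1/2}Q(x)^{-1}B^{1/2}$ exists with $K\le I$. The missing idea is how to prove the reverse inequality $K\ge I$ without inverting $B$ boundedly. The paper does this by letting $y\uparrow\infty$ in $I\le Q(x)^{1/2}B^{-1/2}\bigl[B^{1/2}Q(y)^{-1}B^{1/2}\bigr]B^{-1/2}Q(x)^{1/2}$ to obtain $T(x)^*T(x)\ge I$ for $T(x)=K^{1/2}B^{-1/2}Q(x)^{1/2}$, then using that $T(x)^*T(x)$ and $T(x)T(x)^*$ are unitarily equivalent (both kernels being trivial) to get $K^{1/2}B^{-1/2}Q(x)B^{-1/2}K^{1/2}\ge I$, and finally letting $x\uparrow\infty$ and invoking (ii) to conclude $K\ge I$. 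Some device of this kind is indispensable; as written, your proof does not reach the case the proposition is actually about.
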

\begin{proof}
It follows from the inequality $Q(x) \ge B$, that $B^{1/2}Q(x)^{-1}B^{1/2}
\le I$. The operator function  $B^{1/2}Q(x)^{-1}B^{1/2}$ increases monotonically with $x > 0$, consequently
\begin{equation}\label{eq:BQB}
    s-\lim\limits_{x \uparrow \infty} B^{1/2}Q(x)^{-1}B^{1/2} = K \le I.
\end{equation}
On the other hand, if $x<y$, then we obtain  $Q(x)^{-1} \le Q(y)^{-1}$ and

\eq{DM_42}{I \le Q(x)^{1/2}B^{-1/2}[B^{1/2}Q(y)^{-1}B^{1/2}]B^{-1/2}Q(x)^{1/2}.}

Proceeding to the limit in the inequality $\eqref{DM_42}$  as $y \uparrow \infty$ and using~\eqref{eq:BQB}, we get
\eq{DM_43}
{I \le Q(x)^{1/2}B^{-1/2}KB^{-1/2}Q(x)^{1/2}.}
Let further, $T(x) = K^{1/2}B^{-1/2}Q(x)^{1/2}$. Then the inequality $\eqref{DM_43}$
takes the form $T(x)^*T(x) \ge I$. Clearly,  $\ker(T(x)) = \{0\} =
\ker(T(x)^*)$, $x>0$, consequently  $T(x)^*T(x)$ and $T(x)T(x)^*$ are unitary  equivalent and hence $T(x)T(x)^* \ge I$, i.e.
\[
K^{1/2}B^{-1/2}Q(x)B^{-1/2}K^{1/2} \ge I.
\]
Proceeding in the last inequality to the limit  as $x \uparrow \infty$ and using the condition
(ii), we obtain $K \ge I$. However, inequality $K \le I$ completes the proof.
\end{proof}

\begin{remark}\label{R:DM_9}
 Let us remind the definition of the $Q_{F}$ $(Q_{K})$ function in the terminology of M.G. Krein and  и I.E. Ovcharenko (see~\cite{KrOv78}). Assume, that the operator $A$ is nonnegative, $\wt{A}_{F}$  and $\wt{A}_{K}$ are the Friedrichs and the Krein extensions of the operator $A$ and $\gamma_{K}(z)$ and $\gamma_{F}(z)$ are $\gamma$-fields of the pairs $(A,\wt{A}_{F})$ and $(A,\wt{A}_{K})$, respectively. It is not required that the $\gamma_{F}(z)^{-1}$,
$\gamma_{K}(z)^{-1} \in [\mathfrak{N}_z, \mathcal{H}]$, but it is assumed only that $\gamma_{F}(-a)$ and $\gamma_{K}(-a)$ are single-valued maps
from $\mathcal{H}$ onto $C^{1/2}_a\mathfrak{N}_{-a}$, where
\[
C_a = 2a[(\wt{A}_{K} + a)^{-1} - (\wt{A}_{F} + a)^{-1} ].
\]
An operator function  $Q_{F}(z)$ $(Q_{K}(z))$
holomorphic in  $\Ext[0, \infty)$, which satisfies~\eqref{DM_21}  and the condition
\[
s-\lim\limits_{x \uparrow 0}Q_{F}(x) = 0,\quad (s-\lim\limits_{x
\uparrow -\infty}Q_{K}(x) = 0).
\]
is called the $Q_{F}$ $(Q_{K})$ function of the operator $A$.
 One of function  $Q_{F}(z)$
$(Q_{K}(z))$ takes the form  \eq{DM_44} {\begin{array}{l}
   Q_{F}(z) =
\{-2aI + (z+a)C^{1/2}_a[I + (z+a)(\wt{A}_{F} -z)^{-1}C_a^{1/2}]\}
\mid_{\mathfrak{N}_{-a}}, \\
   Q_{K}(z) = \{2aI + (z+a)C^{1/2}_a[I + (z+a)(\wt{A}_{K} -z)^{-1}C_a^{1/2}]\}
\mid_{\mathfrak{N}_{-a}}.
 \end{array}}

Let $\{\mathcal{H}, \Gamma^{F}_1, \Gamma^{F}_2\}$ and
$\{\mathcal{H}, \Gamma^{K}_1, \Gamma^{K}_0\}$ are positive boundary {triplet}s
for the operator $A^* + a$ of the form $\eqref{DM_34}$, in which ${A}_0 =
\wt{A}_{F}$ and ${A}_0 = \wt{A}_{K}$, respectively,  $M^a_{F}(z)$ and
$M^a_{K}(z)$ are the corresponding Weyl functions. It follows from the relations
$\eqref{DM_39}$ and $\eqref{DM_44}$,  that  the pairs of  functions
$Q_{F}(z), M^a_{F}(z)$ and $Q_{K}(z), M_{K}(z)$ are connected by the equalities
$$Q_{F}(z) = [-2aI + C^{1/2}_aM^a_{F}(z)C_a^{1/2}]\mid_{\mathfrak{N}_{-a}},\quad
Q_{K}(z) = [2aI +
C^{1/2}_aM^a_{K}(z)C_a^{1/2}]\mid_{\mathfrak{N}_{-a}}.
$$

If $\wt{A}_{F}$ and $\wt{A}_{K}$ are transversal, then the operator $C_a$
is invertible  (see Corollary~\ref{cor:DM_3}), and by $\eqref{DM_44}$ we get
$Q_{F}(z)$, $Q_{K}(z)$ are Weyl functions of the operators $\wt{A}_{F}$ and
$\wt{A}_{K}$ for the following choices of  boundary {triplet}s
$$\{\mathcal{H}, -2aC_a^{-1/2}\Gamma_0^{F}+C_a^{1/2}\Gamma_1^{F},
C_a^{-1/2}\Gamma_0^{F}\}, \quad \{\mathcal{H},
2aC_a^{-1/2}\Gamma_0^{M}+C_a^{1/2}\Gamma_1^{M},
C_a^{-1/2}\Gamma_0^{F}\}.$$

Operator $C_a$ is expressed in terms of $M_{F}^a(z)$, $M_{K}^a(z)$: $$C_a =
2a[s-\lim\limits_{x \uparrow 0}(M_{F}^a)(x)^{-1}] =
2a[s-\lim\limits_{x \uparrow 0}(M_{K}^a)(x)^{-1}].$$

These relations are true without assuming transversality of $\wt{A}_{F}$ and
$\wt{A}_{K}$.
\end{remark}

\subsection{A criterion for finiteness of the negative spectrum}
 Let $\{\mathcal{H}, \Gamma_0,
\Gamma_1\}$ be a boundary {triplet}, such that  ${A}_0 \ge 0$. In this case the Weyl function
$M(z)$ of the operator ${A}_0$, corresponding to this boundary {triplet}, is defined and holomorphic on
 $(-\infty, 0)$. Being an  $R$-function $M(x)$  increases monotonically on
 $(-\infty, 0)$, and if extensions $A_0$ and $\wt{A}_{K}$  are disjoint, then the following equality
  defines a self-adjoint operator
\begin{equation}\label{eq:M0}
    M(0) := s-R-\lim\limits_{x \uparrow 0}M(x),
\end{equation}
as a strong resolvent limit of operators $M(x)$ at $x\to 0$
(see~\cite{Kato}).
If the expansions  $A_0$ and $\wt{A}_{K}$ are transversal, then the operator
 $M(0)$ is bounded, $M(0)\in[H]$.

\begin{proposition}\label{P:DM_11}
Let $\{\mathcal{H}, \Gamma_0, \Gamma_1\}$ be a boundary {triplet} for the operator $A^*$, where ${A}_0$ is
a nonnegative extension of $A$ transversal  to the Krein's extension $\wt{A}_{K}$.  Let
 $B = B^* \in \mathcal{C}(\mathcal{H})$ and  let $A_B$ be a proper extensions of the operator
$A$, which is defined by the equality
 $\dom({A}_B) = \ker(\Gamma_1 - B\Gamma_0)$.
In order that the negative part of the spectrum of the operator ${A}_B$:
\begin{itemize}
\item[\;\;\rm (a)] to consist of $n$ points ($0 \le n \le \infty$);
\item[\;\;\rm (b)] to have a unique accumulation point $0$,
\end{itemize}
it is sufficient, and if ${A}_0 = \wt{A}_{F} $, then also necessary, that the operator
$B -M(0)$ to have the same property.
\end{proposition}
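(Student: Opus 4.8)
The plan is to reduce the whole question, via Proposition~\ref{P:DM_7}, to the behaviour of the monotone self\-adjoint family $x\mapsto B-M(-x)$ on $(0,\infty)$, and then to compare the negative spectral subspaces of $A_B$ with those of $B-M(0)$ by means of the Kre\u{\i}n\-type resolvent formula of Proposition~\ref{P:DM_9} together with the hypothesis $A_0\ge 0$.

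First, since $B=B^*$ the extension $A_B$ is self\-adjoint, and since $A_0\ge 0$ we have $A\ge 0$, so $(-\infty,0)\subset\wh{\rho}(A)\cap\rho(A_0)$; in fact $\|(A+x)f\|\ge x\|f\|$ for $f\in\dom(A)$, $x>0$. Hence for every $x>0$ Proposition~\ref{P:DM_7} applies at $z=-x$: $A_B$ has no residual spectrum, and $-x\in\sigma(A_B)$ (resp. $\sigma_p(A_B)$, with multiplicity) if and only if $0\in\sigma(B-M(-x))$ (resp. $\sigma_p$, with the same multiplicity). Thus the negative spectrum of $A_B$ is $\{-x:x>0,\ 0\in\sigma(B-M(-x))\}$, and for $-x\notin\sigma(A_B)$ the dimension of $E_{A_B}(-\infty,-x)$ equals the negative index of $(A_B+x)^{-1}$. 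I would also record the monotonicity: $M$ is a $Q$-function (Theorem~\ref{T:DM_1}), so differentiating~\eqref{DM_21} gives $M'(x)=\gamma(x)^*\gamma(x)$ on $(-\infty,0)$; as $\gamma(x)^{-1}$ is bounded, $M(x)-M(y)\ge c(x,y)I>0$ for $y<x<0$, whence $x\mapsto B-M(-x)$ is strictly operator\-increasing and continuous on $(0,\infty)$. Transversality of $A_0$ and $\wt{A}_K$ makes $M(0)=\slim_{x\uparrow 0}M(x)$ bounded, so $B-M(-x)$ decreases to $B-M(0)$ in the strong resolvent sense as $x\downarrow 0$.

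The core step is an inertia comparison at each fixed $x>0$. Proposition~\ref{P:DM_9} gives $(A_B+x)^{-1}=(A_0+x)^{-1}+R_x$, where $R_x=R_x^*$ maps $H$ into $\mathfrak{N}_{-x}$, annihilates $\mathfrak{M}_{-x}$, and is congruent on $\mathfrak{N}_{-x}$, through the boundedly invertible $T(-x)$, to $(B-M(-x))^{-1}$; moreover $(A_0+x)^{-1}\ge 0$. Writing this sum in $2\times 2$ block form with respect to $H=\mathfrak{N}_{-x}\oplus\mathfrak{M}_{-x}$ and computing the negative index via the (generalized) Schur complement of the $\mathfrak{M}_{-x}$\-corner, which corner is $\ge 0$, one finds that the negative index of $(A_B+x)^{-1}$ equals the negative index of $(B-M(-x))^{-1}$ plus that of a nonnegative operator, namely the Schur complement of $(A_0+x)^{-1}$ relative to $\mathfrak{M}_{-x}$. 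Hence
\[
\dim E_{A_B}(-\infty,-x)\ \le\ \dim E_{B-M(-x)}(-\infty,0),\qquad x>0,
\]
with equality precisely when that Schur complement vanishes. Using $\langle(A_0+x)^{-1}u,u\rangle=\|(A_0+x)^{-1/2}u\|^2$ one checks that it vanishes if and only if $\mathfrak{N}_{-x}\cap\dom(A_0^{1/2})=\{0\}$, i.e. if and only if $A_0=\wt{A}_F$, by the form\-domain characterization of the Friedrichs extension (cf.~\cite{Kr47}).

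Finally, letting $x\downarrow 0$ and using $B-M(-x)\downarrow B-M(0)$ one obtains $\dim E_{A_B}(-\infty,0)\le\dim E_{B-M(0)}(-\infty,0)$, with equality when $A_0=\wt{A}_F$; the same monotone\-limit argument applied at $-\varepsilon$ in place of $0$, together with the strict monotonicity of $x\mapsto M(-x)$ (to pass between $\dim E_{B-M(-\varepsilon)}(-\infty,0)$ and $\dim E_{B-M(0)}(-\infty,-\varepsilon')$ for a suitable $\varepsilon'>0$), yields the corresponding comparison of the functions $\varepsilon\mapsto\dim E_{A_B}(-\infty,-\varepsilon)$ and $\varepsilon\mapsto\dim E_{B-M(0)}(-\infty,-\varepsilon)$. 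Reading property (a) (this function is eventually a finite constant $n$) and property (b) (it is finite for every $\varepsilon>0$) off these, the inequality gives the sufficiency and the equality (for $A_0=\wt{A}_F$) gives the necessity. The principal difficulty is the inertia step: making the Schur\-complement/congruence argument rigorous when the $\mathfrak{M}_{-x}$\-corner of $(A_0+x)^{-1}$ is not boundedly invertible (which needs generalized Schur complements and an approximation argument), and then matching the vanishing of that Schur complement exactly with the identity $A_0=\wt{A}_F$.
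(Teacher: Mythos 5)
Your overall architecture is the same as the paper's: reduce everything to the family $B-M(-\varepsilon)$ via the resolvent identity \eqref{DM_45} of Proposition~\ref{P:DM_9}, use the monotonicity of $M$ on $(-\infty,0)$, and pass to the limit $\varepsilon\downarrow 0$. The difference, and the genuine gap, is in the one step you yourself flag as ``the principal difficulty.'' The paper does not prove the inertia identity relating $\dim E_{A_B}(-\infty,0)$ to the negative index of the resolvent difference $({A}_B+\varepsilon)^{-1}-({A}_0+\varepsilon)^{-1}$; it imports it wholesale from the classical Birman--Kre\u{\i}n--Vishik theory, citing \cite{Bir56}, \cite{Kr47}, \cite{Mikh80} (sufficiency: a resolvent difference with $n$ negative squares added to the nonnegative $({A}_0+\varepsilon)^{-1}$ controls the negative spectrum of ${A}_B+\varepsilon$; necessity for ${A}_0=\wt{A}_F$: Birman's equality of the two negative indices). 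Your Schur-complement route is a plausible way to reprove that classical fact, but as written it is not a proof: the Haynsworth argument needs the $\sM_{-\varepsilon}$-corner to be treated as a generalized (non-invertible) Schur complement, and you only gesture at the required approximation. So either carry that out in full or, as the paper does, invoke \cite{Bir56,Kr47,Mikh80} explicitly --- at present the key step is asserted, not established.

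Two further imprecisions in the sketch. First, the negative index of a sum is not the sum of negative indices; what the (generalized) Haynsworth formula gives is that the negative index of $({A}_B+\varepsilon)^{-1}$ equals the negative index of $S+T^*(B-M(-\varepsilon))^{-1}T$ with $S\ge 0$ the Schur complement of $({A}_0+\varepsilon)^{-1}$, whence only the inequality $\dim E_{A_B}(-\infty,-\varepsilon)\le\dim E_{B-M(-\varepsilon)}(-\infty,0)$; and equality is \emph{not} ``precisely when'' $S=0$ (adding a nonnegative $S\neq 0$ can preserve the negative index), so $S=0$ is sufficient but not necessary for equality at a fixed $\varepsilon$. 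Second, for the sufficiency direction your inequality yields only $\dim E_{A_B}(-\infty,0)\le n$ in case (a), i.e.\ finiteness rather than the exact count $n$ claimed in the statement (it does suffice for (b), since it gives $\dim E_{A_B}(-\infty,-\varepsilon)<\infty$ for every $\varepsilon>0$). The paper obtains the exact count from the cited results; if you insist on the Schur-complement route you must either restrict the sufficiency claim to ``at most $n$'' or supply the additional argument producing the matching lower bound.
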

\begin{proof}
In view of $\eqref{DM_35}$ the following formula holds
\eq{DM_45}
{[({A}_B - x)^{-1} - ({A}_0 - x)^{-1}] \mid_{\mathfrak{N}_x}=
T(x)^*(B-M(x))^{-1}T(x),}
for all $x \in \rho({A}_B) \cap (-\infty,0)$. Assume that $\dim E_{B - M(0)}(-\infty, 0) = n <
\infty$. Since\footnote[1]{If the operator function $T(x)$ is not monotonic then  $\dim
E_{T(x)}(-\infty, 0)$, can increase, as it can be seen from elementary examples.} the function $B - M(x)$ is monotonically decreasing on $(-\infty,0)$, then $\dim E_{B -
M(-\varepsilon )}(-\infty, 0) = n $ for all $\varepsilon >0$ small enough. In view of~\eqref{DM_45}, при каждом
$x <0$ the operator $[({A}_B +\varepsilon)^{-1} - ({A}_0 +\varepsilon)^{-1}]$ also has $n$ negative eigen-values with account of multiplicity for all $\varepsilon >0$ small enough. Then, as follows from the results of papers
\cite{Bir56, Kr47, Mikh80}, the operator  ${A}_B +\varepsilon I$ has the same property for all  $\varepsilon >0$, and hence also the operator ${A}_B$.

Let ${A}_0 = \wt{A}_{F}$ and $\dim E_{{A}_{B }}(-\infty, 0) = n$. Then $\dim E_{{A}_{B + \varepsilon I}}(-\infty, 0) = n$ for all $\varepsilon$ small enough and, in view of~\cite{Bir56, Kr47, Mikh80} the operator  $({A}_B + \varepsilon)^{-1} -
(\wt{A}_{F} + \varepsilon)^{-1}$ has the same property and, according to  $\eqref{DM_45}$, the operator
$B-M(-\varepsilon)$ has the same property for all $\varepsilon$ small enough. Using monotonicity of the operator-function  $B -M(x)$ on $(-\infty, 0)$
one obtains $\dim E_{B - M(0)}(-\infty, 0) = n $. This proves (a). The case (b) is proved similarly.
\end{proof}
  \begin{corollary}\label{Cor:DM_10_Posit_Oper}
Let $\{\mathcal{H}, \Gamma_0, \Gamma_1\}$ be a boundary {triplet} for $A^*,$
such that ${A}_0 = \wt{A}_{F}$ and the extensions $\wt{A}_{F}$  and  $\wt{A}_{K}$ are transversal. Let also  $B = B^* \in \mathcal{C}(\mathcal{H})$ and let  $A_B$ be a proper extension of the operator $A$, determined by the equality
 $\dom({A}_B) = \ker(\Gamma_1 - B\Gamma_0)$. Then the following equivalence holds:
$$
     A_B\ge 0 \Longleftrightarrow B -M(0)\ge 0
$$
In particular, the Krein extension  $\wt{A}_{K}$ of the operator ${A}$ corresponds to the operator  $B = M(0)$ via the equality $\dom(\wt{A}_{K})
= \ker(\Gamma_1 - M(0)\Gamma_0)$.
   \end{corollary}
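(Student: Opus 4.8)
The plan is to obtain this corollary as a direct specialization of Proposition~\ref{P:DM_11}. Indeed, under the stated hypotheses $\wt{A}_F$ and $\wt{A}_K$ are transversal, so by the discussion preceding Proposition~\ref{P:DM_11} the limit $M(0)=s\text{-}R\text{-}\lim_{x\uparrow 0}M(x)$ exists as a \emph{bounded} self-adjoint operator $M(0)\in[\cH]$. First I would apply Proposition~\ref{P:DM_11} with $n=0$: the negative spectrum of $A_B$ is empty (i.e. $A_B\ge 0$) if and only if the negative spectrum of $B-M(0)$ consists of $0$ points, i.e. $B-M(0)\ge 0$. Since ${A}_0=\wt{A}_F$, Proposition~\ref{P:DM_11} gives the equivalence in both directions (necessity holds precisely because of the Friedrichs choice), which is exactly the first assertion
$$
A_B\ge 0 \;\Longleftrightarrow\; B-M(0)\ge 0.
$$

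Next, for the statement about the Krein extension, I would argue as follows. The Krein extension $\wt{A}_K$ is a nonnegative self-adjoint extension of $A$, hence in particular a proper extension, and it is transversal to $\wt{A}_F$ by hypothesis. Therefore by Corollary~\ref{C:DM_2} (or directly by Proposition~\ref{P:DM_2}) there is an operator $B_K=B_K^*\in[\cH]$ with $\dom(\wt{A}_K)=\ker(\Gamma_1-B_K\Gamma_0)$, i.e. $\wt{A}_K={A}_{B_K}$ in the given boundary triplet. Applying the equivalence just proved to $B=B_K$, and using that $\wt{A}_K\ge 0$, we get $B_K-M(0)\ge 0$. It remains to show $B_K=M(0)$, and for this I would use the extremal (minimality) property of the Krein extension expressed by the second inequality in~\eqref{DM_Fridr_Krein}: among all nonnegative self-adjoint extensions $\wt A$, $\wt{A}_K$ is the maximal one in the sense $(\wt A+x)^{-1}\le(\wt{A}_K+x)^{-1}$.

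Concretely, suppose $B'=B'^*\in[\cH]$ also satisfies $B'-M(0)\ge 0$, so that ${A}_{B'}\ge 0$ by the first part. Then, comparing via Proposition~\ref{P:DM_9}/formula~\eqref{DM_35} (with $a=0$), the difference $({A}_{B'}-x)^{-1}-({A}_0-x)^{-1}$ restricted to $\sN_x$ equals $T(x)^*(B'-M(x))^{-1}T(x)$, and similarly for $\wt{A}_K={A}_{B_K}$; since $\wt{A}_K$ dominates ${A}_{B'}$ in the resolvent order, we get $(B'-M(x))^{-1}\ge(B_K-M(x))^{-1}$ for $x<0$, hence $B_K-M(x)\ge B'-M(x)$, i.e. $B_K\ge B'$. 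Taking $B'=M(0)$ (which is admissible since $M(0)-M(0)=0\ge 0$) yields $B_K\ge M(0)$; combined with $B_K-M(0)\ge 0$ being sharp — more precisely, applying the same domination argument with the roles reversed using minimality of $\wt{A}_K$ among extensions with nonnegative $B'-M(0)$, or simply noting that $B=M(0)$ already gives the Krein extension by the extremal characterization — one concludes $B_K=M(0)$.

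The main obstacle I anticipate is the last identification $B_K=M(0)$: one must match the abstract extremal property~\eqref{DM_Fridr_Krein} of $\wt{A}_K$ with the order structure on the parameter side. The cleanest route is probably to show directly that the extension ${A}_{M(0)}$, defined by $\dom=\ker(\Gamma_1-M(0)\Gamma_0)$, is nonnegative (immediate from the first part, as $M(0)-M(0)=0\ge0$) and is the \emph{largest} nonnegative extension in the resolvent sense, by using~\eqref{DM_35} to translate $(\wt A+x)^{-1}\le({A}_{M(0)}+x)^{-1}$ into $B-M(x)\le M(0)-M(x)$ for every nonnegative ${A}_B$; monotone passage $x\uparrow 0$ then gives $B\le M(0)$ on the relevant parameters, forcing ${A}_{M(0)}=\wt{A}_K$ by uniqueness of the Krein extension. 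Everything else is a routine bookkeeping of the correspondence between proper extensions and their boundary parameters.
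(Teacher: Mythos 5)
Your reduction of the first equivalence to Proposition~\ref{P:DM_11} with $n=0$ is exactly what the paper intends, and your overall plan for the second assertion (parametrize $\wt{A}_K$ as $A_{B_K}$ with $B_K=B_K^*\in[\cH]$ via transversality, then pin down $B_K$ by the extremal property \eqref{DM_Fridr_Krein} translated through \eqref{DM_35}) is the right one. However, the step that is supposed to produce the missing inequality is carried out with the order reversed, and as written the argument never closes. The translation through \eqref{DM_35} involves an \emph{inversion}: for nonnegative extensions and $x>0$ one has $(A_{B}+x)^{-1}-(\wt{A}_F+x)^{-1}=T(-x)^*\bigl(B-M(-x)\bigr)^{-1}T(-x)$ on $\sN_{-x}$ with $B-M(-x)>0$ boundedly invertible, so $(A_{B'}+x)^{-1}\le(\wt{A}_K+x)^{-1}$ gives $\bigl(B'-M(-x)\bigr)^{-1}\le\bigl(B_K-M(-x)\bigr)^{-1}$, and antitonicity of inversion on positive operators then yields $B_K\le B'$ --- not $B_K\ge B'$ as you wrote. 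Taking $B'=M(0)$ this gives $B_K\le M(0)$, which is precisely the inequality you were missing; combined with $B_K-M(0)\ge 0$ from the first part it forces $B_K=M(0)$. As written, your chain only re-derives $B_K\ge M(0)$, which you already had, and the subsequent ``roles reversed / sharpness'' remark does not supply the reverse bound. The same flip occurs in your ``cleanest route'': $(\wt{A}+x)^{-1}\le(A_{M(0)}+x)^{-1}$ is equivalent to $B\ge M(0)$ (not $B\le M(0)$), which is exactly what the first part delivers for every nonnegative $A_B$; applying this with $A_B=\wt{A}_K$ and invoking \eqref{DM_Fridr_Krein} in the opposite direction gives equality of the resolvents and hence $A_{M(0)}=\wt{A}_K$. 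So all the ingredients are present and the architecture matches the paper's intent, but the sign of the order correspondence between the resolvent ordering and the boundary parameter must be corrected for the identification $B_K=M(0)$ to actually follow.
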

\begin{remark}\label{R:DM_10}
If ${A}_0$ is a positive definite operator, and
$\{\mathcal{H}, \Gamma_0, \Gamma_1\}$ is a positive boundary {triplet}, then Proposition~\ref{R:DM_10} coincides with Theorem~1.6 from~\cite{Mikh80}, which in turn generalizes the results of papers~\cite{Bir56, Kr47}. Indeed, in this case
\[
\ker \Gamma_1 = \dom(A) \dotplus \ker A^* = \dom(A) \dotplus
\mathfrak{N}_0
\] (see \cite{Mikh80}), and thus $M(0) = 0$.
\end{remark}

\section{ Characteristic functions of almost solvable extensions}\label{5}

\subsection{ Characteristic function by A.V. Shtrauss}
Remind, following A.V.~\v{S}trauss \cite{Str60}, the definition of the characteristic function of a proper extension $\wt{A}$ of a Hermitian operator $A$.
\begin{definition}\label{D:DM_8}
Let  ${\cE}$ be a Hilbert space endowed with an inner product $[f,g]_\cE
= (Jf, g)_{\cE}$, where $J=J^*=J^{-1}$ is a signature operator, and let
$\Gamma$ be a linear operator from $\dom(\wt{A})$ to ${\cE}$, such that
$\overline{\ran(\Gamma)} = {\cE}$ and for all  $f, g \in \dom(\wt{A})$
\eq{DM_46}{(\wt{A}f, g) -(f, \wt{A}g) = 2i[\Gamma f, \Gamma g]_\cE.}
The operator $\Gamma$ is called the boundary operator for the extension $\wt{A}$.
\end{definition}

Let $\Gamma '$ be a  boundary operator for $-\wt{A}^*$, acting from $\dom(\wt{A}^*)$ to ${\cE}'$, such that $\overline{\ran(\Gamma ')}
={\cE}'$. For arbitrary  $f \in \dom(\wt{A})$ and $z \in
\rho(\wt{A}^*)$ let us find a vector $g_z \in \dom(\wt{A}^*)$ from the equation
\[
(\wt{A}^* - z)g_z = (\wt{A} - z)f
\]
and define the characteristic function $W(z)$ of  the extension ${A}_B$ by the equality
\[
W(z) \Gamma f = \Gamma ' g_z.
\]
Let $\{ \mathcal{H}, \Gamma_0, \Gamma_1\}$ be a boundary {triplet} for the operator  $A^*$ and let  ${A}_B $ be an almost solvable extension of the operator $A$. In what follows it is assumed that the extensions  ${A}_B$ and
${A}_B^*$ are disjoint, i.e. $\dom({A}_B) \cap
\dom({A}_B^*) = \dom(A)$. The domain of such an  extension ${A}_B$ in the boundary {triplet} $\{ \mathcal{H}, \Gamma_0, \Gamma_1\}$ is defined by
$\dom({A}_B) = \ker(\Gamma_1  - B \Gamma_0)$, where $B \in
[\mathcal{H}]$, $\ker B_I = 0$. Let ${\cE} ={\cE}' = \mathcal{H}$
endowed with the metric $\|f\|_{\cE} = \||B_I|^{1/2}f\|$ and let $J = \sign B_I$. Then the boundary operators  $\Gamma$ and $\Gamma '$ for the extensions ${A}_B$ and
$-{A}_B^*$ can be given by
\begin{equation}\label{eq:BO}
    \Gamma f= \Gamma_0f \quad(f\in\dom({A}_B) ), \quad \Gamma ' g=
\Gamma_0g \quad(g\in\dom({A}_B^*) ).
\end{equation}
 Indeed, in this case one obtains for all $ f, g \in \dom({A}_B) $
\begin{equation}\label{eq:BOid}
    \begin{split}
({A}_B f, g) - (f, {A}_B g) &= (\Gamma_1f, \Gamma_0g)_\cH
-(\Gamma_0f,
\Gamma_1 g)=  2i(B_I \Gamma_0f,\Gamma_0g)_\cH\\
&=2i (|B_I|^{1/2}J\Gamma f, |B_I|^{1/2}\Gamma g )_\cH = 2i [\Gamma
f, \Gamma g]_{\cE}.
\end{split}
\end{equation}
 Similar equality holds also for the operator $-(A_{{B}})^*$.

 \begin{theorem}\label{T:DM_4}
 The characteristic function of almost solvable extension ${A}_B$ is holomorphic on $\rho({A}_{B^*})$, and takes values in $[{\cE}]$ and is given by
 \eq{DM_47}{W(z) = (B^* - M(z))^{-1}(B-M(z)).}
  \end{theorem}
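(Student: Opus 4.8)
The strategy is to work directly from the definition of the characteristic function $W(z)$ for the pair $({A}_B,-{A}_B^*)$ with the boundary operators $\Gamma,\Gamma'$ given by~\eqref{eq:BO}, and to translate each step into the language of the boundary triplet $\{\mathcal H,\Gamma_0,\Gamma_1\}$ and the Weyl function $M(z)$. Fix $f\in\dom({A}_B)$ and $z\in\rho({A}_B^*)$. By definition we must produce $g_z\in\dom({A}_B^*)$ with $({A}_B^*-z)g_z=({A}_B-z)f$ and then read off $W(z)$ from $W(z)\Gamma_0 f=\Gamma_0 g_z$. First I would set $h:=f-g_z$; applying $A^*-z$ to the defining equation and using $A^*f = {A}_Bf$, $A^*g_z={A}_B^*g_z$ on the respective domains, one gets $(A^*-z)h=0$, i.e. $h\in\sN_z$. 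Since $z\in\rho({A}_B^*)\subset\wh\rho(A)$ (the field of regularity), the map $\Gamma_0$ is an isomorphism of $\sN_z$ onto $\mathcal H$ by Remark~\ref{R:DM_3}, so $h$ is determined by $\Gamma_0 h$, and it remains to compute $\Gamma_0 h=\Gamma_0 f-\Gamma_0 g_z$.

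The key computation is to express the two boundary conditions. Since $f\in\dom({A}_B)$, we have $\Gamma_1 f=B\Gamma_0 f$. Since $g_z\in\dom({A}_B^*)$ and $\dom({A}_B^*)=\ker(\Gamma_1-B^*\Gamma_0)$, we have $\Gamma_1 g_z=B^*\Gamma_0 g_z$. Now apply $\Gamma_1$ to $h=f-g_z\in\sN_z$ and use the Weyl function: $\Gamma_1 h=M(z)\Gamma_0 h$, i.e.
\[
\Gamma_1 f-\Gamma_1 g_z=M(z)(\Gamma_0 f-\Gamma_0 g_z).
\]
Substituting the two boundary conditions,
\[
B\Gamma_0 f-B^*\Gamma_0 g_z=M(z)\Gamma_0 f-M(z)\Gamma_0 g_z,
\]
which rearranges to $(B-M(z))\Gamma_0 f=(B^*-M(z))\Gamma_0 g_z$, hence
\[
\Gamma_0 g_z=(B^*-M(z))^{-1}(B-M(z))\Gamma_0 f,
\]
provided $(B^*-M(z))^{-1}\in[\mathcal H]$. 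Since $\Gamma f=\Gamma_0 f$ and $\Gamma' g_z=\Gamma_0 g_z$, this is precisely $W(z)=(B^*-M(z))^{-1}(B-M(z))$, formula~\eqref{DM_47}.

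The main obstacle, and the point requiring care, is the invertibility and boundedness issue: one must check that for $z\in\rho({A}_B^*)$ the operator $B^*-M(z)$ is boundedly invertible (so that $g_z$ exists and is uniquely determined in $\dom({A}_B^*)$ and $W(z)\in[\mathcal H]$), and conversely that the construction indeed lands $g_z$ in $\dom({A}_B^*)$. Here I would invoke Proposition~\ref{P:DM_7}(iii) (in the form $z\in\rho({A}_B^*)\Leftrightarrow 0\in\rho(M(z)-B^*)$, using $M(z)^*=M(\bar z)$ and passing through the residual/point spectrum dictionary), together with Corollary~\ref{C:DM_7}, to guarantee $(B^*-M(z))^{-1}\in[\mathcal H]$ exactly on $\rho({A}_B^*)$. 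The holomorphy of $W(z)$ on $\rho({A}_B^*)$ then follows from the holomorphy of $M(z)$ there (Corollary~\ref{C:DM_4A}) and the holomorphy of $z\mapsto(B^*-M(z))^{-1}$, while membership in $[\mathcal E]$ is immediate since $\mathcal E=\mathcal H$ as a set and both factors are bounded. Finally, one should note $B-M(z)$ maps $\mathcal E$ boundedly into $\mathcal H$ and $(B^*-M(z))^{-1}$ maps $\mathcal H$ into $\mathcal E$, so that $W(z)$ is genuinely an operator on $\mathcal E$; the $J$-contractivity/characteristic properties are not needed for this formula, only the two Green-type identities~\eqref{eq:BOid} that were already verified to justify the choice~\eqref{eq:BO}.
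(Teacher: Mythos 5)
Your proposal is correct and follows essentially the same route as the paper's own proof: one solves $({A}_{B^*}-z)g_z=({A}_B-z)f$, observes $f-g_z\in\sN_z$ so that $\Gamma_1(f-g_z)=M(z)\Gamma_0(f-g_z)$, substitutes the two boundary conditions to get $(B-M(z))\Gamma_0f=(B^*-M(z))\Gamma_0g_z$, and invokes Proposition~\ref{P:DM_7} for the bounded invertibility of $B^*-M(z)$ on $\rho({A}_{B^*})$. The only cosmetic difference is that the paper deduces holomorphy from the rewriting $W(z)=I+2i(B^*-M(z))^{-1}B_I$, whereas you argue it directly from the holomorphy of $M(z)$ and of the inverse; both are equivalent.
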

\begin{proof}
For arbitrary $ f \in \dom({A}_B)$ let us find  $g_z \in
\dom({A}_{B^*})$ from the equation
\[
{A}_B f - \wt{A}^*_B g_z = z(f -g_z),
z\in \rho(\wt{A}^*_B).
\]
Then
\[
f - g_z \in \mathfrak{N}_z, \quad \Gamma_1(f - g_z) =
M(z)(\Gamma_0(f-g_z)).
\]
Taking into account that $\Gamma_1 f = B \Gamma_0 f$ and $\Gamma_1 g_z = B^*
\Gamma_0 g_z$, one obtains
\[
B\Gamma_0f-B^*\Gamma_0 g_z = M(z)(\Gamma_0f - \Gamma_0g_z).
\]
Hence
\[
(B - M(z)) \Gamma_0f = (B^* - M(z))\Gamma_0 g_z.
\]
Since $z \in \rho({A}^*_B)$, then by Proposition~\ref{P:DM_7},
$0 \in \rho(B^* - M(z)).$ Therefore,

\eq{DM_48}{\Gamma_0 g_z = (B^* - M(z))^{-1}(B-M(z))\Gamma_0 f.}
The equality $\eqref{DM_47}$ follows from the definition of the characteristic function $W(z)$ and the equality $\eqref{DM_48}$. Analyticity of  $W(z)$ and relation  $W(z)
\in [{\cE}']$ for $ z\in \rho(\wt{A}^*_B)$ follow from the equality
$$W(z) = (B^* - M(z))^{-1}(B-M(z))= I +2i(B^* - M(z))^{-1}B_I.$$
This completes the proof.
\end{proof}

\subsection{Class $\Lambda_J$}
Hereinafter, we will need the following, more general, construction of boundary operators and the characteristic function
$W(z)$. Let us include the operator $B \in [\mathcal{H}]$ in an operator colligation
$\Theta = (B, \mathcal{H}, K, {\cE}, J)$ . Remind (see~\cite{Brod69}), that a set $\Theta = (B, \mathcal{H}, K, {\cE}, J)$, consisting of Hilbert spaces  $\cH$ and $\cE$, and operators $B\in [\cH]$,
$K\in [\cE,\cH]$ and $J\in [\cE]$, is called an operator colligation, if
$J$ is a signature operator in $\cE$, i.e. $J=J^*=J{=1}$ and
\begin{equation}\label{eq:Node}
    B_I = KJK^*.
\end{equation}
If, in addition to~\eqref{eq:Node} the assumption
\begin{equation}\label{eq:Node1}
    \quad\ker B_I = \{0\}.
\end{equation}
is in force, then $\overline{{\ran}(K)} =\mathcal{H}$ and hence the operator
$K^*$ is invertible.
\begin{proposition}\label{prop:ChF2}
   Let $\{ \mathcal{H}, \Gamma_0, \Gamma_1\}$ be a boundary {triplet} for the operator $A^*$, let $M(z)$ be the corresponding Weyl function,
$B\in[\cH]$ and let  ${\cE}=\cE'$ be a Hilbert space endowed with an inner product $[f,g]_\cE = (Jf, g)_{\cE}$, where $J=\sign(J)$. Then boundary operators
 $\Gamma$ и $\Gamma '$ for the extensions ${A}_B$ and $-{A}_B^*$ can be defined by
\begin{equation}\label{eq:RedOp2}
    \Gamma f= K^*\Gamma_0f \quad(f\in\dom({A}_B) ), \quad \Gamma ' g=
K^*\Gamma_0g \quad(g\in\dom({A}_B^*) )
\end{equation}
and the corresponding characteristic function of almost solvable extension  ${A}_B$ takes the form
\begin{equation}\label{DM_49}
    W(z) = I+2iK^*(B^* - M(z))^{-1}KJ\quad (z\in\rho(A_{B^*})).
\end{equation}
\end{proposition}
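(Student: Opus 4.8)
The plan is to reduce this to the already-established formula~\eqref{DM_47} of Theorem~\ref{T:DM_4} by checking two things: first that the operators $\Gamma$ and $\Gamma'$ defined in~\eqref{eq:RedOp2} are genuine boundary operators for ${A}_B$ and $-{A}_B^*$ in the sense of Definition~\ref{D:DM_8}, and second that with respect to these boundary operators the characteristic function assumes the claimed form~\eqref{DM_49}. The colligation hypotheses~\eqref{eq:Node}--\eqref{eq:Node1} are exactly what is needed: $B_I = KJK^*$ gives the Green-type identity, and $\ker B_I = \{0\}$ forces $\overline{\ran(K)} = \cH$, hence $K^*$ has dense range and is injective, so $K^*\Gamma_0$ has dense range in $\cE$ because $\Gamma_0\dom({A}_B) = \cH$ (the latter holding since ${A}_B$ is disjoint from ${A}_0$, cf. the argument around~\eqref{eq:BO}).

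First I would verify the abstract Green identity~\eqref{DM_46} for $\Gamma$. For $f,g \in \dom({A}_B)$ the boundary triplet identity~\eqref{DM_1} together with $\Gamma_1 f = B\Gamma_0 f$, $\Gamma_1 g = B\Gamma_0 g$ gives, exactly as in~\eqref{eq:BOid},
\[
({A}_B f, g) - (f, {A}_B g) = (\Gamma_1 f, \Gamma_0 g)_\cH - (\Gamma_0 f, \Gamma_1 g)_\cH = ((B - B^*)\Gamma_0 f, \Gamma_0 g)_\cH = 2i(B_I \Gamma_0 f, \Gamma_0 g)_\cH.
\]
Now substitute $B_I = KJK^*$ to rewrite the right-hand side as $2i(JK^*\Gamma_0 f, K^*\Gamma_0 g)_\cE = 2i[K^*\Gamma_0 f, K^*\Gamma_0 g]_\cE = 2i[\Gamma f, \Gamma g]_\cE$, which is~\eqref{DM_46}. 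The density $\overline{\ran(\Gamma)} = \cE$ follows from $\overline{\ran(K^*)} = \cE$ (equivalently $\ker K = \{0\}$, which is implied by $B_I = KJK^*$ and $\ker B_I = \{0\}$) and $\Gamma_0\dom({A}_B) = \cH$. The analogous verification for $\Gamma'$ on $\dom({A}_B^*) = \ker(\Gamma_1 - B^*\Gamma_0)$, producing $-(-{A}_B^*)$-Green identity, is identical with $B$ replaced by $B^*$ and a sign bookkeeping.

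Next I would compute $W(z)$. Following the recipe in Definition~\ref{D:DM_8} and the proof of Theorem~\ref{T:DM_4}: given $f \in \dom({A}_B)$ and $z \in \rho({A}_B^*)$, solve $({A}_B^* - z)g_z = ({A}_B - z)f$; then $f - g_z \in \sN_z$, so $\Gamma_1(f - g_z) = M(z)\Gamma_0(f - g_z)$, and using $\Gamma_1 f = B\Gamma_0 f$, $\Gamma_1 g_z = B^*\Gamma_0 g_z$ one gets $(B - M(z))\Gamma_0 f = (B^* - M(z))\Gamma_0 g_z$, with $0 \in \rho(B^* - M(z))$ by Proposition~\ref{P:DM_7} since $z \in \rho({A}_B^*)$. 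Hence $\Gamma_0 g_z = (B^* - M(z))^{-1}(B - M(z))\Gamma_0 f$, and therefore
\[
W(z)\Gamma f = \Gamma' g_z = K^*\Gamma_0 g_z = K^*(B^* - M(z))^{-1}(B - M(z))\Gamma_0 f = K^*(B^* - M(z))^{-1}(B - M(z))(K^*)^{-1}\Gamma f
\]
on $\ran(K^*)$, which is dense; so $W(z) = K^*(B^* - M(z))^{-1}(B - M(z))(K^*)^{-1}$ as a (bounded) operator on $\cE$. To put it in the form~\eqref{DM_49}, write $B - M(z) = (B^* - M(z)) + (B - B^*) = (B^* - M(z)) + 2iB_I$, so that $(B^* - M(z))^{-1}(B - M(z)) = I + 2i(B^* - M(z))^{-1}B_I = I + 2i(B^* - M(z))^{-1}KJK^*$; conjugating by $K^*$ and cancelling $(K^*)^{-1}K^* = I$ on the right yields $W(z) = I + 2iK^*(B^* - M(z))^{-1}KJ$, which is~\eqref{DM_49}. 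Holomorphy of $W(z)$ on $\rho({A}_{B^*})$ and $W(z) \in [\cE]$ are immediate from this last expression and the holomorphy of $z \mapsto (B^* - M(z))^{-1}$ there.

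The main obstacle is a bookkeeping one rather than a conceptual one: one must be careful that $\Gamma'$ is a boundary operator for $-{A}_B^*$ (note the minus sign and the role of $\rho(\wt A_B^*)$ versus $\rho({A}_{B^*})$, which coincide), and that the identification of $W(z)$ as an operator on all of $\cE$ — not just on the dense subspace $\ran(K^*)$ — is legitimate; this is where boundedness of the right-hand side of~\eqref{DM_49}, already visible from the colligation data, is used to extend by continuity. Everything else is a direct transcription of the proof of Theorem~\ref{T:DM_4} with $\Gamma_0$ replaced by $K^*\Gamma_0$.
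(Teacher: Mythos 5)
Your proposal is correct and follows essentially the same route as the paper: verify the abstract Green identity by substituting $B_I = KJK^*$ into the computation \eqref{eq:BOid}, then apply $K^*$ to the relation \eqref{DM_48} from the proof of Theorem~\ref{T:DM_4} and use $B - M(z) = (B^*-M(z)) + 2iKJK^*$ to arrive at \eqref{DM_49}. One small quibble: $\ker B_I = \{0\}$ together with $B_I = KJK^*$ yields $\ker K^* = \{0\}$ (equivalently $\overline{\ran K} = \cH$), not $\ker K = \{0\}$, so the density $\overline{\ran \Gamma} = \cE$ needs the additional (implicit in the paper, and satisfied in all its applications) assumption that $\ran K^*$ is dense in $\cE$.
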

\begin{proof}
Indeed, plugging~\eqref{eq:Node} into \eqref{eq:BOid}, one obtains
\begin{equation}\label{eq:BOid2}
    \begin{split}
({A}_B f, g) - (f, {A}_B g) &=   2i(B_I \Gamma_0f,\Gamma_0g)_\cH\\
&=2i (JK^*\Gamma_0 f, K^*\Gamma_0 g )_\cH = 2i [\Gamma f, \Gamma
g]_{\cE},
\end{split}
\end{equation}
i.e. $\Gamma$ is a boundary operator for the extension ${A}_B$.
Checking the analogous equality for the operator $-(A_{{B}})^*$,
one convinces that $\Gamma'$ is a boundary operator for the extension
$-{A}_{B^*}$.

Applying the operator $K^*$ to both parts of the equality $\eqref{DM_48}$ one obtains
$$\Gamma ' g_z =
[I+2iK^*(B^* - M(z))^{-1}KJ]\Gamma f.$$
Therefore, the characteristic function of almost solvable extension
 ${A}_B$ takes the form~\eqref{DM_49}. Clearly, $W(z)$ is a holomorphic operator function on
$\rho(\wt{A}^*_B)$ with values in $[{\cE}]$.
\end{proof}

\begin{definition}\label{D:DM_9}
An  operator function $W(z)$ with values in
$[{\cE}]$, holomorphic on a domain $Z_W$, is related to the class
$\Lambda_J$, if it can be represented in the form $\eqref{DM_49}$, where $B =
KJK^* \in [{\cE}]$. Let us say that $W$ belongs to the class $\Lambda^0_J$, if $W\in\Lambda_J$ and, in addition,  $\ker(K^*)= \{0\}$.
\end{definition}

Let us define in $\mathbb{C}_+ \cup \mathbb{C}_-$ the operator function

\eq{DM_50}{V(z) = K^*(B_R - M(z))^{-1}K.} Clearly, for $z \in
\rho({A}_{B^*})\cap (\mathbb{C}_+ \cup \mathbb{C}_-)$ there exists
$(W(z)+I)^{-1} \in [{\cE}]$ and the following equality holds
\eq{DM_51}{V(z) = -i(W(z) - I)(W(z)+I)^{-1}J.} Indeed, multiplying the relation
\[(B^*-M(z))^{-1} - (B_R-M(z))^{-1}= i (B^* -
M(z))^{-1}KJK^*(B_R - M(z))^{-1}\] by $K^*$ from the left, and by  $KJ$ from the right,
yields
\[W(z) - I - 2iV(z)J = i(W(z) - I)V(z)J,
\]
which implies
\[
(W(z)+I)(I - iV(z)J) = 2I.
\]
Similarly, one proves the equality
\[
(I - iV(z)J)(W(z)+I) =
2I.
\]

Let us introduce a rigging
 ${\cE}_+^W \subset {\cE} \subset {\cE}_-^W$
of a Hilbert space ${\cE}$ (см.~\cite{Ber65}), completing ${\cE}$ by the norm

\eq{DM_52}{\|f\|^2_{\cE^W_-} = (\Im V(z_0)f, f)_{\cE}, \quad z_0 \in
\mathbb{C}_+.}
It is easily seen, that the norms obtained via~\eqref{DM_52} are equivalent for different $z_0 \in \mathbb{C}_+.$

\begin{proposition}\label{P:DM_12}
Let $W \in \Lambda^0_J$. Then
\begin{itemize}
\item[\;\;\rm (a)] $V(z) \in [{\cE}^W_-, {\cE}^W_+]$ for all $z \in
\mathbb{C}_+ \cup \mathbb{C}_-;$
\item[\;\;\rm (b)] $(W(z) - I)J$ admits a continuation to a topological isomorphism of spaces ${\cE}^W_-$ and ${\cE}^W_+$;
\item[\;\;\rm (c)] $W(z) + I$ is a topological isomorphism of space ${\cE}^W_+$ into itself.
\end{itemize}
\end{proposition}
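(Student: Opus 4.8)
The strategy is to reduce everything to properties of the function $V(z) = K^*(B_R - M(z))^{-1}K$ introduced in~\eqref{DM_50} together with the connection~\eqref{DM_51} between $V$ and $W$. First I would establish (a): since $W \in \Lambda^0_J$ we have $\ker K^* = \{0\}$, so $K^*$ maps $\cH$ to a dense subspace of $\cE$; I would show that $\Im V(z_0) = K^*\,\Im\!\big((B_R - M(z_0))^{-1}\big)K \ge 0$ for $z_0 \in \dC_+$, using that $M \in (R)$ (Corollary~\ref{C:DM_4A}) forces $\Im M(z_0) > 0$, hence $B_R - M(z_0)$ has negative imaginary part and $(B_R - M(z_0))^{-1}$ has positive imaginary part. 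By definition~\eqref{DM_52} this $\Im V(z_0)$ is exactly the Gram operator of the $\cE_-^W$-norm, so the assertion $V(z) \in [\cE_-^W, \cE_+^W]$ amounts to two boundedness estimates: $\|V(z)f\|_\cE^2 \lesssim \|f\|_{\cE_-^W}^2$ (mapping $\cE$-continuously in from $\cE_-^W$) and $\|V(z)f\|_{\cE_+^W} \lesssim \|V(z)f\|_\cE$. Both follow from a Cauchy--Schwarz / Heinz-type inequality applied to the nonnegative operator $(B_R - M(z_0))^{-1} - (B_R - M(z))^{-1}$ controlled by $\Im(B_R-M(z_0))^{-1}$, exactly as in the classical theory of $Q$-functions and their riggings (cf.~the treatment of $\gamma$-fields following~\eqref{eq:gamma_f}); I would cite~\cite{Ber65} for the rigging formalism and the equivalence of the norms~\eqref{DM_52} for different $z_0$, already noted in the text.

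For (b), the identity $(W(z)+I)(I - iV(z)J) = 2I$ and its mirror $(I - iV(z)J)(W(z)+I) = 2I$, both proved just above the Proposition, give $(W(z)+I)^{-1} = \tfrac12(I - iV(z)J)$ on $\cE$; multiplying the first identity on the left by $J$ and rearranging yields $(W(z)-I)J = -i(W(z)+I)\,V(z)\,$ up to the bookkeeping already displayed, so $(W(z)-I)J = -2i\,V(z)\,(I - iV(z)J)^{-1}\cdot(\text{bounded})$ — more cleanly, from~\eqref{DM_51} we read off $V(z)J = i(I - (W(z)+I)\cdot\tfrac12(\cdots))$, and the upshot is that $(W(z)-I)J$ and $V(z)$ differ by pre- and post-composition with the $\cE$-isomorphisms $W(z)\pm I$ (boundedly invertible on $\cE$ for $z \in \rho(A_{B^*})$, as recorded after~\eqref{DM_51}). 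Hence $(W(z)-I)J$ inherits from $V(z)$ — via part (a) — the property of extending to a bounded map $\cE_-^W \to \cE_+^W$; for the reverse direction one uses the same identities to express $V(z)$ in terms of $(W(z)-I)J$, obtaining that $(W(z)-I)J$ is in fact boundedly invertible between the rigged spaces. The only subtlety is checking that the auxiliary factors $W(z)\pm I$, which are a priori only bounded on $\cE$, actually restrict/extend to bounded operators on $\cE_\pm^W$; this is where I expect the main work to lie.

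That subtlety is resolved by proving (c) first, or in tandem: $W(z)+I = 2(I - iV(z)J)^{-1}$, and since $V(z)J \in [\cE_-^W, \cE_+^W]$ by (a) while the inclusion $\cE_+^W \hookrightarrow \cE_-^W$ is bounded, the operator $iV(z)J$ maps $\cE_+^W$ boundedly into $\cE_+^W$; moreover it is "small" in the sense that $(I - iV(z)J)$ is invertible on $\cE_+^W$ — I would argue this by noting that on $\cE$ the inverse is $\tfrac12(W(z)+I)$, and a Neumann-series / closed-graph argument together with the bound $\|V(z)J f\|_{\cE_+^W} \le C\|f\|_{\cE}\le C'\|f\|_{\cE_+^W}$ and the fact that $V(z)J$ actually gains regularity (lands in $\cE_+^W$) lets the algebraic inverse act continuously on $\cE_+^W$. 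Thus $W(z)+I$ is a topological automorphism of $\cE_+^W$, giving (c), and feeding this back into the identities of the previous paragraph legitimizes the factorization needed for (b).

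The single hardest point, as indicated, is the bootstrapping in part (b): showing that the purely algebraic relations among $W(z)\pm I$ and $V(z)$, which hold on $\cE$, survive when one insists on continuity with respect to the stronger/weaker norms $\|\cdot\|_{\cE_\pm^W}$. The clean way to do this is to never leave the $V(z)$ picture — prove (a), then (c) as an automorphism statement about $I - iV(z)J$ on $\cE_+^W$, and only at the end translate back to $W$ via~\eqref{DM_51}, so that (b) becomes a formal consequence of (a) and (c) rather than requiring an independent estimate. I would also remark that the $z$-independence of the conclusions follows from the already-noted equivalence of the norms~\eqref{DM_52} across $z_0 \in \dC_+$ and analytic continuation of $V(z)$, $W(z)$ off the real axis.
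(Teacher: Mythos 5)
Your overall architecture (work with $V(z)$, use the identities around \eqref{DM_51}, prove (a) first and let (b), (c) follow) is reasonable, but it misses the one structural fact on which the paper's proof actually rests: a \emph{concrete identification of the rigged spaces}. The paper shows that $\cE_+^W$ coincides with $\ran(K^*)$ equipped with the norm $\|v\|_{\cE_+}=\|(K^*)^{-1}v\|_{\cH}$ (so that $K^*$ is an isometric isomorphism of $\cH$ onto $\cE_+^W$), and dually that $\|u\|_{\cE_-}=\sup_h|(u,K^*h)_\cE|/\|h\|_\cH=\|Ku\|_\cH$, so $\cE_-^W$ is the completion of $\cE$ in $\|Ku\|_\cH$ and $K$ extends to an isometric isomorphism of $\cE_-^W$ onto $\cH$. (This identification uses that $\Im V(z_0)=K^*\,\Im\bigl[(B_R-M(z_0))^{-1}\bigr]K$ with middle factor a \emph{topological isomorphism} of $\cH$, since $\Im M(z_0)\ge\varepsilon I$ for a Weyl function of a densely defined operator.) Once this is in place, (a) and (b) are immediate from the factorizations $V(z)=K^*(B_R-M(z))^{-1}K$ and $(W(z)-I)J=2iK^*(B^*-M(z))^{-1}K$, because the middle factors are topological isomorphisms of $\cH$ and the outer factors are isometries between the rigged spaces and $\cH$. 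Without this identification, two of your steps fail concretely: the estimate $\|V(z)f\|_{\cE_+^W}\lesssim\|V(z)f\|_\cE$ is false in general (the $+$-norm dominates the $\cE$-norm, not conversely, unless one already knows $V(z)f=K^*h$ with $\|V(z)f\|_{\cE_+}=\|h\|_\cH$); and the operator $(B_R-M(z_0))^{-1}-(B_R-M(z))^{-1}$ you propose to control by Cauchy--Schwarz is not nonnegative.

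The second genuine gap is in (b): statement (a) asserts only \emph{boundedness} of $V(z)$ from $\cE_-^W$ to $\cE_+^W$, so no amount of algebra with $W(z)\pm I$ can upgrade it to the claim that $(W(z)-I)J$ is a \emph{topological isomorphism} of $\cE_-^W$ onto $\cE_+^W$; your remark that ``one uses the same identities to express $V(z)$ in terms of $(W(z)-I)J$'' is circular here. The invertibility has to come from the factorization itself, i.e.\ from $(B^*-M(z))^{-1}$ being boundedly invertible on $\cH$ together with $K,K^*$ being isometric isomorphisms. There is also a smaller mismatch in your reduction: from \eqref{DM_51} one gets $(W(z)-I)J=iV(z)J(W(z)+I)J$, so you would need $J(W(z)+I)J$ to act boundedly on $\cE_-^W$, which is a \emph{dual} statement to your (c) (an automorphism of $\cE_+^W$) and is not supplied. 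Your treatment of (c) is essentially sound in spirit --- the paper's argument is exactly the two inclusions $(W(z)+I)\cE_+^W\subset\cE_+^W$ and $(W(z)+I)f=g\in\cE_+^W\Rightarrow 2f=g+(I-W(z))f\in\cE_+^W$, both consequences of $(W(z)-I)$ mapping all of $\cE$ into $\cE_+^W$ --- but the Neumann-series suggestion should be dropped, as no smallness of $V(z)J$ is available.
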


\begin{proof}
Let ${\cE}_+$ be a Hilbert space  $ \ran(K^*)$ with the norm
\[
\|v\|_{\cE_+} = \|(K^*)^{-1}v\|\quad(v\in\cE_+).
\]
Then $K^*$
is an isometry in $[\mathcal{H}, {\cE}_+]$.

Consider a rigging ${\cE}_+^W \subset {\cE} \subset {\cE}_-^W$
of the Hilbert space ${\cE}$ (see~\cite{Ber65}). Then for $u\in\cE$ one obtains
\[
\|u\|_{\cE_-}=\sup_{f\in\cE_+,f\ne
0}\frac{|(u,K^*h)_\cE|}{\|h\|_\cH}=\|Ku\|_\cH.
\]
Thus, ${\cE}_-$ is a completion of the space ${\cE}$ in the metric
\[
\|u\|_{\cE_-} = \|Ku\|_\cH\quad(u\in\cE),
\]
and the operator $K$ admits a continuation to an isometrical isomorphism in  $[{\cE}_-, \mathcal H]$ (also denoted by $K$).

It follows from the equality $\eqref{DM_50}$ that
\[
\Im V(z_0) = K^*(B^*_R - M(z_0))^{-1}\Im M(z_0)(B_R - M(z_0)^*)K.
\]
Since $\Im M(z_0)$  is a topological isomorphism in $\mathcal{H}$, then $\Im V(z_0)$
is a topological isomorphism from ${\cE}_-$ onto ${\cE}_+$. Thus, ${\cE}^W_- =
{\cE}_-$ and ${\cE}^W_+ = {\cE}_+$. Now the statement (a) is implied by $\eqref{DM_50}$.

Since
\[
(W(z) - I)J = 2iK^*(B^* - M(z))^{-1}
\] и $(B^* - M(z))^{-1}$ is a topological isomorphism in  $\mathcal{H}$, then $(W(z) - I)J$ is a topological isomorphism of spaces ${\cE}^W_-$ and ${\cE}^W_+$.

It follows from $(\eqref{DM_49})$ that $(W(z) + I){\cE}^W_+ \subset {\cE}^W_+.$
On the other hand, the equality $(W(z)+I)f= g \in {\cE}^W_+$ yields $2f = g+(I-W(z))f \in {\cE}^W_+$.
This proves the statement (c).
\end{proof}

\begin{remark}\label{R:DM_12}
Two characteristic functions corresponding to different pairs of boundary operators differs by constant  $J$-unitary factors $U_1, U_2 \in [{\cE}]$,
\[
W(z) = U_1W_0(z)U_2.
\]
Notice, that the class $\Lambda^0_J$ is not invariant with respect to the multiplication by arbitrary $J$-unitary factor. The set of
$J$--unitary factors, preserving the class  $\Lambda^0_J$, is described by the following theorem.
\end{remark}

\begin{theorem}\label{T:DM_5}
Let $W_0 \in \Lambda^0_J$ and let $U_1, U_2$ be $J$--unitary operators
in ${\cE}$. In order that the operator function  $W(z) = U_1W_0(z)U_2$
to be in the class $\Lambda^0_J$, it is necessary and sufficient that the following hold:
\begin{itemize}
\item[\;\;\rm (a)] The operator $U_2U_1$ is an isomorphism in ${\cE}^{W_0}_+$;
\item[\;\;\rm (b)] $(U_2U_1 - I) \in [{\cE}^{W_0}_-, {\cE}^{W_0}_+]$.
\end{itemize}

\end{theorem}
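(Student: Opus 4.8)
The plan is to characterize when $W(z) = U_1 W_0(z) U_2$ stays in $\Lambda^0_J$ by reducing everything to the function $V(z)$ introduced in~\eqref{DM_50}, because membership in $\Lambda^0_J$ is encoded precisely by the rigging ${\cE}^W_+ \subset {\cE} \subset {\cE}^W_-$ and the mapping properties of $V(z)$ described in Proposition~\ref{P:DM_12}. First I would use the Cayley-type formula~\eqref{DM_51}, namely $V(z) = -i(W(z)-I)(W(z)+I)^{-1}J$, together with its inverse relation $(W(z)+I)(I - iV(z)J) = 2I$, to compute how $V(z)$ transforms under $W_0 \mapsto U_1 W_0 U_2$. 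Since $U_1, U_2$ are $J$-unitary, a direct manipulation (using $U_k^* J U_k = J$, equivalently $J U_k^{-1} = U_k^* J$) should give $V(z) = J U_2 (I - iV_0(z)J + \tfrac12(U_2 U_1 - I)\cdot(\,\cdots))^{-1}\cdots$; more cleanly, I expect the computation to collapse to a formula of the shape
\[
V(z) = U_2^{-1}\bigl(V_0(z) + C\bigr)(U_1^{-1})^* \quad\text{or}\quad
-iV(z)J = \bigl(2(U_2U_1)^{-1} - (I - iV_0(z)J)\bigr)^{-1}\cdots
\]
where $C$ is a constant operator built from $U_1, U_2$. The point of this step is that $W \in \Lambda^0_J$ iff the "new" $V(z)$ has exactly the same mapping properties relative to a rigging as $V_0(z)$ does relative to ${\cE}^{W_0}_\pm$, and in particular ${\cE}^W_+ = {\cE}^{W_0}_+$ as a set.

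Next I would extract the precise meaning of $W \in \Lambda^0_J$ in terms of this $V$. By the proof of Proposition~\ref{P:DM_12}, $W \in \Lambda^0_J$ forces ${\cE}^W_\pm = {\cE}_\pm = \ran(K^*)$ (with its graph norm) and ${\cE}_-$, and conversely the reconstruction of $K$ from $V(z)$ shows that $W$ lies in $\Lambda^0_J$ exactly when: (i) $\Im V(z_0)$ is a topological isomorphism from some Hilbert space ${\cE}_-'$ onto its dual ${\cE}_+'$ with ${\cE}_+' \hookrightarrow {\cE}$ densely, and (ii) $W(z) - I$ and $V(z)$ have the continuation/isomorphism properties of parts (a)--(c) there. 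So I would phrase the criterion as: $W \in \Lambda^0_J$ iff ${\cE}^{W_0}_+$ can serve as the positive space of the rigging for $W$, i.e. iff $U_2 U_1$ (which is what conjugating the rigging by $U_1, U_2$ produces — note the order, since $W+I \mapsto U_1(W_0 + U_1^{-1}U_2^{-1})U_2$ and the relevant combination of factors acting on the rigging is $U_2 U_1$) preserves ${\cE}^{W_0}_+$ topologically, and iff the "correction" $U_2 U_1 - I$, which must map ${\cE}^{W_0}_-$ into ${\cE}^{W_0}_+$ for the new $V$ to still be smoothing, actually does so. That is conditions (a) and (b).

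Concretely, for necessity: assuming $W \in \Lambda^0_J$, I would compare the two riggings. From the transformation formula for $V$, the equality of the form-norms $\|\cdot\|_{{\cE}^W_-}$ and the relation ${\cE}^W_\pm = {\cE}^{W_0}_\pm$ forces $U_2 U_1 : {\cE}^{W_0}_+ \to {\cE}^{W_0}_+$ to be bounded with bounded inverse (this uses that both $W+I$ and $W_0 + I$ are isomorphisms of their positive spaces by part (c) of Proposition~\ref{P:DM_12}), giving (a); and reading off the constant term in the transformed $V(z)$, boundedness of $V(z) : {\cE}^{W_0}_- \to {\cE}^{W_0}_+$ together with part (a) of Proposition~\ref{P:DM_12} for $V_0$ forces $U_2 U_1 - I \in [{\cE}^{W_0}_-, {\cE}^{W_0}_+]$, giving (b). For sufficiency: given (a) and (b), I would define ${\cE}^W_\pm := {\cE}^{W_0}_\pm$, check via the transformation formula that $V(z)$ maps ${\cE}^W_- \to {\cE}^W_+$ with $\Im V(z_0)$ a topological isomorphism (here (b) absorbs the constant correction and (a) handles the $U_2U_1$ conjugation, while $\Im V_0(z_0)$ being an isomorphism is inherited from $W_0 \in \Lambda^0_J$), then run the reconstruction from the proof of Proposition~\ref{P:DM_12} backwards to produce $K$ with $\ran(K^*)$ dense, $\ker K^* = 0$, and $B = KJK^*$ realizing $W(z) = I + 2iK^*(B^* - M(z))^{-1}KJ$. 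The main obstacle I anticipate is bookkeeping in the first step: getting the $J$-unitarity identities and the order of the factors right so that it is genuinely $U_2 U_1$ (and not $U_1 U_2$ or some adjoint) that appears, and making sure the constant operator $C$ in the formula for $V(z)$ is exactly (a multiple of) $U_2 U_1 - I$ transported by $J$, since that is what makes conditions (a)--(b) fall out cleanly rather than as some awkward equivalent.
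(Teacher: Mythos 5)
Your necessity argument is essentially the paper's: reduce to $W_1(z)=W_0(z)U_2U_1$, show that the riggings of $W_1$ and $W_0$ coincide, and extract (a) and (b) from Proposition~\ref{P:DM_12}. The paper does this concretely via the formula $\Im V_i(z)=(W_i(z)+I)^{-1}(J-W_i(z)JW_i^*(z))(W_i(z)^*+I)^{-1}$ together with the algebraic identities $Uf=(I-W_0(z))Uf+(W_1(z)-I)f+f$ and $(W_1(z)-I)J=(W_0(z)-I)J(JUJ)+(U-I)J$, where $U=U_2U_1$; your idea of ``reading off the constant term of the transformed $V$'' has to be replaced by identities of this kind, because the transformation of $V$ is not additive (see below), but this half is repairable.

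The genuine gap is in sufficiency. You propose to ``run the reconstruction from the proof of Proposition~\ref{P:DM_12} backwards to produce $K$,'' but Proposition~\ref{P:DM_12} contains no reconstruction --- it only records properties of a function already known to lie in $\Lambda^0_J$. The inverse result you would need is Theorem~\ref{T:DM_6} (proved later, via the Krein--Langer model), and to invoke it you must verify all four of its hypotheses for the new $V(z)=-i(W(z)-I)(W(z)+I)^{-1}J$. Here your first guessed formula $V=U_2^{-1}(V_0+C)(U_1^{-1})^*$ is wrong: writing $U=U_2U_1$ one finds $(W-I)(W+I)^{-1}=U_1\,(W_0-U^{-1})(W_0+U^{-1})^{-1}\,U_1^{-1}$, a Cayley-type transform of $W_0$ based at $U^{-1}$ rather than at $I$, so the new $V$ is a genuinely fractional-linear (not affine) transform of $V_0$. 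Consequently the conditions $V\in(R)$ and the two asymptotic conditions of Theorem~\ref{T:DM_6} at $iy\to i\infty$ are not inherited automatically, and you have not indicated how hypotheses (a), (b) of the present theorem would deliver them. The paper's proof avoids all of this: conditions (a) and (b) are used solely to prove that an explicit block operator $X$ built from $K$, $B_R$, $J$ and $U^*$ has bounded entries --- for instance $X_{21}=\tfrac{i}{2}K^{-*}J(I-U^*)K^{-1}\in[\cH]$ precisely because $K^{-1}\in[\cH,\cE_-]$, $J(I-U^*)=((I-U)J)^*\in[\cE_-,\cE_+]$ and $K^{-*}\in[\cE_+,\cH]$ --- after which $X$ is checked to be $J$-unitary, a new boundary triplet is generated by it, and a direct computation shows that the characteristic function of $A_{\wt B}$ with $\wt K=KU_1^*$ in that triplet equals $U_1W_0(z)U_2$, which places $W$ in $\Lambda^0_J$ by the very definition~\eqref{DM_49}. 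Without either this construction or a completed verification of the hypotheses of Theorem~\ref{T:DM_6} for the fractional-linearly transformed $V$, your sufficiency argument does not go through.
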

\begin{proof}
{\it Necessity}. If the operator function $W(z) = U_1W_0(z)U_2$
belongs to $\Lambda^0_J$,  then
\[
W_1(z) = U_1^{-1}W(z)U_1 = W_0(z)U_2U_1
\]
also belongs to $\Lambda^0_J$. By Proposition~\ref{P:DM_12}
\[
(W_0(z)-I)J \in [{\cE}^{W_0}_-, {\cE}^{W_0}_+], \quad (W_1(z)-I)J
\in [{\cE}^{W_1}_-, {\cE}^{W_1}_+] \quad(z\in Z_W).
\]
Let us show that ${\cE}^{W_0}_- = {\cE}^{W_1}_-$. Since \eq{DM_53}{\Im
V_i(z) = (W_i(z)+I)^{-1}(J - W_i(z)JW^*_i(z))(W_i(z)^*+I)^{-1},
\quad(i= 1,2),} then the formula $\eqref{DM_53}$ yields
\[
\|f\|_-^{W_0} = \|(W_1^*+I)(W_0^*+I)^{-1}f\|^{W_1}_-.
\]
Therefore, the operator  $C = (W^*_1 + I)(W_0^* +I)^{-1}$ is an isometry  from
 ${\cE}^{W_0}_-$ onto ${\cE}^{W_1}_-$. By Proposition~\ref{P:DM_12} the identity operator
in ${\cE}$
\[
I = (W^*_1+I)^{-1}C(W^*_0+I),
\]
admits a continuation to an isomorphism from
${\cE}^{W_0}_-$ onto ${\cE}^{W_1}_-$, and thus ${\cE}^{W_0}_- =
{\cE}^{W_1}_-$.

The fact that the $J$ - unitary operator $U = U_2U_1$ is an isomorphism in ${\cE}^{W_0}_+$, follows from the equalities
$$Uf = (I-W_0(z))Uf + (W_1(z) - I)f +f,$$
$$U^{-1}f = (I-W_1(z))U^{-1}f + (W_0(z) - I)f +f,$$
since
$U{\cE}^{W_0}_+ \subset {\cE}^{W_0}_+$ and
$U^{-1}{\cE}^{W_0}_+ \subset {\cE}^{W_0}_+$.

Since $U^{-1}$ is an isomorphism in  ${\cE}^{W_0}_+$, then $(U^{-1})^* =
JUJ$ is an isomorphism in  ${\cE}^{W_0}_-$. By the equality
$$(W_1(z)-I)J = (W_0(z)-I)J(JUJ)+(U-I)J$$
and Proposition  \ref{P:DM_12} one obtains  $(U-I)J \in
[{\cE}^{W_0}_-, {\cE}^{W_0}_+]$.

{\it Sufficiency}. Let ${A}_B$ be an a.s. extension of the operator
$A$, determined by the relation~\eqref{DM_2} in the boundary {triplet} $\{\mathcal{H},
\Gamma_0, \Gamma_1\}$, and let $W_0(z)$ be its characteristic function of the class $\Lambda^0_J$, determined by the formula \eqref{DM_49}. Let us
Let us set $U=U_2U_1$, and let introduce the operator
\[
  X=\left(
      \begin{array}{ll}
        X_{11} & X_{12} \\
        X_{21} & X_{22} \\
      \end{array}
    \right):=
    \left(
                 \begin{array}{ll}
                   X_{11} & -X_{11}B_R+iK(U^*-I)JK^* \\
                   X_{21} & -X_{21}B_R+K^{-*}
                   J(U^*+I)JK^* \\
                 \end{array}
               \right)
\]
\[ = \frac{1}{2} \left(\begin{array}{ll}
K(U^*+I)K^{-1}              & -K(U^*+I)K^{-1}B_R + iK(U^* - I)JK^*\\
iK^{-*}J(I-U^*)K^{-1}   & -iK^{-*}J(I-U^*)K^{-1}B_R +
K^{-*}J(U^*+I)JK^*
 \end{array}
               \right). \]

It is easy to see, that $X_{ij}\in{[{\mathcal H}]}$ $(i,j=0,1)$.
Indeed, since
\[
K^{-1}\in [{\mathcal H},{\cE}_-],\quad
J(I-U^*)=((I-U)J)^*\in[{\cE}_-,{\cE}_+]\quad \mbox{и}\quad
K^{-*}\in[{\cE}_+,{\mathcal H}],
\]
then $X_{21}\in{[{\mathcal H}]}$. Similarly one proves that all other operators
 $X_{ij} ( i,
j\in\{0, 1\})$ are bounded in  ${\mathcal H}$ .
Now the $J$--unitarity of the operator  $X$ is equivalent
to the equalities \eqref{eq:1.34}, \eqref{eq:1.35}. It is easy to see, that
\[
\begin{split}
X_{12}X_{11}^*-X_{11}X_{12}^*&=\frac{i}{4}K\{(U^*-I)J(U+I)-(U^*+I)
J(U-I)\}K^*=0,
\end{split}
\]
since $(U^*+I)J(I-U)=(U^*-I)J(I+U)$. Further,
\[
X_{11}X_{22}^*-X_{12}X_{21}^*=\frac{1}{4}K\{(U^*+I)J(U+I)-(U^*-I)J(I-U)\}JK^{-1}
=I,
\]
\[
X_{21}X_{22}^*-X_{22}X_{21}^*=\frac{i}{4}K^{-*}J\{(I-U^*)J(I+U)+(I+U^*)J(I-U)\}JK^{-1}
=0.
\]
This proves the relation \eqref{eq:1.35}, the relation \eqref{eq:1.34} is checked similarly.

Since the operator $X$ is $J$--unitary, one can define a new boundary {triplet}
$\{{\mathcal H},\wt{\Gamma}_0,\wt{\Gamma}_1\}$ by the formulas
\eqref{DM_18}, \eqref{DM_18A}, setting there $V=I$. Let $\wt{M}(z)$ be the Weyl function of the operator  $A$, corresponding to the boundary {triplet}
$\wt\Pi$ and satisfying the relation
\begin{equation}\label{DM_17A}
\wt M(z)=(X_{11}M(z)+X_{12})(X_{21}M(z)+X_{22})^{-1}.
\end{equation}
Clearly,
\[
X_{11}B+X_{12}=iKU^*JK^*, \quad X_{21}B+X_{22}=K^{-*}JU^*JK^*.
\]
Therefore, $(X_{21}B+X_{22})^{-1}= K^{-*}UK^*\in{[{\mathcal
H}]}$ in view of the assumption а) of the theorem.
By Proposition~\ref{P:DM_6} the domain of the operator $\wt A$ is determined in the new boundary {triplet} $\{{\mathcal H},\wt{\Gamma}_0,\wt{\Gamma}_1\}$ by the equality
$\dom(\wt{A})=\ker(\wt{\Gamma}_1-B\wt{\Gamma}_0)$, where
\[
\wt{B}=(X_{11}B+X_{12})(X_{21}B+X_{22})^{-1}=iKJK^*.
\]
Setting $\wt{K}=(B^*X_{21}^*+X_{22}^*)^{-1}K(U_2^*)^{-1}=KU_1^*$ and including the operator
 $\wt{B}$ into the operator colligation  $\wt{\varphi}=({\mathcal
H},\wt{B};\wt{K},J,\cE)$, one obtains with account of~\eqref{DM_17A}
\[
\begin{split}
\wt{W}(z) &= J\wt{K}^{-1}(\wt{B} - \wt{M}(z))(\wt{B}^*-\wt{M}(z))^{-1}\wt{K}J \\
&= U_1JK^{-1}[\wt{B}(X_{21}M(z)+X_{22}) - (X_{11}M(z)+X_{12})]\times\\
&\times[\wt{B}^*(X_{21}M(z)+X_{22}) - (X_{11}M(z)+X_{12})]^{-1}KJU_1^{-1}\\
&=U_1JK^{-1}[B-M(z)][KU^*K^{-1}(B^*-M(z))]^{-1}KJU_1^{-1} \\
&=U_1JK^{-1}[B-M(z)][B^*-M(z)]^{-1}K(U^*) ^{-1}K^{-1}KJU_1^{-1}  \\
&=U_1W_0(z)UU_1^{-1} = U_1W_0(z)U_2.
\end{split}
\]
This proves the equality $\wt{W}(z)=U_1W_0(z)U_2$.
\end{proof}

\begin{theorem}\label{T:DM_6}
Let ${\cE}$ be a Hilbert space, $J$ be a signature operator in ${\cE}$, let $W(z)$ be an operator valued function with values in  $[{\cE}]$ holomorphic on a domain $Z_W$, which contains a point $z_0 \in\mathbb{C}_+$.
In order that the operator function $W(z)$ to be in the class $\Lambda^0_J$, it is necessary and sufficient that the operator function
 $V(z)$, defined by the equality~\eqref{DM_51},
to admit a holomorphic continuation on $\mathbb{C}_+$ which satisfies the following conditions:
\begin{itemize}
\item[\;\;\rm (a)] $V \in (R)$;
\item[\;\;\rm (b)] $\lim\limits_{y \uparrow \infty}y^{-1}V(iy)=0$;
\item[\;\;\rm (c)]  $\lim\limits_{y \uparrow \infty}y(\Im V(iy)f,f) = \infty \quad (f \in {\cE})$;
\item[\;\;\rm (d)] $V(z) \in [{\cE}^W_-, {\cE}^W_+]  \quad (z \in Z_W)$.
\end{itemize}
\end{theorem}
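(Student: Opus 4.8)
The plan is to prove Theorem~\ref{T:DM_6} by reducing the problem to the realization theorem for Weyl functions (Corollary~\ref{C:DM_6}) applied to the function $V(z)$, exploiting the linear-fractional correspondence~\eqref{DM_51} between $W(z)$ and $V(z)$. First I would establish necessity: assuming $W\in\Lambda^0_J$, so that $W(z)=I+2iK^*(B^*-M(z))^{-1}KJ$ with $B=KJK^*$ and $\ker K^*=\{0\}$, the identity~\eqref{DM_50}--\eqref{DM_51} derived in the text shows $V(z)=K^*(B_R-M(z))^{-1}K$ on $\mathbb{C}_+\cup\mathbb{C}_-$. Since $B_R=B_R^*$ and $M(z)$ is the Weyl function of $A$ belonging to a self-adjoint extension (Theorem~\ref{T:DM_1}), the operator $B_R-M(z)$ is, up to sign, the Weyl function of $A$ corresponding to the boundary triplet $\{\mathcal H,\Gamma_0,\Gamma_1-B_R\Gamma_0\}$; hence $(B_R-M(z))^{-1}=-\wt M(z)^{-1}$ for a Weyl function $\wt M$, and $V(z)=-K^*\wt M(z)^{-1}K$ inherits all the $(R)$-class properties. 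Properties (a), (b), (c) then follow from the corresponding properties of $-\wt M(z)^{-1}$ in Corollary~\ref{C:DM_6} applied to $\wt M$, after conjugation by $K$; here one uses that $K^*$ is injective with dense range (consequence of~\eqref{eq:Node1}) to transfer condition (iii) of Corollary~\ref{C:DM_6} into condition (c). Property (d) was already proved in Proposition~\ref{P:DM_12}(a).

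For sufficiency I would run the argument in reverse. Given $W(z)$ holomorphic on $Z_W\ni z_0\in\mathbb{C}_+$ such that the associated $V(z)$ (defined via~\eqref{DM_51}) continues holomorphically to $\mathbb{C}_+$ and satisfies (a)--(d), the first step is to build the rigging ${\cE}^W_+\subset{\cE}\subset{\cE}^W_-$ using~\eqref{DM_52} (well-defined by (a) and the equivalence of norms for different $z_0$, exactly as in the text preceding Proposition~\ref{P:DM_12}). Condition (d) says $V(z)$ maps ${\cE}^W_-$ into ${\cE}^W_+$, so $\Im V(z_0)$ is an isomorphism of ${\cE}^W_-$ onto ${\cE}^W_+$. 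Now I would apply Corollary~\ref{C:DM_6} to the function $-V(z)$ viewed as an operator function on the Hilbert space ${\cE}^W_+$ (or equivalently, pull $V$ back through the isometry relating ${\cE}^W_\pm$ to $\mathcal H$ and $\mathcal H$): conditions (i)--(iii) of Corollary~\ref{C:DM_6} for $-V^{-1}$ — no, more precisely one recognizes $V(z)$ itself, under the change of Hilbert space, as $-\gamma(z_0)^*(\cdot)$-type; the cleanest route is to note $V\in(R)$ with the growth conditions (b),(c) forces $-V(z)^{-1}$ to satisfy (i)--(iii) of Corollary~\ref{C:DM_6}, so there is a simple densely defined symmetric operator $A$, a boundary triplet $\{\mathcal H,\Gamma_0,\Gamma_1\}$, and a Weyl function $\wt M(z)$ with $-V(z)^{-1}=\wt M(z)$ after identifying $\mathcal H$ with ${\cE}^W_-$. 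Then set $K:{\cE}^W_-\to\mathcal H$ to be the isometric isomorphism furnished by the rigging construction (as in the proof of Proposition~\ref{P:DM_12}), put $B_R:=0$ in this model (or absorb it), let $B=KJK^*$, and verify that the characteristic function of $A_B$ computed by~\eqref{DM_49} reproduces $W(z)$ via~\eqref{DM_51}; the matching of $W$ with the reconstructed characteristic function uses the already-proven identity $(W(z)+I)(I-iV(z)J)=2I$, which determines $W$ uniquely from $V$.

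The main obstacle I expect is bookkeeping the three Hilbert spaces simultaneously — the abstract ${\cE}$ with its $J$-metric, the rigging ${\cE}^W_\pm$, and the model space $\mathcal H$ produced by Corollary~\ref{C:DM_6} — and checking that the operator $K$ one constructs genuinely satisfies the colligation relations $B_I=KJK^*$ and $\ker K^*=\{0\}$, so that $W$ lands in $\Lambda^0_J$ and not merely in $\Lambda_J$. Concretely, one must show that $\Im V(z_0)$ being a topological isomorphism ${\cE}^W_-\to{\cE}^W_+$ (condition (d)) is exactly what guarantees the reconstructed $K^*$ has trivial kernel; this is the place where (d) is indispensable and where the "$0$" superscript in $\Lambda^0_J$ gets its teeth. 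A secondary technical point is ensuring the holomorphic continuation of $V$ to all of $\mathbb{C}_+$ (hypothesis) is compatible with $V$ originally being defined only on $Z_W$, i.e. that the two agree on $Z_W\cap\mathbb{C}_+$ by uniqueness of analytic continuation, so that~\eqref{DM_51} can be inverted globally. Once these identifications are pinned down, the equivalence is a formal consequence of Corollary~\ref{C:DM_6} and the linear-fractional dictionary between $W$ and $V$.
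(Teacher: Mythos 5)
Your necessity argument is essentially the paper's: from \eqref{DM_50} one reads off $V(z)=K^*(B_R-M(z))^{-1}K$, the middle factor is (up to a self-adjoint additive constant and a sign) the inverse of a Weyl function of the same simple densely defined operator and hence again such a Weyl function, so (a)--(c) follow from Corollary~\ref{C:DM_6}, while (d) is Proposition~\ref{P:DM_12}(a). That half is fine.

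The sufficiency half has a genuine gap. You propose to apply Corollary~\ref{C:DM_6} to $-V(z)^{-1}$ (after ``identifying $\cH$ with ${\cE}^W_-$'') and then reassemble $W$. But condition (d) only asserts that $V(z)$, and in particular $\Im V(z_0)$, is an isomorphism from ${\cE}^W_-$ onto ${\cE}^W_+$; the space ${\cE}^W_+$ may be a \emph{proper dense} subspace of ${\cE}$, so $V(z)^{-1}$ is in general an unbounded operator in ${\cE}$ and Corollary~\ref{C:DM_6} does not apply to $-V^{-1}$ as an $[{\cE}]$-valued function. (If it did, $\Im V(z)$ would have to be boundedly invertible in ${\cE}$ and condition (d) would be vacuous, which it is not.) The identification you gesture at amounts to replacing $-V^{-1}$ by $-R^{1/2}V(z)^{-1}R^{1/2}$ with $R=\Im V(z_0)$; but then hypotheses (ii) and (iii) of Corollary~\ref{C:DM_6} for the conjugated function do not follow formally from (b) and (c) for $V$, because $R^{-1/2}$ is unbounded and the vectors $R^{-1/2}f$ sweep only a dense subspace of ${\cE}$ --- precisely the kind of limit interchange one cannot wave through. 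Even the invertibility of $V(z):{\cE}^W_-\to{\cE}^W_+$ for \emph{all} $z\in\dC_+$, which you use implicitly when writing $-V(z)^{-1}$, requires proof. This is why the paper does not invoke Corollary~\ref{C:DM_6} here but runs the Kre\u{\i}n--Langer construction explicitly: it builds the model space $H_V$ from the kernel $(V(z)-V^{*}(\zeta))/(z-\bar\zeta)$ on ${\cE}_-$, defines $A$, $\wt{A}_1$, $\wt{A}_0$ through the functionals $\chi_1,\chi_0$, proves transversality via the decomposition \eqref{DM_58}, and only then obtains the boundary triplet $\Gamma_0=-R^{1/2}\chi_0$, $\Gamma_1=R^{1/2}\chi_1$, the ($[{\cE}]$-valued) Weyl function $M(z)=-R^{1/2}V(z)^{-1}R^{1/2}$, and the colligation $B=iR^{1/2}JR^{1/2}$, $K=R^{1/2}J$, from which \eqref{DM_59} recovers $W$. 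Your plan identifies the correct target but replaces the construction that legitimizes it by an appeal to a corollary whose hypotheses are not met.
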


\begin{proof}
{\it Necessity}. It follows from \eqref{DM_50} that
\[
V(z) = K^*(B_R - M(z))^{-1}K.
\]
Since $(B_R - M(z))^{-1}$ is a  $Q$-function of a densely defined symmetric operator
 $A$, then $V(z)$
satisfies the conditions (a), (b), (c), see Corollary~\ref{C:DM_6}.
The condition (d) was proved in Proposition~\ref{P:DM_11}.

{\it Sufficiency}. Let $\left\langle u,f\right\rangle_{\cE}$ serves for the functional
on $u\in\cE_+$, defined by a vector  $f\in\cE_-$.
Consider the set $\mathcal{L}$ of vector-valued functions $f(z)$ with values in
 ${\cE}_-$, defined on  $\mathbb{C}_+\cup\mathbb{C}_-$
and distinct from zero on a finite set. For arbitrary $z
\in \mathbb{C}_-$, let us set $V(z) = V(\overline{z})^*$ and define an inner product in
$\mathcal{L}$, setting
\eq{DM_54}{(f,g)_V =
\sum_{z,\zeta}\left\langle\frac{V(z) - V^{*}(\zeta)}{z -
\bar\zeta}f(z), g(\zeta)\right\rangle_{\cE}.}
Let
\[
\mathcal{L}_0 = \{f \in \mathcal{L}: (f,g)_V=0 \,\,\mbox{ for all
}\,\,g \in \mathcal{L} \}.
\]
Denote by $H_V$ the completion of the factor-space
$\mathcal{L}/\mathcal{L}_0$ with respect to the metric \eqref{DM_54}.

Define on functions from $\mathcal{L}$, satisfying the assumption
\eq{DM_55}{\chi_1(f) := \sum_{z}f(z) = 0,} an operator $\wt A_1$
by the equality
\[
\wt{A}_1f(z) = zf(z).
\]
It follows from \eqref{DM_54} that the operator $\wt A_1$ is symmetric and in view of
 (b)  the domain  $\dom(\wt {A}_1)$ of $\wt A_1$ is dense in
$H_V$. The closure of the operator $\wt {A}_1$ в $H_V$ will be also denoted by $\wt {A}_1$.

A function from $\mathcal{L}$, which is distinct from zero in a unique point
$z_0$ and is equal to $h$ at $z_0$, will be denoted by $h_{z_0}(z)$:
\[
h_{z_0}(z)=\left\{\begin{array}{cc}
            h ,& \mbox{if }z=z_0; \\
                    0 ,& \mbox{if }z\ne z_0.\\
                  \end{array}\right.\quad(h\in \cE_-).
\]
Let $f \in \mathcal{L}$, $f(z_0)= 0$ and let
$\chi_1(\frac{f(z)}{z-z_0}) = h$. Then
\[
(\wt {A}_1 -z_0)\left(\frac{f(z)}{z-z_0}-h_{z_0}(z)\right)=f(z).
\]
Therefore, $\ran(\wt {A}_1-z_0)=H_V$ and thus $\wt {A}_1$ is a self-adjoint operator in $H_V$.

Notice that $\ker \Im V(z_0) = 0$ for $z_0 \in \mathbb{C}_+$, since the assumption $(\Im V(z_0)h,h)=0$ leads to the condition $(\Im V(z)h,h)=0$ for all
$z \in \mathbb{C}_+$, which contradicts the condition (c).

Consider a linear manifold $\mathfrak{N}_{z_0}:=\{h_{z_0}(\cdot) : h \in
{\cE}\}$ in $H_V$ and define an operator  $\gamma_{z_0}$ by the equality
$\gamma_{z_0}h = h_{z_0}(z).$ It follows from the equality
\eq{DM_56}{\|h_{z_0}(\cdot)\|_V^2 = \left\langle\frac{\Im
V(z_0)}{\Im z_0}h,h\right\rangle_{\cE} = \frac{1}{\Im z_0}\|h\|^2_-}
that the operator  $\gamma_{z_0}$ belongs to $ [{\cE},
\mathfrak{N}_{z_0}]$ and defines a topological isomorphism from
${\cE}_-$ onto $\mathfrak{N}_{z_0}$.

Let $A$ be a restriction of the operator  $\wt {A}_1$ to the linear manifold
\[
D = \{f \in \dom(\wt {A}_1): (\wt {A}_1 - \overline{z}_0)f \perp
\mathfrak{N}_{z_0}\}.
\]
The domain of the operator  $A$ can be characterized by two conditions: $ \chi_1(f) = 0, \chi_0(f) = 0 $, where
\eq{DM_57}{\chi_0(f) = \sum_{z}V(z)f(z). }

Indeed, if $f \in \mathcal{L}$ и $\chi_0(f)=\chi_1(f)= 0$,
then
\[
((\wt {A}_1 - \overline{z}_0)f, h_{z_0}(z))_V = \sum_{z}
\left\langle\frac{V(z) - V(z_0)}{z -\overline{z}_0}(z -
\overline{z}_0)f(z),h\right\rangle_{\cE} = \sum_{z}\left\langle
V(z)f(z),h\right\rangle_{\cE} = 0.
\]

By the assumption (b)  $\mathfrak{N}_{z_0} \cap \dom({A}_1) =
\{0\}$ (see \cite{KL73}), and therefore $A$ is a densely defined symmetric operator. The constructions of the space  $H_V$ and the operators $A$, $\wt{A}_1$ belong to M.G. Kre\u{\i}n and H. Langer \cite{KL73}.

Let $\wt{A}_0$ be the closure in $H_V$ of the operator $\wt{A}_0f(z) =
zf(z)$, defined originally on vector-functions from $\mathcal{L}$,
satisfying the condition $\chi_0(f) = 0.$ Clearly, $\wt{A}_0$ is a symmetric operator. Let us show, that the extensions $\wt{A}_1$ and $\wt{A}_0$ are transversal.
In view of the equality $
\dom(\wt{A}_1) \dotplus \mathfrak{N}_{z_0} = \dom(A^*)$, in order to prove the transversality of $\wt{A}_1$ and $\wt{A}_0$ it is enough to check that
\eq{DM_58A}{
\mathfrak{N}_{z_0} \subset \dom(\wt{A}_1) + \dom(\wt{A}_0).
}
Let $h \in {\cE}_-$. Then
\eq{DM_58}{h_{z_0}(z) = [f_{z_0}(z) -
f_{\bar z_0}(z)]+[h_{z_0}(z) - f_{z_0}(z)+f_{\overline{z}_0}(z)],}
where $f = [2i\Im V(z_0)]^{-1}V(z_0)h \in {\cE}_-.$ The first term in
$\eqref{DM_58}$ belongs to $\dom(\wt{A}_1)$, and the second term belongs to $\dom(\wt{A}_0)$. This proves~\eqref{DM_58A}.

The transversality of the extensions $\wt{A}_0$ and $\wt{A}_1$ implies, in particular, that $\wt{A}_0$ is a self-adjoint operator.

Define a boundary {triplet}  $\{\mathcal{H}, \Gamma_0, \Gamma_1\}$ for $A^*$,
setting
\[
\mathcal{H}= {\cE}, \quad \Gamma_0f=-R^{1/2}\chi_0(f),\quad
\Gamma_1f = R^{1/2}\chi_1(f),
\]
where $R = \Im V(z_0)$. Let us show that the mapping
$f \to \{\Gamma_0f, \Gamma_1f\}$ from $\dom(A^*)$ to $\mathcal{H}
\oplus \mathcal{H}$ is surjective. To prove this it is enough to check
that the operators $\Gamma_0|_{\mathfrak{N}_{Z_0}},
\Gamma_1|_{\mathfrak{N}_{Z_0}}$ are isomorphisms from
$\mathfrak{N}_{z_0}$ to $\mathcal{H}$.

In order to prove the first statement represent  the operator
 $V(z_0)$
in the form  $V(z_0) = Q+iR$, where $Q=Q^*, R = R^* \in [{\cE}_-,{\cE}_+]$.
Then
\[
V^*(z_0)R^{-1}V(z_0) = QR^{-1}Q+R.
\]
Hence one obtains
$$\|\Gamma_0h_{z_0}\|^2_{\cE} = \left\langle R^{-1}V(z_0)h, V(z_0)h\right\rangle_{\cE}
\ge \left\langle Rh,h\right\rangle_{\cE} = \|h\|^2_-=\Im
z_0\|h_{z_0}\|_V^2.$$
On the other hand, it is clear that there is $C>0$, such that
$$\|\Gamma_0h_{z_0}\|^2_{\cE} \le C\|h\|_-^2 = C \Im
z_0\|h_{z_0}\|_V^2.$$
These two inequalities show, that
$\Gamma_0|_{\mathfrak{N}_{z_0}}$ is an isomorphism in ${\cE}$.

The second statement is implied by the equality
$$\|\Gamma_1h_{z_0}\|^2_{\cE} =\left\langle Rh,h\right\rangle_{\cE}
= \|h\|^2_- = \Im z_0\|h_{z_0}\|_V^2.$$
And finally, it follows from the equality
$$(A^*f,g)_V - (f,A^*g)_V = \sum_{z,\zeta}\left\langle[V(z) - V^*(\zeta)]f(z), g(\zeta)\right\rangle_{\cE} = $$
$$= \sum_{z, \zeta}\{\left\langle V(z)f(z),g(\zeta)\right\rangle_{\cE}
- \left\langle f(z), V(\zeta)g(\zeta)\right\rangle_{\cE}=$$
$$=\sum_{z, \zeta} \{(R^{-1/2}V(z)f(z),R^{1/2}g(\zeta))_{\cE} -
(R^{1/2}f(z), R^{-1/2}V(\zeta)g(\zeta))_{\cE}\}=$$
$$=(\Gamma_1f,\Gamma_0g)_{\cE} - (\Gamma_0f, \Gamma_1g)_{\cE}$$
that $\{\mathcal{H}, \Gamma_0, \Gamma_1\}$ is a boundary {triplet} for
 $A^*$. The Weyl function for this boundary {triplet}
takes the form
\[
M(z) =- R^{1/2}V^{-1}(z)R^{1/2}.
\]
Let us set $B = iR^{1/2}JR^{1/2}$ and consider an a.s. extension ${A}_B$
of the operator $A$, determined by the "boundary condition"
\[
\sum_{z}(I+iJV(z))f(z)=0,
\]
which is equivalent to the condition
\[
\Gamma_1f=iR^{1/2}JR^{1/2}\Gamma_0f.
\]
Setting $K=R^{1/2}J$, one can find the characteristic function
$\wt{W}(z)$ of the operator ${A}_B$ by the formula~\eqref{DM_49}
\eq{DM_59}{\wt{W}(z) = I
+2iJR^{1/2}(B^*-M(z))^{-1}R^{1/2}=I+2iJV(z)(I - iJV(z))^{-1}=}
$$=(I+iJV(z))(I-iJV(z))^{-1}=W(z).$$
This proves that $W\in\Lambda_J^0$.
\end{proof}

\begin{remark}\label{R:DM_13}
Characteristic functions of unbounded operators with finite non-Hermitian rank have been studied by the methods  of the theory of bi-extensions of symmetric operators in papers by E.R. Tsekanovskii and Yu.L. Shmuljan (see~\cite{Ts65,ShmTs77}), and in the case of infinite non-Hermitian rank in papers by Yu.M. Arlinskii and E.R. Tsekanovskii (see~\cite{Arl76,ArlTs74}).
In particular, in~\cite{Arl76,ArlD79}
the class of characteristic functions of nonbounded operator colligations constructed within the bi-extension theory was completely characterized. Methods of boundary {triplet}s allow to present an alternative approach to the theory of characteristic functions of nonbounded operators, which nonetheless leads to the same class $\Lambda_J$. Statements, which are closed to Proposition~\ref{P:DM_12} and Theorem~\ref{T:DM_5}, were proved earlier in~\cite{ArlTs74}.
\end{remark}
\section{Ordinary Differential Operators with Bounded Operator Coefficients}\label{6}
\subsection{Operators on finite intervals} Let $H$ be a separable Hilbert space, and let $A$ be a minimal operator
generated in $L_2([0,b],H)$ by the differential expression of order $2n$ of the form
  \eq{DM_60}
{l[y] = \sum_{k=1} ^{ n} (-1)^k (p_{n-k}y^{(k)})^{(k)}+p_n y.}
Here $y: [0,b]\to H$ is a vector function with values in $H$, quasi-derivatives
$y^{[k]}$ are successively defined by
\[
y^{[k]}(x)=y^{(k)}(x)\,\, (k=0,\dots,n-1), \quad
y^{[n]}(x)=p_0(x)y^{(n)}(x),
\]
\[
y^{[n+k]}(x)=p_k(x)y^{(n-k)}(x)-\frac{d}{dx}y^{[n+k-1]}(x)\,\ \  (k=1,\dots,n),
\]
and the coefficients $p_k=p_k(t)$ satisfy the following conditions:
\[
p_k(t) = p_k(t)^* \in [H],\quad p_0(t)^{-1} \in [H] \quad\mbox{ for
}\quad 0 \le t \le b, \quad p_k \in C^{n-k}([0,b],[H])\quad(k =
0,1,...,n).
\]
The domain of the minimal operator $A$ is of the form
\[
\dom A=\{y\in\dom A^*:\, y^{[k]}(0)=y^{[k]}(b)=0, \  k=0,1,\dots,2n-1\}.
\]
Due to Rofe-Beketov\cite{RB69}, a boundary triplet for $A^*$ has the form
$\{\mathcal{H}, \Gamma_0, \Gamma_1\}$, where $\mathcal{H} = H^{2n},$
\[
\begin{split}
\Gamma_0y &= (y(0),...,y^{(n-1)}(0); y(b),...,y^{(n-1)}(b))^T,\\
\Gamma_1y &= (y(^{[2n-1]}0),...,y^{[n]}(0);
-y^{[2n-1]}(b),...,-y^{[n]}(b))^T.
\end{split}
\]

Let $Y_i(z,t)$ be a solution to the operator equation $l[Y] = zY$
satisfying the initial conditions
\[
Y_i^{[j-1]}(z,0) = \delta_{ij} I,  \quad i,j = 1, ..., 2n.
\]
From the results of~\cite[Theorem 6]{RB69} it follows that the block operator
\[Y(z,t) =\|Y_1(z,t), ..., Y_{2n}(z,t)\|\]
establishes an isomorphism between $H^{2n}$ and
$\mathfrak{N}_z(A)$. By definition of the Weyl function, 
the following equality  holds  $ M(z)\Gamma_0y = \Gamma_1y $, $y\in\sN_z$. By substituting for $y$
columns of the matrix $Y(z,t)$ we arrive at the relation
\[
M(z)Y^0(z) = Y^1(z),
\]
in which operator functions $Y^i(z)=\Gamma_iY(z,t)$, $(i =
0,1)$ are isomorphisms in $\mathcal{H}$  for all $z \ne
\overline{z}$. Consequently, \eq{DM_61}{M(z) =
Y^1(z)[Y^0(z)]^{-1}.} Further, let $B\in[\cH]$, and let ${A}_B$ be the almost
solvable extension of the operator
$A$ defined by the boundary condition $\Gamma_1y =B\Gamma_0y$. Consider the operators
\[
\Phi_z = (\Gamma_1 - B\Gamma_0)\mid_{\mathfrak{N}_z},\quad \Phi_{*z}
= (\Gamma_1 - B^*\Gamma_0)\mid_{\mathfrak{N}_z}.
\]
Since the operator $\Phi_{*z}$ is an isomorphism from $\mathfrak{N}_z$ onto
$\mathcal{H} $ for any $ z \in \rho(\wt{A}^*_B)$, then it follows from the
formula~\eqref{DM_47}, that the characteristic function for ${A}_B$
is of the form \eq{DM_62}{W(z) =[\Phi_{*z}Y]^{-1}[\Phi_zY].}

\begin{remark}\label{R:DM_14}
For the operator $\wt{A} \supset A$ defined by expression $\eqref{DM_60}$
and by the conditions
\[
y^{[i]}(0)=0\quad (0 \le i \le 2n-1),
\]
and for some choice of a boundary triplet one obtains:
\[
\Phi y = \{y(0), ..., y^{[2n-1]}(0)\},\quad\Phi_* y = \{y(b), ...,
y^{[2n-1]}(b)\}.
\]
Hence $W(z)$ takes the form
\[
W(z)^{-1} = \|Y_j^{i-1}(b)\|_{ij=1}^{n}
\]
(see \cite{Str60}). As in Remark~\ref{R:DM_3}, the equality $Y(z, t) = (\Phi
\mid_{\mathfrak{N}_z})^{-1}$ yields Theorem~6 from~\cite{RB69}.
\end{remark}

\subsection{Operators on the half-line}
Let $A$ be a minimal operator generated in $L_2(0,\infty)$ by the differential
expression~\eqref{DM_60} in which $p_0^{-1},p_1,\dots,p_n$ are real and measurable functions
on $(0,\infty)$ integrable on each subinterval $[a,b]$ of $(0,\infty)$.
Suppose also that the operator $A$ has deficiency indices $n_+(A) = n_-(A) = n$.
Then the domain of the minimal operator $A$ is of the form
\[
\dom A=\{y\in\dom A^*:\, y^{[k]}(0)=0 \mbox{ for any  }
k=0,1,\dots,2n-1\}.
\]
For an arbitrary boundary triplet $\{\mathcal{H}, \Gamma_0,\Gamma_1\}$,
choose the basis $y_k(z,x) (1 \le k \le n)$ in $\mathfrak{N}_z$ such that
\[
I_n = \Gamma_0Y=\Gamma_0( y_1, y_2, ..., y_n).
\]
Then the Weyl function $M(z)$ of the operator $A$ is given by
\[
M(z) = \Gamma_1Y = \Gamma_1( y_1, y_2, ..., y_n).
\]
If $g_k(\cdot,z) \in \mathfrak{N}_z$ and $g_k^{[n-i]}(0,z) =
\delta_{ik}$, then, by defining $\Gamma_0$ and $\Gamma_1$ by the equalities
\eq{DM_63}{\Gamma_0y =\{y^{(n-1)}(0), ... , y(0)\}^T, \qquad
\Gamma_1y =\{y^{(n)}(0), ... , y^{(2n-1)}(0)\}^T} and by setting
$y_k(x,z) = g_k(x,z)$, one obtains
\[
M(z) = \|M_{jk}(z)\|_{k=1}^n,\quad\mbox{ where }\quad M_{jk}(z) =
g_k^{[n+j-1]}(0,z).
\]
Thus, in this case the Weyl function $M(z)$ exactly coincides with the
characteristic matrix of the operator ${A}_0$ (see~\cite[p. 278]{Naj69}).
\footnote{Let $G(x,\xi,z)=\sum_{k=1}^n u_k(x,z)g_k(\xi,z)$ $(x<\xi)$ be the Green function of the Dirichlet problem, i.e. the kernel of the resolvent of the operator $A_0$. Then the above-mentioned coincidence   becomes evident, if one uses the following identities for quasi-derivatives $G_{\nu, j}$ $(\nu+j=2n-1)$ of the Green function\\
$
G_{\nu, j}(\xi-0,\xi,z)-G_{\nu, j}(\xi+0,\xi,z)=
\sum_{k=1}^n \left\{u_k^{[\nu]}(\xi,z)g_k^{[j]}(\xi,z)-
u_k^{[j]}(\xi,z)g_k^{[\nu]}(\xi,z)\right\}=\left\{\begin{array}{cc}
                                                    1 ,& j<\nu \\
                                                    -1, & j>\nu,
                                                  \end{array}\right.
$\\
which imply that for $u_k^{[n+j-1]}(a)=\delta_{k,j}$ one has $g_k^{[n-i]}(a)=\delta_{k,j}$. Mention also, that the book \cite[p.260]{Naj69} contains an unfortunate inaccuracy: a wrong sign $(-1)^{n-1}$ in the right part of the above equality}
Suppose that the almost solvable extension $\wt{A}$ of the operator $A$ is defined by
\eq{DM_64}{\Phi_j y = \sum_{1 \le k \le 2n}\phi_{jk}y^{[k-1]}(0)=0,
\quad 1 \le j \le n,} and the extension $(\wt{A})^*$ is defined by
\eq{DM_65}{\Phi_{*j} y = \sum_{1 \le k \le
2n}\phi_{*jk}y^{[k-1]}(0)=0, \quad 1 \le j \le n.} Since $\wt{A}$ is
an almost solvable extension of the operator $A$, it follows that, at some boundary triplet
$\{\mathcal{H}, \Gamma_0, \Gamma_1\}$, one has:
\[
\wt{A} = {A}_B,\quad \dom(\wt{A}) = \ker(\Gamma_1 - B\Gamma_0),\quad
B \in [\mathcal{H}],
\]
and conditions~\eqref{DM_64}, \eqref{DM_65} take the form
\[
\Phi y = C_1(\Gamma_1 -B\Gamma_0)y = 0,\quad \Phi_* y =
C_2(\Gamma_1- B^*\Gamma_0)y = 0 ,
\]
where $C_1, C_2, C_1^{-1}, C_2^{-1} \in [\mathcal{H}]$. By introducing the matrices
\[
\Phi(z) = \|\Phi_jy_k(x,z)\|^n_{j,k=1}, \quad
 \Phi_*(z) = \|\Phi_{*j}y_k(x,z)\|^n_{j,k=1}
 \]
and by taking into account~\eqref{DM_47} we arrive at the following
expression for the characteristic function $W(z)$ of the operator ${A}_B$:
 \eq{DM_66}{W(z) = [(\Gamma_1-B\Gamma_0)Y][(\Gamma_1 - B^*\Gamma_0)Y]^{-1} = C^{-1}_1\Phi(z)\Phi_*(z)C_2.}

 \begin{remark}\label{R:DM_15}
At the "natural"\ boundary triplet of form~\eqref{DM_63}, the matrices $C_1$ and
$C_2$ have the form
\[
C_1 = \|\phi_{j,n+k}\|^n_{jk=1}, \quad C_2 =
  \|\phi_{*j,n+k}\|^n_{jk=1}.
\]
All the conclusions of this section are valid without change for the
differential operation of form~$\eqref{DM_60}$ in $L_2([0, \infty), H)$ only if
$n_+(A)=n_-(A) = n =\dim H < \infty$. Note that the computation of the characteristic function of the differential operator in the scalar case ($\dim H = 1$) is held in~\cite{Arl80,Kuz64}
in which similar results are obtained by other techniques.
 \end{remark}

 \section{Sturm--Liouville Operator with Semi-Bounded Operator Potential} \label{7}

8.1. Let $\mathfrak{H}$ be a separable Hilbert space, and let $L= L_{\min}$ be a minimal operator
generated in $L_2([0,b],\mathfrak{H})$ by the differential expression
\[
l[y] = - y''+Ay+q(t)y
\]
with an unbounded operator $A=A^*\ge I$,\, $A \in \mathcal{C}(\mathfrak{H})$.
Let also $\{\sH_\alpha\}$ $(\alpha\in\dR)$ be the scale of Hilbert spaces
constructed with respect to the operator $A$:
\[
\sH_\alpha=\dom(A^\alpha),\quad \|h\|_{\sH_\alpha}=\|A^\alpha
h\|_\sH \quad (h\in \sH_\alpha).
\]

In~\cite{Gor71}, M. L. Gorbachuk showed that for any
$y \in \dom(L^*)$ boundary values $y(0)$ and $y(b)$  exist in the space
$\mathfrak{H}_{- 1/4}$ and indicated the following boundary triplets for the operator
$L^*$:
\[
\mathcal H = \mathfrak{H} \oplus \mathfrak{H}, \quad \Gamma_0 y =
\{y_0, -y_b\}^T,\quad \Gamma_1y = \{y_0', y_b'\}^T,
\]
where
 \begin{equation}\label{DM_Boun_Map}
y'_0=A^{1/4}[y'(0)+A^{1/2}y(0)],\quad  y_0 = A^{-1/4}y(0), \quad y(0) \in
\mathfrak{H}_{-1/4}.
   \end{equation}
Further, in the case $q(t) \equiv 0$ there holds the following
representation  for any $y \in \mathfrak{N}_z$ (see \cite{Gor71}):
  \eq{DM_67}
{y(t,z) = \omega_1(t,z)f_1+\omega_2(t,z)f_2, \quad (f_1,f_2 \in \mathfrak{H}),}
  \eq{DM_68}
  {\omega_1(t,z)=e^{-t\sqrt{A-z}}A^{1/4}, \quad
\omega_2(t,z) =\frac{\shh
t\sqrt{A-z}}{\sqrt{A-z}}A^{3/4}e^{-b\sqrt{A}.}}
Therefore, the Weyl function $M(z)$ is of the form
\[
M(z) = \Omega_1(z)\Omega_0^{-1}(z),
\]
where
\[
\Omega_0(z) = (\Gamma_0\omega_1(t,z), \Gamma_0\omega_2(t,z)), \quad \Omega_1(z)=
(\Gamma_1\omega_1(t,z), \Gamma_1\omega_2(t,z)).
\]

8.2. Let $B \in \mathcal{C}(\mathcal{H})$, and let ${L}_B$ be the extension
of the minimal operator $L$. Then, due to Proposition~\ref{P:DM_7}, this yields a full
characterization of the spectrum of the operator ${A}_B$. For example,
\[
z \in \sigma_p({L}_B) \Longleftrightarrow 0 \in \sigma_p(M(z)-B).
\]
      \begin{proposition}\label{P:DM_13}
Suppose that $A^{-1} \in \sS_{p-1/2}$, $z \in \rho({L}_B)$, $\xi \in \rho(B)$.
Then the following equivalence  holds
\[
({L}_B-z)^{-1} \in \sS_p(H) \Longleftrightarrow (B - \xi)^{-1} \in
\sS_p(\mathcal{H}).
\]
 \end{proposition}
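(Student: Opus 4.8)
### Proof Plan

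The plan is to reduce the statement to a resolvent-comparison result already available in the paper, namely Theorem~\ref{T:DM_2}, applied to the pair of extensions $L_B$ and $L_0$ (the latter being the extension with $\dom(L_0)=\ker\Gamma_0$). First I would note that by Theorem~\ref{T:DM_2}, for any $\zeta\in\rho(B)$ and any common regular point $z$ of $L_B$ and $L_0$, the membership $({L}_B-z)^{-1}-({L}_0-z)^{-1}\in\sS_p(H)$ is equivalent to $(B-\zeta)^{-1}-(M(z)-\zeta)^{-1}\in\sS_p(\cH)$; since for $L_0$ itself the operator $B$ is (morally) $M(z)$, the cleanest route is to compare $L_B$ directly to $L_0$. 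Then $({L}_B-z)^{-1}\in\sS_p(H)$ is equivalent to $({L}_B-z)^{-1}-({L}_0-z)^{-1}\in\sS_p(H)$ \emph{provided} we know $({L}_0-z)^{-1}\in\sS_p(H)$; similarly $(B-\xi)^{-1}\in\sS_p(\cH)$ matches $(B-\xi)^{-1}-(M(z)-\xi)^{-1}\in\sS_p(\cH)$ provided $(M(z)-\xi)^{-1}$ — equivalently the corresponding piece of the resolvent of $L_0$ — is known to lie in $\sS_p(\cH)$. So the whole argument rests on establishing the single "reference" fact that under the hypothesis $A^{-1}\in\sS_{p-1/2}$ one has $({L}_0-z)^{-1}\in\sS_p(H)$, together with the companion statement for the Weyl function.

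The key steps, in order, are: (1) Using the explicit form of the boundary triplet from Gorbachuk \cite{Gor71} and the explicit solutions $\omega_1,\omega_2$ in \eqref{DM_67}--\eqref{DM_68}, obtain a concrete description of the resolvent $(L_0-z)^{-1}$ (the Dirichlet-type Green function) as an integral operator whose kernel is built from $e^{-t\sqrt{A-z}}$ and $\sinh(t\sqrt{A-z})/\sqrt{A-z}$. (2) Estimate the singular numbers of this operator: the functional calculus for $A$ reduces the question to the $s$-numbers of $A^{-1}$, and the square-root/exponential structure produces the shift by $1/2$ in the Schatten index — this is exactly why the hypothesis is $A^{-1}\in\sS_{p-1/2}$ rather than $\sS_p$; heuristically each half-line factor $(A-z)^{-1/2}$ contributes a gain of order $\tfrac14$ in the index on each of the two slots, or one uses the known asymptotics $s_n((L_0-z)^{-1})\sim c\, n^{-2}$ combined with $s_n(A^{-1})\sim c' n^{-2/(p-1/2)}$-type scaling. (3) Observe that the $\sN_z$-compression of $(L_0-z)^{-1}$, which is $(M(z)-\zeta)^{-1}$ up to the bounded isomorphisms $T_1,T_2$ from the formula \eqref{DM_33}/\eqref{DM_35} and Proposition~\ref{P:DM_9}, inherits the same Schatten class, giving $(M(z)-\xi)^{-1}\in\sS_p(\cH)$. (4) Apply Theorem~\ref{T:DM_2} with $B_1=B$, $B_2=$ the relation defining $L_0$ (or more carefully, approximate $L_0$ by an almost solvable extension and pass to the ideal property of $\sS_p$), and combine with steps (2)--(3) to transfer membership in $\sS_p$ from $({L}_B-z)^{-1}$ to $(B-\xi)^{-1}$ and back, using that $\sS_p$ is a two-sided ideal stable under the bounded conjugations appearing in \eqref{DM_33}.

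The main obstacle I expect is step (2): pinning down the precise Schatten-index bookkeeping for the Dirichlet Green function of $-y''+Ay$ on a finite interval in terms of $A^{-1}\in\sS_{p-1/2}$. One must be careful that the operator $\sqrt{A-z}$ and the exponentials $e^{-t\sqrt{A-z}}$ are handled via the spectral theorem uniformly in $t\in[0,b]$, and that the integration in $t$ does not spoil the estimate; the factor $e^{-b\sqrt{A}}$ in \eqref{DM_68} is strongly smoothing and only helps, but the clean accounting of the "$-1/2$" requires either a direct eigenvalue computation (possible when $q\equiv0$ and $A$ has discrete spectrum, reducing to a scalar Sturm--Liouville computation on each eigenspace of $A$) or an appeal to a known interpolation/trace-ideal lemma for operators of the form $f(A)$ with $f$ decaying like a fixed power. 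A secondary, more technical point is justifying the use of Theorem~\ref{T:DM_2} when one of the "extensions" is $L_0=\ker\Gamma_0$ rather than a genuine $A_B$ with bounded or closed $B$; this is circumvented by working with $(L_0-z)^{-1}$ directly as the reference operator and invoking the resolvent identity \eqref{DM_33} together with Proposition~\ref{P:DM_9}, so that only the ideal property of $\sS_p$ and the boundedness of the intertwining isomorphisms are needed.
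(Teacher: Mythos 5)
Your overall strategy coincides with the paper's: the paper's entire proof consists of quoting from \cite{GG72} the equivalence $A^{-1}\in\sS_{p-1/2}\Longleftrightarrow L_0^{-1}\in\sS_p$ for the Dirichlet extension $L_0=\ker\Gamma_0$, and then invoking the resolvent-comparison machinery of Section~\ref{3}. You correctly identify this membership $(L_0-z)^{-1}\in\sS_p$ as the crux (your step (2)), but you propose to re-derive it from the Green function of $-y''+Ay$ and leave the singular-value bookkeeping as a heuristic; as written the argument is therefore not self-contained exactly where the paper leans on a citation. That is a matter of completeness rather than of correctness.

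There is, however, one genuinely wrong step: your step (3). The compression formula \eqref{DM_35} (equivalently \eqref{DM_33}) represents the \emph{difference} $(L_B+a)^{-1}-(L_0+a)^{-1}$ restricted to $\sN_{-a}$ as $T^*(B-M(-a))^{-1}T$; it says nothing about the compression of $(L_0-z)^{-1}$ alone, and the operator it produces is $(B-M(z))^{-1}$, not $(M(z)-\xi)^{-1}$. Moreover the conclusion you draw, $(M(z)-\xi)^{-1}\in\sS_p(\cH)$, is impossible: this would be the inverse of a bounded operator on the infinite-dimensional space $\cH=\sH\oplus\sH$ and so can never be compact. Fortunately the claim is not needed. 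The correct chain is
\[
(L_B-z)^{-1}\in\sS_p \;\Longleftrightarrow\; (L_B-z)^{-1}-(L_0-z)^{-1}\in\sS_p
\;\Longleftrightarrow\; (B-M(z))^{-1}\in\sS_p
\;\Longleftrightarrow\; (B-\xi)^{-1}\in\sS_p ,
\]
where the first equivalence uses $(L_0-z)^{-1}\in\sS_p$ and linearity of the ideal, the second is \eqref{DM_33}/\eqref{DM_35} (Proposition~\ref{P:DM_9}) together with the two-sided ideal property and the boundedness of the intertwining isomorphisms, and the third follows from the identities \eqref{DM_31}--\eqref{DM_32} in the proof of Theorem~\ref{T:DM_2}, which exhibit $(B-M(z))^{-1}$ and $(B-\xi)^{-1}$ as differing by boundedly invertible factors --- no Schatten property of $M(z)$ is required. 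With step (3) replaced by this last equivalence, your steps (1), (2) and (4) assemble into precisely the paper's argument.
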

The proof is clearly implied by both Theorem $\ref{DM_2}$ and the equivalence
\[
A^{-1} \in \sS_{p-1/2} \Longleftrightarrow \wt L_D^{-1} \subset
\sS_p
\]
established in~\cite{GG72}, where $\wt L_D=L_0$
is the Dirichlet extension of the operator $L$.

As is shown in~\cite{GG72}, vector functions $y \in \dom(L^*)$ are continuous on
$(0,b)$ in the space $\mathfrak{H}_{3/4}$, and continuous on $[0,b]$
only in $\mathfrak{H}_{-1/4}$. However, vector functions from
$\dom({L}_0)$ preserve continuity in $\mathfrak{H}_{3/4}$ and at the endpoints
of the segment.

\begin{definition}\label{D:DM_10}{\rm(\cite{GG72})}.
An extension ${L}_B$ of the operator $L$ is said to be $\alpha$-smooth if
$\dom({L}_B) \subset C([0,b], \mathfrak{H}_{\alpha})$ whenever $-1/4<\alpha<3/4$,
and maximally smooth whenever $\alpha = 3/4$.
\end{definition}

\begin{proposition}\label{P:DM_14}
For an extension ${L}_B$, $(B \in \mathcal{C}(\mathcal
H))$ to be $\alpha$-smooth, it is necessary and sufficient that
\[
(A \oplus A)^{\alpha+1/4}(B-\xi)^{-1} \in [\mathcal{H}] \quad \mbox{
for }\quad \xi \in \rho(B).
\]
\end{proposition}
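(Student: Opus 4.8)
The plan is to reduce the smoothness statement about $\dom(L_B)$ to a boundedness statement about the operator $(A\oplus A)^{\alpha+1/4}(B-\xi)^{-1}$ by analysing how a vector $y\in\dom(L_B)$ decomposes with respect to the boundary triplet $\{\mathcal H,\Gamma_0,\Gamma_1\}$, $\mathcal H=\mathfrak H\oplus\mathfrak H$, described above. First I would recall from \cite{GG72} (as quoted in the excerpt) that vector functions in $\dom(L_0)$ — i.e.\ $\ker\Gamma_0$ — already belong to $C([0,b],\mathfrak H_{3/4})$; hence the smoothness of an arbitrary $y\in\dom(L_B)$ is governed entirely by its ``$\sN_z$-part''. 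Concretely, fix $z\in\rho(L_B)\cap\rho(L_0)$ and write $y=y_0+f_z$ with $y_0\in\dom(L_0)$ and $f_z\in\sN_z$; the $\alpha$-smoothness of $\dom(L_B)$ is then equivalent to $f_z\in C([0,b],\mathfrak H_\alpha)$ for every such $y$.

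Next I would make the link with $B$ explicit. Since $y\in\dom(L_B)=\ker(\Gamma_1-B\Gamma_0)$, we have $\Gamma_1 y=B\Gamma_0 y$; combining with $\Gamma_1 y_0$ and $\Gamma_0 y_0=0$, and using $M(z)\Gamma_0 f_z=\Gamma_1 f_z$, one gets $(M(z)-B)\Gamma_0 f_z=-\Gamma_1 y_0$, i.e.\ $\Gamma_0 f_z=(B-M(z))^{-1}\Gamma_1 y_0$, and as $y_0$ ranges over $\dom(L_0)$ the element $\Gamma_1 y_0$ ranges over a dense set in $\mathcal H$ (indeed over all of $\mathcal H$, since $\Gamma_1\dom(L_0)=\mathcal H$). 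Thus the set of admissible boundary data $\{\Gamma_0 f_z: y\in\dom(L_B)\}$ is exactly $\ran\big((B-M(z))^{-1}\big)=\dom(B)$ when $B$ is closed. Now from the explicit solutions \eqref{DM_67}--\eqref{DM_68} (with $q\equiv0$; the general $q$ being a lower-order perturbation that does not affect the scale of smoothness, by a variation-of-parameters/Volterra argument) one reads off that $f_z(t)=\omega_1(t,z)h_1+\omega_2(t,z)h_2$ lies in $C([0,b],\mathfrak H_\alpha)$ if and only if $h=(h_1,h_2)^T\in\dom\big((A\oplus A)^{\alpha+1/4}\big)$: indeed $e^{-t\sqrt{A-z}}$ maps $\mathfrak H$ into $C((0,b],\mathfrak H_\beta)$ for every $\beta$ but at $t=0$ only preserves the domain of the relevant power of $A$, and the factors $A^{1/4}$, $A^{3/4}$ in \eqref{DM_68} shift the index so that boundedness into $\mathfrak H_\alpha$ up to the endpoints is equivalent to $(A\oplus A)^{\alpha+1/4}h\in\mathcal H$. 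Finally, $h=\Gamma_0 f_z$ is related to $\Gamma_1 y_0\in\mathcal H$ via $(B-M(z))^{-1}$, and since $\Gamma_0$ and $\Gamma_1$ act on $\sN_z$ through fixed isomorphisms (Remark~\ref{R:DM_3}) and $M(z)$ is bounded with bounded inverse of $B-M(z)$, the condition ``$f_z\in C([0,b],\mathfrak H_\alpha)$ for all $y\in\dom(L_B)$'' becomes ``$(A\oplus A)^{\alpha+1/4}(B-M(z))^{-1}\in[\mathcal H]$'', and one passes from $M(z)$ to $\xi\in\rho(B)$ using the resolvent-type identity \eqref{DM_31}–\eqref{DM_32}, which shows $(B-M(z))^{-1}$ and $(B-\xi)^{-1}$ differ by bounded factors on the left and right; hence $(A\oplus A)^{\alpha+1/4}(B-\xi)^{-1}\in[\mathcal H]$ iff the same holds with $M(z)$ in place of $\xi$.

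The main obstacle I expect is the careful bookkeeping of the scale $\{\mathfrak H_\alpha\}$ at the endpoint $t=0$: one must verify that the operators $\omega_1(t,z),\omega_2(t,z)$ together furnish an isomorphism between the boundary data $h=(h_1,h_2)^T$ equipped with the graph norm of $(A\oplus A)^{\alpha+1/4}$ and the solutions regarded as elements of $C([0,b],\mathfrak H_\alpha)$, uniformly in $z$ on compact subsets of $\rho(L_0)$, and that neither the potential $q$ nor the hyperbolic-sine factor in $\omega_2$ (which is smoothing in the interior but not at $t=b$, where $e^{-b\sqrt A}$ intervenes) spoils this equivalence. Once that estimate is in place — essentially the content of the Gorbachuk boundary-value analysis in \cite{Gor71,GG72} — the rest is the algebraic manipulation with $\Gamma_0,\Gamma_1,M(z),B$ outlined above, which is routine given Propositions~\ref{P:DM_7} and~\ref{P:DM_8} and Remark~\ref{R:DM_3}.
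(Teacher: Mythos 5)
The paper states Proposition~\ref{P:DM_14} without proof, so there is no argument of the authors' to set yours against; judged on its own, your reduction is the right one and the numerology checks out: since $\Gamma_0 y=\{A^{-1/4}y(0),-A^{-1/4}y(b)\}^T$ and elements of $\dom(L_0)$ already lie in $C([0,b],\mathfrak H_{3/4})$, membership of $y\in\dom(L_B)$ in $C([0,b],\mathfrak H_\alpha)$ reduces to $\Gamma_0y\in\dom\bigl((A\oplus A)^{\alpha+1/4}\bigr)$, and the closed graph theorem converts the inclusion $\dom(B)\subset\dom\bigl((A\oplus A)^{\alpha+1/4}\bigr)$ into boundedness of $(A\oplus A)^{\alpha+1/4}(B-\xi)^{-1}$. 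Two caveats. First, the detour through $(B-M(z))^{-1}$ is unnecessary and slightly risky: it forces you to fix $z\in\rho(L_B)\cap\rho(L_0)$, and the proposition assumes only $\rho(B)\neq\emptyset$, not $\rho(L_B)\neq\emptyset$. You can bypass this by noting that $\Gamma_0\dom(L_B)=\dom(B)=\ran\bigl((B-\xi)^{-1}\bigr)$ follows directly from $\dom(L_B)=\ker(\Gamma_1-B\Gamma_0)$ and the surjectivity of $\Gamma$, while the decomposition $y=y_0+f_z$ needs only $z\in\rho(L_0)$, which always holds. If you do keep the $M(z)$ route, the passage to $\xi$ must use the factorization $(B-M(z))^{-1}=(B-\xi)^{-1}\bigl[I+(\xi-M(z))(B-\xi)^{-1}\bigr]^{-1}$, i.e.\ a bounded invertible factor on the \emph{right}, so that $(A\oplus A)^{\alpha+1/4}$ still acts directly on $(B-\xi)^{-1}$; a factor on the left would not commute past the unbounded power of $A$. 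Second, the analytic core --- that $\omega_1,\omega_2$ identify $C([0,b],\mathfrak H_\alpha)$-solutions in $\sN_z$ with boundary data in $\dom\bigl((A\oplus A)^{\alpha+1/4}\bigr)$, including continuity at the endpoints via strong continuity of $e^{-t\sqrt{A-z}}$ on $\dom(A^{\alpha+1/4})$, and that a bounded potential $q$ does not shift the scale --- is asserted rather than proved; you correctly identify this as the content of \cite{Gor71,GG72}, and it is where all the real work sits, so the sketch is reasonable but not self-contained.
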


\begin{corollary}\label{C:DM_12}
If $A^{-1} \in \sS_{\infty}(\mathfrak{H})$, then any $\alpha$-smooth
extension has a discrete spectrum.
\end{corollary}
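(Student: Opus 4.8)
The plan is to derive the corollary from Proposition~\ref{P:DM_14} together with (the compact-ideal version of) Proposition~\ref{P:DM_13}. First, since $L_B$ is $\alpha$-smooth with $-1/4<\alpha<3/4$, Proposition~\ref{P:DM_14} gives $(A\oplus A)^{s}(B-\xi)^{-1}\in[\cH]$ for $\xi\in\rho(B)$, where $s:=\alpha+1/4\in(0,1)$. Because $A=A^*\ge I$ and $A^{-1}\in\sS_\infty(\mathfrak H)$, the spectral theorem yields $A^{-s}=(A^{-1})^{s}\in\sS_\infty(\mathfrak H)$, hence $(A\oplus A)^{-s}\in\sS_\infty(\cH)$; writing
\[
(B-\xi)^{-1}=(A\oplus A)^{-s}\cdot\bigl[(A\oplus A)^{s}(B-\xi)^{-1}\bigr]
\]
as a product of a compact operator and a bounded one, one obtains $(B-\xi)^{-1}\in\sS_\infty(\cH)$; in particular $B$ has a discrete spectrum.

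Next I would transfer this compactness to $L_B$. By the equivalence $A^{-1}\in\sS_{p-1/2}\Leftrightarrow\wt L_D^{-1}\subset\sS_p$ from~\cite{GG72}, read off with $p=\infty$, the Dirichlet extension $L_0=\wt L_D$ satisfies $(L_0-z_0)^{-1}\in\sS_\infty(H)$ for $z_0\in\rho(L_0)$. Choose $z_0\in\rho(L_B)\cap\rho(L_0)$; this intersection is nonempty since $\rho(L_B)$ is open and nonempty and, $L_0$ having a discrete spectrum, $\dC\setminus\sigma(L_0)$ is dense. As in the proof of Proposition~\ref{P:DM_13}, the difference $D:=(L_B-z_0)^{-1}-(L_0-z_0)^{-1}$ has range in $\sN_{z_0}$ and, on passing to the adjoint, kernel containing $\sN_{\bar z_0}^{\perp}$; hence $D\in\sS_\infty(H)$ if and only if its restriction $D|_{\sN_{\bar z_0}}\colon\sN_{\bar z_0}\to\sN_{z_0}$ does. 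By the analogue of~\eqref{DM_29} (equivalently, by the argument of Proposition~\ref{P:DM_9}) this restriction equals $(B-M(z_0))^{-1}$ up to composition with fixed topological isomorphisms, so $D\in\sS_\infty(H)\Leftrightarrow(B-M(z_0))^{-1}\in\sS_\infty(\cH)$. Finally, identities~\eqref{DM_31}--\eqref{DM_32} express $(B-M(z_0))^{-1}$ through $(B-\xi)^{-1}$ and bounded operators, so the compactness of $(B-\xi)^{-1}$ established above gives $(B-M(z_0))^{-1}\in\sS_\infty(\cH)$, whence $D\in\sS_\infty(H)$ and $(L_B-z_0)^{-1}=(L_0-z_0)^{-1}+D\in\sS_\infty(H)$; thus $L_B$ has a discrete spectrum.

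The only delicate point is the passage from "$(B-\xi)^{-1}$ compact'' to "$(B-M(z_0))^{-1}$ compact'' via~\eqref{DM_31}, since $M(z_0)-\xi I$ need not be invertible: I would first replace $\xi$ by a point of $\rho(B)$ lying outside the compact set $\sigma(M(z_0))$ (possible because $\rho(B)$ is the complement of a discrete set once $B$ has a discrete spectrum), so that~\eqref{DM_31} can be solved for $(B-M(z_0))^{-1}$ as a bounded operator times $(B-\xi)^{-1}$ times a bounded operator. Alternatively, and more briefly, one may simply invoke Proposition~\ref{P:DM_13} verbatim with the two-sided ideal $\sS_\infty(H)$ in place of $\sS_p$ — its proof uses only Theorem~\ref{T:DM_2} and the $\sS_p$-characterization of the Dirichlet resolvent, both of which remain valid for $\sS_\infty$ under the hypothesis $A^{-1}\in\sS_\infty(\mathfrak H)$.
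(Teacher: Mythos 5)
Your argument is correct and is exactly the intended one: Proposition~\ref{P:DM_14} together with the compactness of $(A\oplus A)^{-(\alpha+1/4)}$ gives $(B-\xi)^{-1}\in\sS_\infty(\cH)$, and Proposition~\ref{P:DM_13}, read with the two-sided ideal $\sS_\infty$ (its proof via Theorem~\ref{T:DM_2} and the Dirichlet-resolvent criterion goes through verbatim), transfers this to $(L_B-z)^{-1}$. Two small remarks: the standing assumption $\rho(L_B)\neq\emptyset$ is implicit here exactly as it is in the paper, and your worry about inverting $M(z_0)-\xi$ is unnecessary, since the factorization $(B-M(z))^{-1}=(B-\xi)^{-1}\bigl[I+(\xi-M(z))(B-\xi)^{-1}\bigr]^{-1}$, whose second factor is bounded by \eqref{DM_31}, already exhibits $(B-M(z))^{-1}$ as a compact operator times a bounded one.
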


\begin{proposition}\label{P:DM_15}
Let $B \in \cC(\mathcal{H})$, and let ${L}_B \supset L$. Then ${L}_B$ is
$1/4$-smooth if and only if ${L}_B$ is the extension with the finite Dirichlet
integral, i.e.,
\eq{DM_69}
{D[y] = \int_{0}^{b}[\|y'(t)\|^2+\|A^{1/2}y(t)\|^2+(q(t)y(t),y(t))]dt<
\infty.}
\end{proposition}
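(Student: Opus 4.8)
The plan is to reduce both properties to one and the same condition on the boundary data in the scale $\{\mathfrak{H}_\alpha\}$ and then appeal to Proposition~\ref{P:DM_14}. By that proposition with $\alpha=1/4$, the extension $L_B$ is $1/4$-smooth if and only if $(A\oplus A)^{1/2}(B-\xi)^{-1}\in[\mathcal{H}]$, so it suffices to show that this operator condition is equivalent to $D[y]<\infty$ for every $y\in\dom(L_B)$. First I would note that the zero-order term $q(t)y$ enters $D[y]$ only through the summand $\int_0^b(q(t)y,y)\,dt$, which is finite for each $y\in\dom(L^*)$, so one may assume $q\equiv 0$ and use the explicit representation \eqref{DM_67}, \eqref{DM_68}. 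Fixing $a>0$ large enough that $-a\in\rho(L_0)$ and setting $z=-a$, I would decompose an arbitrary $y\in\dom(L_B)$ according to the von Neumann-type splitting $\dom(L^*)=\dom(L_0)\dotplus\mathfrak{N}_z$, say $y=y_0+f_z$ with $y_0\in\dom(L_0)=\ker\Gamma_0$ and $f_z\in\mathfrak{N}_z$. Since $L_0$ is the Dirichlet (Friedrichs) extension, functions in $\dom(L_0)$ stay continuous in $\mathfrak{H}_{3/4}$ up to the endpoints (see \cite{GG72}), so $D[y_0]<\infty$; as $D[\cdot]$ is a nonnegative quadratic form, this yields $D[y]<\infty\iff D[f_z]<\infty$, and moreover $\Gamma_0 y=\Gamma_0 f_z$.

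The core of the argument is the spectral computation $D[f_z]<\infty\iff\Gamma_0 f_z\in\dom((A\oplus A)^{1/2})$. Writing $f_z=\omega_1(\cdot,z)\varphi_1+\omega_2(\cdot,z)\varphi_2$ and differentiating \eqref{DM_68}, one expresses $\int_0^b\|f_z'(t)\|^2\,dt$ and $\int_0^b\|A^{1/2}f_z(t)\|^2\,dt$ through the spectral measure of $A$: after integrating in $t$, the symbol attached to the $\omega_1\varphi_1$-part is comparable with $\lambda$ for large $\lambda\in\sigma(A)$, the symbol attached to the $\omega_2\varphi_2$-part is again comparable with $\lambda$, and the cross terms together with all contributions of the smoothing factor $e^{-b\sqrt{A}}$ stay bounded. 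Hence $D[f_z]$ is two-sided comparable with $\|A^{1/2}\varphi_1\|^2+\|A^{1/2}\varphi_2\|^2$, i.e. $D[f_z]<\infty\iff\varphi_1,\varphi_2\in\mathfrak{H}_{1/2}$. Since $f_z(0)=A^{1/4}\varphi_1$, one has $f_z(0)\in\mathfrak{H}_{1/4}\iff\varphi_1\in\mathfrak{H}_{1/2}$; an analogous computation at $t=b$, where $\omega_1(b,z)$ is smoothing and $A^{1/4}\omega_2(b,z)$ acts as $A^{1/2}$ times a bounded boundedly invertible operator commuting with $A$, gives $f_z(b)\in\mathfrak{H}_{1/4}\iff\varphi_2\in\mathfrak{H}_{1/2}$. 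Thus $D[f_z]<\infty$ if and only if $\Gamma_0 f_z=\{A^{-1/4}f_z(0),-A^{-1/4}f_z(b)\}\in\mathfrak{H}_{1/2}\oplus\mathfrak{H}_{1/2}=\dom((A\oplus A)^{1/2})$.

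To conclude, observe that $\Gamma_0(\dom(L_B))=\dom(B)=\ran((B-\xi)^{-1})$: indeed $y\in\dom(L_B)$ forces $\Gamma_1 y=B\Gamma_0 y\in\mathcal{H}$, hence $\Gamma_0 y\in\dom(B)$, while for any $\psi\in\dom(B)$ surjectivity of $\Gamma=(\Gamma_1,\Gamma_0)^T$ gives $y\in\dom(A^*)$ with $\Gamma_0 y=\psi$, $\Gamma_1 y=B\psi$, so $y\in\dom(L_B)$; and $\dom(B)=\ran((B-\xi)^{-1})$ because $\xi\in\rho(B)$. Combining this with the previous two paragraphs, $D[y]<\infty$ for all $y\in\dom(L_B)$ if and only if $\ran((B-\xi)^{-1})\subseteq\dom((A\oplus A)^{1/2})$; since $(A\oplus A)^{1/2}(B-\xi)^{-1}$ is a closed operator (product of a closed operator with a bounded one), being defined on all of $\mathcal{H}$ makes it bounded by the closed graph theorem, so this is equivalent to $(A\oplus A)^{1/2}(B-\xi)^{-1}\in[\mathcal{H}]$, which by Proposition~\ref{P:DM_14} is $1/4$-smoothness of $L_B$.

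I expect the main obstacle to be the two-sided spectral estimate of the second paragraph: one must track the interplay of $\sinh t\sqrt{A-z}$, the weights $A^{1/4}$, $A^{3/4}$ and the regularizer $e^{-b\sqrt{A}}$ uniformly in the spectral parameter $\lambda$, and in particular establish the lower bound for $D[f_z]$, which rules out cancellation between its two summands and is the reason $1/4$-smoothness is a genuine restriction. The reduction to $q\equiv0$, the splitting $y=y_0+f_z$, the identification of $f_z(0),f_z(b)$ with $\varphi_1,\varphi_2$, and the closed-graph step are all routine.
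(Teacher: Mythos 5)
The paper states Proposition~\ref{P:DM_15} without proof (as it does Propositions~\ref{P:DM_13} and~\ref{P:DM_14}), so there is no official argument to measure yours against; judged on its own merits your proof is correct and essentially complete. The skeleton — reduce both properties to the single condition $\Gamma_0(\dom(L_B))=\dom(B)\subset\dom((A\oplus A)^{1/2})$, via Proposition~\ref{P:DM_14} with $\alpha=1/4$ plus the closed graph theorem on one side, and via the splitting $y=y_0+f_{-a}$ with the spectral evaluation of $D[f_{-a}]$ on the other — is sound, and the symbol computations you sketch do close: the diagonal contributions of $\omega_1\varphi_1$ and $\omega_2\varphi_2$ to $D$ behave like $\lambda$ on $\sigma(A)$, the cross terms are bounded forms because $\int_0^b e^{-ts}\sinh(ts)\,dt\leqslant b/2$ and the factor $e^{-b\sqrt{A}}$ absorbs the remaining powers of $A$, and $A^{1/4}\omega_2(b,-a)$ has symbol comparable to $\lambda^{1/2}$, so that $D[f_{-a}]<\infty$, the endpoint conditions $f_{-a}(0),f_{-a}(b)\in\mathfrak{H}_{1/4}$, and $\varphi_1,\varphi_2\in\mathfrak{H}_{1/2}$ are all equivalent. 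Two steps should be made explicit rather than left implicit. First, ``assume $q\equiv0$'' needs the remark that replacing $q$ by $0$ is a bounded perturbation, so $\dom(L^*)$, the boundary maps \eqref{DM_Boun_Map}, hence the set $\dom(L_B)$ and the finiteness of $D[y]$ are all unchanged; only then are you entitled to decompose $y$ against the defect elements \eqref{DM_67}--\eqref{DM_68} of the free operator. Second, your justification of $D[y_0]<\infty$ for $y_0\in\dom(L_0)$ via continuity in $\mathfrak{H}_{3/4}$ controls $\int\|A^{1/2}y_0\|^2\,dt$ but not $\int\|y_0'\|^2\,dt$; the clean reason is that the Dirichlet extension $L_0$ is the Friedrichs extension, whose domain lies in the form domain, i.e.\ in the set of finite Dirichlet integral. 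With those two sentences added the argument stands; a shorter route, in the spirit of the paper's own proof of Proposition~\ref{P:DM_16}, would be to start from the Lagrange identity $(L_By,y)=(Y',Y)+D[y]$ and note that the boundary pairing reduces to $-\|(A\oplus A)^{1/2}\Gamma_0y\|^2+(B\Gamma_0y,\Gamma_0y)$, which is finite exactly on $\dom((A\oplus A)^{1/2})$ — but that identity itself must first be justified for non-smooth boundary data, which is essentially your spectral computation in disguise.
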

\begin{definition}\label{D:DM_11}
Let $B  \in \cC(\mathcal{H})$. An extension ${L}_B$ is said to be
${D}$-extension if $$({L}_By,y)={D}[y] \quad \mbox{ for any }\quad  y \in \dom({L}_B).$$
\end{definition}
      \begin{proposition}\label{P:DM_16}
Suppose that $B  \in \cC(\mathcal{H})$ and  $\wh{A} = A \oplus A$.
For the extension ${L}_B$ to be the ${D}$-extension, it is necessary and sufficient that
${L}_B$ be $1/4$-smooth and
\[
(Bg,g)=\|\wh A^{1/2}g\|^2 \quad
 \mbox{ for any }\quad g \in \dom(B).
 \]
        \end{proposition}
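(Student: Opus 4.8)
The plan is to derive a single quadratic‑form version of Green's identity linking $(L^*y,y)$, the Dirichlet integral $D[y]$ and the boundary data, and then read off both implications; the role of $1/4$‑smoothness is precisely to guarantee the regularity under which that identity is valid (Proposition~\ref{P:DM_15}).

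First I would prove that for every $y\in\dom(L^*)$ with $D[y]<\infty$ (equivalently, with $y(0),y(b)\in\mathfrak H_{1/4}$) one has
\[
(L^*y,y)=D[y]+(\Gamma_1 y,\Gamma_0 y)_{\cH}-\|\wh A^{1/2}\Gamma_0 y\|_{\cH}^2 .
\]
To see this I integrate by parts in $(L^*y,y)=\int_0^b\big((-y''+Ay+q(t)y)(t),\,y(t)\big)\,dt$. Under the hypothesis $D[y]<\infty$ one has $y'\in L_2([0,b],\mathfrak H)$, $A^{1/2}y\in L_2([0,b],\mathfrak H)$, the boundary values $y(0),y(b)$ lie in $\mathfrak H_{1/4}$ and $y'(0),y'(b)$ in $\mathfrak H_{-1/4}$, and $(y'(0),y(0))$, $(y'(b),y(b))$ are the corresponding dualities (see~\cite{Gor71,GG72}); hence the $q$‑terms cancel against each other and
\[
(L^*y,y)=\int_0^b\big(\|y'\|^2+\|A^{1/2}y\|^2+(q(t)y,y)\big)\,dt+(y'(0),y(0))-(y'(b),y(b))=D[y]+(y'(0),y(0))-(y'(b),y(b)).
\]
On the other hand, substituting the explicit expressions \eqref{DM_Boun_Map} for $\Gamma_0,\Gamma_1$ (and their analogues at the endpoint $b$) and using $(A^{1/2}h,h)=\|A^{1/4}h\|^2$ for $h\in\mathfrak H_{1/4}$, a direct computation gives
\[
(\Gamma_1 y,\Gamma_0 y)_{\cH}=(y'(0),y(0))-(y'(b),y(b))+\|A^{1/4}y(0)\|^2+\|A^{1/4}y(b)\|^2 ,
\]
and the last two terms equal $\|\wh A^{1/2}\Gamma_0 y\|_{\cH}^2$ because $\Gamma_0 y=\{A^{-1/4}y(0),\,-A^{-1/4}y(b)\}^T$. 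Combining the two displays yields the claimed identity.

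Next I would record the elementary fact $\Gamma_0(\dom({L}_B))=\dom(B)$: given $g\in\dom(B)$, surjectivity of $\Gamma=(\Gamma_1,\Gamma_0)^T$ (item (ii) of Definition~\ref{D:DM_1}) furnishes $y\in\dom(L^*)$ with $\Gamma_0 y=g$ and $\Gamma_1 y=Bg$, so that $y\in\ker(\Gamma_1-B\Gamma_0)=\dom({L}_B)$, while the reverse inclusion is immediate from $\Gamma_1 y=B\Gamma_0 y$. Now both directions follow. If ${L}_B$ is a $D$‑extension, then $D[y]=({L}_By,y)<\infty$ for all $y\in\dom({L}_B)$, hence ${L}_B$ is $1/4$‑smooth by Proposition~\ref{P:DM_15}; applying the identity with $\Gamma_1 y=B\Gamma_0 y$ and $({L}_By,y)=D[y]$ forces $(B\Gamma_0 y,\Gamma_0 y)_{\cH}=\|\wh A^{1/2}\Gamma_0 y\|^2$ for all $y\in\dom({L}_B)$, i.e.\ $(Bg,g)=\|\wh A^{1/2}g\|^2$ for every $g\in\dom(B)$. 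Conversely, if ${L}_B$ is $1/4$‑smooth and $(Bg,g)=\|\wh A^{1/2}g\|^2$ for all $g\in\dom(B)$, then $D[y]<\infty$ on $\dom({L}_B)$ by Proposition~\ref{P:DM_15}, the identity applies with $g=\Gamma_0 y\in\dom(B)$, the last two terms cancel, and $({L}_By,y)=D[y]$; thus ${L}_B$ is a $D$‑extension.

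The only genuinely delicate step is the first one: rigorously justifying the integration by parts at exactly the regularity available for $y\in\dom(L^*)$ with finite Dirichlet integral — namely that $y(0),y(b)\in\mathfrak H_{1/4}$, $y'(0),y'(b)\in\mathfrak H_{-1/4}$, and that the boundary terms produced by integration by parts are precisely the $\mathfrak H_{\mp1/4}$‑dualities entering $(\Gamma_1 y,\Gamma_0 y)_{\cH}$ — and checking that this passage is consistent with Green's identity \eqref{DM_1}. For this I would lean on Gorbachuk's construction of the boundary mappings and the associated trace theorems in~\cite{Gor71,GG72}; once the identity is in place, the rest is bookkeeping.
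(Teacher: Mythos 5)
Your proof is correct and follows essentially the same route as the paper's: both rest on the integration-by-parts identity $({L}_By,y)=D[y]+(y'(0),y(0))-(y'(b),y(b))$ from~\cite{Gor71}, rewrite the boundary pairing through $\Gamma_0,\Gamma_1$ and the condition $\Gamma_1y=B\Gamma_0y$ so that it becomes $(B\Gamma_0y,\Gamma_0y)-(\wh{A}\Gamma_0y,\Gamma_0y)$, and invoke Proposition~\ref{P:DM_15} to identify $1/4$-smoothness with finiteness of the Dirichlet integral. Your only additions — stating the identity once for all $y$ with $D[y]<\infty$ and noting explicitly that $\Gamma_0(\dom({L}_B))=\dom(B)$ — are harmless refinements of the same argument.
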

\begin{proof}
If ${L}_B$ is the ${D}$-extension, then Proposition~\ref{P:DM_15} implies that ${L}_B$
is $1/4$-smooth and $\dom(B) \subset \dom(\wh A^{1/2})$. For $y\in \dom({L}_B)$ we put
\[
Y=\left(%
\begin{array}{c}
  y(0) \\
  -y(b) \\
\end{array}%
\right),\quad
Y'=\left(%
\begin{array}{c}
  y'(0) \\
  y'(b) \\
\end{array}%
\right).
\]
Then
$$Y=\wh{A}^{1/4}\Gamma_0y \in H_{1/4},\quad
Y' = [-\wh{A}^{3/4}+\wh{A}^{-1/4}B]\Gamma_0y \in H_{-1/4}.$$
By integrating by parts one represents the expression $({L}_By,y)$ as
\[
({L}_By,y) = (Y', Y)+D(y).
\]
(see ~\cite{Gor71}). This yields the equality $(Y',Y)=0$ that, in view of
the above representations for $Y$ and $Y'$, takes the form
\[
(B\Gamma_0y, \Gamma_0y)=(\wh{A}\Gamma_0y, \Gamma_0y).
\]
By putting $g=\Gamma_0y$ and by taking into account that $g \in \dom(B)\subset
\dom(\wt{A}^{1/2})$ we obtain $(Bg,g) = (\wt{A}^{1/2}g,
\wt{A}^{1/2}g)$. The above reasoning is convertible. Thus, proposition is proved.
\end{proof}
  \begin{corollary}\label{C:DM_13}
Suppose that $B  \in \cC(\mathcal{H})$ and the operator ${L}_B$ is the ${D}$-extension. Then
${L}_B$ is symmetric.
\end{corollary}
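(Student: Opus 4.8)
The plan is to deduce the symmetry of $B$ from Proposition~\ref{P:DM_16} and then transfer it to $L_B$ via the abstract Green identity~\eqref{DM_1}.

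First, since $L_B$ is a $D$-extension, Proposition~\ref{P:DM_16} applies and gives that $L_B$ is $1/4$-smooth and that $(Bg,g)_{\cH}=\|\wh A^{1/2}g\|^2$ for every $g\in\dom(B)$; in particular $\dom(B)\subset\dom(\wh A^{1/2})$, so the right-hand side is meaningful, and $(Bg,g)_{\cH}$ is a nonnegative real number for every $g$ in the (dense) domain of $B$. A densely defined operator in a complex Hilbert space whose quadratic form is real-valued is symmetric: applying the polarization identity to $(Bx,x)_{\cH}$ (that is, evaluating on $g\pm h$ and $g\pm ih$ and taking the appropriate linear combinations) yields $(Bg,h)_{\cH}=(g,Bh)_{\cH}$ for all $g,h\in\dom(B)$, i.e. $B\subset B^*$. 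Note that $B$ need not be self-adjoint; only its symmetry will be used.

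Next I would use the boundary {triplet} $\{\cH,\Gamma_0,\Gamma_1\}$ in which $\dom(L_B)=\ker(\Gamma_1-B\Gamma_0)$. For arbitrary $f,g\in\dom(L_B)$ one has $\Gamma_0f,\Gamma_0g\in\dom(B)$ and $\Gamma_1f=B\Gamma_0f$, $\Gamma_1g=B\Gamma_0g$, so the abstract Green identity~\eqref{DM_1} gives
\[
(L_Bf,g)-(f,L_Bg)=(\Gamma_1f,\Gamma_0g)_{\cH}-(\Gamma_0f,\Gamma_1g)_{\cH}
=(B\Gamma_0f,\Gamma_0g)_{\cH}-(\Gamma_0f,B\Gamma_0g)_{\cH}=0,
\]
the last equality by the symmetry of $B$ established above. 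Since $\dom(L_B)\supset\dom(L)$ is dense in $L_2([0,b],\mathfrak{H})$, this shows $L_B\subset L_B^*$, i.e. $L_B$ is symmetric.

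Every step is essentially a direct quotation of earlier material (Proposition~\ref{P:DM_16}, the polarization identity, and the Green identity~\eqref{DM_1}), so there is no serious obstacle; the only point worth care is to invoke \emph{symmetry} of $B$ rather than self-adjointness. Alternatively, one can bypass $B$ altogether: by definition $(L_By,y)=D[y]$ for $y\in\dom(L_B)$, the Dirichlet integral $D[y]$ in~\eqref{DM_69} is real-valued because $A=A^*$ and $q(t)=q(t)^*$, and the same polarization argument applied directly to $L_B$ (whose domain is dense) yields symmetry.
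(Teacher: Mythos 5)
Your argument is correct: the paper states this corollary without proof, and your derivation (reality of the quadratic form $(Bg,g)_{\cH}=\|\wh A^{1/2}g\|^2$ from Proposition~\ref{P:DM_16}, polarization to get symmetry of $B$, then the Green identity~\eqref{DM_1} to transfer symmetry to $L_B$) is exactly the intended route, with the direct polarization of the real-valued Dirichlet form $D[y]$ being an equally valid shortcut. You are also right to flag that only symmetry, not self-adjointness, of $B$ is available and needed.
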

Propositions~\ref{P:DM_14}, \ref{P:DM_16} and Corollaries~\ref{C:DM_12}, \ref{C:DM_13}
generalize the results of~\cite{GG72,GG72A} on smoothness of
dissipative extensions to the case of almost solvable ones.

Let $\wt{L}_1, \wt{L}_2$ be the extensions of the operator $L$
generated by the conditions $Y'=S_jY$, $j =1,2$. Here $S_j$ are strictly
$\wh{A}^{1/2}$ - bounded operators\footnote[1]{I. e., $\dom(\wh{A}^{1/2})\subset \dom(S_j)$ and $\|S_jf\| \le a\|\wh{A}^{1/2}f\|+b\|f\|$, $a<1$, for any $f \in \dom(\wh{A}^{1/2})$.} in
$\mathcal{H}$. Then (see~\cite{GG72}) we conclude that
   \begin{equation}\label{8.4}
\wt{L}_j={L}_{B_j},\quad\mbox{ where  }\quad
B_j=\wh{A}^{1/4}(\wh{A}^{1/2}+S_j)\wh{A}^{1/4}, \ \ \ j = 1,2.
  \end{equation}
    \begin{proposition}\label{P:DM_17}
For the $\sS_p$ -- resolvent comparability of the operators ${L}_{B_1}$ and
${L}_{B_2}$ in $L_2([0,b], \mathcal{H})$ it is sufficient that
$(S_1-S_2)\wh{A}^{-1/2} \in \sS_{p_1}$ and $\wh{A}^{-1} \in \sS_{p_2}$,
where $p=p_1p_2(p_1+p_2)^{-1} \ge 1$.
       \end{proposition}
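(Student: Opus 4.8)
The plan is to reduce the claim, via Theorem~\ref{T:DM_2}, to a Schatten estimate for the difference of the resolvents of the (unbounded) operators $B_1,B_2$, and then to exploit the explicit form $B_j=\wh A^{1/4}(\wh A^{1/2}+S_j)\wh A^{1/4}$ recorded in~\eqref{8.4}. First I would fix $\zeta=-\lambda$ with $\lambda>0$ large: since each $S_j$ is strictly $\wh A^{1/2}$-bounded, the operator $E_j(\zeta):=\wh A^{1/2}+S_j-\zeta\wh A^{-1/2}$ is $m$-sectorial --- a perturbation of relative bound $<1$ of the selfadjoint operator $\wh A^{1/2}\ge I$, plus a bounded nonnegative term --- and boundedly invertible once $-\zeta$ is large; correspondingly $B_j$ is closed with $\dom(B_j)=\dom(\wh A^{3/4})$, and $\rho(B_1)\cap\rho(B_2)\cap\rho(L_{B_1})\cap\rho(L_{B_2})$ contains a ray $(-\infty,-\lambda_0)$. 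Then Theorem~\ref{T:DM_2} applies (with $z=\zeta$) and reduces the $\sS_p$-resolvent comparability of $L_{B_1}$ and $L_{B_2}$ to the inclusion $(B_1-\zeta)^{-1}-(B_2-\zeta)^{-1}\in\sS_p(\cH)$.

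To verify the latter, note that~\eqref{8.4} gives $B_j-\zeta=\wh A^{1/4}E_j(\zeta)\wh A^{1/4}$, hence $(B_j-\zeta)^{-1}=\wh A^{-1/4}E_j(\zeta)^{-1}\wh A^{-1/4}$, while $E_2(\zeta)-E_1(\zeta)=S_2-S_1$; the resolvent identity therefore yields
\[
(B_1-\zeta)^{-1}-(B_2-\zeta)^{-1}=\wh A^{-1/4}\,E_1(\zeta)^{-1}(S_2-S_1)E_2(\zeta)^{-1}\,\wh A^{-1/4}.
\]
Writing $E_j(\zeta)^{-1}=\wh A^{-1/2}G_j$, where $G_j:=\wh A^{1/2}E_j(\zeta)^{-1}\in[\cH]$ is bounded by the closed graph theorem (it maps $\cH$ into $\dom(\wh A^{1/2})$), and regrouping the middle block as $G_1\bigl[(S_2-S_1)\wh A^{-1/2}\bigr]G_2$, one obtains
\[
(B_1-\zeta)^{-1}-(B_2-\zeta)^{-1}=\wh A^{-3/4}\,G_1\bigl[(S_2-S_1)\wh A^{-1/2}\bigr]G_2\,\wh A^{-1/4}.
\]
Now $\wh A^{-1}\in\sS_{p_2}$ gives $\wh A^{-3/4}\in\sS_{4p_2/3}$ and $\wh A^{-1/4}\in\sS_{4p_2}$, the hypothesis gives $(S_2-S_1)\wh A^{-1/2}=-(S_1-S_2)\wh A^{-1/2}\in\sS_{p_1}$, and $G_1,G_2\in[\cH]$; by Hölder's inequality for Schatten ideals the product on the right lies in $\sS_r$ with $\frac1r=\frac{3}{4p_2}+\frac1{p_1}+\frac1{4p_2}=\frac1{p_1}+\frac1{p_2}=\frac1p$, i.e. in $\sS_p(\cH)$, which by the reduction above proves the proposition.

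The Schatten arithmetic and the use of Theorem~\ref{T:DM_2} are routine; the point requiring care is the operator-theoretic bookkeeping --- closedness and $m$-sectoriality of $B_j$ (equivalently $E_j(\zeta)$), the common ray of regular points, the identity $(B_j-\zeta)^{-1}=\wh A^{-1/4}E_j(\zeta)^{-1}\wh A^{-1/4}$ on the correct domains, and the boundedness of $G_j$. These are exactly the facts behind the smoothness of the extensions $L_{B_j}$ (cf. Proposition~\ref{P:DM_15} and~\cite{GG72}), and I expect this essentially technical verification to be the main obstacle; once it is available, the factorization and the Hölder estimate are immediate.
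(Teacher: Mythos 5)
Your proposal is correct and follows essentially the same route as the paper: reduce via Theorem~\ref{T:DM_2} to the resolvent difference of $B_1,B_2$, use \eqref{8.4} to factor that difference as $\wh A^{-3/4}\cdot(\text{bounded})\cdot(S_2-S_1)\wh A^{-1/2}\cdot(\text{bounded})\cdot\wh A^{-1/4}$, and conclude by H\"older in the Schatten ideals. The only (immaterial) differences are that the paper factors $B_j-z=\wh A^{1/4}(I+S_j\wh A^{-1/2}-z\wh A^{-1})\wh A^{3/4}$ and secures invertibility of the middle factor by a Neumann series after replacing $A$ by $A+\eta I$, whereas you factor symmetrically and invoke relative boundedness on a negative ray.
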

\begin{proof}
Due to Theorem~\ref{T:DM_2}, the  $\sS_p$ -- resolvent comparability of the operators
$\wt{L}_{B_1}, \wt{L}_{B_2}$ in $L_2([0,b], \mathcal{H})$
is equivalent to one of the operators
$B_1$ and $B_2$. Let $z \in \rho(B_1)\cap\rho(B_2)$. Without loss of generality
we may assume that $\|S_j\wh{A}^{-1/2} - z\wh{A}^{-1}\|<1$, otherwise
it suffices to take the operator $A+\eta I$, $\eta > 0$, instead of $A$
in~$\eqref{DM_67}$. Then the operators $(I + S_j\wh{A}^{-1/2} - z\wh{A}^{-1})$
have bounded inverse, and the following equality holds:
\eq{DM_70}
{(B_1-z)^{-1} - (B_2-z)^{-1}=}
$$
= \wh{A}^{-3/4}(I+S_1\wh{A}^{-1/2}-z
\wh{A}^{-1})^{-1}(S_2-S_1)\wh{A}^{-1/2}(I+S_2\wh{A}^{-1/2}-z\wh{A}^{-1})\wh{A}^{-1/4}.
$$
Since $(S_1-S_2)\wh{A}^{-1/2} \in \sS_{p_1}$ and $\wh{A}^{-1} \in \sS_{p_2}$, we have
$(B_1-z)^{-1} - (B_2-z)^{-1}\in \frak S_p.$
\end{proof}

\begin{corollary}\label{C:DM_14}
If  $S_j$ and $S_j^*$ are strictly $\wh{A}^{1/2}$-bounded operators in $\mathcal{H},\ j
=1, 2$, then for the $\sS_p$ -- resolvent comparability of the extensions $L_1$ and $L_2$
it is sufficient to satisfy any of the following conditions:
\begin{itemize}
\item[\;\;\rm (a)] $\wh{A}^{-1/2}(S_1-S_2)\wh{A}^{-1/2} \in \sS_p$ ;
\item[\;\;\rm (b)] $\wh{A}^{-1/2}(S_1-S_2)\wh{A}^{-1/2} \in \sS_{p_1}, \quad
\wh{A}^{-1/2} \in \sS_{p_2}, \quad p_1^{-1}+p_2^{-1}=p^{-1} \le 1$.
\end{itemize}
\end{corollary}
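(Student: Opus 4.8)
The plan is to sharpen the resolvent identity underlying the proof of Proposition~\ref{P:DM_17} so that a factor $\wh{A}^{-1/2}$ stands on \emph{both} sides of $S_1-S_2$, and then to read off membership of the resolvent difference in $\sS_p(\mathcal{H})$ from the hypotheses together with a Hölder estimate for Schatten ideals; by Theorem~\ref{T:DM_2} this membership is equivalent to the $\sS_p$–resolvent comparability of $L_{B_1}$ and $L_{B_2}$.

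First I would use \eqref{8.4} to write, for $z\in\rho(B_1)\cap\rho(B_2)$,
\[
B_j-z=\wh{A}^{1/4}C_j\wh{A}^{1/4},\qquad C_j:=\wh{A}^{1/2}+S_j-z\wh{A}^{-1/2},
\]
and to factor $C_j$ in the two ways $C_j=(I+S_j\wh{A}^{-1/2}-z\wh{A}^{-1})\wh{A}^{1/2}=\wh{A}^{1/2}(I+\wh{A}^{-1/2}S_j-z\wh{A}^{-1})$. Here the assumption that both $S_j$ and $S_j^{*}$ are strictly $\wh{A}^{1/2}$–bounded is essential: it guarantees that $S_j\wh{A}^{-1/2}$ and $\wh{A}^{-1/2}S_j=(S_j^{*}\wh{A}^{-1/2})^{*}$ both extend to bounded operators in $\mathcal{H}$, so that, choosing $z$ appropriately or replacing $A$ by $A+\eta I$ exactly as in the proof of Proposition~\ref{P:DM_17} (now using also the bound on $\wh{A}^{-1/2}S_j$), the two inner factors of $C_j$ and their inverses are bounded. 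Then $C_1^{-1}\wh{A}^{1/2}=(I+\wh{A}^{-1/2}S_1-z\wh{A}^{-1})^{-1}$ and $\wh{A}^{1/2}C_2^{-1}=(I+S_2\wh{A}^{-1/2}-z\wh{A}^{-1})^{-1}$, and inserting $S_2-S_1=\wh{A}^{1/2}\bigl[\wh{A}^{-1/2}(S_2-S_1)\wh{A}^{-1/2}\bigr]\wh{A}^{1/2}$ into the second resolvent identity $(B_1-z)^{-1}-(B_2-z)^{-1}=\wh{A}^{-1/4}C_1^{-1}(S_2-S_1)C_2^{-1}\wh{A}^{-1/4}$ gives
\begin{multline*}
(B_1-z)^{-1}-(B_2-z)^{-1}=\wh{A}^{-1/4}(I+\wh{A}^{-1/2}S_1-z\wh{A}^{-1})^{-1}\\
\times\bigl[\wh{A}^{-1/2}(S_2-S_1)\wh{A}^{-1/2}\bigr](I+S_2\wh{A}^{-1/2}-z\wh{A}^{-1})^{-1}\wh{A}^{-1/4},
\end{multline*}
an identity of the same type as \eqref{DM_70} but now symmetric in the two copies of $\wh{A}^{-1/2}$; note $\wh{A}^{-1/4}\in[\mathcal{H}]$ since $\wh{A}\ge I$.

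It then remains to estimate this product. Under (a) the bracket $\wh{A}^{-1/2}(S_2-S_1)\wh{A}^{-1/2}$ lies in $\sS_p$, and since all the remaining factors are bounded, the ideal property gives $(B_1-z)^{-1}-(B_2-z)^{-1}\in\sS_p(\mathcal{H})$; Theorem~\ref{T:DM_2} then yields the $\sS_p$–resolvent comparability of $L_{B_1}$ and $L_{B_2}$. Under (b), $\wh{A}^{-1/2}\in\sS_{p_2}$ forces $\wh{A}^{-1/4}=(\wh{A}^{-1/2})^{1/2}\in\sS_{2p_2}$ (its singular numbers are the square roots of those of $\wh{A}^{-1/2}$), the bracket lies in $\sS_{p_1}$, and Hölder's inequality for Schatten classes applied to the five factors (two copies of $\wh{A}^{-1/4}$ in $\sS_{2p_2}$, the bracket in $\sS_{p_1}$, and two bounded factors) gives membership in $\sS_q$ with $q^{-1}=\tfrac1{2p_2}+\tfrac1{p_1}+\tfrac1{2p_2}=p_1^{-1}+p_2^{-1}=p^{-1}$; again Theorem~\ref{T:DM_2} finishes the proof. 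The hypothesis $p^{-1}\le1$ is used only to ensure that $\sS_p$ is the usual normed two-sided ideal.

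The only genuinely delicate step is the derivation of the symmetric resolvent identity: it requires careful handling of the two factorizations of $C_j$ and of the cancellations $\wh{A}^{-1/2}\wh{A}^{1/2}=I$ on the appropriate dense domains, and this is precisely the point at which the extra hypothesis on $S_j^{*}$ (absent in Proposition~\ref{P:DM_17}) is indispensable; once the identity is in hand, everything reduces to the same Hölder bookkeeping used for Proposition~\ref{P:DM_17}.
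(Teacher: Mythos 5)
Your proof is correct and follows essentially the same route as the paper: the paper's own argument consists precisely of rewriting \eqref{DM_70} in the symmetric form you derive (with $\wh{A}^{-1/2}$ standing on both sides of $S_1-S_2$, which is legitimate because the extra hypothesis on $S_j^{*}$ makes $(I+\wh{A}^{-1/2}S_j-z\wh{A}^{-1})^{-1}$ bounded) and then invoking the ideal property, respectively the Schatten--H\"older inequality, together with Theorem~\ref{T:DM_2}. Your write-up merely makes the factorizations of $C_j$ and the exponent bookkeeping in case (b) explicit.
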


\begin{proof}
Indeed, in this case the operators $(I+\wh{A}^{-1/2}S_j -
z\wh{A}^{-1})^{-1}$ are bounded, and equality $\eqref{DM_70}$
can be given in the symmetric form
$$(B_1-z)^{-1} - (B_2-z)^{-1}=\\$$
$$=\wh{A}^{-1/4}(I+S_1\wh{A}^{-1/2}-z \wh{A}^{-1})^{-1}\wh{A}^{-1/2}(S_2-S_1) \wh{A}^{-1/2}(I+S_2\wh{A}^{-1/2}-z\wh{A}^{-1})^{-1}\wh{A}^{-1/4}.$$
\end{proof}

\begin{remark}\label{R:DM_16}
Both Proposition~\ref{P:DM_17} and Corollary~\ref{C:DM_14}
in the self-adjoint case $\wt{L}_j =\wt{L}_j^*$ were proved in~\cite{GK82}.
\end{remark}

\begin{example}\label{Ex:DM_1}
Let $L= L_{\min}$ be minimal operator generated in $L_2([0, \pi] \times
(-\infty, +\infty))$ by the Laplace expression
\[
l[y](t,x) = -\Delta y = -(\partial^2/\partial t^2 +
\partial^2/\partial x^2)y.
\]
Let also $\wt{L}_{\sigma_j}$ $(j=1,2)$ be extensions generated by the conditions
$$
[\partial y/\partial t-\sigma_{j,0}(x)y]\mid_{t=0} =0, \quad [\partial y/\partial
t-\sigma_{j,\pi}(x)y]\mid_{t=0}=0,
$$
where $\sigma_{j,0}, \sigma_{j,\pi} \in L_{\infty}(-\infty, +\infty),\ j = 1,2.$

Writing the operator $L_0^*$ in the form $L_0 = -y''+Ay-y$, where $A =
-d^2/dx^2 + I \ge I$ in $L_2(-\infty, +\infty)$, one can apply the above
assertions to it. The Weyl function $M(\lambda)$ is of the form
$$
M(\lambda) = \wh{A}^{1/4}\begin{pmatrix}
\sqrt{A}-{\displaystyle\frac{\sqrt{A-I-\lambda}}{\tan \pi
\sqrt{A-I-\lambda}}}
\quad -{\displaystyle\frac{\sqrt{A-I-\lambda}}{\shh\pi \sqrt{A-I-\lambda}}}\\
-{\displaystyle\frac{\sqrt{A-I-\lambda}}{\shh\pi
\sqrt{A-I-\lambda}}} \quad \sqrt{A}-
{\displaystyle\frac{\sqrt{A-I-\lambda}}{\tan \pi \sqrt{A-I-\lambda}}}
\end{pmatrix} \wh{A}^{1/4}.
$$

Both Proposition~\ref{P:DM_11}, Corollaries~\ref{Cor:DM_10_Posit_Oper} and~\ref{C:DM_14}
imply the following statements:

(a) The Friedrichs extension $\wt{L}_F$ is given by the Dirichlet conditions: $y(0,x) = y(\pi, x)=0$, and the Krein extension is given by the boundary condition $\Gamma_1y = M(0)\Gamma_0y$
(note that the lower bound is $m(L_0)=1$). In view of the definition
of boundary triplet~\eqref{DM_Boun_Map}, the latter can be transformed as
$$
Y' + \begin{pmatrix}
\Lambda({\tan}\Lambda\pi)^{-1} \quad \Lambda(\shh\Lambda\pi)^{-1} \\
\Lambda(\shh\Lambda\pi)^{-1} \quad \Lambda({\tan}\Lambda\pi)^{-1}
 \end{pmatrix}Y = 0.
$$
Here $\Lambda = \sqrt {-d^2/dx^2}$  is the Calderon operator, i.e.,
the pseudo-differential operator in $L_2(-\infty, +\infty)$ with the symbol $|\xi|$.

(b) The negative part of the spectrum of the operator $\wt{L}_{\sigma_1}$ has dimension
$(0 \le )n \le \infty$ if and only if the same holds for the operator $B_1 - M(0)$, or,
equivalently, for the operator
$$
\wh{A}^{-1/4}(B_1 - M(0))\wh{A}^{-1/4} =\Sigma_1 +\begin{pmatrix}
\Lambda(\mbox{th }\Lambda\pi)^{-1} \quad \Lambda(\shh\Lambda\pi)^{-1} \\
\Lambda(\shh\Lambda\pi)^{-1} \quad \Lambda(\mbox{th }\Lambda\pi)^{-1}
\end{pmatrix},
$$
where $B_1$ is defined by~\eqref{8.4} and
$$
\Sigma_j := \begin{pmatrix} \sigma_{j,0}(x) \quad 0 \\ 0 \quad \sigma_{j,\pi}(x)
\end{pmatrix}\quad(j=1,2).
$$

(c) The operators $\wt{L}_{\sigma_1}$ and $\wt{L}_{\sigma_2}$
are resolvently comparable if $\Sigma_1(x) - \Sigma_2(x)\in L_1(\mathbb R)$, because
\[
\wh{A}^{-1/2}|\Sigma_1-\Sigma_2|^{1/2} \in \frak S_2(L_2(\mathbb R)\times L_2(\mathbb
R)).
\]
\end{example}

8.3. Further, let $L= L_{\min}$ be minimal operator generated in $L_2([0, \infty);H)$
by the differential expression
\[
l[y] = -y''+(A-I)y(t), \quad A \ge I, \quad t \in [0, \infty).
\]
As was shown in~\cite{Kut76}, the boundary triplet $\{\mathcal{H}, \Gamma_0, \Gamma_1\}$
for $L^*$ is of the form
  \begin{equation}\label{DM71A}
\mathcal{H} = H,\quad \Gamma_0y=A^{-1/4}y(0),\quad \Gamma_1y =
A^{1/4}(y'(0)+A^{1/2}y(0).
      \end{equation}
The defect subspace $\mathfrak{N}_z$ consists of vector functions
$\exp(-\sqrt{A+z}t)A^{1/4}f$, $f \in H$.
On this basis, it is easy to calculate the Weyl function:
\eq{DM_71}
{M(z) = A^{1/2}(A^{1/2}-(A-I-z)^{1/2}).}
With the specific form $\eqref{DM_71}$ of the Weyl function, by using the results
of sections~\ref{2} -- \ref{4} one can formulate different statements on the spectrum
of extensions in terms of the operator $A$ as well as on their resolvent
comparability, etc. We present just one of them.
  \begin{proposition}\label{P:DM_18}
Suppose that $S \in \mathcal{C}(H)$ and $\wt{L}_S= \wt{L}_S^*$ is the extension of
the operator $L_0$ generated by $y'(0) = Sy(0)$. For the negative part of the spectrum of the operator $\wt{L}_S$
\begin{itemize}
\item[\;\;\rm (a)] to consist of $0 \le n \le \infty$ points counting multiplicities;
\item[\;\;\rm (b)] to have the origin as a unique limit point,

it is necessary and sufficient that the same be valid for the operator
$S+(A-I)^{1/2}$.
\end{itemize}
\end{proposition}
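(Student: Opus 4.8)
The plan is to realize $\wt{L}_S$ as an almost solvable extension $L_B$ in the boundary triplet \eqref{DM71A}, to compute $M(0)$ and $B-M(0)$ from the explicit Weyl function \eqref{DM_71}, and then to apply Proposition~\ref{P:DM_11}; the one substantial point left over is a Sylvester-type inertia argument for an unbounded congruence.

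First I would rewrite the boundary condition. From \eqref{DM71A}, for $y\in\dom(L^{*})$,
\[
y(0)=A^{1/4}\Gamma_{0}y,\qquad y'(0)=A^{-1/4}\Gamma_{1}y-A^{3/4}\Gamma_{0}y,
\]
so that $y'(0)=Sy(0)$ becomes, after multiplication by $A^{1/4}$, the relation $\Gamma_{1}y=B\Gamma_{0}y$ with
\[
B=A+A^{1/4}SA^{1/4}\qquad(\mathcal H=H),
\]
and $B=B^{*}$ since $\wt{L}_S=\wt{L}_S^{*}$ forces $S=S^{*}$; thus $\wt{L}_S=L_B$ in the sense of \eqref{DM_2} with $B\in\mathcal C(\mathcal H)$. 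Next, $L_{0}=L^{*}|_{\ker\Gamma_{0}}$ is the Dirichlet extension $\wt{L}_D$ (as $A^{-1/4}$ is injective), it is nonnegative by the identity $(L_{0}y,y)=\int_{0}^{\infty}\bigl(\|y'\|^{2}+\|(A-I)^{1/2}y\|^{2}\bigr)\,dt$, and it coincides with the Friedrichs extension $\wt{L}_F$; using the explicit form \eqref{DM_71} of $M$ one also checks that $\wt{L}_D=\wt{L}_F$ is transversal to the Krein extension $\wt{L}_K$. Hence all hypotheses of Proposition~\ref{P:DM_11} hold, and since here $L_{0}=\wt{L}_F$ both its sufficiency and its necessity parts apply.

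Now, from \eqref{DM_71}, $M(0)=A^{1/2}\bigl(A^{1/2}-(A-I)^{1/2}\bigr)=A-A^{1/2}(A-I)^{1/2}$, a bounded positive operator. Since $A^{1/4}$ commutes with $(A-I)^{1/2}$,
\[
B-M(0)=A^{1/4}SA^{1/4}+A^{1/2}(A-I)^{1/2}=A^{1/4}\bigl(S+(A-I)^{1/2}\bigr)A^{1/4}.
\]
By Proposition~\ref{P:DM_11} the negative part of the spectrum of $\wt{L}_S=L_B$ has property (a) (resp. (b)) if and only if $B-M(0)$ does (for the value $n=0$ this is exactly Corollary~\ref{Cor:DM_10_Posit_Oper}: $\wt{L}_S\ge 0\iff B-M(0)\ge 0$). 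It therefore remains to transfer properties (a) and (b) from $B-M(0)=A^{1/4}TA^{1/4}$ to $T:=S+(A-I)^{1/2}$.

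This transfer is the main obstacle, since the congruence operator $A^{1/4}$ is unbounded (and $T$ need not be semibounded below). It is, however, positive and self-adjoint, commutes with $(A-I)^{1/2}$, and has bounded inverse $A^{-1/4}\in[H]$; combined with the variational description of $\dim E_{T}(-\infty,0)$ as the supremum of the dimensions of subspaces of $\dom(T)$ on which $T$ is negative definite, and with the inequalities $\|A^{1/4}h\|\ge\|h\|\ge\|A^{-1/4}h\|$, this lets one push such subspaces back and forth by $A^{\pm1/4}$, giving $\dim E_{T}(-\infty,0)=\dim E_{A^{1/4}TA^{1/4}}(-\infty,0)$; applying the same reasoning to $T+\varepsilon$ for each $\varepsilon>0$ (and using $A^{1/2}\ge I$) then shows that the negative spectrum of one of the two operators is discrete with sole accumulation point $0$ if and only if that of the other is. Carrying out the inertia transfer carefully, keeping track of operator domains while performing the unbounded congruence, is the only genuinely delicate step; everything else is bookkeeping with the explicit formulas \eqref{DM71A} and \eqref{DM_71}.
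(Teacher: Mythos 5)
Your proof is correct and follows essentially the same route as the paper: realize $\wt{L}_S=L_B$ with $B=A^{1/4}(A^{1/2}+S)A^{1/4}$ in the triplet \eqref{DM71A}, compute $B-M(0)=A^{1/4}[S+(A-I)^{1/2}]A^{1/4}$ from \eqref{DM_71}, and invoke Proposition~\ref{P:DM_11} together with the bounded invertibility of $A\ge I$. Your version is if anything more careful on the final inertia-transfer under the congruence by $A^{1/4}$, which the paper dispatches in one phrase, and you correctly use $M(0)=A^{1/2}(A^{1/2}-(A-I)^{1/2})$ where the paper's display \eqref{DM8.9} has a typographical $A^{1/4}$.
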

   \begin{proof}
In terms of boundary triplet \eqref{DM71A}, the extension $\wt L_S$ is given by
$\dom(\wt L_S) = \ker(\Gamma_1- B \Gamma_0)$, where $B =
A^{1/4}(A^{1/2}+S)A^{1/4}$. In this case, there holds the equivalence:
\[
\wt L_S = {\wt L}_S^* \Longleftrightarrow B=B^*.
\]
From \eqref{DM_71} it follows the equality
  \begin{equation}\label{DM8.9}
M(0) = A^{1/4}(A^{1/2} - (A-I)^{1/2}).
  \end{equation}
In turn, this implies the relation $B-M(0) = A^{1/4}[S+(A-I)^{1/2}]A^{1/4}$.
The proof now follows from both Proposition~\ref{P:DM_11} and
the bounded invertibility of the operator $A \ge I$.
   \end{proof}

         \begin{remark}\label{R:DM_17}
Also it is interesting to note that the Friedrichs extension $\wt{L}_{F}$, as usual,
corresponds to the Dirichlet problem $y(0) = 0$, and, by Corollary~\ref{Cor:DM_10_Posit_Oper},
the Krein extension $\wt{L}_{K}$ is given by condition \eq{DM_72}{y'(0)=-(A-I)^{1/2}y(0),} in which the condition $\dom(\wt{L}_{K})=\ker(\Gamma_1 - M(0)\Gamma_0)$ has been transformed. Further,
the operators $\wt{L}_{F}$ and $\wt{L}_{K}$ are transversal since
\[
M(0)=A^{1/2}(A^{1/2}+(A-I)^{1/2})^{-1} \in [H],
\]
and, by Corollary~\ref{Cor:DM_10_Posit_Oper}, the transversality
of extensions $\wt{L}_{F}$ and $\wt{L}_{K}$ is equivalent to
the condition $M(0) \in [H]$.
     \end{remark}

    \begin{example}\label{Ex:DM_2}
Let $L = L_{\min}$ be minimal operator generated in $L_2(\mathbb{R}_+ \times
\mathbb{R})$ by the Laplace expression
\[
-\Delta=-\left(\frac{\partial^2 }{\partial t^2} + \frac{\partial^2
}{\partial x^2} \right),
\]
and let $\wt{L}_j$ be its extension given by the boundary condition
$$
\left[ \frac{\partial y(t,x)}{\partial t} - \sigma_j(x)y(t,x) \right]\mid_{t=0}, \quad
\sigma_j(x) \in L_{\infty}(\mathbb{R}), \quad j =1,2.
$$
In this case, Proposition~\ref{P:DM_18} in which one should put $A=-\frac{d^2}{dx^2}+I$ in $L_2(\mathbb{R})$, suggests that:
\begin{enumerate}
    \item [(a)] the Friedrichs extension $\wt{L}_{F}$
corresponds to the Dirichlet problem $y(0,x)=0$, and the Krein extension $\wt{L}_{K}$
as follows from~\eqref{DM_72} is given by
$$
[\partial y(t,x)/\partial t + \Lambda y(t,x)]\mid_{t=0} =0,
$$
where $\Lambda = \sqrt {-d^2/dx^2}$ is the Calderon operator.

    \item [(b)] The extensions $\wt{L}_{F}$ and $\wt{L}_{K}$
are transversal since, in view of~\eqref{DM8.9},
\[
M(0) =
\left(I-\frac{d^2}{dx^2}\right)^{1/2}\left[\left(I-\frac{d^2}{dx^2}\right)^{1/2}-\Lambda\right]^{-1}
\in [H] =  [L_2(\mathbb{R})].
\]
   \item [(c)] The negative part of the spectrum of the extension
$\wt L_1$ consists of $(0 \le )n \le \infty$ points if and only if
the operator $S+\Lambda$ satisfies the same property in $L_2(\mathbb{R})$,
where $(Sf)(x)=\sigma(x)f(x)$.

\item [(d)] If $\sigma_1 - \sigma_2 \in L_1(-\infty, +\infty)$, then the extensions
$\wt{L}_1$ and $\wt{L}_2$ are resolvently comparable.
\end{enumerate}
\end{example}

8.4. Suppose that $L = L_{\min}$ is minimal operator generated in $L_2([0,b],
\mathfrak{H})$ by the differential expression of the hyperbolic type
\[
l[y] = y''+Ay+q(t)y,\quad A \ge I.
\]
Boundary triplets $\{\mathcal{H}, \Gamma_0, \Gamma_1\}$ for $L^*$
are constructed in~\cite{VG75} and have the form:
\[
 \mathcal{H} = \mathfrak{H} \oplus \mathfrak{H},\quad \Gamma_0y =
\left(%
\begin{array}{c}
  y_0 \\
  y_b \\
\end{array}%
\right), \quad
\Gamma_1y=\left(%
\begin{array}{c}
  y'_0 \\
  y'_b \\
\end{array}%
\right),\quad\mbox{where}
\]
\[
\begin{split}
y_0&=2^{-1/2}(-\sin(\sqrt{A}b)A^{-1/2}y'(b)+\cos(\sqrt{A}b)y(b)+y(0)),\\
y_b&=2^{-1/2}(-\cos(\sqrt{A}b)A^{-1/2}y'(b)-\sin(\sqrt{A}b)y(b)+A^{-1/2}y'(0)),\\
y'_0&=2^{-1/2}(-\cos(\sqrt{A}b)y'(b)+A^{-1/2}\sin(\sqrt{A}b)y(b)-y'0)),\\
y'_b&=2^{-1/2}(-\sin(\sqrt{A}b)y'(b)+A^{-1/2}\cos(\sqrt{A}b)y(b)-A^{-1/2}y(0)).
\end{split}
\]
For any $ y(t) \in \mathfrak{N}_z$ there holds the representation
$$y(t,z) = \omega_1(t,z)f_1+\omega_2(t,z)f_2, \quad f_1, f_2 \in \mathfrak{H}.$$

Consequently, the Weyl function is of the form
\[
M(z) = \Omega_1(z)\Omega_0(z)^{-1},
\]
where
\[
\Omega_0(z)=  \left(%
\begin{array}{cc}
\Gamma_0  \omega _1(t,z) & \Gamma_0  \omega _2(t,z)
\end{array}%
\right), \quad
\Omega_1(z) = \left(%
\begin{array}{cc}
\Gamma _1 \omega _1(t,z) & \Gamma _1 \omega _2(t,z)
\end{array}%
\right).
\]
If $q(t) \equiv 0$, then
$$
\omega_1(t,z) = (t-b)\cos\sqrt{A-z},\quad \omega_2(t,z) =
(t-b)\frac{\sin\sqrt{A-z}}{\sqrt{A-z}}A,
$$
 and the Weyl function is calculated quite clearly.

\section{Schr\"odinger Operator in $\R^3\backslash\{0\}$}\label{8}

Consider in $L^2(\R^3)$ the Schr\"odinger operator
\[
l[y]=-\Delta y+q(x)y
\]
with a spherically symmetric potential $q(x)=q(|x|)$, $|q(x)|\leqslant C$ defined originally
on $C_0^\infty(\R^3\backslash0)$. Its closure $L = L_{\min}$ is a minimal
symmetric operator with the deficiency indices $(1,1)$. Starting with~\cite{BerFad61},
the operator $L$ has been studied by many authors (see the reference list in~\cite{Koc82}).
Boundary triplets for $L^*$ and more general elliptic operators are constructed
in~\cite{Koc82}.

For $q(x)\equiv 0$ the defect subspaces of the operator $L =-\Delta$ have the form
\[
\sN_z=\{e^{ir\sqrt{z}}/r\},\quad(r=|x|).
\]
Therefore, for any bounded $q(x)$ and for any $f\in\dom(L^*)$ there holds the relation
\begin{equation}\label{eq:Shred_f}
    f(x)=\frac{c_{-1}}{r}+c_0+\wt{f}(x),\quad
\wt{f}(x)\in\dom(L_0),\quad \wt{f}(0)=0.
\end{equation}
Applying the Green formula to a function $f\in\dom(L^*)$ of form~\eqref{eq:Shred_f}
and a function $g\in\dom(L^*)$ of the form
\[
g(x)=\frac{d_{-1}}{r}+d_0+\wt{g}(x),\quad \wt{g}(x)\in\dom(L),\quad \wt{g}(0)=0
\]
in the domain $G_r$ exterior to the sphere $\sum_r=\{x:|x|=r\}$, we get
\begin{eqnarray*}
(L^*f,g)-(f,L^*g)&=&\lim\limits_{r\to0}\iiint\limits_{G_r}(f\cdot
\overline{\Delta g}-\Delta f\cdot\bar{g})dx=\\
&{=}&\lim\limits_{r\to0}\iint\limits_{\sum_r}\left[f\left(\frac{\partial\bar{g}}
{\partial n}\right)-\left(\frac{\partial f} {\partial
n}\right)\bar{g}\right]d\sigma\\
&=&\lim\limits_{r\to0}\iint\limits_{\sum_r}
\left(\frac{c_0\bar{d}_{-1}-c_{-1}\bar{d_0}}{r^2}+o(1)\right)d\sigma=4\pi(c_0\bar{d}_{-1}-c_{-1}\bar{d_0}).
\end{eqnarray*}
Define the boundary triplet by setting
\[
\cH=\dC,\quad
\Gamma_0y=2c_{-1}\sqrt{\pi},\quad\Gamma_1y=2c_0\sqrt{\pi}.
\]
  \begin{proposition}\label{P:DM_19}
The Weyl function of the operator $L$ coincides with one of the Weyl functions
of the Sturm -- Liouville operator $A=- d^2/dx^2 + q(r)$ in $L^2[0,\infty)$
with the boundary conditions $y(0)=y'(0)=0$.
\end{proposition}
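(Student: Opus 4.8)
The plan is to exploit the reduction of the three–dimensional Schrödinger operator to its $s$-wave (radial) part and then identify the boundary triplet constructed above with a natural boundary triplet for the half-line Sturm–Liouville operator $A = -d^2/dx^2 + q(r)$ subject to the Dirichlet-type conditions $y(0)=y'(0)=0$. First I would observe that, because $q$ is spherically symmetric, $L$ commutes with the action of $SO(3)$ on $L^2(\mathbb R^3)$, so it decomposes over the spherical-harmonic subspaces; only the zero angular momentum channel is nontrivial for the deficiency indices, since on every channel with $\ell\ge 1$ the formal operator is essentially self-adjoint (the potential $r^{-2}\ell(\ell+1)$ at the origin is in the limit point case). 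Hence $L$ is unitarily equivalent, on its "singular" part, to the radial operator $A$ acting in $L^2(0,\infty)$ after the substitution $y(x)=u(r)/r$, which carries $-\Delta$ into $-u'' + q(r)u$; the Dirichlet-type domain for the minimal radial operator is exactly $\{u: u(0)=u'(0)=0\}$, so $A$ here is the minimal Sturm–Liouville operator whose Weyl functions were discussed in Example~\ref{ex:3.6}.

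Next I would translate the asymptotic coefficients in \eqref{eq:Shred_f} into boundary data for $u$. Writing $f(x) = c_{-1}/r + c_0 + \wt f(x)$ with $\wt f(0)=0$ corresponds, under $y=u/r$, to $u(r) = c_{-1} + c_0 r + r\wt f(x)$, i.e.\ $u(0)=c_{-1}$ and $u'(0)=c_0$ (the remainder $r\wt f$ contributes nothing to the value or first derivative at $0$ since $\wt f$ is continuous with $\wt f(0)=0$ and belongs to $\dom(L_0)$). Therefore the triplet $\cH=\dC$, $\Gamma_0 y = 2\sqrt{\pi}\,c_{-1} = 2\sqrt\pi\,u(0)$, $\Gamma_1 y = 2\sqrt\pi\,c_0 = 2\sqrt\pi\,u'(0)$ is, up to the harmless common scalar factor $2\sqrt\pi$, precisely the boundary triplet $\{\dC,\Gamma_{0,\infty},\Gamma_{1,\infty}\}$ of Example~\ref{ex:3.6} for $A$ (the factor $2\sqrt\pi$ is a $J$-unitary rescaling, so by Proposition~\ref{P:DM_6} with $U=\mathrm{id}$ and $X=\mathrm{diag}(2\sqrt\pi,(2\sqrt\pi)^{-1})$ it does not change the Weyl function).

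Finally I would invoke the defining relation $M(z)\Gamma_0 f_z = \Gamma_1 f_z$ for $f_z\in\sN_z$. For $q\equiv 0$ one has $\sN_z = \{e^{ir\sqrt z}/r\}$, giving $u_z(r)=e^{ir\sqrt z}$, hence $u_z(0)=1$, $u_z'(0)=i\sqrt z$, and $M(z)=i\sqrt z$; this agrees with the classical half-line Weyl function $m_\infty(z)$ of $-d^2/dx^2$ at the regular endpoint $0$, as in Example~\ref{ex:3.6}. For general bounded $q$ the decomposition \eqref{eq:Shred_f} shows that the map $f_z\mapsto u_z$ with $u_z$ the unique $L^2(0,\infty)$ solution of $-u''+qu=zu$ normalized by $u_z(0)=c_{-1}$ is exactly the $\gamma$-field, and the identity $M(z)\Gamma_0 f_z=\Gamma_1 f_z$ becomes $M(z)u_z(0)=u_z'(0)$, which is the definition of a Weyl function of $A$ for the boundary triplet of Example~\ref{ex:3.6}. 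By Theorem~\ref{T:DM_1} (or directly by Corollary~\ref{C:DM_5}) the Weyl function of the simple symmetric operator $L$ determines the pair uniquely, so this identification is complete. I expect the main obstacle to be the careful justification that the non-radial channels are essentially self-adjoint and contribute nothing, so that "the Weyl function of $L$" really is a scalar function equal to that of the radial operator; the reduction to the radial variable and the reading-off of boundary data from \eqref{eq:Shred_f} are then routine.
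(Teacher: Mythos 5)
Your proof is correct and takes essentially the same route as the paper: pass to the radial operator via $y\mapsto y/r$, read off $c_{-1}=u(0)$ and $c_0=u'(0)$ from the expansion, and identify the resulting triplet with the $h=\infty$ triplet of Example~\ref{ex:3.6}, so that $M_L(z)=c_0(z)/c_{-1}(z)=M_A(z)$; the angular decomposition and the $q\equiv0$ check are extra background the paper simply assumes. One small slip in a parenthetical: the common factor $2\sqrt{\pi}$ corresponds to $X=\mathrm{diag}(2\sqrt{\pi},\,2\sqrt{\pi})$, which is not $J$-unitary (and the $J$-unitary $\mathrm{diag}(c,c^{-1})$ would in fact rescale $M$ by $c^{2}$) — but since $M(z)=\Gamma_1 f_z/\Gamma_0 f_z$ is a ratio of scalars here, the common factor cancels and the conclusion is unaffected.
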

\begin{proof}
If $y(r,z)\in\sN_z(A)$, then $y(r,z)r^{-1}\in\sN_z(L_0^*)$. Let
\[
\frac{y(r,z)}{r}=\frac{c_{-1}(z)}{r}+c_0(z)+o(1).
\]
Then the Weyl function of the operator $L$ is of the form
$M_{L}(z)=c_0(z)/c_{-1}(z)$. Choosing for the operator$A^*$ the same boundary triplet as
in Example~\ref{ex:3.6} we obtain $M_{L}(z)=M_A(z)$.
\end{proof}

Consider the extension $\wt{L}_h$ of the operator $L_0$ defined by the boundary condition
$\Gamma_1y=h\Gamma_0y$, $h\neq\bar{h}$. It was shown by Pavlov~\cite{Pavlov66}
that, under the condition on the potential
\begin{equation}\label{DM_73}
\sup|q(r)|\exp(\varepsilon\sqrt{r})<\infty,
\end{equation}
the spectrum of the operator $\wt{A}_h$ and hence of the operator $\wt{L}_h$
has finitely many eigenvalues and spectral singularities.

In the same paper~\cite{Pavlov66} it was shown that this condition
is precise for the Sturm -- Liouville operator. Namely, there were presented both a
real  potential $q(x)$ for which condition~\eqref{DM_73} is violated but
$\sup|q(r)|\exp(\varepsilon r^\beta)<\infty$ for $0<\beta<1/2$, and
a complex $h\neq\bar{h}$ such that the operator has an infinite set of
eigenvalues. Proposition~\ref{P:DM_19} yields that, for the operator $L_0$
with a potential $\wt{q}(x)=q(|x|)$,  there also exists an extension
$\wt{L}_h$ with the same property.


\section{Laplace Operator in Domains with Piecewise Smooth Boundary }\label{9}

\subsection{Domain with One Incoming Angle} \label{Subsec:1}

Let $\Omega_\beta=\{(r,\varphi):0\leqslant r\leqslant
1,\,0\leqslant\varphi\leqslant\pi/\beta\}$ be a domain in $\R^2$,
and let $1/2<\beta<1$. The Laplace operator $Lu=-\Delta
u$ considered in $L^2(\Omega_\beta)$ with the Dirichlet conditions on the boundary
is a symmetric operator with the deficiency indices $(1,1)$
and the domain $\dom(L)=W_0^{2,2}(\Omega)$ (see~\cite{BirS62}).

The domain of the Friedrichs extension $L_F$ is of the form
\[
\dom(L_F)=W_2^{2,2}(\Omega_\beta)+\{u_\beta\},
\]
where
\[
u_\beta(x)=\eta_\varepsilon(r)r^\beta\sin\beta\varphi\in
W_0^{1,2}(\Omega),\quad    \Delta u_\beta\in L^2(\Omega).
\]
Here $\eta_\varepsilon(r)$ is a smooth "cutoff"\ function equal to $1$ for
$r\leqslant\varepsilon/2$ and to $0$ for $r\geqslant\varepsilon$.
The function\footnote[1]{The function $u_\beta(x)$ is constructed by Guseva
(see~\cite{BirS62}).} $u_\beta(x)$ is a weak solution to the problem
\[
-\Delta u=f,\quad u|_\Gamma=0\quad(f\in L^2(\Omega_\beta))
\]
which does not belong to $W_0^{2,2}(\Omega_\beta)$. For any
$f\in\dom(L^*)$ there holds the representation
\begin{equation}\label{DM_74}
f=f_0+c_1u_\beta+c_2v,
\end{equation}
where
\[
v=(r^{-\beta}-r^\beta)\sin\beta\varphi\in\sN_0(L)=\ker(L^*),\quad
c_1,c_2\in\dC.
\]
Let $g\in\dom(L^*)$. In view of~\eqref{DM_74}, the decomposition
\[
g=g_0+d_1u_\beta+d_2v,\quad d_1,d_2\in\dC
\]
holds true. Since
\[
(L^*f,g)-(f,L^*g)=c_1\bar{d}_2(L^*u_\beta,v)-c_2\bar{d}_1(v,L^*u_\beta),
\]
one can define a boundary triplet for the operator $L^*$
by setting
$$\begin{array}{cc}
  \Gamma_0f=kc_2=k\lim\limits_{r\to0}r^\beta f(re^{i\pi/2\beta}),  \\
  \Gamma_1f=k(c_1-c_2)=k\lim\limits_{r\to0}r^{-\beta}
  \{f(re^{i\pi/2\beta})-r^{-\beta}\Gamma_0f\},
\end{array}$$
where $k^2=(L^*u_\beta,v)$. It can be shown that $(L^*u_\beta, v)>0$.
Indeed,
\begin{eqnarray*}
L^*u_\beta&=&-\Delta u_\beta=-\left(\frac{\partial^2}{\partial
r^2}+\frac{1}{r}\frac{\partial}{\partial
r}+\frac{1}{r^2}\frac{\partial^2}{\partial\varphi^2}\right)u_\beta=\\
&=&-\left((2\beta+1)
r^{\beta-1}\eta_\varepsilon'(r)+\eta_\varepsilon''(r)r^\beta
-\beta\eta_\varepsilon(r)r^{\beta-2}\right)\sin\beta\varphi.
\end{eqnarray*}
It follows that
\begin{eqnarray*}
(L^*u_\beta,v)&=&\iint\limits_{\Omega_\beta}\left((2\beta+1)
r^{\beta-1}\eta_\varepsilon'(r)+\eta_\varepsilon''(r)r^\beta
-\beta\eta_\varepsilon(r)r^{\beta-2}\right)(r^\beta-r^{-\beta})
\sin\beta\varphi dxdy=\\
&=&\int\limits_{\varepsilon/2}^\varepsilon\left((2\beta+1)\eta_\varepsilon'(r)+
\eta_\varepsilon''(r)r-\beta\eta_\varepsilon(r)r^{-1}\right)(r^{2\beta}-1)dr
\int\limits_0^{\pi/\beta}\sin\beta\varphi d\varphi.
\end{eqnarray*}
By integrating by parts we obtain:
\[
\int\limits_{\varepsilon/2}^\varepsilon\eta_\varepsilon''(r)(r^{2\beta+1}-r)dr=
\int\limits_{\varepsilon/2}^\varepsilon\eta_\varepsilon'(r)[1-(2\beta+1)r^{2\beta}]dr.
\]
A comparison of the last two equalities leads to the relation
\[
(L^*u_\beta,v)=2\int\limits_{\varepsilon/2}^\varepsilon
\eta_\varepsilon(r)(1-r^{2\beta-1})dr>0.
\]

Let us find the Weyl function of the operator $L$. If $f(\cdot,{z})\in\sN_{z}$, then
\[
f(x,{z})=\left[m({z})J_\beta(r\sqrt{{z}})
+J_{-\beta}(r\sqrt{{z}})\right] \sin\beta\varphi,\qquad
m({z})=-\frac{J_{-\beta}(\sqrt{{z}})}{J_\beta(\sqrt{{z}})},
\]
where $J_{\pm\beta}(r)$ are the Bessel functions of the first kind. It follows that
\begin{equation}
  \Gamma_0f=k\frac{1}{\Gamma(1-\beta)}(2^{-1}\sqrt{{z}})^{-\beta},\qquad
   \Gamma_1f=k\frac{m({z})}{\Gamma(1+\beta)}(2^{-1}\sqrt{{z}})^{\beta},
\end{equation}
\begin{equation}\label{DM_76}
  M({z})=-\frac{\Gamma(1-\beta)J_{-\beta}(\sqrt{{z}})(2^{-1}\sqrt{{z}})^{2\beta}}
  {\Gamma(1+\beta)J_\beta(\sqrt{{z}})}.
\end{equation}

Any extension $\wt{L}_h$ of the operator $L$ can be defined by the condition
$\Gamma_1f=h\Gamma_0f$. Studying the behavior of zeros of the function $M({z})-h$
we can prove completeness and basis properties of sets  of eigenvectors and associated vectors of the operator $\wt{L}_h$.
\subsection{Domain with Finite Number of Incoming Angles}\label{Subsec:2}

Later on, let $\Omega$ be a bounded domain in $\R^2$ with a
piecewise smooth boundary of class $C^2$, and let $a_k$ 
be corner points of the boundary $\Gamma=\partial\Omega$
in which the interior angle $\pi/\beta_k$ is greater than
$\pi$ $(1/2<\beta_k<1,\ k=1,\ldots,n)$. Then the Laplace operator
$Lu=-\Delta u$ considered in $L^2(\Omega)$ with a Dirichlet condition
on the boundary is a symmetric operator with the deficiency indices $(n,n)$,
$\dom(L)=W^{2,2}_0$ (see~\cite{BirS62}).

Let a domain $\Omega$ be such that, for some collection of neighborhoods
$O(a_j,\varepsilon_j)$ of the corner points $a_j$, a part of the boundary
$\Gamma_j=\Gamma\cap O(a_j,\varepsilon_j)$ is composed of two
straight line segments
\[
\arg(x-a_j)=\theta_j,\quad \arg(x-a_j)=\theta_j+\frac{\pi}{\beta_j}.
\]
Consider the mapping
$G_j(x)=[e^{-i\theta_j}(x-a_j)]^{\beta_j}$ that takes
$\omega_j=O(a_j,\varepsilon_j)\cap\Omega$ to the upper half-plane
$\dC_+$ such that $\Gamma\cap O(a_j,\varepsilon_j)$ goes to a real
line segment. Let $F_j(x)$ be a function that takes $G_j(\Omega)$
onto $\dC_+$ such that $F_j(0)=0$, $F_j'(0)=1$. By the principle of symmetry,
the function $F_j(x)$ admits analytic continuation to some neighborhood
of $x=0$ and hence it can be represented by the Taylor series
\begin{equation}\label{DM_77}
F_j(x)=x+d_{j,2}x^2+d_{j,3}x^3+\ldots+d_{j,k}x^k+\ldots
\end{equation}
with real coefficients.

Assume that all the functions $F_j(x)$ $(1\leqslant j\leqslant n)$ are analytic in
the disk $|x|<\varepsilon$, and construct a finite smooth function
$\eta_\varepsilon(r)$ equal to $1$ for $r\leqslant\varepsilon/2$ and $0$ for
$r\geqslant\varepsilon$. Put
\begin{equation}\label{DM_78}
u_{\varepsilon, j}=\Im F_j(G_j(x))\cdot\eta_\varepsilon(|G_j(x)|),
\end{equation}
\begin{equation}\label{DM_79}
v_{j}=-\Im [F_j(G_j(x))]^{-1}.
\end{equation}

As was shown in~\cite{BirS62}, the domain of the Friedrichs extension $L_F$
of the operator $L$ consists of functions of the form
\begin{equation}\label{DM_80}
{u}(x,y) =u_0(x,y)+\sum\limits_{j=1}^n c_{1,j}(u)u_{\varepsilon,
j}(x),
\end{equation}
where $u_0(x,y)\in W_0^{2,2}(\Omega)$. It is easy to see that in
the polar coordinate system with the pole at $a_j$ and the polar ray
$\arg(x-a_j)=\theta_j$ the functions $u_{\varepsilon,j}(x)$ have the form
\[
u_{\varepsilon,j}(x)=
r^{\beta_j}\sin\beta_j\varphi\cdot\eta_\varepsilon(r,\varphi)\in
W_0^{2,1}(\Omega),
\]
the functions satisfy $v_j(x)\in\ker L^*$ (see~\cite[p. 19]{BirS62}), and
formulas~\eqref{DM_77}, \eqref{DM_79} imply that
\begin{equation}\label{DM_81}
    \begin{split}
v_j(x)=&-\Im\{[e^{-i\theta_j}(x-a_j)]^{-\beta_j}
+(d_{1,j}^2-d_{2,j})[e^{-i\theta_j}(x-a_j)] ^\beta_j+\\ \nonumber
&+o(|x-a_j|^{\beta_j})=(r^{-\beta_j}-d_jr^{\beta_j})\sin\beta_j\varphi+o(r^{\beta_j}),
\end{split}
\end{equation}
where $d_j=d_{1,j}^2-d_{2,j}$. As in~\ref{Subsec:1}, one can show that
\[
k_j^2:=(L^*u_{\varepsilon ,j},v_j)>0
\]
and it does not depend on $\varepsilon>0$ since $u_{\varepsilon_1,j}-u_{\varepsilon_2,j}\in\dom(L)$. Any function
$v\in\ker L^*$ can be represented as
\begin{equation}\label{DM_82}
v=\sum\limits_{j=1}^n c_{2,j}(v)v_j.
\end{equation}
Note that $(L^* u_{\varepsilon, k},v_j)=0$ for $k\neq j$. Indeed, assume that
\[
\varepsilon<2^{-1}\min|a_k-a_j|,\quad \eta(x)=1\quad\mbox{ for
}\quad x\in O(a_k,\varepsilon)
\]
and $\supp\eta(x)\subset O(a_k,r_\varepsilon)$. Then
$\eta(x)v_j\in\dom(L)$ and $L(\eta(x)v_j)=0$ for any $x\in
O(a_k,\varepsilon)$,
\[
(L^*u_{\varepsilon, k},v_j)=(L^*u_{\varepsilon,
k},\eta(x)v_j)=(u_{\varepsilon, k},L(\eta(x)v_j))=0.
\]
Define the boundary triplet $\{\cH, \Gamma_0,\Gamma_1\}$ by setting
\[
\cH=\dC^n,\quad (\alpha,\beta)_\cH=\sum\limits_{j=1}^n
k_j\alpha_j\bar{\beta}_j,\quad \Gamma_l
u=(\Gamma_{l,1}u,\ldots,\Gamma_{l,n}u)^T\quad (l=0,1),
\]
where
\begin{equation}\label{DM_83}
\Gamma_{0,j}=\widehat{\lim\limits_{x\to a_j}}|x-a_j|^{\beta_j}u(x),
\end{equation}
\begin{equation}\label{DM_84}
\Gamma_{1,j}=\widehat{\lim\limits_{x\to
a_j}}|x-a_j|^{-\beta_j}\{u(x)-\Gamma_{0,j}(u)|x-a_j|^{\beta_j}\}.
\end{equation}
The symbol $\widehat{\lim\limits_{x\to a_j}}$ will denote the limit of $u(x)$
along the bisector of the interior angle of the domain $\Omega$ with vertex at $a_j$.

Let $u_l\in\dom(L^*)$, $l=0,1$. Then, by taking into account
\eqref{DM_81} -- \eqref{DM_84}, we have
\[
u_l=u_l^0+\sum\limits_{1\leqslant j\leqslant
n}\left[\Gamma_{1,j}u_l+d_j\Gamma_{0,j}u_l\right]u_{\varepsilon, j}+
\sum\limits_{1\leqslant j\leqslant n}\Gamma_{0,j}(u_l)v_j,
\]
\[
u_l^0\in W^{2,0}_0(\Omega),\qquad l=0,1.
\]
By direct substitution we verify the equality
\[
(L^*
u_1,u_0)-(u_1,L^*u_0)=(\Gamma_1u_1,\Gamma_0u_0)_{\cH}-(\Gamma_0u_1,\Gamma_1u_0)_\cH.
\]

In the defect subspace $\sN_{z}$, chhose the basis consisting of functions
$u_l(x,{z})$, $(x\in\Omega,\,\, l=1,2,\ldots,n)$ such that
$u_l(x,{z})\in W_0^{2,2}(\omega_j)$ for $j\neq l$. Then the Weyl function
$M({z})$ in this basis has the diagonal representation
\begin{equation}\label{DM_85}
M({z})=\|m_{j,l}({z})\|_{j,l=1}^n, \quad\mbox{ where }\quad
m_{j,l}({z})=\frac{\Gamma_{1,j}(u_l(x,{z}))}{\Gamma_{0,j}(u_l(x,{z}))}\delta_{j,l}.
\end{equation}

\subsection{Unbounded Domain with One Incoming Angle}\label{Subsec:3}

Consider the operator $Lu=-\Delta u$ defined in the unbounded domain
\[
\Omega=\{(r,\varphi):r\geqslant0,0\leqslant\varphi\leqslant\pi/\beta\},\quad
(2^{-1}<\beta<1)
\]
with the Dirichlet conditions on the boundary. The operator $L$ is symmetric,
with the deficiency indices $(1,1)$, and its simple part is unitary equivalent
to the operator $A$ generated by the differential expression
\[
l[y]=-\frac{d^2y(r)}{dr^2}+\frac{\beta^2-1/4}{r^{2}}y(r)
\]
in $L^2(0,\infty)$. The defect space $\sN_{z}$ of the operator $A$
consists of functions of the form
\[
y_{z}(r)=r^{1/2}H_\beta^{(1)}(r\sqrt{z}),
\]
where
$H_\beta^{(1)}(r)=J_\beta(r)+iY_\beta(r)$ is the Hankel function.

Define the boundary triplet $\{\cH, \Gamma_0,\Gamma_1\}$ by setting
$\cH=\dC$,
\[
\Gamma_0y=\lim\limits_{r\to0}r^{\beta-1/2}y(r),\qquad
\Gamma_1y=\lim\limits_{r\to0}r^{-\beta-1/2}\left[y(x)-\Gamma_0(y)r^{1/2-\beta}\right].
\]
From the asymptotics of the Hankel function as $r\to0$
\[
H_\beta^{(1)}(r\sqrt{{z}})=\frac{i}{\sin\beta\pi}\left\{e^{-i\beta\pi}
\left(\frac{r\sqrt{{z}}}{2}\right)^\beta\frac{1}{\Gamma(1+\beta)}-
\left(\frac{r\sqrt{{z}}}{2}\right)^{-\beta}\frac{1}{\Gamma(1-\beta)}
+o(r^\beta)\right\},
\]
we easily find that
\[
\begin{array}{l}
  \Gamma_0y_{z}=-{\displaystyle\frac{i}{\sin\beta\pi}\left(\frac{\sqrt{{z}}}{2}
  \right)^{-\beta}\cdot\frac{1}{\Gamma(1-\beta)}}, \\
  \Gamma_1y_{z}={\displaystyle\frac{i}{\sin\beta\pi}\left(\frac{\sqrt{{z}}}{2}
  \right)^{\beta}\cdot\frac{e^{-i\beta\pi}}{\Gamma(1+\beta)}},
\end{array}
\]
and the following expression for the Weyl function
\[
M({z})=C_\beta{z}^\beta,
\]
where
$C_\beta=\exp(-i\beta\pi)4^{-\beta}\Gamma(1-\beta)/\Gamma(1+\beta)$.

The Friedrichs extension $\wt{L}_F$ is given by the condition
$\dom(\wt{L}_F)=\ker\Gamma_0$, and the Krein extension is given by the condition
$\dom(\wt{L}_K)=\ker\Gamma_1$, because $M(0)=0$. The characteristic function
$\Theta({z})$ of the extension $L_h$
$(\dom(\wt{L}_h)=\ker(\Gamma_1-C_\beta h\Gamma_0))$ is of the form
\[
\Theta({z})=\frac{{z}^\beta+h}{{z}^\beta+\bar{h}}.
\]


\end{document}